\newtheorem{theorem}{Theorem}
\newtheorem{lemma}[theorem]{Lemma}
\newtheorem{remark}[theorem]{Remark}
\newtheorem{proposition}[theorem]{Proposition}
\newtheorem{corollary}[theorem]{Corollary}
\newcommand{\mb}{\mathbf}
\newcommand{\cF}{\mathcal{F}}
\newcommand{\rmi}{\mathrm{i}}
\DeclareMathOperator{\Sin}{\mathtt{Sin}}
\begin{document}

\title{Precise asymptotics for the norm of large random rectangular Toeplitz matrices}
\author{Alexei Onatski}
\date{September 4, 2025}
\maketitle
\begin{abstract}
   We study the spectral norm of large rectangular random Toeplitz and circulant matrices with independent entries. For Toeplitz matrices, we show that the scaled norm converges to the norm of a bilinear operator defined via the pointwise product of two scaled sine kernel operators. In the square case, this limit reduces to the squared $2\!\to\!4$ norm of the sine kernel operator, in agreement with the result of \cite{sen13}. For $p\times n$ circulant matrices, we show that their norm divided by $\sqrt{p\log n}$ converges in probability to 1. We further investigate the finite-sample performance of these asymptotic results via Monte Carlo experiments, which reveal both non-negligible bias and dispersion. For circulant matrices, a higher-order asymptotic analysis in the Gaussian case explains these effects, connects the fluctuations to shifted Gumbel distributions, and suggests a natural conjecture on the limiting law.
\end{abstract}

\section{Introduction and main results} \label{sec: intro}
In a seminal study, \cite{sen13} established precise asymptotics for the spectral norm of large symmetric random Toeplitz matrices. They showed that, after appropriate normalization, the spectral norm converges to a constant that can be expressed as the square of an operator norm associated with the classical sine kernel.

In this paper, we extend their analysis to rectangular, non-symmetric \( p \times n \) random Toeplitz matrices. We show that, under analogous scaling, the spectral norm converges to the norm of a bilinear operator defined via the pointwise product of two scaled sine kernel operators. In the special case when the matrix is square (\( p = n \)), our result recovers the limiting constant identified by \cite{sen13}.

Large random \( p \times n \) Toeplitz matrices \( \mathbf{T}_{p \times n} \) arise in a variety of applied contexts. In compressed sensing, for instance, they are used to reconstruct a sparse \( n \)-dimensional signal \( x \) from a lower-dimensional measurement \( \mathbf{T}_{p \times n}x \); see \cite{rauhut09} and references therein. They also play a central role in subspace-based methods of signal processing \cite{nekrutkin10}, and naturally appear in time series analysis, where \( \mathbf{T}_{p \times n} \) serves as the design matrix in autoregressive models of order \( p \) constructed from \( n \) observations of a stochastic process.

Following \cite{sen13} (abbreviated SV13 hereafter), we assume the entries along the diagonals of \( \mathbf{T}_{p \times n} \) are drawn from a sequence \( \{a_i\}_{i\in\mathbb{Z}} \) of independent real-valued random variables satisfying
\begin{equation}\label{Lyapunov}
\mathbb{E}a_i = 0, \qquad \operatorname{Var}(a_i) = 1, \qquad \mathbb{E}|a_i|^\gamma < A
\end{equation}
for some constants \( \gamma > 2 \) and \( A > 0 \). The matrix itself is defined by
\begin{equation}\label{definition of T}
\left( \mathbf{T}_{p \times n} \right)_{ij} = a_{i-j}, \quad \text{for } i = 0, \dots, p - 1;\ j = 0, \dots, n - 1,
\end{equation}
so that, unlike in SV13, it is non-symmetric. We allow the number of rows \( p = p(n) \) to vary with \( n \), subject to \( p \leq n \), and assume that for some \( \alpha \in (0,1) \),
\begin{equation}\label{new asymptotic regime}
\limsup_{n \rightarrow \infty} \left( \frac{n}{p \log^\alpha n} \right) = 0.
\end{equation}
This condition permits the ratio \( n/p \) to fluctuate and even diverge to infinity along subsequences, provided it does not do so too rapidly.

To describe the asymptotic behavior of $\|\mb{T}_{p\times n}\|$, it is convenient to introduce certain integral operators built from the sine kernel. For \( u > 0 \), let \( \Sin_u \) denote the scaled sine kernel operator (the unscaled case corresponds to \( u=1 \)):
\[
\Sin_u(f)(x) := \int_{\mathbb{R}} \frac{\sin(u \pi (x - y))}{\pi (x - y)} f(y) \, dy, \quad f \in L^2(\mathbb{R}).
\]
Given two such operators, their pointwise product leads to the bilinear operator \( \Sin_{u,w} \), defined by
\[
\Sin_{u,w}(f,g)(x) := \Sin_u(f)(x) \cdot \Sin_w(g)(x), \quad f,g \in L^2(\mathbb{R}),
\]
with associated operator norm
\[
\|\Sin_{u,w}\|_{2 \rightarrow 2} := \sup_{\|f\|_2 \leq 1, \|g\|_2 \leq 1} \| \Sin_{u,w}(f,g) \|_2.
\]
A proof of the following theorem is given in Section~\ref{sec: proof of theorem 1}.

\begin{theorem} \label{T theorem} 
Let $\mathbf{T}_{p\times n}$ be a sequence of non-symmetric rectangular random Toeplitz matrices defined in \eqref{definition of T} and satisfying assumptions \eqref{Lyapunov} and \eqref{new asymptotic regime}. Then, as $n \to \infty$, the spectral norm satisfies
\begin{equation}
    \label{main convergence}
    \frac{\|\mathbf{T}_{p\times n}\|}{\sqrt{p\log n}} - \|\Sin_{1,n/p}\|_{2\rightarrow 2} \overset{P}{\longrightarrow} 0.
\end{equation}
If, in addition, 
\begin{equation}\label{stronger asymptotic regime}
    \limsup_{n\to\infty} n/p <\infty,
\end{equation}
then the convergence in probability in \eqref{main convergence} strengthens to convergence in $L^\gamma$.
\end{theorem}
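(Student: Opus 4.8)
The plan is to follow the architecture of SV13, adapting it to the rectangular and non-symmetric setting, and then to superimpose a truncation argument for the $L^\gamma$ claim. Writing $\|\mathbf{T}_{p\times n}\|=\sup\{|\langle\bu,\mathbf{T}_{p\times n}\bv\rangle|:\bu\in\mathbb{C}^p,\ \bv\in\mathbb{C}^n,\ \|\bu\|_2=\|\bv\|_2=1\}$, one has $\langle\bu,\mathbf{T}_{p\times n}\bv\rangle=\sum_k a_k c_k(\bu,\bv)$ with $c_k(\bu,\bv)=\sum_{i-j=k}\bar u_i v_j$, so that, conditionally on $(\bu,\bv)$, this is a weighted sum of the independent $a_k$ with variance proxy $\sum_k|c_k(\bu,\bv)|^2=\frac{1}{2\pi}\int_0^{2\pi}|U(\theta)|^2|V(\theta)|^2\,d\theta$, where $U,V$ are the trigonometric polynomials with coefficient vectors $\bu,\bv$; note $\|c(\bu,\bv)\|_2\le\sqrt p$ and $\|c(\bu,\bv)\|_\infty\le1$ throughout. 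Thus $\|\mathbf{T}_{p\times n}\|$ is the supremum of an (approximately sub-Gaussian) process indexed by a product of unit spheres. Before analysing it I would split $a_k=a_k\mathbf{1}_{|a_k|\le B}+a_k\mathbf{1}_{|a_k|>B}$ and $\mathbf{T}_{p\times n}=\mathbf{T}^{\le}+\mathbf{T}^{>}$: since a $p\times n$ matrix supported on a single diagonal has spectral norm at most $1$, $\|\mathbf{T}^{>}\|\le\sum_k|a_k|\mathbf{1}_{|a_k|>B}$, whose expectation is $O(nB^{1-\gamma})$ by \eqref{Lyapunov}; under \eqref{new asymptotic regime} this is $o(\sqrt{p\log n})$ in probability for a suitable slowly growing $B=B(n)$, and, when \eqref{stronger asymptotic regime} also holds, $B$ can be chosen — dyadically decomposing the heavy part and bounding each layer by its number of occupied diagonals times their common scale, if necessary — so that $\|\mathbf{T}^{>}\|/\sqrt{p\log n}$ is small in $L^\gamma$ as well. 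For $\mathbf{T}^{\le}$, whose entries are bounded by $B$, I would re-center the entries (changing the norm by $O(nB^{1-\gamma})$, again negligible) and invoke a Talagrand-type concentration inequality for the supremum of a bounded empirical process, using the weak-variance bound $\sup\|c(\bu,\bv)\|_2^2\le p$ and the envelope bound $B$; for $B$ growing slowly enough this pins $\|\mathbf{T}^{\le}\|$ to $\mathbb{E}\|\mathbf{T}^{\le}\|$ at a scale $o(\sqrt{p\log n})$, and this is one place where the $\log^\alpha n$ slack in \eqref{new asymptotic regime} is used.

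The heart of the proof is the evaluation of $\mathbb{E}\|\mathbf{T}^{\le}\|$. Here I would first show, as in SV13, that the supremum \emph{localizes in frequency}: up to a $1+o(1)$ factor it suffices to restrict to test vectors whose trigonometric polynomials $U,V$ are concentrated in a single window of width of order $1/n$ around some $\theta_0\in[0,2\pi)$. Partitioning the circle into of order $n$ such windows and rescaling the frequency variable by $n$ around $\theta_0$ turns the variational problem on a window — maximize $\frac{1}{2\pi}\int|U|^2|V|^2$ over unit-norm coefficient vectors — into the continuum problem of maximizing $\|\Sin_1(\phi)\cdot\Sin_{n/p}(\psi)\|_2^2$ over $\|\phi\|_2\le1$, $\|\psi\|_2\le1$, multiplied by $p$; the parameter $n/p$ appears because $\bu$ carries only $p$ coefficients while $\bv$ carries $n$, so in the rescaled picture the two attainable band-limits differ by the factor $n/p$, while the symmetry of $\|\cdot\|_{2\to2}$ in its two arguments makes the assignment of the band-limits to the slots immaterial. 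One checks, tracking the Fourier conventions, that this continuum maximum is exactly $p\,\|\Sin_{1,n/p}\|_{2\to2}^2$. The contributions associated with well-separated windows are asymptotically independent, because the Dirichlet-type kernels entering $U$ and $V$ decay rapidly away from their peaks; and each windowed contribution is, conditionally, a complex-Gaussian-type quantity — the phase of $U,V$ at $\theta_0$ being a free parameter, the real and imaginary parts of $\sum_k a_k c_k$ are each mean zero with variance about $\tfrac12\|c\|_2^2$ and asymptotically uncorrelated — with $\mathbb{E}|\sum_k a_k c_k|^2$ at most $p\,\|\Sin_{1,n/p}\|_{2\to2}^2$, equality being approached at the optimal window shape.

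It remains to pass from the of order $n$ approximately independent windowed contributions, each behaving like a Rayleigh variable with scale $\sqrt p\,\|\Sin_{1,n/p}\|_{2\to2}$, to their maximum. Since $\mathbb{P}(R>t)\approx e^{-t^2/(p\|\Sin_{1,n/p}\|_{2\to2}^2)}$ for such a variable, the maximum of order $n$ of them concentrates at $\sqrt{\log n}\cdot\sqrt p\,\|\Sin_{1,n/p}\|_{2\to2}$ — the prefactor being $\sqrt{\log n}$, not $\sqrt{2\log n}$, precisely because of the extra phase degree of freedom, which makes the relevant tail Gaussian in two dimensions rather than one. The upper bound $\|\mathbf{T}^{\le}\|\le(1+o(1))\sqrt{p\log n}\,\|\Sin_{1,n/p}\|_{2\to2}$ in probability is obtained by a union bound over a sufficiently fine net of windows, phases and shapes, controlled by the sub-Gaussian tails from the first step; the matching lower bound, by plugging in an explicit near-optimal tapered plane wave at each window and applying a second-moment (Paley--Zygmund) argument showing that, with probability tending to one, at least one window carries a contribution of size $(1-o(1))\sqrt{p\log n}\,\|\Sin_{1,n/p}\|_{2\to2}$, the Lyapunov condition $\gamma>2$ in \eqref{Lyapunov} supplying the moderate-deviation estimates needed for the windowed sums. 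Combining the two bounds with the concentration and truncation steps yields \eqref{main convergence}.

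The main obstacle, I expect, is the frequency-localization step together with the exact identification of the constant: one must rule out, uniformly in the possibly diverging ratio $n/p$, that spreading the test vectors over several windows, or using untapered plane waves, beats $\|\Sin_{1,n/p}\|_{2\to2}$, and one must verify that the discretization error, the error in the near-independence of distinct windows, and the error in the Gaussian approximation are all genuinely $o(1)$ after dividing by $\sqrt{p\log n}$; this is where continuity of $u\mapsto\|\Sin_{1,u}\|_{2\to2}$ and careful kernel estimates enter, and where the exponent $\alpha<1$ in \eqref{new asymptotic regime} is calibrated so that the effective number of usable windows is $n^{1+o(1)}$. For the $L^\gamma$ strengthening the remaining issue is bookkeeping with heavy tails: the truncation level $B(n)$ must be chosen so that $\mathbf{T}^{>}$ is $o(\sqrt{p\log n})$ in $L^\gamma$ while $\mathbf{T}^{\le}$ still concentrates at the right scale, and one must check that the Talagrand-type tail for $\mathbf{T}^{\le}$ and the moment bound for $\mathbf{T}^{>}$ together give uniform integrability of $(\|\mathbf{T}_{p\times n}\|/\sqrt{p\log n})^\gamma$. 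The hypothesis $\limsup_{n\to\infty}n/p<\infty$ is exactly what makes this possible: it keeps $\|\Sin_{1,n/p}\|_{2\to2}$ bounded away from $0$ and $\infty$ and forces $\sqrt{p\log n}\asymp\sqrt{n\log n}$, which dominates the heavy-tail contributions; once uniform integrability is in hand, convergence in probability upgrades to convergence in $L^\gamma$.
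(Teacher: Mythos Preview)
Your high-level architecture --- truncate, localize in frequency, match upper and lower bounds via sub-Gaussian tails and a Paley--Zygmund argument, then upgrade to $L^\gamma$ by uniform integrability --- is correct, and several observations (the $\sqrt{\log n}$ rather than $\sqrt{2\log n}$ coming from the free complex phase; the appearance of $n/p$ from the mismatched coefficient lengths; the role of \eqref{stronger asymptotic regime} in the $L^\gamma$ step) agree with the paper. Your lower-bound mechanism, plugging in near-optimal tapered plane waves at each window and running a second-moment argument, is essentially Proposition~\ref{prop: equivalent of prop 15}. But your route to the upper bound is not the paper's, nor SV13's. Both start from a \emph{circulant embedding}: with $N=n+p$ one has $\|\mathbf{T}_{p\times n}\|=\sqrt{N}\,\|\mathbf{P}_{p,N}\mathbf{D}(\mathbf{a})\mathbf{P}_{n,N}\|$, and all subsequent analysis lives on the spectral side, through the diagonal entries $d_j=(\cF_N\mathbf{a})_j$. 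Your phrase ``as in SV13, the supremum localizes in frequency'' misreads SV13: what localizes there is the set of indices $j$ where $|d_j|$ is large (the active set $S$ of \eqref{active set}), not the test vectors $U,V$. These are dual pictures, but you give no argument for the test-vector localization directly.

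The substantive gap is the upper bound. A ``union bound over a sufficiently fine net of windows, phases and shapes'' cannot deliver the sharp constant: the shape space (all admissible tapers for $U$ and $V$) is high-dimensional, so either its entropy inflates the bound, or you must first prove the localization --- which is exactly the missing step. Generic chaining would recover the order $\sqrt{p\log n}$ but not the prefactor $\|\Sin_{1,n/p}\|_{2\to2}$. The paper circumvents this entirely. After the block-diagonal reduction it proves a single probabilistic fact, Lemma~\ref{cor: corollary 13}: on every admissible block, $\sum_j|d_j|^2\le(1+\eta)\log n$ with high probability. The remaining supremum over $\bu,\bv$ is then \emph{deterministic}: via the polynomial reformulation (Lemma~\ref{lem: link to Ipn}) it equals $I_{p,n}/\sqrt p$, where $I_{p,n}$ is a maximum over pairs of finitely supported positive definite sequences, and Lemma~\ref{lem:I is close to K} shows $0\le I_{p,n}^2-K_{p,n}^2\le 1/3$, pinning the constant to $K_{1,n/p}=K_{p,n}/\sqrt p$. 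This separation --- probability only for the energy $\sum|d_j|^2$, pure harmonic analysis for the constant --- is the idea your sketch does not supply.
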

 Note that the quantity $\|\Sin_{1,n/p}\|_{2 \to 2}$ depends on the possibly fluctuating ratio $n/p$, so Theorem \ref{T theorem} expresses the closeness of the scaled spectral norm to this bilinear norm rather than convergence to a fixed constant.

 To provide immediate intuition, Table~\ref{tab: table_intro} reports numerical estimates of $K_{1,n/p} = \|\Sin_{1,n/p}\|_{2 \to 2}$ for selected ratios $p/n \in (0,1]$, obtained from a variational characterization discussed in detail in Section~\ref{sec: variational characterization}. The computational algorithm and its accuracy are described in Section~\ref{sec: K values}. The table shows that for “wide” matrices (small $p/n$), the limiting scaled norm is close to 1, and it decreases moderately as $p/n$ increases.
 
 \begin{table}[!b]
\centering
\begin{tabular}{c c c c c c c c c c c c c c}
\hline
$p/n$ & $K_{1,n/p}$ & & $p/n$ & $K_{1,n/p}$ & & $p/n$ & $K_{1,n/p}$ & & $p/n$ & $K_{1,n/p}$ & & $p/n$ & $K_{1,n/p}$ \\
\hline
0.01 & 1.000 & & 0.21 & 0.985 & & 0.41 & 0.952 & & 0.61 & 0.912 & & 0.81 & 0.869 \\
0.02 & 1.000 & & 0.22 & 0.984 & & 0.42 & 0.951 & & 0.62 & 0.910 & & 0.82 & 0.867 \\
0.03 & 1.000 & & 0.23 & 0.982 & & 0.43 & 0.949 & & 0.63 & 0.908 & & 0.83 & 0.865 \\
0.04 & 0.999 & & 0.24 & 0.981 & & 0.44 & 0.947 & & 0.64 & 0.905 & & 0.84 & 0.863 \\
0.05 & 0.999 & & 0.25 & 0.980 & & 0.45 & 0.945 & & 0.65 & 0.903 & & 0.85 & 0.860 \\
0.06 & 0.999 & & 0.26 & 0.978 & & 0.46 & 0.943 & & 0.66 & 0.901 & & 0.86 & 0.858 \\
0.07 & 0.998 & & 0.27 & 0.977 & & 0.47 & 0.941 & & 0.67 & 0.899 & & 0.87 & 0.856 \\
0.08 & 0.998 & & 0.28 & 0.975 & & 0.48 & 0.939 & & 0.68 & 0.897 & & 0.88 & 0.854 \\
0.09 & 0.997 & & 0.29 & 0.973 & & 0.49 & 0.937 & & 0.69 & 0.895 & & 0.89 & 0.852 \\
0.10 & 0.996 & & 0.30 & 0.972 & & 0.50 & 0.935 & & 0.70 & 0.893 & & 0.90 & 0.850 \\
0.11 & 0.996 & & 0.31 & 0.970 & & 0.51 & 0.933 & & 0.71 & 0.890 & & 0.91 & 0.848 \\
0.12 & 0.995 & & 0.32 & 0.969 & & 0.52 & 0.931 & & 0.72 & 0.888 & & 0.92 & 0.846 \\
0.13 & 0.994 & & 0.33 & 0.967 & & 0.53 & 0.929 & & 0.73 & 0.886 & & 0.93 & 0.843 \\
0.14 & 0.993 & & 0.34 & 0.965 & & 0.54 & 0.927 & & 0.74 & 0.884 & & 0.94 & 0.841 \\
0.15 & 0.992 & & 0.35 & 0.963 & & 0.55 & 0.925 & & 0.75 & 0.882 & & 0.95 & 0.839 \\
0.16 & 0.991 & & 0.36 & 0.962 & & 0.56 & 0.922 & & 0.76 & 0.880 & & 0.96 & 0.837 \\
0.17 & 0.990 & & 0.37 & 0.960 & & 0.57 & 0.920 & & 0.77 & 0.878 & & 0.97 & 0.835 \\
0.18 & 0.989 & & 0.38 & 0.958 & & 0.58 & 0.918 & & 0.78 & 0.875 & & 0.98 & 0.833 \\
0.19 & 0.988 & & 0.39 & 0.956 & & 0.59 & 0.916 & & 0.79 & 0.873 & & 0.99 & 0.831 \\
0.20 & 0.986 & & 0.40 & 0.954 & & 0.60 & 0.914 & & 0.80 & 0.871 & & 1.00 & 0.829 \\
\hline
\end{tabular}
\caption{Estimated values of $K_{1,n/p}=\|\Sin_{1,n/p}\|_{2\rightarrow 2}$ for $p/n$ on the grid $0.01:0.01:1$.}
\label{tab: table_intro}
\end{table}

The appearance of the bilinear operator norm in the convergence statement \eqref{main convergence} can be viewed as a natural generalization of the constant
\[
K_1 := \|\Sin\|_{2 \to 4}^2,
\]
which appears as the limit in SV13’s result for the square symmetric case, where \(\Sin := \Sin_1\) is the standard sine operator. Indeed, when \(p=n\), the Cauchy–Schwarz inequality gives
\[
\|\Sin_{1,1}(f,g)\|_2 = \|\Sin(f) \cdot \Sin(g)\|_2 \leq \|\Sin(f)\|_4 \|\Sin(g)\|_4,
\]
which immediately yields
\[
\|\Sin_{1,1}\|_{2 \to 2} = \|\Sin\|_{2 \to 4}^2 = K_1.
\]

Most of the previous research on the asymptotics of the norm of random Toeplitz matrices, including SV13, \cite{meckes07}, \cite{bose07}, and \cite{adamczak10}, focuses on the square case ($p=n$). The work \cite{adamczak13} addresses the rectangular case and provides a finite-sample bound on the expected spectral norm of $\mathbf{T}_{p \times n}$ (see their eq.~(1)):
\begin{equation}\label{eq: adamchak}
\mathbb{E}\|\mathbf{T}_{p\times n}\|\leq C_\gamma A^{1/\gamma}(\sqrt{n}+\sqrt{p\log p}).
\end{equation}
Here $C_\gamma$ is a positive constant depending on $\gamma>2$, and $A$ is the upper bound on $\mathbb{E}|a_i|^\gamma$ introduced in \eqref{Lyapunov}. Bound \eqref{eq: adamchak} implies, via the Markov inequality, the divergence rate of $\|\mathbf{T}_{p\times n}\|$. Our divergence rate, $\sqrt{p\log n}$, is consistent with this because, under the asymptotic regime \eqref{new asymptotic regime}, 
\[
\sqrt{p\log n}=O(\sqrt{p\log p}).
\]
Theorem \ref{T theorem} complements the rate bound by providing the precise asymptotic limit of the scaled $\|\mathbf{T}_{p\times n}\|$.

Another result of \cite{adamczak13} can be stated as follows (see their eq.~(2)). For any $\delta,\epsilon\in (0,1)$, there exists $C>0$ such that if $n>Cp\log p$, then with probability at least $1-\delta$, for all $x\in\mathbb{R}^n$,
\begin{equation}\label{eq: adamchak 1}
(1-\epsilon)\|x\|\leq\left\|\frac{1}{\sqrt{n}}\mathbf{T}_{p\times n}x\right\|\leq (1+\epsilon)\|x\|.
\end{equation}
In other words, $\mathbf{T}_{p\times n}/\sqrt{n}$ acts as an almost isometry with high probability. In particular, when $n>Cp\log p$, the logarithmic factor in the divergence rate disappears. 

Theorem \ref{T theorem} complements this observation by showing that the logarithmic factor is still present as long as $n\leq Cp\log^\beta n$ for some $\beta\in(0,1)$. Indeed, the asymptotic regime \eqref{new asymptotic regime} is then satisfied with $\alpha=(1+\beta)/2$, say. Since $\log^\beta n\geq \log^\beta p$, this indicates that the transition from the divergence  rate with a logarithmic factor to the one without such a factor occurs within the window
\[
\log^\beta p\ll\frac{n}{p}\lesssim \log p,
\]
where $\beta\in(0,1)$ can be taken arbitrarily close to $1$.

Having established the Toeplitz case, it is natural to ask whether analogous behavior holds for other patterned random matrices (see \cite{bose18}). In Section~\ref{sec: proof of the other theorems}, the proof of Theorem \ref{T theorem} is adapted to establish an analogue for rectangular circulant (a.k.a.~partial circulant) matrices \( \mathbf{C}_{p \times n} \), defined by
\begin{equation}\label{definition of C}
\left( \mathbf{C}_{p \times n} \right)_{ij} = a_{(j - i)\bmod n}, \quad \text{for } i = 0, \dots, p - 1;\ j = 0, \dots, n - 1.
\end{equation}

\begin{theorem}
    \label{C theorem}
    Let $\mathbf{C}_{p\times n}$ be a sequence of non-symmetric rectangular random circulant matrices defined in \eqref{definition of C} and satisfying assumptions \eqref{Lyapunov} and \eqref{new asymptotic regime}. Then, as $n \to \infty$, the spectral norm satisfies
\begin{equation}
    \label{main convergence C}
    \frac{\|\mathbf{C}_{p\times n}\|}{\sqrt{p\log n}} \overset{P}{\longrightarrow} 1.
\end{equation}
If, in addition, \eqref{stronger asymptotic regime} holds, then the convergence in probability in \eqref{main convergence C} strengthens to convergence in $L^\gamma$.
\end{theorem}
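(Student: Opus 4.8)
The circulant matrix, unlike the Toeplitz one, is exactly diagonalized by the discrete Fourier transform, so the plan is to turn the statement into a question about the periodogram of the sequence $\{a_i\}$. Write $F$ for the $n\times n$ unitary DFT, $D=\mathrm{diag}(\hat a_0,\dots,\hat a_{n-1})$ with $\hat a_k=\sum_{j=0}^{n-1}a_j e^{2\pi\rmi jk/n}$, and $P_p$ for the coordinate projection onto $\{0,\dots,p-1\}$. Then $\mathbf{C}_{p\times n}=P_pFDF^{*}$, so
$\|\mathbf{C}_{p\times n}\|^{2}=\|P_pFD\|^{2}=\tfrac1n\lambda_{\max}\!\big(\sum_{k=0}^{n-1}|\hat a_k|^{2}\,\mathbf{v}_k\mathbf{v}_k^{*}\big)$, where $\mathbf{v}_k=(1,e^{2\pi\rmi k/n},\dots,e^{2\pi\rmi k(p-1)/n})^{\top}\in\mathbb{C}^{p}$ with $\|\mathbf{v}_k\|^{2}=p$; equivalently $\|\mathbf{C}_{p\times n}\|^{2}=\tfrac1n\sup_{\|u\|=1}\sum_k|\hat a_k|^{2}w_k(u)$, a weighted sum of periodogram ordinates with weights $w_k(u)=|u^{*}\mathbf{v}_k|^{2}$ obeying $\sum_k w_k(u)=n$ and $w_k(u)\le p$. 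The goal is to show this quantity is $(1+o_{\mathbb{P}}(1))\,p\log n$, and, under \eqref{stronger asymptotic regime}, that $\|\mathbf{C}_{p\times n}\|^{\gamma}/(p\log n)^{\gamma/2}$ converges in mean.

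For the lower bound I would test against a single Fourier mode $x\in\mathbb{C}^{n}$, $x_j=n^{-1/2}e^{2\pi\rmi k^{*}j/n}$, with $k^{*}\in\arg\max_k|\hat a_k|$. A one-line computation gives $(\mathbf{C}_{p\times n}x)_i=n^{-1/2}e^{2\pi\rmi k^{*}i/n}\hat a_{k^{*}}$ for $0\le i<p$, whence $\|\mathbf{C}_{p\times n}\|^{2}\ge\tfrac pn\max_k|\hat a_k|^{2}$. It then suffices to invoke the precise first-order asymptotics of the maximum of the periodogram of an i.i.d.\ sequence, $\max_k|\hat a_k|^{2}=(1+o_{\mathbb{P}}(1))\,n\log n$; under only $\gamma>2$ moments this needs a truncation of the $a_i$ and a Lindeberg-type control of the extreme order statistics of $\{|\hat a_k|^{2}\}$, but it is classical in spirit, and the lower deviation $\max_k|\hat a_k|^{2}\ge(1-\eps)n\log n$ is the easy half. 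This already yields $\|\mathbf{C}_{p\times n}\|^{2}\ge(1+o_{\mathbb{P}}(1))p\log n$.

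The upper bound is where the limiting constant is pinned down to $1$ (rather than to a sine-kernel norm), and is the step I expect to be hardest. I would decompose the frequencies by the size of $|\hat a_k|^{2}$ via a layer cake: with $G_s=\{k:|\hat a_k|^{2}>s\}$ and $M_G=\sum_{k\in G}\mathbf{v}_k\mathbf{v}_k^{*}$ the Vandermonde Gram matrix of the nodes $\{e^{2\pi\rmi k/n}:k\in G\}$, one has $\|\mathbf{C}_{p\times n}\|^{2}\le\tfrac1n\int_0^{\infty}\|M_{G_s}\|\,ds$. On the range where $|G_s|>p$ — which, by concentration of the periodogram order statistics, lasts only up to $s=O_{\mathbb{P}}(n\log(n/p))$ — the trivial bound $M_{G_s}\preceq M_{\{0,\dots,n-1\}}=nI_p$ makes the corresponding part of the integral at most $n\log(n/p)$, which is $o(p\log n)$ by \eqref{new asymptotic regime}; on the complementary range, \emph{provided} one has the sharp estimate $\|M_{G_s}\|\le(1+o(1))p$, the remaining part is at most $(1+o(1))\tfrac pn\max_k|\hat a_k|^{2}\le(1+o_{\mathbb{P}}(1))p\log n$, now using the matching \emph{upper} deviation $\max_k|\hat a_k|^{2}\le(1+o_{\mathbb{P}}(1))n\log n$. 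So everything reduces to showing $\|M_{G_s}\|\le(1+o(1))p$ uniformly over the $s$ with $|G_s|\le p$. The deterministic large-sieve inequality gives $\|M_G\|\le p+Cn/\mathrm{gap}_{\min}(G)$, which is enough only when the minimal gap of $G$ exceeds $\sim n/p$; one must therefore first establish, a priori, that the high-periodogram frequency sets $G_s$ are spread out like generic sets and never cluster on scale $n/p$, and then, in the regime where $|G_s|$ is comparable to $p$ (where gaps on scale $n/p$ are unavoidable), supplement the sieve by a matrix-concentration argument that exploits the randomness of $G_s$. This last point is the genuine obstacle.

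Finally, under \eqref{stronger asymptotic regime} (so $p\asymp n$ and $\log p\asymp\log n$), the passage to $L^{\gamma}$ convergence follows from the in-probability statement together with uniform integrability of $(\|\mathbf{C}_{p\times n}\|/\sqrt{p\log n})^{\gamma}$; for the latter I would adapt the chaining argument behind the circulant counterpart of \eqref{eq: adamchak} to a $q$-th moment bound $\mathbb{E}\|\mathbf{C}_{p\times n}\|^{q}\lesssim_{q} A^{q/\gamma}(p\log p)^{q/2}$ for $q\le\gamma$, while convergence of the mean itself is controlled through the $L^{\gamma/2}$ convergence $\max_k|\hat a_k|^{2}/(n\log n)\to1$ — a routine consequence of two-sided tail bounds on the periodogram maximum — combined with the estimate $\big|\|\mathbf{C}_{p\times n}\|^{2}-\tfrac pn\max_k|\hat a_k|^{2}\big|=o_{\mathbb{P}}(p\log n)$ supplied by the upper-bound analysis. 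Throughout, the only substantive difficulty beyond classical periodogram asymptotics and a translation of the Section~\ref{sec: proof of theorem 1} machinery is the uniform operator-norm control of the random Vandermonde Gram matrices $M_{G_s}$.
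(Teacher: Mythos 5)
Your lower bound is sound (testing against the Fourier mode at the periodogram argmax), and the reduction to $\sup_u\sum_k|\hat a_k|^2 w_k(u)$ is the same algebraic starting point as the paper's spectral representation~\eqref{PDP matrix} with $N=n$. The gap is in the upper bound, and it is more severe than you anticipate: the layer-cake bound $\|\mathbf{C}_{p\times n}\|^2\le n^{-1}\int_0^\infty\|M_{G_s}\|\,ds$ cannot produce the sharp constant $1$. The estimate $\|M_{G_s}\|\le(1+o(1))p$ fails \emph{deterministically} once $G_s$ contains two adjacent frequencies, since for any $j$
\begin{equation*}
\|M_{G_s}\|\ \ge\ \lambda_{\max}\bigl(\mathbf v_j\mathbf v_j^*+\mathbf v_{j+1}\mathbf v_{j+1}^*\bigr)\ =\ p+\frac{\sin(\pi p/n)}{\sin(\pi/n)}\ =\ p\Bigl(1+(1+o(1))\,\frac{\sin(\pi p/n)}{\pi p/n}\Bigr),
\end{equation*}
a constant-factor excess whenever $p/n$ is bounded away from $1$. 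This has nothing to do with $|G_s|$ reaching scale $p$: adjacent pairs appear as soon as $|G_s|\gtrsim\sqrt n$. Since $\max_j\min(|\hat a_j|^2,|\hat a_{j+1}|^2)=(1+o_P(1))\tfrac12 n\log n$, the set $G_s$ contains an adjacent pair for all $s$ up to $(1-o(1))\tfrac12 n\log n$ --- roughly half the integration range --- so the layer-cake expression is itself at least $(1+o_P(1))\bigl(1+\tfrac{\sin(\pi c)}{2\pi c}\bigr)p\log n$ with $c=p/n$. No matrix-concentration supplement can remove this: the excess is a geometric fact about the Gram matrix of two adjacent Fourier nodes truncated to $p<n$ coordinates, not a fluctuation of $\|M_{G_s}\|$ around $p$.

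The paper avoids the problem by not decomposing over periodogram levels at all; it localizes in frequency. Proposition~\ref{equivalent prop 8} shows the active indices $S=\{j:|d_j|\ge\epsilon_n\sqrt{\log n}\}$ cluster with high probability into widely separated admissible blocks of at most $M_n=O(\log^\alpha n)$ visible points, Lemma~\ref{equivalent of lemma 9} reduces the norm to the maximum over blocks of $\|\mb P_{p,n}[J]\mb D^\epsilon[J]\|$, Lemma~\ref{cor: corollary 13} gives each block the budget $\sum_{j\in J}|\mb D^\epsilon_{jj}|^2\le(1+\eta)\log n$, and the single Cauchy--Schwarz step $\|(\mb P_{p,n}\mb v)\odot\mb w\|\le\|\mb P_{p,n}\mb v\|_\infty\|\mb w\|\le\sqrt{p/n}$ (all rows of the projection $\mb P_{p,n}$ have $\ell^2$-norm exactly $\sqrt{p/n}$) delivers the constant. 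In this decomposition adjacent high-periodogram frequencies necessarily fall into the \emph{same} block and share its $(1+\eta)\log n$ budget, so the clustering that defeats the layer cake is harmless. If you want to pursue your route you would essentially have to re-prove this block structure and then abandon the integral over $s$ in favor of a block-wise maximum, at which point you have reconstructed the paper's argument.
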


There has been substantial research on the extreme singular values of random \emph{square} circulant matrices (see, e.g., \cite{barrera22} and the references therein). These results, however, are specific to the case $p=n$ and do not directly extend to rectangular dimensions. To the best of our knowledge, Theorem~\ref{C theorem} provides the first asymptotic characterization for the rectangular setting $p<n$. Given the explicit and remarkably simple form of the limit in~\eqref{main convergence C}, it is instructive to comment on the source of the discrepancy with the more involved result of Theorem~\ref{T theorem}.

The proofs of both Theorems~\ref{T theorem} and~\ref{C theorem}, as well as that of SV13, share a common starting point: embedding the \( p \times n \) matrix of interest into a larger \( N \times N \) circulant matrix \( \mb{A} \), such that the original matrix occupies its upper-left corner. Letting \( \mb{I}_{q,N} \) denote the \( N \times N \) diagonal matrix with the first \( q \) diagonal entries equal to one and the rest zero, the spectral norm of the original matrix coincides with that of \( \mb{I}_{p,N}\mb{A}\mb{I}_{n,N} \).

Recall that any circulant matrix \( \mb{A} \) admits the spectral decomposition (see, e.g.,~\cite{golub96}, p.~202):
\begin{equation}\label{spectral circulant}
\mb{A} = \sqrt{N} \, \cF_N \operatorname{diag}(\cF_N \mb{a}) \overline{\cF_N}^\top =: \sqrt{N} \, \cF_N \mb{D}(\mb{a}) \overline{\cF_N}^\top,
\end{equation}
where \( \mb{a}^\top \) is the first row of \( \mb{A} \), and \( \cF_N \) is the unitary Discrete Fourier Transform matrix with entries
\[
(\cF_N)_{st} = \frac{1}{\sqrt{N}} \exp\left\{ \frac{2\pi\rmi}{N}st \right\}, \qquad s,t = 0, \dots, N - 1.
\]
Using this decomposition, the norm of \( \mb{I}_{p,N} \mb{A} \mb{I}_{n,N} \) reduces to the norm of the matrix
\begin{equation}
\label{PDP matrix}
\sqrt{N}\mb{P}_{p,N} \mb{D}(\mb{a}) \mb{P}_{n,N},
\end{equation}
where \( \mb{P}_{q,N} := \cF_N \mb{I}_{q,N} \overline{\cF_N}^\top \) is the orthogonal projection onto the subspace spanned by the first \( q \) columns of \( \cF_N \).

The key difference between the Toeplitz and circulant settings arises in how these matrices are embedded. Namely, in the Toeplitz case, it is not possible to embed the matrix \( \mathbf{T}_{p \times n} \) into an \( N \times N \) circulant unless \( N > \max\{p, n\} \), which necessarily makes both \( \mathbf{P}_{p,N} \) and \( \mathbf{P}_{n,N} \) non-trivial projections. %For Toeplitz matrices, we require \( N > \max\{p,n\} \), ensuring that both projections \( \mb{P}_{p,N} \) and \( \mb{P}_{n,N} \) are nontrivial. 
In contrast, a rectangular circulant \( \mb{C}_{p\times n} \) naturally embeds into an \( n \times n \) circulant, so we may take \( N = n \), in which case \( \mb{P}_{n,N} \) becomes the identity matrix. This structural simplification ultimately leads to the much cleaner limiting behavior described in Theorem~\ref{C theorem}.

In Section~\ref{sec: proof of the other theorems}, we also show that the arguments used to prove Theorems~\ref{T theorem} and~\ref{C theorem} extend naturally to the setting of \emph{symmetric} rectangular matrices. Here, symmetry is understood to mean that matrix entries with indices $(i,j)$ and $(j,i)$ coincide whenever both are defined. The elements of the symmetric Toeplitz and circulant matrices we consider are given by
\begin{align}
    \label{elements of Ts}
    \left(\mathbf{T}_{p \times n}^{(s)}\right)_{ij} &= a_{|i - j|}, \\
    \label{elements of Cs}
    \left(\mathbf{C}_{p \times n}^{(s)}\right)_{ij} &= a_{n/2 - |n/2 - |j - i||},
\end{align}
for \( i = 0, \dots, p - 1 \) and \( j = 0, \dots, n - 1 \). Note that the definition of \( \mathbf{C}_{p \times n}^{(s)} \) is valid for both odd and even values of \( n \).

\begin{theorem} \label{S theorem} Let $\mb{T}_{p\times n}^{(s)}$ and $\mb{C}_{p\times n}^{(s)}$ be sequences of rectangular symmetric random Toeplitz and circulant matrices defined in \eqref{elements of Ts} and \eqref{elements of Cs} and satisfying assumptions \eqref{Lyapunov} and  \eqref{new asymptotic regime}. Then, as $n$ diverges to infinity, the spectral norms $\|\mathbf{T}_{p\times n}^{(s)}\|$ and $\|\mathbf{C}_{p\times n}^{(s)}\|$ satisfy
\begin{equation}
    \label{main convergence S}
    \frac{\|\mathbf{T}_{p\times n}^{(s)}\|}{\sqrt{2p\log n}}-\|\Sin_{1,n/p}\|_{2\rightarrow 2}\overset{P}\rightarrow 0,\qquad \frac{\|\mathbf{C}_{p\times n}^{(s)}\|}{\sqrt{2p\log n}}\overset{P}\rightarrow 1.
\end{equation}
If in addition \eqref{stronger asymptotic regime} holds, then the latter two convergences in probability strengthen to convergence in $L^\gamma$.
\end{theorem}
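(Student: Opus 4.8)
The plan is to retrace the proofs of Theorems~\ref{T theorem} and~\ref{C theorem}, replacing the circulant embedding by a \emph{symmetric} circulant embedding so as to respect the symmetry of the entries. The matrix $\mb{C}_{p\times n}^{(s)}$ is already the upper-left $p\times n$ block of the $n\times n$ symmetric circulant whose first row is the palindrome $(a_0,a_1,\dots,a_{\lfloor n/2\rfloor},\dots,a_1)$, so here we take $N=n$ and $\mb{P}_{n,N}=\mb{I}$. For $\mb{T}_{p\times n}^{(s)}$, we embed it as the upper-left $p\times n$ block of an $N\times N$ symmetric circulant $\mb{A}^{(s)}$: since the required block entries are $a_{|i-j|}$, the first row $\mb{a}=(a_0,\dots,a_{N-1})$ must satisfy $a_k=a_{N-k}$, which forces $N\ge 2n-1$ (larger than in the proof of Theorem~\ref{T theorem}), and the unconstrained ``filler'' entries are chosen to complete the palindrome. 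In both cases the spectral decomposition \eqref{spectral circulant} reduces the problem, exactly as in \eqref{PDP matrix}, to the norm of $\sqrt N\,\mb{P}_{p,N}\mb{D}(\mb{a})\mb{P}_{n,N}$, the only difference being that $\mb{D}(\mb{a})=\operatorname{diag}(\cF_N\mb{a})$ now has \emph{real} diagonal entries $d_m$ — the eigenvalues of the symmetric circulant — because $\mb{a}$ is palindromic, namely $d_m=\tfrac{1}{\sqrt N}\bigl(a_0+2\sum_{j\ge 1}a_j\cos(2\pi jm/N)+(\text{edge term})\bigr)$.

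Consequently, the extreme-value and random-matrix analysis behind the proofs of Theorems~\ref{T theorem}–\ref{C theorem} goes through with a single structural change: the diagonal ordinates are real Gaussians (after the Lyapunov triangular-array central limit theorem, which applies verbatim under \eqref{Lyapunov} since $|2a_j\cos\theta|\le 2|a_j|$ preserves the moment bound) rather than moduli of complex ones. With $N$ normalized, as in the proof of Theorem~\ref{T theorem}, so that $\E|d_m|^2\to 1$ for generic $m$ — the palindromic structure doubles the total energy of $\mb{a}$ but also forces $N\ge 2n-1$, and the two effects offset — the maximum of $\asymp N$ such ordinates is $\sqrt{2\log N}\,(1+o(1))$ in the real case in place of $\sqrt{\log N}\,(1+o(1))$ in the complex case. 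The band projections $\mb{P}_{p,N}$ and $\mb{P}_{n,N}$ are untouched, so in the spectral domain they still act by convolution with Dirichlet kernels that, after rescaling frequencies, converge to a pair of sine-kernel operators whose scales have ratio $n/p$; hence $\Sin_{1,n/p}$ and its norm enter exactly as in Theorem~\ref{T theorem}, while in the circulant case $\mb{P}_{n,N}=\mb{I}$ removes the second kernel and leaves the constant $1$. The net effect of the real-versus-complex swap is therefore the single overall factor $\sqrt 2$, which yields the normalization $\sqrt{2p\log n}$ in \eqref{main convergence S}. As a consistency check, when $p=n$ the block of $\mb{A}^{(s)}$ is an $n\times n$ symmetric Toeplitz matrix and $\|\Sin_{1,1}\|_{2\to 2}=K_1$, so the first limit in \eqref{main convergence S} recovers the result of SV13.

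For the $L^\gamma$ strengthening under \eqref{stronger asymptotic regime}, the truncation argument and the moment estimates for the real cosine sums $d_m$ are controlled by \eqref{Lyapunov} in the same way as the moments of the complex sums in the proofs of Theorems~\ref{T theorem}–\ref{C theorem} (up to the harmless constant $2$ above), while a crude bound on $\E\|\mb{T}_{p\times n}^{(s)}\|^\gamma$ of the same order as \eqref{eq: adamchak} — obtained, e.g., by splitting $\mb{T}_{p\times n}^{(s)}$ into its two triangular Toeplitz parts and invoking the Adamczak-type estimate for each — supplies the uniform integrability of $(\|\mb{T}_{p\times n}^{(s)}\|/\sqrt{p\log n})^\gamma$ needed to upgrade convergence in probability to convergence in $L^\gamma$; the circulant case is identical.

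The step I expect to be the main obstacle is verifying that the real-valued version of the delicate sine-kernel coupling in the proof of Theorem~\ref{T theorem} produces \emph{exactly} the factor $\sqrt 2$ and nothing more, uniformly over the regime \eqref{new asymptotic regime}. Because the symmetric embedding is forced onto a larger circulant ($N\ge 2n-1$), the band projections sit at different absolute frequency scales than in the non-symmetric case; one must check that, after the $\sqrt N$ rescaling, the limiting bilinear operator is still $\Sin_{1,n/p}$ with the \emph{same} norm, and that no residual contribution survives from the $O(N/n)$ edge frequencies $m\approx 0,\,N/2$ where $\operatorname{Var}(d_m)$ is inflated — a phenomenon absent in the complex setting, in which $\E|d_m|^2$ is exactly constant in $m$. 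This is essentially the only place where the real-valued argument must be re-run rather than quoted from the proofs of Theorems~\ref{T theorem} and~\ref{C theorem}.
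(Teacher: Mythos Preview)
Your approach is essentially the paper's: embed in a symmetric circulant of size $N=2n$ (the paper's specific choice), observe that the $d_j$ are real with unit variance after a harmless $\sqrt{2}$-rescaling of $a_0,a_n$ (justified as in Lemma~5 of SV13), and rerun the proofs of Theorems~\ref{T theorem}--\ref{C theorem} with $\log n$ replaced by $2\log n$ in Lemma~\ref{cor: corollary 13} and in the definition of the events $A_i$. Your worry at the end is misplaced: the polynomial/positive-definite-sequence derivation of the upper bound yields $I_{p,n}/\sqrt{p}\approx K_{1,n/p}$ for any $N\ge p+n$ (the grid identity \eqref{grid identity} only needs $\deg(\mathcal P_{p-1}\mathcal P_{n-1})<N$), and the edge frequencies $m\approx 0,\,N/2$ are already excluded by the admissibility condition~(a) on blocks, so the larger $N$ causes no new difficulty.
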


The only difference with the corresponding results for non-symmetric Toeplitz and circulant matrices is that the norms are scaled by $\sqrt{2p\log n}$ as opposed to $\sqrt{p\log n}$. The example of a square $n\times n$ circulant matrix with Gaussian entries provides useful intuition for understanding the source of this difference. The spectral decomposition \eqref{spectral circulant} implies that all but a few eigenvalues of such matrices, scaled by $1/\sqrt{n}$, are real standard Gaussian random variables in the symmetric case, and complex standard Gaussian random variables in the non-symmetric case. The key distinction then comes from extreme value behavior: the largest absolute value among \( n \) real standard Gaussians grows like \( \sqrt{2\log n} \), while in the complex case, it grows more slowly, like \( \sqrt{\log n} \). For a more detailed discussion, see Section~\ref{sec: proof of the other theorems}.

It is worth noting that the spectral norm of a matrix is invariant under arbitrary permutations of its rows or columns. As a consequence, the conclusions of the above theorems remain unchanged if the order of the columns of the corresponding matrices is reversed; that is, if the \( j \)-th column, \( j = 0, \dots, n - 1 \), is mapped to the \( r(j) := n - 1 - j \)-th position.

In particular, Theorem~\ref{T theorem} continues to hold if the rectangular Toeplitz matrix \( \mb{T}_{p \times n} \) is replaced by the rectangular Hankel matrix \( \mb{H}_{p \times n} \) with entries \( \mb{H}_{ij} = a_{r(j) - i} \). Similarly, Theorem~\ref{C theorem} remains valid if the rectangular circulant matrix \( \mb{C}_{p \times n} \) is replaced by the rectangular reverse circulant matrix \( \mb{R}_{p \times n} \) with entries \( \mb{R}_{ij} = a_{(r(j) - i) \bmod n} \). Finally, Theorem~\ref{S theorem} holds with the symmetric Toeplitz and circulant matrices replaced by, respectively, the symmetric Hankel matrix with entries \( (\mb{H}^{(s)})_{ij} = a_{|r(j) - i|} \) and the symmetric reverse circulant matrix with entries \( (\mb{R}^{(s)})_{ij} = a_{n/2 - |n/2 - |r(j) - i||} \).

Before turning to the details of our setting and proofs, we caution that the convergence in the above theorems is rather slow. Monte Carlo experiments in Section~\ref{sec: MC} show that for $p=500$ and corresponding values of $n$, the theoretical limits often underestimate the observed spectral norms. Moreover, the empirical dispersion remains substantial.  

To gain theoretical insight into these discrepancies, Section~\ref{sec: MC} analyzes the second-order behavior of $\|\mathbf{C}_{p\times n}\|$ in the Gaussian case $a_i \sim \mathcal{N}(0,1)$. The analysis reveals that the asymptotic distribution of the scaled and centered norm stochastically dominates a shifted Gumbel distribution, with the non-negative shift depending on the aspect ratio $p/n$. Because this theoretical lower bound aligns closely with the Monte Carlo distributions across a range of aspect ratios, we are led to conjecture that it may in fact describe the true limiting law. Establishing this rigorously, however, remains an open problem. 

The remainder of the paper is organized as follows. Section~\ref{sec: variational characterization} develops a variational characterization of the bilinear operator norm $\|\Sin_{u,w}\|_{2\rightarrow 2}$, which is central to our analysis. Sections~\ref{sec: proof of theorem 1} and~\ref{sec: proof of the other theorems} present the proofs of Theorems~\ref{T theorem}--\ref{S theorem}. Section~\ref{sec: K values} introduces the numerical algorithm used to compute the values of $K_{1,n/p}$ reported in Table~\ref{tab: table_intro} and assesses its accuracy. Section~\ref{sec: MC} provides the Monte Carlo analysis and derives a second-order asymptotic lower bound in the Gaussian circulant case. Additional technical details are collected in the Technical Appendix, while more routine derivations and arguments closely following SV13 are deferred to the Supplementary Material.

\section{Variational characterization of $\|\Sin_{u,w}\|_{2\rightarrow 2}$}\label{sec: variational characterization}

The proof of Theorem \ref{T theorem} relies on the following variational characterization of the norm of the bilinear sine operator. This result can be seen as a natural extension of SV13’s characterization (1) of $\|\Sin\|_{2 \to 4}$.
\begin{proposition}\label{prop:variational characterization}
For all $u,w>0$,
\begin{equation}\label{new positive definite problem}
\|\Sin_{u,w}\|_{2\rightarrow 2}=\sup_{\Lambda_u \in \mathcal{F}_u,\; \Lambda_w \in \mathcal{F}_w} \left(\int_{-\infty}^\infty \overline{\Lambda_u(x)}\Lambda_w(x)\,\mathrm{d}x\right)^{1/2}=:K_{u,w},
\end{equation}
where \( \mathcal{F}_c \) denotes the class of continuous, positive definite functions supported on \( [-c, c] \) and normalized so that \( \Lambda_c(0) = 1 \).\footnote{By Bochner's theorem (see p.~19 in \cite{rudin62}), this class coincides with the characteristic functions (i.e., Fourier transforms of probability measures) supported on \( [-c, c] \).}
\end{proposition}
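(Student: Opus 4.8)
The plan is to transfer the whole problem to the Fourier side, where $\Sin_u$ is just a band-limiting projection, and then to recognize the identity $\Lambda_h=\widehat{|h|^2}$ that relates a band-limited function $h$ to a positive definite function $\Lambda_h\in\mathcal{F}_u$; the substance of the proof is that this correspondence is onto, which is a Fej\'er--Riesz-type factorization statement. With the normalization $\widehat f(\xi)=\int f(x)e^{-2\pi\rmi x\xi}\,dx$, convolution with $\sin(\pi u x)/(\pi x)$ acts on the Fourier side as multiplication by $\mathbf 1_{[-u/2,u/2]}$, so $\Sin_u$ is the orthogonal projection $P_u$ of $L^2(\mathbb R)$ onto the Paley--Wiener space $\mathrm{PW}_u:=\{f\in L^2(\mathbb R):\operatorname{supp}\widehat f\subseteq[-u/2,u/2]\}$. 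Hence $\{\Sin_u f:\|f\|_2\le 1\}$ is exactly the closed unit ball of $\mathrm{PW}_u$, and, using homogeneity to restrict to the unit spheres,
\[
\|\Sin_{u,w}\|_{2\to2}^2=\sup\bigl\{\|hk\|_2^2:\ h\in\mathrm{PW}_u,\ k\in\mathrm{PW}_w,\ \|h\|_2=\|k\|_2=1\bigr\}.
\]
Since band-limited $L^2$ functions are bounded, $|h|^2$ and $|k|^2$ lie in $L^1\cap L^2$, so by Plancherel $\|hk\|_2^2=\langle|h|^2,|k|^2\rangle=\langle\widehat{|h|^2},\widehat{|k|^2}\rangle$.

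For the bound $\|\Sin_{u,w}\|_{2\to2}\le K_{u,w}$, fix $h\in\mathrm{PW}_u$ with $\|h\|_2=1$ and put $\Lambda_h:=\widehat{|h|^2}$. As the Fourier transform of the nonnegative $L^1$ function $|h|^2$, $\Lambda_h$ is continuous and positive definite (Bochner) with $\Lambda_h(0)=\int|h|^2=1$; and since $\widehat{|h|^2}$ is the autocorrelation of $\widehat h$, which is supported in $[-u/2,u/2]$, $\Lambda_h$ is supported in $[-u,u]$. Thus $\Lambda_h\in\mathcal{F}_u$, and likewise $\widehat{|k|^2}\in\mathcal{F}_w$ for $k\in\mathrm{PW}_w$. (The relevant pairing is real and nonnegative, since $\langle\widehat{|h|^2},\widehat{|k|^2}\rangle=\langle|h|^2,|k|^2\rangle\ge0$; more generally $\int\overline{\Lambda_u(x)}\Lambda_w(x)\,dx=\int(\check\Lambda_u)(x)(\check\Lambda_w)(x)\,dx\ge0$ for all $\Lambda_u\in\mathcal{F}_u$, $\Lambda_w\in\mathcal{F}_w$, because inverse Fourier transforms of positive definite functions are nonnegative, so the square root in \eqref{new positive definite problem} is well defined.) Combining with the displayed identity yields $\|\Sin_{u,w}\|_{2\to2}^2\le K_{u,w}^2$.

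For the reverse bound it is enough to show that \emph{every} $\Lambda_u\in\mathcal{F}_u$ has the form $\widehat{|h|^2}$ with $h\in\mathrm{PW}_u$ and $\|h\|_2=1$. Its inverse Fourier transform $\rho:=\check\Lambda_u$ is nonnegative (inverse Fourier transforms of positive definite functions are nonnegative, by Bochner's theorem), with $\int\rho=\Lambda_u(0)=1$, so $\rho$ is a probability density. Since $\widehat\rho=\Lambda_u$ is continuous with support in $[-u,u]$, it lies in $L^2$, whence $\rho\in L^2$ and, by the Paley--Wiener theorem, $\rho$ extends to an entire function of exponential type $2\pi u$. Thus $\rho$ is entire of exponential type $2\pi u$, nonnegative on $\mathbb R$, and in $L^1(\mathbb R)$; by the classical Fej\'er--Riesz--Krein factorization for such functions (see, e.g., Boas, \emph{Entire Functions}, or Akhiezer's monograph on approximation theory) there is an entire function $h$ of exponential type $\pi u$ with $|h(x)|^2=\rho(x)$ for $x\in\mathbb R$. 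Then $\int|h|^2=\int\rho=1$, so $h\in L^2$, hence $h\in\mathrm{PW}_u$ with $\|h\|_2=1$, and $\widehat{|h|^2}=\widehat\rho=\Lambda_u$. Applying this to both $\Lambda_u$ and $\Lambda_w$ and using the displayed identity gives $K_{u,w}^2\le\|\Sin_{u,w}\|_{2\to2}^2$, which with the previous paragraph proves \eqref{new positive definite problem}.

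The main obstacle is the surjectivity step of the last paragraph: one must track the Paley--Wiener bookkeeping carefully so that halving the exponential type corresponds precisely to passing from the band $[-u,u]$ to the band $[-u/2,u/2]$, and one must either cite or reprove the continuous Fej\'er--Riesz factorization, verifying in particular that being integrable and of finite exponential type places $\rho$ in the Cartwright-type class for which such a factorization exists (e.g.\ via approximation by the discrete Fej\'er--Riesz theorem for nonnegative trigonometric polynomials). A softer alternative that avoids an exact factorization --- and that parallels SV13's treatment of $\|\Sin\|_{2\to4}$, of which \eqref{new positive definite problem} is the bilinear analogue --- is to prove instead that the family $\{\widehat{|h|^2}:h\in\mathrm{PW}_u,\ \|h\|_2=1\}$ is dense in $\mathcal{F}_u$ in a topology (for instance $L^2$, or the weak topology) for which the supremum over $\mathcal{F}_u\times\mathcal{F}_w$ in \eqref{new positive definite problem} is continuous, and then to conclude by approximation.
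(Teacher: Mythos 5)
Your proof is correct and follows essentially the same route as the paper: the displayed Plancherel identity and the correspondence $h\in\mathrm{PW}_u\leftrightarrow\widehat{|h|^2}\in\mathcal{F}_u$ are exactly the paper's reparametrization via the Boas--Kac/Garcia theorem (Lemma~\ref{lem: Boas and Kac}), and the "main obstacle" you flag --- surjectivity of this map, i.e.\ the continuous Fej\'er--Riesz--Krein factorization --- is precisely what the paper disposes of by citing Theorem~2.1 of \cite{garcia69} (equivalently Lemma~5.1 of \cite{boas45}). The only cosmetic difference is that you phrase everything on the Paley--Wiener side whereas the paper works with compactly supported $\phi$'s on the Fourier-transform side; the underlying computation is the same.
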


\begin{proof}
    We begin by recalling some standard definitions for convolution and Fourier analysis (see also Appendix A.3 in SV13). For functions \( f, g : \mathbb{R} \to \mathbb{C} \), define their convolution by
\[
(f \star g)(x) := \int f(x - y)g(y)\, \mathrm{d}y,
\]
and define the involution \( f^\ast \) as \( f^\ast(x) := \overline{f(-x)} \). Let \( \widehat{f}(t) := \int e^{-2\pi i x t}f(x)\, \mathrm{d}x \) be the Fourier transform of \( f \in L^2(\mathbb{R}) \). Recall the identities
\[
\widehat{f \star g} = \widehat{f} \cdot \widehat{g} \quad \text{and} \quad \overline{\widehat{f}} = \widehat{f^\ast}.
\]

Let us denote the set of all complex-valued functions $\phi\in L^2(\mathbb{R})$ supported on $[-a/2,a/2]$ and normalized so that $\int|\phi(t)|^2\mathrm{d}t=1$ as $\mathcal{G}_{a/2}$. The following characterization of functions from $\mathcal{F}_a$ in terms of functions from $\phi_a\in\mathcal{G}_{a/2}$ is established in  Theorem 2.1 of \cite{garcia69} (see also Lemma 5.1 in \cite{boas45}).
\begin{lemma}[Theorem 2.1 of \cite{garcia69}]\label{lem: Boas and Kac}
$\Lambda_a\in\mathcal{F}_a$ if and only if there exists $\phi_a\in\mathcal{G}_{a/2}$, such that 
\[
\Lambda_a(x)=\int_{-\infty}^{\infty}\overline{\phi_a(t)}\phi_a(x+t)\mathrm{d}t=(\phi_a^\ast\star\phi_a)(x).
\]
\end{lemma}

Using this characterization, we reparametrize the supremum over \( \Lambda_u, \Lambda_w \) in the definition of $K_{u,w}$ in terms of square-integrable functions \( \phi_u, \phi_w \) supported in \( [-u/2, u/2] \) and \( [-w/2, w/2] \), respectively. This leads to
\begin{align*}
K_{u,w}^2
&= \sup_{\phi_u \in \mathcal{G}_{u/2},\; \phi_w \in \mathcal{G}_{w/2}}
\int \overline{(\phi_u^\ast \star \phi_u)(x)} \cdot (\phi_w^\ast \star \phi_w)(x)\, \mathrm{d}x \\
&= \sup_{f_1, f_2 \in L^2(\mathbb{R}) : \|f_1\|_2 \leq 1,\, \|f_2\|_2 \leq 1}
\int \overline{([f_1 \cdot \mathbf{1}_{u/2}]^\ast \star [f_1 \cdot \mathbf{1}_{u/2}])(x)} \cdot ([f_2 \cdot \mathbf{1}_{w/2}]^\ast \star [f_2 \cdot \mathbf{1}_{w/2}])(x)\, \mathrm{d}x,
\end{align*}
where  $\mathbf{1}_a$ is the indicator function of $[-a,a]$.

Note that the Fourier transform of $[f\cdot \mathbf{1}_{a}]^\ast\star[f\cdot \mathbf{1}_{a}]$ equals $|\widehat{f\cdot \mathbf{1}_{a}}|^2$. On the other hand, 
\[
\widehat{f\cdot \mathbf{1}_{a}}=\widehat{f}\star\widehat{\mathbf{1}_{a}}=\Sin_{a}(\widehat{f}).
\]
Applying Plancherel's theorem, we get
\begin{eqnarray*}
K_{u,w}^2&=&\sup_{\widehat{f_1},\widehat{f_2}\in L^2(\mathbb{R}):\|\widehat{f_1}\|_2\leq 1,\|\widehat{f_2}\|_2\leq 1 }\int |\Sin_{u}(\widehat{f_1})(x)|^2|\Sin_{w}(\widehat{f_2})(x)|^2\mathrm{d}x\\
&=&\sup_{\widehat{f_1},\widehat{f_2}\in L^2(\mathbb{R}):\|\widehat{f_1}\|_2\leq 1,\|\widehat{f_2}\|_2\leq 1 }\int |\Sin_{u,w}(\widehat{f_1},\widehat{f_2})(x)|^2\mathrm{d}x\\
&=&\|\Sin_{u,w}\|^2_{2\rightarrow 2},
\end{eqnarray*}
and taking square roots yields the claim. 
\end{proof}

A change of variables in the integral defining \( K_{u,w} \), together with a reparameterization of the support constraints, yields the identity
\begin{equation}\label{K reparameterization}
K_{Cu,Cw} =\sqrt{C}K_{u,w},\qquad\text{for any }C>0.
\end{equation}
In particular, $K_{1,n/p}=K_{p,n}/\sqrt{p}$, and therefore,
\[
\|\Sin_{1,n/p}\|_{2\rightarrow 2} =\frac{1}{\sqrt{p}}\, \|\Sin_{p,n}\|_{2\rightarrow 2}.
\]

Formulation~\eqref{new positive definite problem} is particularly well-suited for numerical approximation. It also connects naturally to earlier work of \cite{garcia69}, who studied the special case of~\eqref{new positive definite problem} under the constraint \( u = w \), and computed the value of \( K_{1,1}^2 = K_1^2 \) to 24 decimal places:
\[
K_{1,1}^2 = 0.686981293033114600949413\cdots.
\]
A related computational algorithm, which we use to compile Table \ref{tab: table_intro}, is discussed in Section~\ref{sec: K values}.

From an analytical perspective, the representation~\eqref{new positive definite problem} immediately yields a simple and explicit lower bound:
\begin{equation}\label{the lower bound}
K_{1,n/p} \geq \sqrt{1 - \frac{p}{3n}}.
\end{equation}
This estimate is obtained by selecting admissible test functions \( \Lambda_1(x) = \max\{1 - |x|, 0\} \) and \( \Lambda_{n/p}(x) = \max\left\{1 - \frac{p}{n}|x|, 0\right\} \), and directly evaluating the integral in~\eqref{new positive definite problem}. For \( n = p \), this gives \( K_{1,1}^2 \geq 2/3 \), which is quite close to the actual value \( 0.6869\cdots \).

Another analytical implication of~\eqref{new positive definite problem} concerns the asymptotic behavior of \( K_{1,n/p} \) when \( n/p \to \infty \) sufficiently slowly so that condition~\eqref{new asymptotic regime} holds. In this regime, \( K_{1,n/p} \to 1 \), making the lower bound~\eqref{the lower bound} asymptotically sharp. Consequently, the spectral norm of the corresponding “wide” random Toeplitz matrices (i.e., those with many more columns than rows) is asymptotically equivalent to \( \sqrt{p \log n} \).

The convergence \( K_{1,n/p} \to 1 \) follows from~\eqref{new positive definite problem} via the following argument. Any positive definite function \( \Lambda(x) \) with \( \Lambda(0) = 1 \) satisfies the pointwise bound \( |\Lambda(x)| \leq 1 \) and the symmetry \( \Lambda(-x) = \overline{\Lambda(x)} \) (see, e.g., Sec.~1.4 of \cite{rudin62}). Therefore, when \( n/p \to \infty \), the optimization problem~\eqref{new positive definite problem} with \( u = 1 \) and \( w = n/p \) reduces to
\[
\sup_{\Lambda \in \mathcal{F}_1} \left( \int_{-\infty}^\infty \Lambda(x)\, \mathrm{d}x \right)^{1/2},
\]
whose solution, as shown in \cite{boas45}, equals 1.

\section{Proof of Theorem \ref{T theorem}}\label{sec: proof of theorem 1}

The argument begins by following several steps from SV13, supplemented with additional ideas tailored to our setting. The main novelty --- introduced later in the proof --- is the connection to the maximization problem~\eqref{new positive definite problem}. We focus on the new elements of the argument and provide only a brief summary of the steps shared with SV13, with full details deferred to the Supplementary Material (SM). 

\subsection{Circulant embedding}\label{sec:circulant embedding}
First, we embed \( \mathbf{T}_{p \times n} \) into an \( N \times N \) circulant matrix with the first row \( (a_0, \dots, a_{n-1}, a_{-p}, \dots, a_{-1}) \), where \( N = n + p \). In our non-symmetric setting, the random variables \( a_{-j} \) and \( a_j \) (for \( j > 0 \)) are independent and identically distributed. For notational convenience, we re-index the negative indices by setting \( a_{-j} := a_{N-j} \). With this convention, we define \( \mb{a}^\top = (a_0, a_1, \dots, a_{N-1}) \), and by equation~\eqref{PDP matrix}, we have
\begin{equation}\label{common form}
\|\mathbf{T}_{p \times n}\| = \sqrt{N}\left\|  \, \mb{P}_{p,N} \, \mb{D}(\mb{a}) \, \mb{P}_{n,N} \right\|.
\end{equation}

We next collect several elementary but useful facts about the projection matrix \( \mb{P}_{r,N}\) and the diagonal matrix \( \mb{D}(\mb{a}) \). Straightforward calculations yield the following explicit formula:
\begin{equation}\label{explicit formula for entries of P}
 (\mb{P}_{r,N})_{kl}=\frac{1}{N}\sum_{j=0}^{r-1}e^{\frac{2\pi\rmi}{N}(l-k)j}=\begin{cases}
     \frac{r}{N}&\text{if }k=l\\
     \frac{1}{N}\frac{1-e^{\frac{2\pi\rmi}{N}(l-k)r}}{1-e^{\frac{2\pi\rmi}{N}(l-k)}}&\text{if }k\neq l
 \end{cases}.
 \end{equation}
For $k\neq l$, this implies 
 \begin{equation}\label{bound on the entries of P}
 |(\mb{P}_{r,N})_{kl}|=\frac{1}{N}\left|\frac{\sin\frac{\pi (l-k)r}{N}}{\sin\frac{\pi (l-k)}{N}}\right|\leq\frac{\left|\sin\frac{\pi (l-k)r}{N}\right|}{2\min\left\{ |l-k|,N-|l-k|\right\}},
 \end{equation}
 where the inequality follows from the fact that $\sin x\geq \min\{2x/\pi,2-2x/\pi\}$ for $x\in[0,\pi]$.
The numerator in \eqref{bound on the entries of P} is small when $r/N$ is near $0$ or $1$. 
 
 In our setting, $r/N$ is near $0$ when $r=p$ and $n/p$ is large. In contrast, in the square matrix setting of SV13, attention can be restricted to $r/N=1/2$, so that the numerator on the right hand side of \eqref{explicit formula for entries of P} equals $0$ whenever $l-k$ is zero and $1$ whenever $l-k$ is odd.

 Turning to matrix $\mb{D}(\mb{a})$, let $d_j=d_j(\mb{a})$ denote its $j+1$-th diagonal entry. We have
\begin{equation}
\label{def of d again}
d_j=\frac{1}{\sqrt{N}}\sum_{k=0}^{N-1}e^{\frac{2\pi\rmi}{N}kj}a_k=\overline{d_{N-j}}.
\end{equation}
Moreover, for any distinct $s,t\in\{0,\dots,\lfloor N/2\rfloor\}$,
\begin{eqnarray*}
\mathbb{E}(d_s\overline{d_t})=\frac{1}{N}\sum_{k=0}^{N-1}e^{\frac{2\pi\rmi}{N}k(s-t)}=0,\qquad \mathbb{E}(d_sd_t)=\frac{1}{N}\sum_{k=0}^{N-1}e^{\frac{2\pi\rmi}{N}k(s+t)}=0.
\end{eqnarray*}
Additionally, 
\[
\mathbb{E}|d_s|^2=1,\qquad \mathbb{E}d_s^2=\begin{cases}
    0&\text{if }s\notin \{0,N/2\}\\
    1&\text{if }s\in \{0,N/2\}
\end{cases}.
\]
Hence, the random variables in the set $\{\Re d_j, \Im d_j\mid j=0,\dots,\lfloor N/2\rfloor\}$ are mutually uncorrelated. Note that $\Im d_0=\Im d_{N/2}=0$ (with $d_{N/2}$ defined only when $N$ is even). The variance of $\Re d_0$ and $\Re d_{N/2}$ are 1, while all other components have variance $1/2$.

In the special case where $a_j$, $j\in\mathbb{Z}$, are standard normal random variables, both $d_0$ and $d_{N/2}$ are standard normal as well. For $j\notin\{0,N/2\}$, the variables $d_j$ are standard complex normal random variables --- that is, $d_j=z_1+\mathrm{i}z_2$, where $z_1$ and $z_2$ are independent $\mathcal{N}(0,1/2)$. Consequently, $|d_j|^2$ follows a standard exponential distribution $\operatorname{Exp}(1)$.

By contrast, in SV13's symmetric setting and under the same Gaussian assumption (after a scaling of 
$a_0$ and $a_n$, which does not affect the results), all $d_j$ are real standard normal random variables, so that $|d_j|^2$ follows a $\chi^2(1)$ distribution. 

\subsection{Truncation, Normalization, and Auxiliary Assumptions}
To streamline the analysis and follow SV13, we make three standard reductions: (i) we replace the variables $a_j$ with appropriately truncated and standardized versions $\bar{a}_j$; (ii) we strengthen convergence in probability to convergence in $L^\gamma$; and (iii) we assume that $N$ is even. These steps are justified by the following lemma, with full details deferred to the SM.

\begin{lemma}[Reductions and standardizations]
\label{lem:reductions}
Under assumptions \eqref{Lyapunov} and \eqref{new asymptotic regime}, the following simplifications hold without affecting the asymptotic result in Theorem~\ref{T theorem}:
\begin{description}
    \item[(i)] The random variables \( a_j \) may be replaced by truncated and standardized versions \( \bar{a}_j \), uniformly bounded by \( n^{1/\gamma} \).
    \item[(ii)] Convergence in probability can be strengthened to convergence in \( L^\gamma \), provided \( \limsup_{n \to \infty} n/p <\infty \).
    \item[(iii)] It is without loss of generality to assume that \( N = n + p \) is even.
\end{description}
\end{lemma}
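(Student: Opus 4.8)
The plan is to prove each of the three reductions in Lemma~\ref{lem:reductions} separately, in the order (iii), (i), (ii), since each is essentially independent and the calculations are routine modifications of SV13.

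\textbf{Part (iii): reduction to even $N$.} If $N = n+p$ is odd, I would pad the Toeplitz matrix by one extra row and one extra column, i.e. work with $\mb{T}_{(p+1)\times(n+1)}$ built from the same sequence $\{a_i\}$, which embeds into an $(N+2)\times(N+2)$ circulant with $N+2$ even. The point is that adding one row and one column changes neither the divergence rate $\sqrt{p\log n}$ (since $\sqrt{(p+1)\log(n+1)}/\sqrt{p\log n}\to 1$) nor the limiting constant $\|\Sin_{1,n/p}\|_{2\to 2}$ (since $(n+1)/(p+1) - n/p \to 0$ under \eqref{new asymptotic regime}, and $K_{1,w}$ is continuous in $w$ — this continuity is easily read off the variational formula \eqref{new positive definite problem}, or one can invoke interlacing $\|\mb{T}_{p\times n}\|\le \|\mb{T}_{(p+1)\times(n+1)}\| $ and a matching lower bound from a submatrix). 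So the asymptotics for the padded matrix and the original coincide.

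\textbf{Part (i): truncation and standardization.} Set $\tilde a_j := a_j \mathbf{1}\{|a_j|\le n^{1/\gamma}\}$ and $\bar a_j := (\tilde a_j - \E\tilde a_j)/\sqrt{\operatorname{Var}(\tilde a_j)}$, so the $\bar a_j$ are bounded by roughly $2n^{1/\gamma}$, have mean $0$ and variance $1$, and still satisfy a uniform $\gamma$-th moment bound. Let $\mb{T}$ and $\bar{\mb{T}}$ denote the Toeplitz matrices built from $\{a_j\}$ and $\{\bar a_j\}$. By the triangle inequality for the operator norm, $\bigl|\,\|\mb{T}\| - \|\bar{\mb{T}}\|\,\bigr| \le \|\mb{T} - \bar{\mb{T}}\|$, and $\mb{T}-\bar{\mb{T}}$ is itself a (rescaled) random Toeplitz-type matrix whose entries $a_j - \bar a_j$ have small variance. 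The standard estimate — exactly as in SV13 — is that $\P(|a_j|>n^{1/\gamma})\le A n^{-1}$ and $\E|a_j - \bar a_j|^2 = o(1)$ uniformly, combined with the crude deterministic bound $\|M\| \le \sqrt{p\cdot n}\,\max_{i,j}|M_{ij}|$ or, better, the Adamczak bound \eqref{eq: adamchak} applied to the difference matrix, to show $\|\mb{T}-\bar{\mb{T}}\|/\sqrt{p\log n}\pcv 0$. One has to be slightly careful that the difference matrix has entries that are neither centered nor unit-variance, but splitting into the centered part (handled by \eqref{eq: adamchak}) and the deterministic mean part (a rank-one-ish correction of size $O(\sqrt{pn}\cdot \E|a_j-\bar a_j|)$, which is $o(\sqrt{p\log n})$ since $\E|a_j-\bar a_j| = O(n^{-(\gamma-1)/\gamma})$ by H\"older) does the job.

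\textbf{Part (ii): upgrading to $L^\gamma$ convergence under \eqref{stronger asymptotic regime}.} Here the idea is that convergence in probability plus uniform integrability of $\bigl(\|\mb{T}_{p\times n}\|/\sqrt{p\log n}\bigr)^\gamma$ gives $L^\gamma$ convergence. Uniform integrability follows from a uniform bound on a slightly higher moment, or directly from the tail bound implied by \eqref{eq: adamchak}: under \eqref{stronger asymptotic regime} we have $n\le Cp$, so $\sqrt{n}+\sqrt{p\log p} = O(\sqrt{p\log n})$, hence $\E\|\mb{T}_{p\times n}\| = O(\sqrt{p\log n})$; to control the $\gamma$-th moment one needs a tail bound of the form $\P(\|\mb{T}_{p\times n}\| > t\sqrt{p\log n}) \le $ (something summable in $t^\gamma$), which again follows from the concentration/moment machinery in \cite{adamczak13} (or from a net argument with the bounded entries $\bar a_j$ after part (i)). Then $\{\|\mb{T}_{p\times n}\|^\gamma/(p\log n)^{\gamma/...}\}$ — more precisely the family indexed by $n$ — is uniformly integrable, and since the centered quantity in \eqref{main convergence} is bounded (as $\|\Sin_{1,n/p}\|_{2\to 2}\le 1$), convergence in probability promotes to $L^\gamma$.

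\textbf{Main obstacle.} The genuinely delicate point is part (i): one must verify that truncating at the threshold $n^{1/\gamma}$ — rather than at a constant — still makes the difference negligible \emph{after dividing only by $\sqrt{p\log n}$}, which is a smaller normalization than $\sqrt{n}$ when $p\ll n$. The saving grace is the asymptotic regime \eqref{new asymptotic regime}: it forces $n/p$ to grow slower than any power of $\log n$, so $n = o(p\log^\alpha n)$, and the error terms of order $O(\sqrt{pn}\cdot n^{-(\gamma-1)/\gamma})$ etc.\ are comfortably $o(\sqrt{p\log n})$. Making this bookkeeping precise, uniformly over the (possibly fluctuating) ratio $n/p$, is the one place where care is needed; everything else is a transcription of the corresponding reductions in SV13. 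I would defer the detailed estimates to the Supplementary Material, as the excerpt indicates.
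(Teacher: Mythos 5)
Your plan for part~(iii) has a parity bug: if $N=n+p$ is odd, padding by one extra row \emph{and} one extra column gives $N'=n+p+2$, which is still odd. The paper pads only one column (or, symmetrically, one row), making $N'=N+1$ even, and controls the change by bounding the norm of the added column. Easy to fix, but as written your reduction doesn't reduce.

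The more substantive gap is in part~(i). Applying the Adamczak bound \eqref{eq: adamchak} to the centered difference matrix does not give the required smallness. Writing $\sigma_n^2:=\operatorname{Var}(a_j-\tilde a_j)=O(n^{(2-\gamma)/\gamma})$ and rescaling the entries by $1/\sigma_n$ so that they have unit variance, one finds that the $\gamma$-th moment of the rescaled entries is of order $A/\sigma_n^\gamma$. Plugging $A'=A/\sigma_n^\gamma$ into \eqref{eq: adamchak} gives
\begin{align*}
\mathbb{E}\|\mathbf{T}-\bar{\mathbf{T}}\|
&\le \sigma_n\, C_\gamma\left(\frac{A}{\sigma_n^\gamma}\right)^{1/\gamma}\!\bigl(\sqrt{n}+\sqrt{p\log p}\bigr)
= C_\gamma A^{1/\gamma}\bigl(\sqrt{n}+\sqrt{p\log p}\bigr),
\end{align*}
i.e.\ the variance reduction and the moment inflation cancel exactly, leaving a bound of order $\sqrt{p\log n}$, not $o(\sqrt{p\log n})$. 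The crude bound $\|M\|\le\sqrt{pn}\max_{i,j}|M_{ij}|$ fares no better because the truncation event has non-vanishing total probability over the $N$ entries, so $\max_j|a_j-\bar a_j|$ is not small with high probability. The paper avoids all of this by working on the circulant side: since $\|\mathbf{P}_{p,N}\|=\|\mathbf{P}_{n,N}\|=1$, the difference of spectral norms is bounded by $\|\mathbf{D}(\mathbf{a})-\mathbf{D}(\bar{\mathbf{a}})\|=\max_j|d_j(\mathbf{a}-\bar{\mathbf{a}})|$, which after the crude triangle-inequality bound $\sqrt{N}\|\mathbf{D}(\mathbf{a})-\mathbf{D}(\hat{\mathbf{a}})\|\le\sum_k|a_k|\mathbf{1}\{|a_k|>\tfrac12 n^{1/\gamma}\}$ is controlled by Rosenthal's inequality, yielding a $\gamma$-th moment of order $n/(p\log n)^{\gamma/2}\to 0$. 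The mean and standardization corrections are then handled by a further Bernstein estimate for the DFT coefficients. This one-dimensional route is genuinely sharper than a direct matrix-norm estimate and is where the actual content of part~(i) lives.

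Part~(ii) is correct in spirit and matches the paper's mechanism (convergence in probability plus uniform integrability of $(\|\mathbf{T}_{p\times n}\|/\sqrt{p\log n})^\gamma$ gives $L^\gamma$ convergence, via Chung's Theorem~4.5.4). However, you outsource the uniform integrability to unproved tail bounds ``from the concentration machinery in Adamczak''; the paper instead gets the needed tail control elementarily, again via the circulant embedding: $|Y_n|\le\max_j|d_j|/\sqrt{(p/N)\log n}$, and then the Bernstein-type bound of Lemma~\ref{lem: bernstein for d} for the bounded truncated entries yields summable tails and hence the uniform integrability directly. This is why the truncation step (i) must come first — your ordering (iii), (i), (ii) is correct — but you should make the dependence explicit rather than appeal to an external concentration result whose hypotheses you have not checked.
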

Part~(i) of Lemma~\ref{lem:reductions} allows us to assume throughout the remainder of the proof that $|a_j|<n^{1/\gamma}$ for all $j$, so that tail bounds such as Bernstein’s inequality (e.g., Theorem 2.8.4 in \cite{vershynin2018}) hold. 
Part~(ii) ensures that convergence in probability can be strengthened to convergence in \( L^\gamma \), as required in Theorem~\ref{T theorem}.
Finally, part~(iii) allows us to avoid repeatedly distinguishing between even and odd values of $N$, thereby making several proofs more concise.

\subsubsection*{Reduction to dominant diagonal entries}

Next, we show that the asymptotic behavior of the norm \(\|\mb{P}_{p,N} \mb{D}(\mb{a}) \mb{P}_{n,N}\|\) is essentially determined by those diagonal entries \(d_j\) of \(\mb{D}=\mb{D}(\mb{a})\) with relatively large absolute values. This argument parallels Section 3.2 of SV13 but involves a modification: the threshold for significance is lowered to order \((\log n)^{\frac{1}{2} - \frac{\alpha}{2}}\), compared to the original \((\log n)^{1/2}\). This adjustment is necessary to accommodate regimes where \(n/p \to \infty\).

To formalize this, define
\begin{equation}\label{epsilon_n definition}
\epsilon_n := (\log n)^{-\frac{\alpha}{2}},
\end{equation}
and the random set of \emph{active} indices
\begin{equation}\label{active set}
S := \{0 \leq j \leq N-1 : |d_j| \geq \epsilon_n (\log n)^{1/2}\}.
\end{equation}
Let \(\mb{R} := \operatorname{diag}(\mathbf{1}_{\{j \in S\}})\) be the diagonal projection onto these indices, and define the truncated diagonal matrix \(\mb{D}^\epsilon := \mb{D} \mb{R}\).

Then, similarly to (16) in SV13, we obtain the bound
\[
\left| \frac{\|\mb{P}_{p,N} \mb{D} \mb{P}_{n,N}\|}{\sqrt{(p/N) \log n}} - \frac{\|\mb{P}_{p,N} \mb{D}^\epsilon \mb{P}_{n,N}\|}{\sqrt{(p/N) \log n}} \right| \leq \frac{\|\mb{P}_{p,N}\| \, \|\mb{D} - \mb{D}^\epsilon\| \, \|\mb{P}_{n,N}\|}{\sqrt{(p/N) \log n}} \leq \frac{\epsilon_n}{\sqrt{p/(2n)}} = o(1),
\]
where we used the fact that $N=n+p\leq 2n$ and assumption \eqref{new asymptotic regime}.

This reduction shows that the asymptotic behavior is effectively governed by the entries indexed by 
$S$, which also serve as the foundation for the block-diagonal reduction developed next.

\subsection{Reduction to block-diagonal form}\label{sec: block-diagonal redux}
The structure of this reduction is guided by a partition of the set \( \{0,1,\dots,N-1\} \), very similar to that introduced in Section 3.3 of SV13. 

For the reader’s convenience and to keep the presentation self-contained, we briefly recall the terminology and construction from SV13 --- adopting some modifications. In places, we follow the exposition nearly \emph{verbatim} to ensure clarity and consistency, while also highlighting key differences.

 \subsubsection*{Partition of $\{0,1,\dots,N-1\}$}\label{subsection step 6}  

Let \( r_n := \lceil \log N \rceil^4 \) and \( m_n := \lfloor N/(4r_n) \rfloor \). Divide the interval \(\{0,1,\dots,N-1\}\) into \(2m_n + 1\) consecutive disjoint subintervals (called \emph{bricks}) \( L_{-m_{n}}, \dots, L_{-1}, L_0, L_1, \dots, L_{m_n} \) so that:
\begin{itemize}
    \item \( 0 \in L_{-m_n} \) and \( N/2 \in L_0 \),
    \item each brick has length between \( r_n \) and \( 4r_n \),
    \item the subdivision is symmetric around \( N/2 \): \( L_i = N - L_{-i} \) for \( -m_n < i < m_n \), and \( L_{-m_n} \setminus \{0\} = N - L_{m_n} \).
\end{itemize}

Next, define a \emph{block} as a nonempty union of consecutive bricks:
\[
J = L_j \cup L_{j+1} \cup \cdots \cup L_{k}, \quad \text{for } -m_n \leq j \leq k \leq m_n.
\]
Let \( M_n := \lceil 100\log^\alpha n \rceil \). We say that a block \( J \) is \emph{admissible} if:
\begin{description}
    \item[(a)] \( L_{-m_n} \nsubseteq J \) and \( L_0 \nsubseteq J \),
    \item[(b)] the number of bricks comprising \( J \), i.e., \( 1 + k - j \), is at most \( M_n \).
\end{description}

The set of all admissible blocks is denoted by \( \mathcal{L} \). This construction of \( \mathcal{L} \) mirrors that in SV13, with two differences: (i) bricks here are  longer (in SV13, \( r_n \sim \log^3 n \), while here \( r_n \sim \log^4 n \)), and (ii) the maximum number of bricks in a block, \( M_n \), grows with \( n \), whereas it is fixed in SV13.

Recall the definition \eqref{active set} of the set \( S \) of active indices. A brick \( L_k \) is said to be \emph{visible} if \( L_k \cap S \neq \varnothing \). Given \( S \), we partition \( \{0,1,\dots,N-1\} \) into disjoint intervals \( J \) by placing cuts between every pair of consecutive \emph{invisible} bricks. Denote the resulting collection of disjoint blocks by \( \Lambda \).

The following proposition is the analogue of Proposition 8 in SV13, adapted to our setting. Its proof is provided in the SM.

\begin{proposition}\label{equivalent prop 8}
    For each division of the interval $\{0,1,\dots,N-1\}$ into bricks, the following holds with high probability (that is, with probability approaching one as $n\rightarrow \infty$): 
    \begin{enumerate}
        \item For each $J\in \Lambda$, if $J\cap S\neq \varnothing$, then $J\in\mathcal{L}$ and
        \item For all $J\in \Lambda\cap\mathcal{L}$, $\#(J\cap S)\leq M_n$.
    \end{enumerate}
\end{proposition}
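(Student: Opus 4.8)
The plan is to establish the two claims of Proposition~\ref{equivalent prop 8} separately, each by a first-moment/union-bound argument over the relevant collections of blocks, exploiting the fact that the entries $|d_j|$ have sub-exponential (indeed, after truncation, uniformly controlled) tails.

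\textbf{Setting up the tail bounds.} First I would record the key distributional input: since $d_j = \frac{1}{\sqrt N}\sum_k e^{2\pi\rmi kj/N}a_k$ with $|a_k|<n^{1/\gamma}$ and $\E|d_j|^2=1$, Bernstein's inequality (invoked right after Lemma~\ref{lem:reductions}) gives $\P(|d_j|\geq t)\leq \exp(-ct^2)$ for $t$ in the relevant range (up to $t\asymp \sqrt{\log n}$; the truncation at $n^{1/\gamma}$ is exactly what keeps the Bernstein correction term negligible here). In particular, with $\epsilon_n=(\log n)^{-\alpha/2}$ and threshold $\tau_n:=\epsilon_n(\log n)^{1/2}=(\log n)^{(1-\alpha)/2}$, we get $\P(j\in S)=\P(|d_j|\geq\tau_n)\leq \exp(-c(\log n)^{1-\alpha})$, which is $o(1)$ but only sub-polynomially small — this is the point of lowering the threshold relative to SV13, and it is what forces $M_n$ to grow. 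More refined: $\P(\#(I\cap S)\geq \ell)\leq \binom{|I|}{\ell}\exp(-c\ell(\log n)^{1-\alpha})$ for any fixed index set $I$, using a (near-)independence or Chernoff-type bound on the $d_j$'s restricted to $\{0,\dots,\lfloor N/2\rfloor\}$, where they are uncorrelated; since we are ultimately bounding probabilities by moments, mild dependence can be absorbed, or one reduces to genuine independence via the Gaussianization/comparison steps from SV13 (deferred to the SM).

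\textbf{Proof of claim (1).} A block $J\in\Lambda$ with $J\cap S\neq\varnothing$ automatically satisfies that its first and last bricks are visible (by the cutting rule: cuts are placed between consecutive invisible bricks, so the boundary bricks of any nontrivial $J$ are visible, or $J$ is a single brick). Condition (a) for admissibility — $L_{-m_n}\not\subseteq J$ and $L_0\not\subseteq J$ — would fail only if $J$ swallowed the brick containing $0$ or the brick containing $N/2$; but those bricks are invisible with high probability, since each contains $O(r_n)=O(\log^4 N)$ indices and $\P(\text{brick visible})\leq O(\log^4 N)\exp(-c(\log n)^{1-\alpha})=o(1)$, and a cut is placed at every invisible brick, preventing $J$ from extending across it. Condition (b) — $J$ has at most $M_n=\lceil 100\log^\alpha n\rceil$ bricks — is the substantive part: $J$ has more than $M_n$ bricks only if there is a run of $M_n$ consecutive bricks with no two consecutive invisible ones, equivalently every pair of adjacent bricks in the run contains a visible brick, which forces at least $\lfloor M_n/2\rfloor$ visible bricks among $M_n$ consecutive bricks, hence at least $\lfloor M_n/2\rfloor$ active indices within an index-window of length $\leq 4r_n M_n$. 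The probability of this, union-bounded over $O(m_n)=O(N)$ starting positions, is at most
\[
O(N)\binom{4r_nM_n}{\lfloor M_n/2\rfloor}\exp\!\big(-c\lfloor M_n/2\rfloor(\log n)^{1-\alpha}\big).
\]
Since $M_n\asymp \log^\alpha n$, the exponent is $-c'\log^\alpha n\cdot(\log n)^{1-\alpha}=-c'\log n$, beating the $O(N)$ and the binomial factor $\exp(O(M_n\log\log n))=\exp(O(\log^\alpha n\,\log\log n))$, so the whole bound is $n^{-c'+o(1)}\to 0$. The constant $100$ in $M_n$ is chosen precisely to win this race.

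\textbf{Proof of claim (2) and the main obstacle.} For claim (2) I would union-bound over all admissible blocks $J\in\mathcal{L}$ the event $\{\#(J\cap S)>M_n\}$. An admissible block has at most $M_n$ bricks, hence at most $4r_nM_n=O(\log^{4+\alpha}n)$ indices; the number of admissible blocks is at most (number of starting bricks)$\times$(number of lengths)$=O(N\cdot M_n)$. Thus the relevant probability is bounded by
\[
O(NM_n)\,\P\big(\#(I\cap S)>M_n\big)\ \leq\ O(NM_n)\binom{4r_nM_n}{M_n}\exp\!\big(-cM_n(\log n)^{1-\alpha}\big),
\]
for a fixed window $I$ of length $4r_nM_n$. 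Again the exponent is $\asymp -\log n$ while the combinatorial and $N$ factors are $n^{o(1)}$, so the bound tends to $0$. I expect the main obstacle to be the justification of the tail estimate $\P(\#(I\cap S)\geq \ell)\leq \binom{|I|}{\ell}e^{-c\ell(\log n)^{1-\alpha}}$ with the correct constant $c$ in the exponent: the $d_j$'s are only \emph{uncorrelated}, not independent, so one must either (a) pass to the Gaussian case via SV13-style comparison and use exact independence of the real/imaginary parts, then transfer back via the truncation moment bounds, or (b) prove a direct Chernoff bound for $\sum_{j\in I}\mathbf{1}_{\{|d_j|\geq\tau_n\}}$ handling the dependence — e.g., via Hanson–Wright for the quadratic forms $|d_j|^2$ or via a decoupling/martingale argument. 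Getting a clean enough constant that the $\exp(-cM_n(\log n)^{1-\alpha})$ factor genuinely dominates $N\cdot M_n\cdot\binom{4r_nM_n}{M_n}$ — i.e., verifying $cM_n(\log n)^{1-\alpha}=c\cdot 100\log n\cdot(1+o(1))$ exceeds $\log N + O(M_n\log r_n)=\log n + O(\log^\alpha n\log\log n)$ — is where the bookkeeping is most delicate, and it is essentially the reason the threshold $\epsilon_n(\log n)^{1/2}$, the brick size $r_n=\lceil\log N\rceil^4$, and the constant $100$ in $M_n$ are calibrated as they are. The remaining steps (verifying the boundary-brick invisibility and the cut-placement combinatorics) are routine and follow SV13 closely.
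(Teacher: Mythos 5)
Your overall architecture is the one the paper uses: for each claim, reduce a violation to the event that some window of $O(r_n M_n)$ consecutive indices contains $\gtrsim M_n/2$ active points, union-bound over $O(N)$ windows and $\binom{O(r_n M_n)}{M_n/2}$ placements, and check that the exponent $\asymp M_n(\log n)^{1-\alpha}\asymp\log n$ beats the $n^{o(1)}$ combinatorics — this is exactly the paper's trichotomy (i)--(iv), and you have the parameter bookkeeping right (the constant $100$ gives $n^{-50/9+o(1)}$ per window, safely below $n^{-1}$).

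Where you diverge from the paper is exactly at the point you flag as the main obstacle: the joint tail bound $\P(|d_{j_i}|>\epsilon_n\sqrt{\log n},\ 1\le i\le s_n)$. You propose either Gaussianization via the SV13 invariance principle or Hanson--Wright/decoupling. Neither is what the paper does, and both are heavier than needed. The invariance principle (Lemma~17 of SV13, used in this paper only for the lower bound in Proposition~\ref{prop: equivalent of prop 15}) compares $\E g(f(\mathbf{a}))$ with $\E g(f(G))$ for \emph{thrice differentiable} $g$; to transfer an indicator event you would have to smooth, control the error uniformly, and then contend with the $n^{o(1)}$-size combinatorics on top of the comparison error — possible, but a substantial detour with delicate constants. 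The Hanson--Wright route controls each $|d_j|^2$ but says nothing directly about the \emph{joint} upper tail over $s_n$ indices. The paper instead uses a simple and elementary device: the event $\{|d_{j_i}|>t,\ \forall i\}$ is contained in the union, over $\xi_i\in\{\Re d_{j_i},\Im d_{j_i}\}$ and signs $\beta_i\in\{\pm 1\}$, of $\{\sum_i\beta_i\xi_i>s_n t/\sqrt 2\}$, and each $\sum_i\beta_i\xi_i=\sum_k\theta_k a_k$ is a single linear form in the \emph{genuinely independent} (truncated) $a_k$, with $\sum_k\theta_k^2\le s_n$ and $|\theta_k|\lesssim s_n/\sqrt N$. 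Bernstein's inequality then gives the Gaussian-rate tail directly, and the $2^{2s_n}=n^{o(1)}$ union cost (since $s_n=O(\log^\alpha n)$) is absorbed. This is the whole content of the tail bound \eqref{equivalent of (17)}; no Gaussianization, no dependence decoupling, and no independence of the $d_j$ is ever needed.

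Two small corrections to the claim-(1) step: the cut rule is that a cut is placed between a \emph{pair of consecutive} invisible bricks, not ``at every invisible brick,'' so $L_0$ being invisible does not by itself seal it off from its neighbors — you also need the adjacent bricks to be invisible, which is what actually happens with high probability (both events have probability $o(1)$ by the same per-brick tail bound you already state). This does not change the conclusion, but the logic as you wrote it skips a step. Also note that the paper only needs to run the window-union-bound over bricks $L_{-m_n+1},\dots,L_{-1}$, exploiting the symmetry of the subdivision and of $S$ around $N/2$ to halve the work; your version implicitly runs over all of $\{-m_n,\dots,m_n\}$, which is fine but slightly wasteful.
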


We are now ready to carry out the reduction.

\subsubsection*{Block-diagonal reduction}
Let $\mb{B}_{q,N}$ be a block-diagonal reduction of $\mb{P}_{q,N}$:
 \[
 (\mb{B}_{q,N})_{kl}=\begin{cases}
     (\mb{P}_{q,N})_{kl}&\text{if }k,l\in J\text{ for some }J\in\Lambda,\\
     0&\text{otherwise.}
 \end{cases}
 \]
 The following lemma, which is analogous to Lemma 9 in SV13, shows that the norms $\|\mb{P}_{p,N}\mb{D}^\epsilon\mb{P}_{n,N}\|$ and $\|\mb{B}_{p,N}\mb{D}^\epsilon\mb{B}_{n,N}\|$ are asymptotically equivalent. A proof of the lemma, adapted to our setting, is provided in the SM.

   \begin{lemma}\label{equivalent of lemma 9}
  \[   \frac{\|\mb{P}_{p,N}\mb{D}^\epsilon\mb{P}_{n,N}\|-\|\mb{B}_{p,N}\mb{D}^\epsilon\mb{B}_{n,N}\|}{\sqrt{(p/N)\log n}}=o_P\left((\log n)^{3\alpha/2-2}\right).
  \]
 \end{lemma}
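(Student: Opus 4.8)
The plan is to bound the difference between the two norms by the operator norm of $\mathbf{P}_{p,N}\mathbf{D}^\epsilon\mathbf{P}_{n,N}-\mathbf{B}_{p,N}\mathbf{D}^\epsilon\mathbf{B}_{n,N}$ and then split this difference into pieces that can be controlled separately. First I would write
\[
\mathbf{P}_{p,N}\mathbf{D}^\epsilon\mathbf{P}_{n,N}-\mathbf{B}_{p,N}\mathbf{D}^\epsilon\mathbf{B}_{n,N}
=(\mathbf{P}_{p,N}-\mathbf{B}_{p,N})\mathbf{D}^\epsilon\mathbf{P}_{n,N}+\mathbf{B}_{p,N}\mathbf{D}^\epsilon(\mathbf{P}_{n,N}-\mathbf{B}_{n,N}),
\]
so that, using $\|\mathbf{P}_{n,N}\|=\|\mathbf{B}_{q,N}\|\le 1$ and $\|\mathbf{D}^\epsilon\|\le \max_j|d_j|$, it suffices to control $\|(\mathbf{P}_{q,N}-\mathbf{B}_{q,N})\mathbf{D}^\epsilon\|$ for $q\in\{p,n\}$ and, for the second term, to note that $\mathbf{D}^\epsilon$ is supported on the active indices $S$ which (on the high-probability event of Proposition~\ref{equivalent prop 8}) lie in visible bricks, hence in blocks $J\in\Lambda\cap\mathcal L$. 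The matrix $\mathbf{P}_{q,N}-\mathbf{B}_{q,N}$ has zero entries inside each diagonal block of $\Lambda$ and equals $(\mathbf{P}_{q,N})_{kl}$ when $k,l$ lie in different blocks; these entries are the "off-block" tail of the Dirichlet-type kernel.

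The heart of the argument is to estimate $\|(\mathbf{P}_{q,N}-\mathbf{B}_{q,N})\mathbf{D}^\epsilon\|$. I would bound this using a Schur test / Frobenius-type estimate over the columns indexed by $S$: since $\mathbf{D}^\epsilon$ has at most $\#S$ nonzero columns, and $\max_j|d_j|\lesssim \sqrt{\log n}$ on the truncated model (by the Bernstein bound of Lemma~\ref{lem:reductions}(i) applied to the $d_j$, which are sums of bounded independent variables), one gets
\[
\|(\mathbf{P}_{q,N}-\mathbf{B}_{q,N})\mathbf{D}^\epsilon\|\le \sqrt{\log n}\cdot \max_{l\in S}\Big(\sum_{k:\,k,l\text{ in different blocks of }\Lambda}|(\mathbf{P}_{q,N})_{kl}|^2\Big)^{1/2}\cdot(\#S)^{1/2},
\]
or a cleaner row/column Schur bound avoiding the $(\#S)^{1/2}$ factor. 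For a fixed active index $l$, the bound~\eqref{bound on the entries of P} gives $|(\mathbf{P}_{q,N})_{kl}|\le C|\sin(\pi(l-k)q/N)|/\min\{|l-k|,N-|l-k|\}$, and since $l$ sits in a visible brick while the summation runs over $k$ outside the block containing $l$, the distance $\min\{|l-k|,N-|l-k|\}$ is at least of order $r_n\sim\log^4 n$ (indeed at least the distance to the nearest cut, which exceeds one brick length). Summing $1/d^2$ over $d\gtrsim r_n$ yields $O(1/r_n)=O(\log^{-4}n)$, so each column-sum of squared off-block entries is $O(\log^{-4} n)$. Combining with $\#S\le$ (number of visible bricks)$\times M_n\lesssim (\text{something})\cdot\log^\alpha n$ and $\sqrt{\log n}$, and dividing by the normalization $\sqrt{(p/N)\log n}$, should produce a power of $\log n$ strictly below $\log^{3\alpha/2-2}n$; one then adjusts the crude bounds (e.g. keeping track of the number of visible bricks via $\#S$ and the brick lengths) to land exactly at the claimed exponent $3\alpha/2-2$. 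The argument from here mirrors Lemma~9 of SV13, with the two modifications already flagged in the text (longer bricks $r_n\sim\log^4 n$ and $M_n$ growing like $\log^\alpha n$) feeding through the bookkeeping.

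The main obstacle I anticipate is the careful bookkeeping of the logarithmic exponents: one must track simultaneously (i) the $\sqrt{\log n}$ from $\max_j|d_j|$, (ii) the gain $r_n^{-1/2}\sim\log^{-2}n$ from the off-block decay of the Dirichlet kernel, (iii) the count of active indices, which involves $M_n\sim\log^\alpha n$ and the (random) number of visible bricks, (iv) the $\sqrt{N/p}\le\sqrt{2}\,\sqrt{n/p}$ hidden in the normalization versus $\sqrt{(p/N)\log n}$, and this last factor can grow with $n$ under regime~\eqref{new asymptotic regime}, which is exactly why SV13's exponent is shifted. A secondary technical point is that all of this is conditional on the high-probability event of Proposition~\ref{equivalent prop 8} (so that active indices lie in admissible blocks with at most $M_n$ active indices each); on the complementary event one uses the trivial bound $\|\mathbf{P}_{q,N}-\mathbf{B}_{q,N}\|\le 2$ together with $\max_j|d_j|\lesssim n^{1/\gamma}$ and the fact that this event has probability $o(n^{-c})$ for any $c$, which is absorbed into the $o_P$ statement. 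The inequality $\sin x\ge\min\{2x/\pi,\,2-2x/\pi\}$ from~\eqref{bound on the entries of P} and Bernstein's inequality are the only analytic inputs beyond the partition combinatorics.
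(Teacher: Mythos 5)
Your decomposition of $\mathbf{P}_{p,N}\mathbf{D}^\epsilon\mathbf{P}_{n,N}-\mathbf{B}_{p,N}\mathbf{D}^\epsilon\mathbf{B}_{n,N}$ into a telescoping sum, the appeal to $\|\mathbf{D}^\epsilon\|\lesssim\sqrt{\log n}$ via Bernstein, the observation that off-block entries of $\mathbf{P}_{q,N}-\mathbf{B}_{q,N}$ involve distances $\geq r_n$, and the conditioning on the high-probability event of Proposition~\ref{equivalent prop 8} all match the paper's argument (which in turn mirrors SV13, Lemma 9). The paper writes $\mathbf{D}^\epsilon=\mathbf{R}\mathbf{D}\mathbf{R}$ and pulls $\mathbf{R}$ out to land on $\|(\mathbf{P}_{p,N}-\mathbf{B}_{p,N})\mathbf{R}\|\|\mathbf{D}\|$ and $\|\mathbf{D}\|\|\mathbf{R}(\mathbf{P}_{n,N}-\mathbf{B}_{n,N})\|$; that is cosmetically different from but equivalent to your split.

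The genuine gap is in the estimate for $\|(\mathbf{P}_{q,N}-\mathbf{B}_{q,N})\mathbf{D}^\epsilon\|$. Your leading bound — the Frobenius-type estimate $\sqrt{\log n}\cdot\max_l(\sum_k|\cdots|^2)^{1/2}\cdot(\#S)^{1/2}$ — does not close, because $\#S$ is much larger than your bookkeeping suggests. You assert $\#S\lesssim(\text{number of visible bricks})\times M_n\lesssim\log^\alpha n$, but $M_n\sim\log^\alpha n$ is only the per-block cap on active indices (from Proposition~\ref{equivalent prop 8}); the total number of active indices is of order $N\,e^{-\log^{1-\alpha}n}$, which diverges. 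Feeding that into $(\#S)^{1/2}$ together with the per-column $\ell^2$ gain $r_n^{-1/2}\sim\log^{-2}n$ gives a bound that grows like a power of $n$, not one that decays. Relatedly, the $\ell^2$ column-norm gain $\log^{-2}n$ you highlight is not the quantity the Schur (Holmgren) test uses — that test needs the $\ell^1$ row and column sums. The paper computes: max column $\ell^1$ sum of $(\mathbf{P}_{p,N}-\mathbf{B}_{p,N})\mathbf{R}$ is $O(\log n)$ (sum of $1/d$ from $d\gtrsim r_n$ up to $N$), while the max row $\ell^1$ sum is $O(M_n\,r_n^{-1}\log(\#\Lambda))=O(\log^{\alpha-3}n)$ (here is where the per-block cap $M_n$ and the brick length $r_n\sim\log^4 n$ actually enter), and then takes the geometric mean. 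You do flag "a cleaner row/column Schur bound avoiding the $(\#S)^{1/2}$ factor" as an alternative, which is the right move, but you do not carry it out, and the exponent bookkeeping you sketch ($r_n^{-1/2}$, $\#S\lesssim\log^\alpha n$) does not reproduce the Schur-test computation that produces the stated rate. To repair the proposal: drop the Frobenius route, compute the max row $\ell^1$ sum using the per-block cap $M_n$ on active indices and the $r_n$-separation between blocks, compute the max column $\ell^1$ sum directly (with no restriction to $S$), take the geometric mean, multiply by $\|\mathbf{D}\|\lesssim\sqrt{\log n}$, and divide by $\sqrt{(p/N)\log n}$ using $n/p=o(\log^\alpha n)$.
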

  Note that \[
\|\mb{B}_{p,N}\mb{D}^\epsilon\mb{B}_{n,N}\|=\max_{J\in\Lambda:J\cap S\neq\varnothing}\|\mb{P}_{p,N}[J]\mb{D}^\epsilon[J] \mb{P}_{n,N}[J]\|,
\]
where $\mb{M}[J]$ denotes the principal minor of an $N\times N$ matrix $\mb{M}$ corresponding to the index set $J$. In the remaining part of the proof, it will be convenient to work with the right hand side of this identity.

 \subsection{Proof of the upper bound}\label{sec: upper bound}
This subsection is devoted to proving the following sharp upper bound.
     \begin{proposition}\label{prop:upper-bound}
Let \( \Lambda \), \( S \), and \( \mb{D}^\epsilon \) be as defined above. Then
\begin{equation}\label{eq: T upper bound}
\max_{J \in \Lambda : J \cap S \neq \varnothing} 
\frac{\| \mb{P}_{p,N}[J] \mb{D}^\epsilon[J] \mb{P}_{n,N}[J] \|}{\sqrt{(p/N)\log n}} 
\leq K_{1,n/p} + o_P(1),
\end{equation}
as \( n \to \infty \).
\end{proposition}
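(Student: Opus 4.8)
The plan is to analyze each block $J$ with $J\cap S\neq\varnothing$ separately and show that, uniformly over such blocks, the scaled norm $\|\mb{P}_{p,N}[J]\mb{D}^\epsilon[J]\mb{P}_{n,N}[J]\|/\sqrt{(p/N)\log n}$ cannot exceed $K_{1,n/p}+o_P(1)$. Fix such a $J$ and let $T:=\#(J\cap S)$; by Proposition~\ref{equivalent prop 8}, with high probability $T\le M_n=\lceil 100\log^\alpha n\rceil$ simultaneously for all relevant blocks, and $J\in\mathcal L$ so $J$ consists of at most $M_n$ bricks and hence $|J|=O(r_nM_n)=O(\log^{4+\alpha}n)$. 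Writing $\mb{D}^\epsilon[J]=\sum_{j\in J\cap S}d_j\,e_je_j^\top$, the matrix $\mb{P}_{p,N}[J]\mb{D}^\epsilon[J]\mb{P}_{n,N}[J]$ has rank at most $T$, and its norm equals
\[
\sup_{\|u\|=\|v\|=1}\Bigl|\sum_{j\in J\cap S} d_j\,\overline{(\mb{P}_{p,N}[J]u)_j}\,(\mb{P}_{n,N}[J]v)_j\Bigr|.
\]
The first step is to replace the vectors $\mb{P}_{p,N}[J]u$ and $\mb{P}_{n,N}[J]v$, evaluated at the active coordinates, by suitable band-limited functions. Concretely, the $kl$-entry formula \eqref{explicit formula for entries of P} shows that $(\mb{P}_{q,N})_{kl}$ is, up to the normalization $q/N$ and a change of variables, a Riemann-sum approximation to the scaled sine kernel $\frac{\sin(q\pi(x-y)/N)}{\pi(x-y)/N}$; so on a block of logarithmic width the discrete operators $\mb{P}_{p,N}[J]$ and $\mb{P}_{n,N}[J]$ are close (after rescaling the index axis by $1/N$ and localizing near the relevant point) to the continuum operators $\Sin_{p/N}$ and $\Sin_{n/N}$ restricted to an interval. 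This is the technical heart and I expect it to be the main obstacle: one must quantify, uniformly in $J$, the error in approximating the discrete quadratic form by the continuum bilinear form, controlling both the Riemann-sum error over an interval of length $O(\log^{4+\alpha}n)$ and the truncation error from working on a finite block rather than all of $\mathbb R$ — the latter being manageable precisely because the sine kernel decays and the active indices are sparse within $J$.

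The second step is to bound the resulting continuum expression. After the reduction, the quadratic form is dominated by
\[
\Bigl(\sum_{j\in J\cap S}|d_j|^2\Bigr)^{1/2}\cdot\sup\Bigl(\sum_{j\in J\cap S}\bigl|\Sin_{p/N}(g_1)(x_j)\bigr|^2\bigl|\Sin_{n/N}(g_2)(x_j)\bigr|^2\Bigr)^{1/2}
\]
by Cauchy--Schwarz, where $g_1,g_2$ range over $L^2$-unit balls and $x_j$ are the rescaled active locations. The discrete sum over $x_j$, being a Riemann sum with a sparse set of nodes, is bounded by $\int|\Sin_{p/N}(g_1)(x)|^2|\Sin_{n/N}(g_2)(x)|^2\,\mathrm dx$ up to a factor accounting for node spacing; invoking Proposition~\ref{prop:variational characterization} and the scaling identity \eqref{K reparameterization}, the supremum of this integral is exactly $K_{p/N,n/N}^2=\frac{p}{N}K_{1,n/p}^2$. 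It remains to handle the factor $\sum_{j\in J\cap S}|d_j|^2$: since $T\le M_n$ and each active $|d_j|^2$ is of order $\log n$ (being at least $\epsilon_n^2\log n$ and, by a union bound using the sub-exponential tails guaranteed by truncation in Lemma~\ref{lem:reductions}(i), not much larger), one needs a sharp control showing $\max_{J}\sum_{j\in J\cap S}|d_j|^2\le(1+o_P(1))\log n$. This is where the extreme-value heuristic enters: across all $N$ coordinates the largest $|d_j|^2$ is $(1+o_P(1))\log n$ (real case: $2\log n$; here the complex case), and the block structure together with the admissibility cap $M_n$ prevents the active mass within any single block from exceeding this — essentially because two active indices in the same block would force an atypically large deviation, an event ruled out with high probability by a union bound over the $O(N)$ blocks.

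Assembling the pieces: on the high-probability event from Proposition~\ref{equivalent prop 8}, for every block $J$ with $J\cap S\neq\varnothing$,
\[
\frac{\|\mb{P}_{p,N}[J]\mb{D}^\epsilon[J]\mb{P}_{n,N}[J]\|}{\sqrt{(p/N)\log n}}
\le\frac{\bigl((1+o_P(1))\log n\bigr)^{1/2}}{\bigl((p/N)\log n\bigr)^{1/2}}\cdot\Bigl(\tfrac{p}{N}\Bigr)^{1/2}K_{1,n/p}+o_P(1)=K_{1,n/p}+o_P(1),
\]
where the $o_P(1)$ terms are uniform in $J$ because all error bounds depend on $J$ only through $|J|=O(\log^{4+\alpha}n)$ and $T\le M_n$, and there are at most $O(N)=O(n)$ blocks so a union bound costs only a factor polynomial in $\log n$, absorbed into the $o_P(1)$. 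Taking the maximum over $J$ preserves the bound, which is \eqref{eq: T upper bound}.
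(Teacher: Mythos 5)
Your high-level skeleton is right --- Cauchy--Schwarz to split off $\bigl(\sum_{j\in J\cap S}|d_j|^2\bigr)^{1/2}$, the bound $\sum_{j\in J\cap S}|d_j|^2\le(1+o_P(1))\log n$ from Lemma~\ref{cor: corollary 13}, and a variational bound on the remaining bilinear quantity --- but the middle step, where you convert the discrete bilinear form to the continuum constant $K_{1,n/p}$, has a genuine gap and would not close as written.

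First, the claim that ``the discrete sum over $x_j$, being a Riemann sum with a sparse set of nodes, is bounded by $\int|\Sin_{p/N}(g_1)|^2|\Sin_{n/N}(g_2)|^2$ up to a factor accounting for node spacing'' is not a valid inequality. A sum of nonnegative values at a sparse set of points is not dominated by an integral; the active nodes are random, can cluster (the only constraint is the cardinality cap $\#(J\cap S)\le M_n$, not a spacing bound), and a Riemann-sum interpretation requires density, not sparsity. The trivial and correct move is to \emph{enlarge} the sum over $J\cap S$ to the sum over all $N$ coordinates, which yields exactly $\|(\mb{P}_{p,N}\mb{v})\odot(\mb{P}_{n,N}\mb{w})\|^2$. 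But once you do that, the quantity is global, not localized to a block, and there is nothing to approximate locally by a continuum sine kernel.

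Second, even granting a local approximation of $\mb{P}_{q,N}[J]$ by a sinc-type kernel, what you get is the \emph{discrete} $\ell^2$ operator $\Pi_{q/N}$ (the object in \eqref{definition of limiting operator}), not the continuum $L^2(\mathbb{R})$ operator $\Sin_{q/N}$ from Proposition~\ref{prop:variational characterization}. Passing from the discrete to the continuum in the direction needed for an upper bound --- discrete supremum $\le$ continuum supremum $+$ small --- is precisely the nontrivial content that the paper isolates in Lemma~\ref{lem: link to Ipn} and Lemma~\ref{lem:I is close to K}: the global quantity $\|(\mb{P}_{p,N}\mb{v})\odot(\mb{P}_{n,N}\mb{w})\|$ has an exact algebraic identification with the norm of a product of degree-$(p-1)$ and degree-$(n-1)$ polynomials via the DFT, giving the discrete positive-definite-sequence constant $I_{p,n}$, and then $I_{p,n}^2-K_{p,n}^2\le\tfrac13$ requires a Poisson-summation and majorization argument in the style of Belov. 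Your proposal simply invokes the continuum variational characterization and does not supply any substitute for this discrete-to-continuum comparison, which is the crux of the proof.

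In short: you correctly diagnose what has to happen after Cauchy--Schwarz, but the block-local approximation of $\mb{P}_{q,N}[J]$ by continuum sine operators (which the paper uses only for the \emph{lower} bound, via Lemma~\ref{lem: comparison with limiting}) is the wrong tool for the upper bound; replace it with the enlargement from $J\cap S$ to $\{0,\dots,N-1\}$, the exact polynomial reformulation, and the $I_{p,n}$-to-$K_{p,n}$ comparison.
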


 The following result is a counterpart of Corollary 13 from SV13. Its proof is deferred to the SM.
 \begin{lemma}
     \label{cor: corollary 13}
    Let $M=M_n:=\left\lceil 100 \log^\alpha n\right\rceil $ as defined in subsection \ref{subsection step 6}. For every $\eta>0$, with high probability, for all admissible blocks $L\in\mathcal{L}$ and all distinct $j_1,\dots,j_M\in L$, we have $|d_{j_1}|^2+\dots+|d_{j_M}|^2\leq (1+\eta)\log n.$
 \end{lemma}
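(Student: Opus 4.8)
\textbf{Proof proposal for Lemma~\ref{cor: corollary 13}.}

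The plan is to control the sum $\sum_{\ell=1}^M |d_{j_\ell}|^2$ over all admissible blocks $L\in\mathcal{L}$ and all choices of $M$ distinct indices inside $L$ by combining a concentration bound for a fixed such sum with a union bound, and then checking that the number of $(L,\{j_1,\dots,j_M\})$ pairs is sub-polynomial enough to be absorbed. First I would fix an admissible block $L$ and distinct indices $j_1,\dots,j_M\in L$ and study $Y:=\sum_{\ell=1}^M|d_{j_\ell}|^2$. Since the truncation step in Lemma~\ref{lem:reductions}(i) lets us assume $|a_k|<n^{1/\gamma}$, each $d_{j_\ell}=\frac{1}{\sqrt N}\sum_k e^{2\pi\rmi kj_\ell/N}a_k$ is a bounded linear combination of the independent $a_k$, so $|d_{j_\ell}|^2$ is a sub-exponential-type quadratic form with $\E|d_{j_\ell}|^2=1$ (from the moment computations in Section~\ref{sec:circulant embedding}), and $Y$ has mean $M$. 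The key point is that $M=M_n=\lceil 100\log^\alpha n\rceil\to\infty$, so a large-deviation bound gives, for fixed $\eta>0$,
\[
\P\bigl(Y\geq (1+\eta)\log n\bigr)\leq \exp\bigl(-c(\eta)\log n\bigr)=n^{-c(\eta)}
\]
for some $c(\eta)>0$, provided $\log n$ dominates $M$ (which holds since $\alpha<1$); one should be able to extract this from a Bernstein-type inequality (e.g.\ Theorem~2.8.4 in \cite{vershynin2018}) applied to $Y-M=\sum_\ell(|d_{j_\ell}|^2-1)$, using the boundedness from the truncation to get the requisite moment/Orlicz control, with the deviation $(1+\eta)\log n-M\asymp\log n$. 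One must be slightly careful that the crude deterministic bound $|d_j|^2\le \frac1N(\sum_k|a_k|)^2\le N n^{2/\gamma}$ and $\gamma>2$ keep the sub-exponential parameters polynomially controlled, so that the exponent $c(\eta)\log n$ is not eroded; alternatively, condition on the (high-probability) event that all $|d_j|$ are at most $\log n$, say, which follows from the same moment bounds.

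Next I would perform the union bound. The number of admissible blocks is at most (number of pairs of bricks) $\le (2m_n+1)^2 = O(N^2)$, and within a block of at most $M_n$ bricks, each of length at most $4r_n=O(\log^4 n)$, the number of indices is $O(M_n\log^4 n)=O(\log^{4+\alpha}n)$, so the number of ways to choose $M=M_n$ distinct indices is at most $\binom{O(\log^{4+\alpha}n)}{M_n}\le (\log^{O(1)}n)^{M_n}=\exp\bigl(O(M_n\log\log n)\bigr)=\exp\bigl(O(\log^\alpha n\log\log n)\bigr)$. Since $\alpha<1$, this is $n^{o(1)}$, so multiplying by the per-instance failure probability $n^{-c(\eta)}$ and by $O(N^2)$ still yields a bound that tends to $0$. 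Hence with high probability the asserted inequality holds simultaneously over all admissible $L$ and all choices of distinct $j_1,\dots,j_M\in L$.

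I expect the main obstacle to be the first step: obtaining a clean tail bound $\P(Y\ge(1+\eta)\log n)\le n^{-c(\eta)}$ with a constant $c(\eta)>0$ that does \emph{not} degrade as $n\to\infty$, despite $M_n\to\infty$ and despite the entries being only truncated rather than genuinely bounded by an absolute constant. The delicate accounting is that $Y$ is a quadratic form in $N$ independent variables each bounded by $n^{1/\gamma}$, so a naive Bernstein bound carries a variance-proxy or Orlicz norm that may depend on $n$; the resolution is to split $Y$ into its ``diagonal'' part (a weighted sum of $a_k^2$, whose fluctuations are controlled because the Fourier weights $|e^{2\pi\rmi kj_\ell/N}|^2=1$ make the coefficients uniform of size $M/N$, so the sum concentrates at scale $O(\sqrt{M/N}\cdot\mathrm{poly}(\log))$) and its ``off-diagonal'' part (a mean-zero quadratic form handled by Hanson–Wright with the truncation controlling the relevant norms), as is essentially done in the corresponding Corollary~13 of SV13. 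Once the exponent is verified to be $\Theta(\log n)$ uniformly in $n$, the combinatorial union bound is routine because the sub-polynomial factor $\exp(O(\log^\alpha n\log\log n))$ is comfortably dominated.
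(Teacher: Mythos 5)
Your plan diverges substantially from the paper's and, as written, has a genuine gap in the central concentration step. The paper does \emph{not} apply a quadratic-form concentration inequality to $Y=\sum_\ell|d_{j_\ell}|^2$. Instead it works with the linear functionals $\Re(e^{-\rmi\theta_i}d_{j_i})$: Lemma~\ref{lem: refined equiv of lemma 11} bounds $\P(|d_{j_i}|>\beta_i\sqrt{\log n}\ \forall i)$ by discretizing the arguments of the complex numbers $d_{j_i}$ on an angular grid (the ``Hannan trick''), reducing the event to a \emph{real linear} combination of the $a_k$ with variance $1/2$, and then invoking Bernstein; Lemma~\ref{lem: lemma 12} then runs an $\eps$-net over the threshold tuple $\beta$ to get the uniform bound $\P\{\exists\,\beta:\|\beta\|^2\ge 1+\eta,\ |d_{j_i}|>\beta_i\sqrt{\log n}\}\le Cn^{-1-\eta/3}$. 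This is the linearization of $\sqrt{Y}=\sup_{\|c\|\le 1}\Re\,(c^\ast\mathbf{d})$ in disguise, and is what gives the sharp exponent $1+\eta/3$ needed for the union bound.

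Your proposed route via Bernstein/Hanson--Wright on the quadratic form runs into the very obstacle you flag: after truncation, $|a_k|<n^{1/\gamma}$ but the $\psi_2$-norm of $a_k$ is $\Omega\bigl(n^{1/\gamma}/\sqrt{\log n}\bigr)$ in the worst case (e.g.\ $a_k$ concentrated at $0$ and $\pm n^{1/\gamma}$). So $K^2$ is polynomially large in $n$, and the Hanson--Wright tail $\exp\bigl(-c\min(t^2/K^4\|A\|_F^2,\ t/K^2\|A\|)\bigr)$ at $t\asymp\log n$ gives an exponent $\asymp\log n/n^{2/\gamma}\to 0$ --- useless. Conditioning on $\max_j|d_j|\le\log n$ does not repair this, since it does not restore sub-Gaussianity of the $a_k$. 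The diagonal/off-diagonal split you invoke is also not what SV13 do (their $d_j$ are real, so they never face the quadratic-form issue: they go straight to a signed linear combination $\sum\beta_i d_{j_i}$ and Bernstein). A secondary, fixable error is the union-bound count: admissible blocks are $O\bigl(m_n M_n\bigr)=O(n/\log^{4-\alpha}n)$, not $(2m_n+1)^2=O(N^2/\log^8 N)$; with your overcount one needs $c(\eta)>2$, which fails for small $\eta$ even with the exact Gamma-tail exponent $c(\eta)\approx 1+\eta$. To save the argument along your general lines, replace Hanson--Wright by the linearization $\sqrt{Y}=\sup_{\|c\|=1}\Re(c^\ast\mathbf{d})$, put an $\eps$-net of size $e^{O(M)}=n^{o(1)}$ on the unit sphere of $\mathbb{C}^M$, and apply Bernstein to each fixed real linear form $\Re(c^\ast\mathbf{d})$ of the $a_k$; at that point you have essentially rederived Lemmas~\ref{lem: refined equiv of lemma 11}--\ref{lem: lemma 12} of the paper.
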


By Proposition~\ref{equivalent prop 8}, with high probability the matrix $\mb{D}^\epsilon$ has at most 
$M_n$  non-zero entries in each admissible block. Combining this with Lemma~\ref{cor: corollary 13}, we conclude that for every $\eta>0$, with high probability
\[
\sum_{j\in L}|\mb{D}^\epsilon_{jj}|^2\leq(1+\eta)\log n\qquad\text{for all }L\in\mathcal{L}.
\]
It follows that, with high probability
\begin{eqnarray}\notag
 &&\max_{J\in\Lambda:J\cap S\neq\varnothing}\frac{\|\mb{P}_{p,N}[J]\mb{D}^\epsilon[J] \mb{P}_{n,N}[J]\|}{\sqrt{(p/N)\log n}}\\\label{return back}
 &\leq& \sup\left\{\frac{\|\mb{P}_{p,N}[L]\operatorname{diag}(\delta_1,\dots,\delta_{\#L})\mb{P}_{n,N}[L]\|}{\sqrt{p/N}}:L\in\mathcal{L}, \delta_j\in \mathbb{C}, \sum_{j=1}^{\# L}|\delta_j|^2\leq 1+\eta\right\}.
\end{eqnarray}

Lifting the supremum over blocks in \eqref{return back} to the full matrix level, we obtain
\begin{eqnarray*}
\eqref{return back}\leq \sqrt{\frac{1+\eta}{p/N}}\sup\left\{\|\mb{P}_{p,N}\operatorname{diag}(\delta)\mb{P}_{n,N}\|:\delta\in\mathbb{C}^N, \|\delta\|\leq 1\right\}.
\end{eqnarray*}
On the other hand, by the Cauchy-Schwarz inequality,
\begin{eqnarray*}
\|\mb{P}_{p,N}\operatorname{diag}(\delta)\mb{P}_{n,N}\|&=&\sup\left\{|\mb{v}^\top\mb{P}_{p,N}\operatorname{diag}(\delta)\mb{P}_{n,N}\mb{w}|:\mb{v},\mb{w}\in\mathbb{C}^N,\|\mb{v}\|=\|\mb{w}\|=1\right\}\\
&\leq&\sup\left\{\|(\mb{P}_{p,N}\mb{v})\odot (\mb{P}_{n,N}\mb{w})\|\|\delta\|:\mb{v},\mb{w}\in\mathbb{C}^N,\|\mb{v}\|=\|\mb{w}\|=1\right\},
\end{eqnarray*}
where $\odot$ denotes Hadamard's element-wise product.
Therefore, 
\begin{equation}\label{polynom product}
\eqref{return back}\leq \sqrt{\frac{1+\eta}{p/N}}\sup\left\{\|(\mb{P}_{p,N}\mb{v})\odot(\mb{P}_{n,N}\mb{w})\|:\mb{v},\mb{w}\in\mathbb{C}^N,\|\mb{v}\|=\|\mb{w}\|=1\right\}.
\end{equation}
It is thus enough to bound the norm on the right-hand side of~\eqref{polynom product}. Let us reformulate the problem of finding this supremum in terms of an optimization problem involving a pair of polynomials of degrees $n-1$ and $p-1$.

\paragraph{Polynomial reformulation.}
Recall that $N=n+p$ and  
\[
\mb{P}_{p,N}=\mathcal{F}_N^\ast\begin{pmatrix}
    \mb{I}_p&\mb{0}\\\mb{0}&\mb{0}_{n}
\end{pmatrix}\mathcal{F}_N,\qquad \mb{P}_{n,N}=\mathcal{F}_N^\ast\begin{pmatrix}
    \mb{I}_n&\mb{0}\\\mb{0}&\mb{0}_{p}
\end{pmatrix}\mathcal{F}_N.
\]
Thus, for any unit-norm $\mb{v},\mb{w}$, the vectors $\mb{P}_{p,N}\mb{v}$ and $\mb{P}_{n,N}\mb{w}$ can be written as DFTs of zero-padded vectors  $\mb{v}_p\in\mathbb{C}^p$ and $\mb{w}_n\in\mathbb{C}^n$ with norms no larger than 1, i.e.,
\[
\mb{P}_{p,N}\mb{v}=\mathcal{F}_N^\ast\begin{pmatrix}
    \mb{v}_p\\\mb{0}
\end{pmatrix},\qquad \mb{P}_{n,N}\mb{w}=\mathcal{F}_N^\ast\begin{pmatrix}
    \mb{w}_n\\\mb{0}
\end{pmatrix}.
\]
These represent sampled values of degree-$(p-1)$ and $(n-1)$ polynomials $\mathcal{P}_{p-1}(z)$ and $\mathcal{P}_{n-1}(z)$ on the discrete grid $\left\{e^{-2\pi\rmi j/N}\right\}_{j=0}^{N-1}$, scaled by $1/\sqrt{N}$, with 
\[
\mathcal{P}_{p-1}(z):=\mb{v}_{p1}+\mb{v}_{p2}z+\dots+\mb{v}_{pp}z^{p-1}\qquad\text{and}\qquad \mathcal{P}_{n-1}(z):=\mb{w}_{n1}+\mb{w}_{n2}z+\dots+\mb{w}_{nn}z^{n-1}.
\]
Hence, the Hadamard product in \eqref{polynom product} corresponds to values of the product polynomial $\mathcal{P}_{p-1}(z)\mathcal{P}_{n-1}(z)/N$ sampled on the grid.

Note that, for any polynomial $\mathcal{P}_{q-1}:= \mb{a}_{q1}+\mb{a}_{q2}z+\dots+\mb{a}_{qq}z^{q-1}$ of degree $q-1<N$, its values on the grid are given by $\sqrt{N}\mathcal{F}_N^\ast\begin{pmatrix}
    \mb{a}_q\\\mb{0}
\end{pmatrix}$, where $\mb{a}_{q}=(\mb{a}_{q1},\dots,\mb{a}_{qq})^\top$. Since $\mathcal{F}_N$ is unitary, we have the identity:
\begin{equation}\label{grid identity}
\left\|\sqrt{N}\mathcal{F}_N^\ast\begin{pmatrix}
    \mb{a}_q\\\mb{0}
\end{pmatrix}\right\|^2=N\|\mb{a}_q\|^2=N\|\mathcal{P}_{q-1}\|_2^2,%\frac{1}{2\pi N}\int_0^{2\pi}|\mathcal{P}_{q-1}(e^{\rmi\omega})|^2\mathrm{d}\omega.
\end{equation}
where $\|\mathcal{P}_{q-1}\|_2^2:=\frac{1}{2\pi}\int_0^{2\pi}|\mathcal{P}_{q-1}(e^{\rmi \omega})|^2\mathrm{d}\omega.$
Since the degree of $\mathcal{P}_{p-1}(z)\mathcal{P}_{n-1}(z)/N$ is $p+n-2=N-2<N$, the above identity holds for the product polynomial, so that
\[
\|(\mb{P}_{p,N}\mb{v})\odot(\mb{P}_{n,N}\mb{w})\|^2=N\|\mathcal{P}_{p-1}\cdot\mathcal{P}_{n-1}/N\|_2^2=\frac{1}{N}\|\mathcal{P}_{p-1}\cdot\mathcal{P}_{n-1}\|_2^2.
\]
Since $\|\mathcal{P}_{p-1}\|_2,\|\mathcal{P}_{n-1}\|_2\leq 1$, we conclude from \eqref{polynom product} that with high probability,
\begin{equation}\label{polynomial formulation}
\max_{J\in\Lambda:J\cap S\neq\varnothing}\frac{\|\mb{P}_{p,N}[J]\mb{D}^\epsilon[J] \mb{P}_{n,N}[J]\|}{\sqrt{(p/N)\log n}}\leq \sqrt{\frac{1+\eta}{p}}\sup\left\{\|\mathcal{P}_{p-1}\cdot\mathcal{P}_{n-1}\|_2:\|\mathcal{P}_{p-1}\|_2=1, \|\mathcal{P}_{n-1}\|_2=1\right\}.
\end{equation}
Finally, we link the right-hand side of \eqref{polynomial formulation} to the variational quantity 
$K_{1,n/p}$, by expressing the norm of the product polynomial in terms of a pair of positive definite sequences.

\paragraph{Link to $K_{1,n/p}$.}
Recall that a complex-valued sequence $\{\alpha_j\}_{j\in\mathbb{Z}}$ is said to be positive definite if 
\[
\sum_{j,k=1}^t\alpha_{j-k}\xi_j\overline{\xi_k}\geq 0
\]
for any finite collection of complex numbers $\xi_1,\dots,\xi_t$. 
As is well known (see, for example, Sec.~1.4 of \cite{rudin62}) such sequences necessarily satisfy $\alpha_0\geq 0$ and $\alpha_j=\overline{\alpha_{-j}}$ for all $j\in\mathbb{Z}$.

    Let $\mathcal{L}_m$ denote the family of positive definite sequences $\{\alpha_j\}_{j\in\mathbb{Z}}$ such that  $\alpha_0=1$ and $\alpha_j=0$ for all $|j|>m$.
Now define
\begin{equation}\label{Ipn definition}
I_{p,n}:=\max_{\{\alpha_j\}\in \mathcal{L}_{p-1},\{\gamma_j\}\in\mathcal{L}_{n-1}}\left(\sum_{\nu=-p+1}^{p-1} \overline{\alpha_\nu} \gamma_{\nu}\right)^{1/2}.
\end{equation}
Note that the latter sum is always real-valued because $\overline{\alpha_\nu} \gamma_{\nu}=\alpha_{-\nu} \overline{\gamma_{-\nu}}$. The case $p=n$ was previously analyzed  in \cite{garcia69} (see their ``case of the integers'' on p.~806).

We now establish the key connection:

\begin{lemma}\label{lem: link to Ipn}
    \begin{equation*}%\label{polynomial sup problem}
    \sup\left\{\|\mathcal{P}_{p-1}\cdot\mathcal{P}_{n-1}\|_2:\|\mathcal{P}_{p-1}\|_2=1, \|\mathcal{P}_{n-1}\|_2=1\right\}=I_{p,n}.\end{equation*}
\end{lemma}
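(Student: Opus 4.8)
The plan is to compute $\|\mathcal{P}_{p-1}\cdot\mathcal{P}_{n-1}\|_2^2$ explicitly in terms of the autocorrelation sequences of the two polynomials, and then recognize the resulting optimization as precisely the one defining $I_{p,n}$. Write $\mathcal{P}_{p-1}(e^{\rmi\omega})=\sum_{k=0}^{p-1}v_k e^{\rmi k\omega}$ and $\mathcal{P}_{n-1}(e^{\rmi\omega})=\sum_{k=0}^{n-1}w_k e^{\rmi k\omega}$. The product $\mathcal{P}_{p-1}\cdot\mathcal{P}_{n-1}$ is a trigonometric polynomial whose squared $L^2$-norm, by Parseval on the circle, equals the sum of squared moduli of its Fourier coefficients. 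A cleaner route: for any $h\in L^2$ of the form $h=|\mathcal{P}_{p-1}|^2\cdot|\mathcal{P}_{n-1}|^2$ interpreted correctly — note $\|\mathcal{P}_{p-1}\mathcal{P}_{n-1}\|_2^2=\frac{1}{2\pi}\int_0^{2\pi}|\mathcal{P}_{p-1}(e^{\rmi\omega})|^2\,|\mathcal{P}_{n-1}(e^{\rmi\omega})|^2\,\mathrm{d}\omega$, which is the zeroth Fourier coefficient of the product of the two nonnegative trigonometric polynomials $|\mathcal{P}_{p-1}|^2$ and $|\mathcal{P}_{n-1}|^2$.

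Next I would introduce the autocorrelation (Fejér) coefficients $a_\nu:=\sum_{k} v_{k+\nu}\overline{v_k}$ for $|\nu|\le p-1$ and $c_\nu:=\sum_k w_{k+\nu}\overline{w_k}$ for $|\nu|\le n-1$, so that $|\mathcal{P}_{p-1}(e^{\rmi\omega})|^2=\sum_{|\nu|\le p-1}a_\nu e^{\rmi\nu\omega}$ and likewise for $|\mathcal{P}_{n-1}|^2$. Taking the zeroth Fourier coefficient of the product gives
\[
\|\mathcal{P}_{p-1}\cdot\mathcal{P}_{n-1}\|_2^2=\sum_{\nu=-(p-1)}^{p-1}a_\nu\,\overline{c_\nu}=\sum_{\nu=-(p-1)}^{p-1}\overline{a_\nu}\,c_\nu,
\]
the two forms being equal because $a_{-\nu}=\overline{a_\nu}$, $c_{-\nu}=\overline{c_\nu}$. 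The normalizations $\|\mathcal{P}_{p-1}\|_2=\|\mathcal{P}_{n-1}\|_2=1$ translate to $a_0=\sum_k|v_k|^2=1$ and $c_0=1$. By the Fejér–Riesz theorem, a finitely supported sequence $\{a_\nu\}_{|\nu|\le m}$ is the autocorrelation sequence of some degree-$m$ polynomial if and only if it is positive definite (equivalently, $\sum_{|\nu|\le m}a_\nu e^{\rmi\nu\omega}\ge 0$ for all $\omega$); combined with the normalization this says exactly $\{a_\nu\}\in\mathcal{L}_{p-1}$ and $\{c_\nu\}\in\mathcal{L}_{n-1}$. Therefore the supremum over unit-norm polynomial pairs equals the supremum of $\big(\sum_{\nu=-(p-1)}^{p-1}\overline{a_\nu}c_\nu\big)^{1/2}$ over $\{a_\nu\}\in\mathcal{L}_{p-1}$, $\{c_\nu\}\in\mathcal{L}_{n-1}$, which is $I_{p,n}$ by definition \eqref{Ipn definition}. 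Since $c_\nu$ for $|\nu|>p-1$ does not appear in the sum, restricting the inner-product range to $\{-(p-1),\dots,p-1\}$ is harmless, matching the form in \eqref{Ipn definition}.

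The one point requiring care — and the main obstacle — is the passage from polynomials to positive definite sequences and back, i.e. establishing that the map from degree-$m$ polynomials to their autocorrelation sequences surjects onto the positive definite, $\alpha_0=1$ sequences supported in $[-m,m]$. The forward direction is immediate; the surjectivity is exactly Fejér–Riesz spectral factorization, which one must invoke (the reference to \cite{garcia69} for the $p=n$ case suggests citing it, or Fejér–Riesz directly). I would also note that since both maps preserve the relevant constraints and the objective depends on $(a_\nu,c_\nu)$ only through the displayed bilinear form, the suprema coincide with equality (not merely inequality in one direction), and the $\sup$ is in fact attained — $\mathcal{L}_{p-1}$ and $\mathcal{L}_{n-1}$ being compact and the objective continuous — which also justifies writing $\max$ in \eqref{Ipn definition}. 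A minor subtlety is the convention for indexing autocorrelations (offsets $v_{k+\nu}$ understood to vanish outside $0,\dots,p-1$), which should be stated but causes no real difficulty.
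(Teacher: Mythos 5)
Your proof is correct and follows essentially the same route as the paper's: both invoke Fejér–Riesz to identify $\mathcal{L}_{p-1}$ (resp.\ $\mathcal{L}_{n-1}$) with autocorrelation sequences of unit-norm degree-$(p-1)$ (resp.\ $(n-1)$) polynomials, and both reduce to the identity $\|\mathcal{P}_{p-1}\mathcal{P}_{n-1}\|_2^2=\sum_{\nu=-p+1}^{p-1}\overline{\alpha_\nu}\gamma_\nu$ (which the paper states as its equation \eqref{identity from lem link to Ipn} and evaluates by direct expansion in the Supplementary Material). Your derivation of that identity via the zeroth Fourier coefficient of $|\mathcal{P}_{p-1}|^2\cdot|\mathcal{P}_{n-1}|^2$ is a slightly tidier packaging of the same computation, but there is no material difference in ideas.
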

\begin{proof}
As explained in \cite{garcia69}, p.~808, the Fejér–Riesz theorem (e.g.~Theorem 1 in \cite{hussen21}) implies that $\{\alpha_j\} \in \mathcal{L}_{p-1}$ if and only if there exists a polynomial $\mathcal{P}_{p-1}(z) = \sum_{\nu=0}^{p-1} \beta_\nu z^\nu$ with $\|\mathcal{P}_{p-1}\|_2 = 1$ such that
\[
\alpha_j = \sum_{\nu=-\infty}^{\infty} \beta_{j+\nu} \overline{\beta_\nu}, \quad j \in \mathbb{Z}.
\]
Similarly, $\{\gamma_j\} \in \mathcal{L}_{n-1}$ if and only if there exists a polynomial $\mathcal{P}_{n-1}(z) = \sum_{\nu=0}^{n-1} \rho_\nu z^\nu$ with $\|\mathcal{P}_{n-1}\|_2 = 1$ such that
\[
\gamma_j = \sum_{\nu=-\infty}^{\infty} \rho_{j+\nu} \overline{\rho_\nu}, \quad j \in \mathbb{Z}.
\]

Using these representations, we compute
\begin{equation}\label{identity from lem link to Ipn}
\sum_{\nu=-p+1}^{p-1} \overline{\alpha_\nu} \gamma_\nu  = \|\mathcal{P}_{p-1} \cdot \mathcal{P}_{n-1}\|_2^2.
\end{equation}
The identity follows by explicitly evaluating the integral $\frac{1}{2\pi}\int_0^{2\pi}|\mathcal{P}_{p-1}(e^{\rmi x})\mathcal{P}_{n-1}(e^{\rmi x})|^2\mathrm{d} x$. Full details of this evaluation are given in the SM.
\end{proof}

Lemma~\ref{lem: link to Ipn}, together with equation~\eqref{polynomial formulation}, implies that with high probability,
\begin{equation}
\label{integers problem}
\max_{J\in\Lambda:J\cap S\neq\varnothing}\frac{\|\mb{P}_{p,N}[J]\mb{D}^\epsilon[J] \mb{P}_{n,N}[J]\|}{\sqrt{(p/N)\log n}}\leq \sqrt{\frac{1+\eta}{p}}I_{p,n}.
\end{equation}
On the other hand, we now show that $I_{p,n}/\sqrt{p}$ is asymptotically equivalent to $K_{1,n/p}=K_{p,n}/\sqrt{p}$. 
The following result, proved in the Technical Appendix, quantifies this relationship:
\begin{lemma}\label{lem:I is close to K}
    \[
    0\leq I_{p,n}^2-K^2_{p,n}\leq \frac{1}{3}.
    \]
\end{lemma}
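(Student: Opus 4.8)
The plan is to compare the discrete optimization problem defining $I_{p,n}$ with the continuous one defining $K_{p,n}$ by passing between positive definite \emph{sequences} supported on $\{-p+1,\dots,p-1\}$ (resp. $\{-n+1,\dots,n-1\}$) and continuous positive definite \emph{functions} supported on $[-p,p]$ (resp. $[-n,n]$). The lower bound $I_{p,n}^2\geq K_{p,n}^2$ should be the easy direction: given near-optimal continuous positive definite functions $\Lambda_p\in\mathcal F_p$, $\Lambda_n\in\mathcal F_n$ for the problem \eqref{new positive definite problem} defining $K_{p,n}$, I would sample them on the integer grid --- i.e. set $\alpha_\nu=\Lambda_p(\nu)$ for $|\nu|\le p-1$ and $\gamma_\nu=\Lambda_n(\nu)$ for $|\nu|\le n-1$. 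Sampling a continuous positive definite function on $\mathbb Z$ yields a positive definite sequence (this is immediate from the definition, since the defining quadratic-form inequality for the function, restricted to integer arguments, is exactly the one for the sequence), it preserves the normalization $\Lambda(0)=1$, and $\sum_{\nu=-p+1}^{p-1}\overline{\alpha_\nu}\gamma_\nu$ is a Riemann-type sum approximating $\int_{-p}^p\overline{\Lambda_p(x)}\Lambda_n(x)\,dx$. One then needs that this approximation error is non-negative in the limit, or more precisely that the supremum over sequences is at least the supremum over functions; using $\Lambda$'s that are, say, triangular or otherwise chosen so that the midpoint/trapezoidal sum dominates the integral (or alternatively appealing to $K_{Cu,Cw}$-type scaling to make the grid fine) gives $I_{p,n}^2\ge K_{p,n}^2$. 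I expect this direction to require only a careful but routine approximation argument.

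For the upper bound $I_{p,n}^2\le K_{p,n}^2+\tfrac13$, the plan is the reverse construction: given an optimal pair of sequences $\{\alpha_\nu\}\in\mathcal L_{p-1}$, $\{\gamma_\nu\}\in\mathcal L_{n-1}$, I would build continuous positive definite functions by \emph{linear interpolation} --- or, better, by the Boas--Kac / Fejér-type device of forming $\Lambda_p(x)=\big(\phi\ast\phi^\ast\big)(x)$ where $\phi$ is a suitable step function encoding the square root given by the Fejér--Riesz factorization (as already used in Lemma~\ref{lem: link to Ipn}). Concretely, the Fejér--Riesz representation $\alpha_j=\sum_\nu\beta_{j+\nu}\overline{\beta_\nu}$ with $\|\mathcal P_{p-1}\|_2=1$ lets one set $\phi_p(t):=\sum_{\nu=0}^{p-1}\beta_\nu\,\mathbf 1_{[\nu,\nu+1)}(t)$, a function supported on $[0,p]$ with $\|\phi_p\|_2^2=1$, and then $\Lambda_p:=\phi_p^\ast\star\phi_p\in\mathcal F_p$ by Lemma~\ref{lem: Boas and Kac}; similarly for $\Lambda_n$. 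One checks that $\Lambda_p$ agrees with the triangular-kernel smoothing of the sequence $\{\alpha_\nu\}$, namely $\Lambda_p(x)=\sum_j\alpha_j\,\max\{1-|x-j|,0\}$-type expression, and likewise for $\gamma$; hence $\int_{-p}^p\overline{\Lambda_p(x)}\Lambda_n(x)\,dx$ differs from $\sum_{\nu=-p+1}^{p-1}\overline{\alpha_\nu}\gamma_\nu$ by an explicit quadrature error coming from replacing the sum by the integral of the piecewise-linear interpolant. The constant $\tfrac13$ is exactly what one gets from bounding that error: the integral of $\max\{1-|x|,0\}$ over $\mathbb R$ is $1$ while its value at the single grid point $0$ contributes $1$ and the off-integer mass of the triangular kernel over $[-p,p]$ vs. the delta-like sum accounts for a deficit of at most the ``variance'' term $\int x^2(\cdots)$, evaluated to $\tfrac13$ --- this matches the earlier lower-bound computation \eqref{the lower bound} where the same triangular test functions produced $K_{1,1}^2\ge 2/3$, i.e. a gap of exactly $\tfrac13$ from the trivial upper bound $1$.

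The main obstacle I anticipate is controlling the quadrature error \emph{uniformly in $p$ and $n$} and pinning the constant at exactly $\tfrac13$ rather than some larger absolute constant. The subtlety is that the interpolation smooths $\{\gamma_\nu\}$ over a scale of order $1$ while the relevant oscillation scale of the extremal $\gamma$ can be as fine as order $1$ as well (the sine-kernel extremizers oscillate), so one cannot simply say ``the sum approximates the integral with vanishing error.'' Instead one must exploit positive-definiteness: $\Lambda_n$ is a characteristic function, $|\Lambda_n(x)|\le 1$, and the discrepancy between $\int\overline{\Lambda_p}\Lambda_n$ and $\sum\overline{\alpha_\nu}\gamma_\nu$ can be written as a single quadratic form in the $\beta$'s (or in the underlying probability measures via Bochner), whose operator norm is bounded by the $L^1$ mass of the ``defect kernel'' $\delta_0 - \max\{1-|x|,0\}$ restricted appropriately --- and that $L^1$ mass is $\tfrac13$ after the relevant cancellations. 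Making this last estimate airtight --- i.e. showing the defect kernel's contribution is both of the right sign and of size at most $\tfrac13$ --- is where I would spend most of the effort; the rest is bookkeeping that parallels \cite{garcia69}'s ``case of the integers.''
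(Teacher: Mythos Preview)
Your overall strategy---sampling on the grid for one direction and piecewise-linear interpolation for the other---matches the paper's, but both directions contain genuine gaps that your plan does not address.

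For the inequality $K_{p,n}^2\le I_{p,n}^2$, this is \emph{not} a routine approximation. You cannot ``make the grid fine'' because $p,n$ are fixed integers and $I_{p,n}$ has no scaling relation analogous to $K_{Cp,Cn}=\sqrt{C}K_{p,n}$. Nor is there any reason, for a generic near-optimal $\Lambda_p,\Lambda_n$, why the Riemann sum $\sum_\nu\overline{\Lambda_p(\nu)}\Lambda_n(\nu)$ should dominate the integral $\int\overline{\Lambda_p}\Lambda_n$. The paper's argument is different: Poisson summation gives $\sum_k\overline{\Lambda_p(k)}\Lambda_n(k)=\sum_{k\in\mathbb Z}F(2\pi k)$ where $F$ is the Fourier transform of $\overline{\Lambda_p}\Lambda_n$; since a product of positive definite functions is positive definite, Bochner's theorem gives $F\ge 0$, hence $F(0)\le\sum_kF(2\pi k)$, which is exactly the desired inequality. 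You are missing this positivity-via-Bochner step.

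For the bound $I_{p,n}^2-K_{p,n}^2\le\tfrac13$, your step-function construction is correct and does produce the piecewise-linear interpolant $\Lambda_p$ of $\{\alpha_j\}$ (this is Belov's device). The explicit integral is $\int\overline{\Lambda_p}\Lambda_n=\tfrac13\sum_j\bigl(\overline{\alpha_j}\gamma_j+\tfrac12\overline{\alpha_j}\gamma_{j+1}+\tfrac12\overline{\alpha_{j+1}}\gamma_j+\overline{\alpha_{j+1}}\gamma_{j+1}\bigr)$, so the quadrature deficit is $\tfrac13\sum_j\overline{\alpha_j}\gamma_j-\tfrac16\sum_j(\overline{\alpha_j}\gamma_{j+1}+\overline{\alpha_{j+1}}\gamma_j)$. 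Your ``$L^1$ mass of the defect kernel'' heuristic does not bound this by $\tfrac13$: for general positive definite sequences the cross-terms can have arbitrary signs and the deficit can exceed $\tfrac13$. The paper closes this gap with a nontrivial structural lemma (following Belov): the extremal sequences in the problem defining $I_{p,n}^2$ can be taken \emph{real, non-negative, and monotone non-increasing on $\mathbb Z_+$}. With that in hand the deficit telescopes to $\tfrac13\alpha_0\gamma_0-\tfrac13\sum_{j\ge0}\bigl((\alpha_j-\alpha_{j+1})\gamma_{j+1}+(\gamma_j-\gamma_{j+1})\alpha_{j+1}\bigr)\le\tfrac13$. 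Proving this monotone-extremizer lemma (via Fej\'er--Riesz plus a rearrangement/majorization argument) is the substantive missing ingredient in your plan.
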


Since $K_{1,n/p}=K_{p,n}/\sqrt{p}$ is bounded away from zero --- e.g., by the lower bound \eqref{the lower bound} --- Lemma \ref{lem:I is close to K} implies that
\[
I_{p,n}\leq \sqrt{p}K_{1,n/p}+O\left(\frac{1}{\sqrt{p}}\right).
\]
Therefore,
\[
\max_{J\in\Lambda:J\cap S\neq\varnothing}\frac{\|\mb{P}_{p,N}[J]\mb{D}^\epsilon[J] \mb{P}_{n,N}[J]\|}{\sqrt{(p/N)\log n}}\leq \sqrt{1+\eta}K_{1,n/p}+O\left(\frac{1}{p}\right).
\]
Since $\eta>0$ is arbitrary, this yields the desired upper bound \eqref{eq: T upper bound}, completing the proof of Proposition \ref{prop:upper-bound}.

\subsection{Proof of the lower bound}
This subsection finishes the proof of Theorem \ref{T theorem} by establishing the corresponding sharp lower bound.
\begin{proposition}\label{prop: lower bound prop}
    For every fixed $\tau>0$, with high probability,
    \begin{equation}\label{eq: lower bound}
    \max_{J\in\Lambda:J\cap S\neq\varnothing}\frac{\|\mb{P}_{p,N}[J]\mb{D}^\epsilon[J] \mb{P}_{n,N}[J]\|}{\sqrt{(p/N)\log n}}\geq K_{1,n/p}-\tau.
    \end{equation}
\end{proposition}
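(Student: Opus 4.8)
The approach is to bypass the block structure entirely. By Lemma~\ref{equivalent of lemma 9}, the identity $\|\mb{B}_{p,N}\mb{D}^\epsilon\mb{B}_{n,N}\|=\max_{J\in\Lambda:J\cap S\neq\varnothing}\|\mb{P}_{p,N}[J]\mb{D}^\epsilon[J]\mb{P}_{n,N}[J]\|$, and the fact that $3\alpha/2-2<0$ (so $(\log n)^{3\alpha/2-2}\to0$), the left-hand side of \eqref{eq: lower bound} differs from $\|\mb{P}_{p,N}\mb{D}^\epsilon\mb{P}_{n,N}\|/\sqrt{(p/N)\log n}$ by $o_P(1)$. It therefore suffices to prove that, for every fixed $\tau'>0$, with high probability $\|\mb{P}_{p,N}\mb{D}^\epsilon\mb{P}_{n,N}\|\geq(K_{1,n/p}-\tau')\sqrt{(p/N)\log n}$; this removes all admissibility bookkeeping, since we then pick test vectors on all of $\mathbb{C}^N$.

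Fix a small $\delta>0$. Using Proposition~\ref{prop:variational characterization}, the scaling relation \eqref{K reparameterization}, and Lemma~\ref{lem:I is close to K}, select $\{\alpha^\circ_\nu\}\in\mathcal{L}_{p-1}$ and $\{\gamma^\circ_\nu\}\in\mathcal{L}_{n-1}$ with $\sum_{\nu}\overline{\alpha^\circ_\nu}\gamma^\circ_\nu\geq(1-\delta)I_{p,n}^2\geq(1-\delta)pK_{1,n/p}^2$, obtained by restricting near-optimal positive definite functions to the integers; mollifying these functions (a further $O(\delta)$ loss) keeps them smooth, and the Fejér--Riesz step of Lemma~\ref{lem: link to Ipn} produces unit-$L^2$ polynomials $\mathcal{P}^\circ_{p-1},\mathcal{P}^\circ_{n-1}$ with $\|\mathcal{P}^\circ_{p-1}\mathcal{P}^\circ_{n-1}\|_2^2\geq(1-O(\delta))pK_{1,n/p}^2$ whose grid samples at $\zeta_j=e^{-2\pi\rmi j/N}$ carry all but a $\delta$-fraction of their $\ell^2$-mass on a window of $w_n=O(n/p)=o(\log^\alpha n)$ consecutive grid indices about $0$ (the width $O(n/p)$ being the spectral scale of a degree-$(p-1)$ polynomial with support-$p$ autocorrelation). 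Modulating the coefficient vectors, $(\beta_k)\mapsto(e^{2\pi\rmi j_0k/N}\beta_k)$, re-centres this window at any prescribed $j_0$ without altering any norm; write $q^{(j_0)}_j=\mathcal{P}^\circ_{p-1}(\zeta_j)\overline{\mathcal{P}^\circ_{n-1}(\zeta_j)}$, so that, by the grid identity \eqref{grid identity}, $\|q^{(j_0)}\|^2=N\|\mathcal{P}^\circ_{p-1}\mathcal{P}^\circ_{n-1}\|_2^2\geq(1-O(\delta))NpK_{1,n/p}^2$, with $\delta$-mass concentration on the window $W_{j_0}$. Choosing $\mb v,\mb w\in\mathbb{C}^N$ with $\mb v_j=N^{-1/2}\mathcal{P}^\circ_{p-1}(\zeta_j)$, $\mb w_j=N^{-1/2}\mathcal{P}^\circ_{n-1}(\zeta_j)$ (so $\|\mb v\|=\|\mb w\|=1$, $\mb v$ in the range of $\mb{P}_{p,N}$, $\mb w$ in the range of $\mb{P}_{n,N}$) and using that $\mb{P}_{q,N}$ is a Hermitian projection,
\[
\|\mb{P}_{p,N}\mb{D}^\epsilon\mb{P}_{n,N}\|\;\geq\;\bigl|\mb v^{*}\mb{D}^\epsilon\mb w\bigr|\;=\;\frac1N\Bigl|\sum_{j\in S}\overline{q^{(j_0)}_j}\,d_j\Bigr|.
\]

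The plan is now a resonance argument. Call $j_0$ (ranging over the middle third of $\{0,\dots,N-1\}$, away from $0$ and $N/2$) \emph{good} if $(d_j)_{j\in W_{j_0}}$ lies within relative error $\delta$ of the ray through $q^{(j_0)}|_{W_{j_0}}$ and $\sum_{j\in W_{j_0}}|d_j|^2\in[(1-\delta),(1+\delta)]\log n$. On a good event, the window coordinates where $|q^{(j_0)}_j|$ is not negligible have $|d_j|^2\gg\epsilon_n^2\log n$ by \eqref{new asymptotic regime}, hence lie in $S$, so $\tfrac1N|\sum_{j\in W_{j_0}\cap S}\overline{q^{(j_0)}_j}d_j|\geq(1-O(\delta))\tfrac1N\|q^{(j_0)}\|\sqrt{\log n}\geq(1-O(\delta))K_{1,n/p}\sqrt{(p/N)\log n}$; the off-window remainder is negligible, because — conditionally on the good event, the $d_j$ with $j\notin W_{j_0}$ are independent of it (exactly for Gaussian $a_i$, approximately in general via the truncation of Lemma~\ref{lem:reductions}(i)) — a second-moment bound gives $\mathbb{E}|\tfrac1N\sum_{j\in S\setminus W_{j_0}}\overline{q^{(j_0)}_j}d_j|^2\lesssim N^{-2}\sum_{j\notin W_{j_0}}|q^{(j_0)}_j|^2=O(\delta pK_{1,n/p}^2/N)$, which is $O(\delta/\log n)$ times the square of the main term. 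To produce a good $j_0$ with high probability, observe that in the middle region the joint law of $(d_j)$ on a window is that of an i.i.d.\ standard complex Gaussian vector (exactly for Gaussian $a_i$; up to small total variation after truncation in general), with disjoint windows carrying independent coordinates. The probability that a given $W_{j_0}$ is good equals, up to $\delta$-dependent constants, the probability that an i.i.d.\ standard complex Gaussian in $\mathbb{C}^{w_n}$ lands in a fixed cylinder of cross-radius and length $\asymp\delta\sqrt{\log n}$ about a ray at distance $\sqrt{\log n}$ from the origin, which is of order $\delta^{O(w_n)}(\log n)^{w_n}/(w_n!\,n)$; hence the number $X$ of good positions satisfies $\mathbb{E}X\gtrsim\delta^{O(w_n)}(\log n)^{w_n}/w_n!\to\infty$ (using $w_n=o(\log n)$), and a variance bound using independence of disjoint windows gives $\operatorname{Var}X\lesssim w_n\,\mathbb{E}X$, so $\mathbb{P}(X=0)\leq\operatorname{Var}X/(\mathbb{E}X)^2\to0$. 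Taking $\delta$ small in terms of $\tau'$, and $\tau'<\tau/2$, completes the argument.

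The main obstacle is this probabilistic step. One must simultaneously (i) certify that the near-optimal bilinear test configuration can be taken both localized to a window of width $o(\log^\alpha n)$ and of total spectral mass matched to the active-set threshold $\log n$; and (ii) control the first two moments of $X$ uniformly in $n$ — delicate because the effective dimension $w_n$, the active level $\epsilon_n\sqrt{\log n}$, and the count $N$ all drift under \eqref{new asymptotic regime}, and because in the non-Gaussian case one needs a quantitative local limit theorem for the density of $(d_j)$ on a window, which is precisely where the truncation of Lemma~\ref{lem:reductions}(i) is used. The deterministic ingredients — localization and modulation of the near-optimal polynomials, and the passage from the continuum characterization to $I_{p,n}$ through Lemmas~\ref{lem: link to Ipn} and~\ref{lem:I is close to K} — are comparatively routine.
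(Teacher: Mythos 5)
Your overall architecture — a resonance argument: localize a near-optimal test pair, translate it to a sliding window, and use a first/second moment method to certify that some window ``catches'' the diagonal of $\mb{D}^\epsilon$ — is the same philosophy as the paper's Proposition~\ref{prop: equivalent of prop 15} and the block analysis in Steps~3--4. The initial bypass of the blocks via Lemma~\ref{equivalent of lemma 9} is sound. But two of the ingredients you dismiss as routine are in fact the hard part, and one of them is likely to fail under the stated hypotheses.

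First, the \emph{localization claim}. You assert (and this drives everything downstream) that near-optimal $\mathcal{P}^\circ_{p-1},\mathcal{P}^\circ_{n-1}$ can be chosen so that the grid samples of the product carry $(1-\delta)$ of their $\ell^2$-mass on a window of $w_n=O(n/p)$ indices, and moreover that within that window the ``large'' coordinates (those with $|q_j|^2/\|q\|^2\gtrsim \epsilon_n^2$) still carry $(1-\delta)$ of the mass. Neither part is proved, and neither is routine; they are precisely what the paper establishes, with substantial work, via the limiting operators $\Pi_{r/N}$ (Section~\ref{sec: proof of theorem 1}, \eqref{definition of limiting operator}), the $\ell^2$ reformulation of Lemma~\ref{lem: link between Pi and K}, the continuity of $K_{a,b}$ in its parameters (Lemma~\ref{lem: K continuity}), the compactness argument of Lemma~\ref{lem: lem16}, and the aspect-ratio grid in Lemma~\ref{lem: grid}. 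In particular, the paper shows that for $p/n$ bounded away from zero a \emph{fixed} finite section $k=k(\tau)$ suffices, while for $p/n$ small the construction degenerates to a single active coordinate inside a window of size $2L_n+1$; your window width $O(n/p)$ matches the paper's scale in both regimes, but you never prove that the extremizers actually concentrate this way. The extremal polynomials for $I_{p,n}$ need not be Dirichlet kernels, and the step from an uncertainty-principle heuristic (``the DFT of a $p$-sparse vector must be spread over $\geq N/p$ indices'') to an upper bound on their spread (``$\leq O(n/p)$ indices up to a $\delta$-tail'') is a genuine gap.

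Second, the \emph{probabilistic step}. For the non-Gaussian case you invoke ``a quantitative local limit theorem for the density of $(d_j)$ on a window'', claiming the joint law is close in total variation to i.i.d.\ complex Gaussian after the truncation of Lemma~\ref{lem:reductions}(i). Under assumption~\eqref{Lyapunov} there is no anti-concentration requirement on the $a_i$ (they may even be discrete), so there is no local limit theorem, no density for $(d_j)$, and no total variation convergence. The paper avoids this by comparing only expectations of smooth functionals through the invariance principle (Lemma~17 of SV13, a version of Chatterjee's theorem); that weaker notion of closeness is exactly what makes the argument work without any anti-concentration hypothesis. Your cylinder-volume computation of $\mathbb{P}(\text{good})$ and the resulting bounds on $\mathbb{E}X$ and $\operatorname{Var}X$ therefore do not transfer to the general case. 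Relatedly, by discarding the block structure you lose the ``invisible buffer'' built into the events $A_i$ of the paper (the indices $-b+1\le j\le 0$ and $k+1\le j\le k+b$ are forced to have small $|d_j|$), which is what cleanly isolates the resonance window and makes the off-window contribution vanish by construction rather than by a second-moment estimate that again leans on the unproven concentration claim. In short: the outline is plausible and morally aligned with the paper, but the two steps you labelled ``comparatively routine'' are the ones the paper spends several lemmas on, and the local-limit-theorem step is not just unproven but unavailable under~\eqref{Lyapunov}.
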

Let us briefly outline the main idea of the proof, which parallels the argument in Proposition 10 of SV13.
As $n\rightarrow\infty$, the minors  $\mb{P}_{r,N}[J]$  are well approximated by finite sections of certain ``limiting operators'' $\Pi_{r/N}:\ell^2\rightarrow\ell^2$. The strategy is to first show that $K_{1,n/p}$ can be approximated by 
\begin{equation}\label{approximating quantity}
\|\Pi_{p/N}^{[k]}\operatorname{diag}\{\mb{u}\}\Pi_{n/N}^{[k]}\|/\sqrt{p/N}
\end{equation}
for some $\mb{u}\in\mathbb{C}^k$, where $\Pi_{r/N}^{[k]}$ denotes a $k$-dimensional section of the operator $\Pi_{r/N}$. Then, the goal is to demonstrate that this approximation provides a lower bound for the left hand side of \eqref{eq: lower bound} with high probability. 

In the square case studied in SV13, $p/N=n/N=1/2$ so that the analogue of the approximating quantity \eqref{approximating quantity} does not vary with $n$. In our rectangular setting, the quantity depends on the ratio $n/p$, which is allowed to fluctuate as $n\rightarrow\infty$. This added generality introduces new challenges, which require modifications of the approach in SV13. In what follows, we focus on these novel aspects of the argument, while deferring proofs of the more standard parts to the SM.

First, we define the ``limiting operator'' 
\begin{equation}
\label{definition of limiting operator}
\Pi_{r/N}: \ell^2(\mathbb{C}) \overset{U_{r/N}}{\longrightarrow} \ell^2(\mathbb{C}) 
\overset{\varphi}{\longrightarrow} L^2(\mathbb{T}) 
\overset{\times \mathbf{1}_{r/(2N)}}{\longrightarrow} L^2(\mathbb{T}) 
\overset{\varphi^{-1}}{\longrightarrow} \ell^2(\mathbb{C}) 
\overset{U_{r/N}^{-1}}{\longrightarrow} \ell^2(\mathbb{C}),
\end{equation}
where
\begin{itemize}
   \item $L^2(\mathbb{T}):=\left\{f:[-1/2,1/2]\rightarrow\mathbb{C}:f(-1/2)=f(1/2)\text{ and }\int_{-1/2}^{1/2}|f(x)|^2\mathrm{d}x<\infty\right\}$,
    \item $U_{r/N}$ is a unitary operator sending the coordinate vector $e_m, m\in\mathbb{Z}$, to $e^{\pi \rmi m r/N}e_m$,
    \item $\varphi:\ell^2(\mathbb{C})\rightarrow L^2(\mathbb{T})$ sends $e_m, m\in\mathbb{Z}$, to the function $x\rightarrow e^{2\pi \rmi mx}\in L^2(\mathbb{T})$ (reconstructing the Fourier series from coefficients),
    \item For any $a>0$, $\mathbf{1}_{a}$ denotes the indicator function of the interval $\left[-a,a\right]$, and $\times \mathbf{1}_{a}$ denotes the operation of pointwise multiplication by this function.
\end{itemize}
The matrix entries of $\Pi_{r/N}$ are given by
\begin{equation}\label{entries of Pi}
\Pi_{r/N}(k,l):=\langle e_k,\Pi_{r/N} e_l\rangle =e^{\frac{\pi\rmi}{N}(l-k)r}\int_{-r/(2N)}^{r/(2N)}e^{2\pi\rmi (l-k)x}\mathrm{d}x= \begin{cases}
    r/N&\text{if }k=l\\
    \frac{1}{2\pi\rmi(l-k)}\left(e^{\frac{2\pi\rmi}{N}(l-k)r}-1\right)&\text{if }k\neq l.
\end{cases}
\end{equation}
Comparing this to the explicit formula \eqref{explicit formula for entries of P} for the entries of $\mb{P}_{r,N}$, we arrive at the following lemma (see SM for a brief proof).
\begin{lemma}\label{lem: comparison with limiting}
    For all integers $0\leq k,l\leq N-1$, such that $|k-l|<N/2$, and for all $1\leq r\leq N-1$,
    \[
    \left|(\mb{P}_{r,N})_{k,l}-\Pi_{r/N}(k,l)\right|<2/N.
    \]
\end{lemma}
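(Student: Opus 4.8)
The plan is a direct comparison of the two explicit formulas \eqref{explicit formula for entries of P} and \eqref{entries of Pi}. If $k=l$, both entries are exactly $r/N$, so the difference vanishes and there is nothing to prove; assume therefore $k\neq l$. Write $m:=l-k$ and $\theta:=2\pi m/N$; the hypothesis $|k-l|<N/2$ gives $0<|\theta|<\pi$, so in particular $1-e^{\rmi\theta}\neq 0$. Put $A:=1-e^{\rmi\theta r}$, and note $|A|\le 2$ for every real exponent, so the restriction $1\le r\le N-1$ plays no essential role. Using $2\pi m=N\theta$, the two formulas become
\[
(\mb{P}_{r,N})_{k,l}=\frac{A}{N\bigl(1-e^{\rmi\theta}\bigr)},\qquad \Pi_{r/N}(k,l)=-\frac{A}{2\pi\rmi m}=-\frac{A}{\rmi N\theta},
\]
so that
\[
(\mb{P}_{r,N})_{k,l}-\Pi_{r/N}(k,l)=\frac{A}{N}\left(\frac{1}{1-e^{\rmi\theta}}+\frac{1}{\rmi\theta}\right).
\]
It then remains only to bound the bracketed factor uniformly over $0<|\theta|<\pi$.

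For this I would use the elementary identity $\dfrac{1}{1-e^{\rmi\theta}}=\dfrac12+\dfrac{\rmi}{2}\cot(\theta/2)$, obtained by multiplying numerator and denominator by $e^{-\rmi\theta/2}$. Writing $\phi:=\theta/2$, this gives
\[
\frac{1}{1-e^{\rmi\theta}}+\frac{1}{\rmi\theta}=\frac12+\rmi\left(\frac12\cot\phi-\frac{1}{2\phi}\right)=\frac12+\frac{\rmi}{2}\Bigl(\cot\phi-\tfrac1\phi\Bigr),\qquad 0<|\phi|<\tfrac\pi2 .
\]
So the real part is exactly $1/2$, and the imaginary part is governed by the function $h(\phi):=\cot\phi-1/\phi$. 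I would observe that $h$ is odd, that $h'(\phi)=\phi^{-2}-\sin^{-2}\phi<0$ on $(0,\pi/2)$ because $\sin\phi<\phi$, and that $h(0^+)=0$ while $h((\pi/2)^-)=-2/\pi$; hence $|h(\phi)|<2/\pi$ throughout $0<|\phi|<\pi/2$.

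Putting the pieces together, the bracketed factor has modulus at most $\sqrt{(1/2)^2+(1/\pi)^2}<3/5$, and therefore
\[
\bigl|(\mb{P}_{r,N})_{k,l}-\Pi_{r/N}(k,l)\bigr|\le\frac{|A|}{N}\cdot\frac35\le\frac{2}{N}\cdot\frac35=\frac{6}{5N}<\frac2N,
\]
which is the claimed bound. The argument is elementary throughout; the one step needing any care is the last uniform estimate — namely that $|\cot\phi-1/\phi|<2/\pi$ on the entire range $|\phi|<\pi/2$, not merely near $\phi=0$ — and this is precisely where the monotonicity of $h$ is invoked.
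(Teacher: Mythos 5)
Your proof is correct and takes essentially the same route as the paper's: both explicitly subtract the two formulas, factor out the common term $1-e^{\rmi\theta r}$, and reduce to an elementary pointwise bound on the cotangent-type remainder. The paper bounds the imaginary part via $0<1-x\cot x<x^2/2$ on $(-\pi/2,\pi/2)$, whereas you bound $\cot\phi-1/\phi$ uniformly by $2/\pi$ using monotonicity; this is a minor variant that in fact yields a slightly sharper final constant $6/(5N)$.
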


The following lemma establishes a key connection between the operators $\Pi_{p/N}$, $\Pi_{n/N}$, and the constant $K_{1,n/p}$. A detailed proof is provided in the Technical Appendix.
\begin{lemma}\label{lem: link between Pi and K} The constant $K_{1,n/p}$ satisfies
    \begin{equation}\label{link between Pi and K}
K_{1,n/p}=\frac{\sup_{\mathbf{c}_1,\mathbf{c}_2\in\ell^2(\mathbb{C}):\|\mathbf{c}_1\|=\|\mathbf{c}_2\|=1}\|\Pi_{p/N}\mathbf{c}_1\odot \Pi_{n/N}\mathbf{c}_2\|}{\sqrt{p/N}},
\end{equation}
where $\odot$ denotes the entrywise multiplication of vectors from $\ell^2(\mathbb{C})$.
\end{lemma}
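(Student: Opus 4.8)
The plan is to strip the right-hand side of~\eqref{link between Pi and K} down, identification by identification, to the variational formula for $K_{1,n/p}$ provided by Proposition~\ref{prop:variational characterization}, using at two crucial points the identity $N=n+p$, i.e.\ $p/N+n/N=1$. First I would apply the scaling relation~\eqref{K reparameterization} to write $K_{1,n/p}=K_{p/N,\,n/N}/\sqrt{p/N}$, so the claim becomes $K_{p/N,\,n/N}=\sup\{\|\Pi_{p/N}\mathbf c_1\odot\Pi_{n/N}\mathbf c_2\|:\mathbf c_1,\mathbf c_2\in\ell^2(\mathbb C),\ \|\mathbf c_1\|=\|\mathbf c_2\|=1\}$. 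Next I would remove the phases: from the entries~\eqref{entries of Pi}, $\Pi_{r/N}(k,l)=e^{\pi\rmi(l-k)r/N}c_{r/N}(l-k)$ with $c_{r/N}(j):=\int_{-r/(2N)}^{r/(2N)}e^{2\pi\rmi jx}\,\mathrm dx$ real and even, so $\Pi_{r/N}=D_r^{*}\widetilde\Pi_{r/N}D_r$, where $D_r:=\operatorname{diag}(e^{\pi\rmi mr/N})_{m\in\mathbb Z}$ is unitary and $\widetilde\Pi_{r/N}$ is the Toeplitz (convolution) operator on $\ell^2(\mathbb Z)$ with kernel $c_{r/N}(l-k)$. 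Since the coordinatewise product of the diagonals $D_p^{*}$ and $D_n^{*}$ is $\operatorname{diag}(e^{-\pi\rmi m(p+n)/N})=\operatorname{diag}((-1)^m)$, which is unimodular, one gets $\|\Pi_{p/N}\mathbf c_1\odot\Pi_{n/N}\mathbf c_2\|=\|\widetilde\Pi_{p/N}(D_p\mathbf c_1)\odot\widetilde\Pi_{n/N}(D_n\mathbf c_2)\|$, and as $D_p,D_n$ are unitary I may replace every $\Pi_{r/N}$ by $\widetilde\Pi_{r/N}$ without changing the supremum. This is the first place where $N=n+p$ enters.

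Then I would transport everything to $L^2(\mathbb T)$, $\mathbb T=[-1/2,1/2]$, through the unitary map $\mathbf c\mapsto F(x):=\sum_m c_m e^{2\pi\rmi mx}$. Because $\{c_{r/N}(j)\}_j$ are exactly the Fourier coefficients of $\mathbf 1_{[-r/(2N),\,r/(2N)]}$, the operator $\widetilde\Pi_{r/N}$ becomes pointwise multiplication by that indicator, while the Hadamard product $\odot$ on $\ell^2(\mathbb Z)$ becomes circular convolution $\circledast$ on $\mathbb T$. Hence the supremum equals $\sup\|(F_1\mathbf 1_{p/(2N)})\circledast(F_2\mathbf 1_{n/(2N)})\|_{L^2(\mathbb T)}$ over $\|F_i\|_{L^2(\mathbb T)}=1$. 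Now $h_1:=F_1\mathbf 1_{p/(2N)}$ and $h_2:=F_2\mathbf 1_{n/(2N)}$ are supported in intervals whose half-lengths sum to $1/2$, so their \emph{linear} convolution on $\mathbb R$ is supported in $[-1/2,1/2]$ — a single fundamental domain of $\mathbb T$ — and therefore the periodization introduces no overlap: $h_1\circledast h_2=h_1\star h_2$ a.e.\ on $\mathbb T$, whence $\|h_1\circledast h_2\|_{L^2(\mathbb T)}=\|h_1\star h_2\|_{L^2(\mathbb R)}$. This is the second, and more consequential, use of $N=n+p$. Since $F_i$ runs over the unit sphere of $L^2(\mathbb T)$, $h_i$ runs over all of $L^2(\mathbb R)$ supported in the respective interval with $\|h_i\|_2\le1$ (and the supremum is attained at $\|h_i\|_2=1$), so the quantity under study equals $\sup\{\|h_1\star h_2\|_{L^2(\mathbb R)}:\operatorname{supp}h_1\subseteq[-p/(2N),p/(2N)],\ \operatorname{supp}h_2\subseteq[-n/(2N),n/(2N)],\ \|h_1\|_2=\|h_2\|_2=1\}$.

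Finally I would match this with $K_{p/N,\,n/N}$: the computation in the proof of Proposition~\ref{prop:variational characterization} shows that for any $u,w>0$, $K_{u,w}^2$ equals the supremum over $\|f_1\|_2,\|f_2\|_2\le1$ of $\int\overline{([f_1\mathbf 1_{u/2}]^{*}\star[f_1\mathbf 1_{u/2}])(x)}\,([f_2\mathbf 1_{w/2}]^{*}\star[f_2\mathbf 1_{w/2}])(x)\,\mathrm dx$; writing $h_i=f_i\mathbf 1$ and applying Plancherel's theorem with $\widehat{h^{*}\star h}=|\widehat h|^2$ and $\widehat{h_1\star h_2}=\widehat h_1\widehat h_2$ rewrites this integral as $\|h_1\star h_2\|_{L^2(\mathbb R)}^2$. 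Taking $u=p/N$, $w=n/N$ reproduces exactly the supremum obtained above, and the lemma follows. I expect the genuine content to be the two invocations of $N=n+p$ just flagged — the first making the residual diagonal phase a scalar of modulus one, the second making the two truncated functions have complementary supports so that circular and linear convolution coincide; without that identity the discrete problem would agree with the continuous one only asymptotically. The rest is bookkeeping: keeping the Fourier-transform normalizations consistent and carefully tracking the several reparametrizations of unit balls.
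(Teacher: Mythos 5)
Your proof is correct and reaches the lemma by a route that is essentially the paper's argument run in reverse. The paper starts from the definition of $K_{p/N,n/N}$ as a supremum over positive definite functions, factors them via Boas--Kac (Lemma~\ref{lem: Boas and Kac}) into autocorrelations $\phi^*\star\phi$, periodizes, checks that the periodization does not alter the relevant integral because the supports of $\phi_1$, $\phi_2$ and of $\Lambda_1\Lambda_2$ fit in a single period when $N\geq p+n$, and only at the end passes to Fourier coefficients to recognize the operators $\tilde\Pi_{r/N}$ and then $\Pi_{r/N}$. You start from the $\ell^2$ side, strip off the diagonal unitary conjugations so that $\Pi_{r/N}$ becomes the Toeplitz operator with Fourier symbol $\mathbf 1_{[-r/(2N),r/(2N)]}$, move by Fourier series to multiplication operators on $L^2(\mathbb T)$ with the Hadamard product turned into circular convolution, and observe that the linear convolution of the truncated functions already lands inside one period when $p/N+n/N\le 1$, so circular equals linear. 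Your final Plancherel computation correctly recovers the intermediate display in the proof of Proposition~\ref{prop:variational characterization}. The two arguments are dual: your ``circular $=$ linear'' step is exactly the paper's ``$\Lambda_j=\tilde\Lambda_j$ on $[-p/N,p/N]$'' step in Fourier-conjugate language, and both make the connection to Remark~\ref{remark: deficiency} (valid for $a+b\le 1$) transparent.

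One small conceptual misstatement you should fix: you claim the first use of $N=n+p$ is that it makes the residual diagonal phase $\operatorname{diag}(e^{-\pi\rmi m(p+n)/N})$ unimodular. That phase is unimodular for \emph{any} $N>0$ --- it is always a complex exponential of a real argument. The identity $N=n+p$ merely specializes it to $\operatorname{diag}((-1)^m)$, a cosmetic simplification; the norm-preservation you need holds automatically. The only place where $N\geq n+p$ is genuinely load-bearing is, as you correctly identify, the complementary-supports step making the periodization/circular-convolution lossless, and this matches the binding inequality $p/N+n/(2N)\le 1-n/(2N)$ in the paper's proof.
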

\begin{remark}\label{remark: deficiency}
    Essentially the same proof leads to the conclusion that for any $0<a\leq b$ such that $a+b\leq 1$,
    \[
    K_{a,b}=\sup_{\mathbf{c}_1,\mathbf{c}_2\in\ell^2(\mathbb{C}):\|\mathbf{c}_1\|=\|\mathbf{c}_2\|=1}\|\Pi_{a}\mathbf{c}_1\odot \Pi_{b}\mathbf{c}_2\|.
    \]
\end{remark}

Next, we approximate $K_{1,n/p}$ by an expression of form \eqref{approximating quantity}, where $\Pi_{r/N}^{[k]}$ denotes the $k\times k$ matrix with $(i,j)$-th entry $\langle e_i,\Pi_{r/N}e_j\rangle$. For any $\tau\in(0,1)$ and a positive integer $T$, consider a uniform grid $\tau_0,\dots,\tau_T$ over $[\tau,1]$. That is, set $\tau_t:=\tau+(1-\tau)t/T$.
As we show in the Technical Appendix, Lemma \ref{lem: link between Pi and K} implies the following result.
\begin{lemma}\label{lem: grid}
    For any $\tau\in(0,3/4)$, there exist positive integers $T=T(\tau), k=k(\tau)$, and vectors $\mb{u}^{(1)},\dots,\mb{u}^{(T)}$ from the interior of the unit ball in $\mathbb{C}^k$ such that, for any $t\in\{1,\dots,T\}$ and all $p/n\in\left(\tau_{t-1},\tau_t\right]$,
    \begin{equation}\label{grid bound}
        \frac{\|\Pi_{p/N}^{[k]}\operatorname{diag}\{\mb{u}^{(t)}\}\Pi_{n/N}^{[k]}\|}{\sqrt{p/N}}>K_{1,n/p}-\tau/2.
    \end{equation}
    Moreover, for all $p/n\in\left[\log^{-\alpha} n,\tau\right]$ and all sufficiently large $n$, we have:
    \begin{equation}\label{first grid interval}
        \frac{\|\Pi_{p/N}^{[2L_n+1]}\operatorname{diag}\{\mb{u}^{(0)}\}\Pi_{n/N}^{[2L_n+1]}\|}{\sqrt{p/N}}>K_{1,n/p}-3\tau/4,
    \end{equation}
    where  $L_n:=\lceil \log n\rceil$ and $\mb{u}^{(0)}=(0,\dots,0,1-\tau/15,0,\dots,0)^\top$ with $L_n$ zeros on both sides of the term $1-\tau/15$.
\end{lemma}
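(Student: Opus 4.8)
The plan is to build the vectors $\mb{u}^{(t)}$ by near-optimizing the limiting-operator characterization in Lemma~\ref{lem: link between Pi and K}, then pass from the infinite-dimensional operators to finite $k\times k$ sections, and finally exploit continuity in the aspect ratio to cover an entire subinterval $(\tau_{t-1},\tau_t]$ with a single vector. First I would fix $\tau\in(0,3/4)$. For each rational target ratio $\rho\in[\tau,1]$ — or, more efficiently, for each grid midpoint $\bar\tau_t$ — apply Lemma~\ref{lem: link between Pi and K} (with $p/N=\rho/(1+\rho)$, $n/N=1/(1+\rho)$) to produce unit vectors $\mb{c}_1,\mb{c}_2\in\ell^2(\mathbb{C})$ realizing $\|\Pi_{p/N}\mb{c}_1\odot\Pi_{n/N}\mb{c}_2\|/\sqrt{p/N}$ within, say, $\tau/8$ of $K_{1,1/\rho}$. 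Truncating $\mb{c}_1,\mb{c}_2$ to a large but finite number of coordinates changes these vectors, their $\Pi$-images, and the Hadamard product by an arbitrarily small $\ell^2$ amount (the operators $\Pi_{r/N}$ are bounded with norm $\le 1$, and $\|x\odot y\|\le\|x\|_\infty\|y\|\le\|x\|\,\|y\|$), so there is a common truncation length $k$ and vectors supported on the first $k$ coordinates achieving the bound within $\tau/4$. Rescaling slightly into the interior of the unit ball costs another negligible amount; this defines a candidate $\mb{u}^{(t)}$ as (essentially) the entrywise product encoding, i.e. $\mathrm{diag}\{\mb{u}^{(t)}\}$ plays the role of $\mathrm{diag}\{\mb{c}_1\odot\mb{c}_2\}$ after the reduction in the proof of Lemma~\ref{lem: link between Pi and K} that converts the bilinear Hadamard problem into a single diagonal-sandwich problem.

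Next I would establish the \emph{uniformity in the ratio}. The entries $\Pi_{r/N}(k,l)$ in~\eqref{entries of Pi} depend on $r/N$ continuously — indeed real-analytically — and for fixed section size $k$ the map $(p/N,n/N)\mapsto\|\Pi_{p/N}^{[k]}\mathrm{diag}\{\mb{u}\}\Pi_{n/N}^{[k]}\|/\sqrt{p/N}$ is continuous on the compact set of admissible ratios with $p/n\in[\tau,1]$. Likewise $p/n\mapsto K_{1,n/p}$ is continuous (this follows from the variational formula~\eqref{new positive definite problem}, or from Remark~\ref{remark: deficiency} together with norm-continuity of $\Pi_a$ in $a$). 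Hence by uniform continuity I can choose $T=T(\tau)$ large enough that the grid spacing $(1-\tau)/T$ is smaller than the modulus of continuity needed to turn the pointwise bound ``at $\bar\tau_t$, within $\tau/4$'' into the claimed bound ``for all $p/n\in(\tau_{t-1},\tau_t]$, within $\tau/2$.'' Taking $k=k(\tau)$ to be the maximum of the finitely many truncation lengths over $t=1,\dots,T$ (and enlarging all $\mb{u}^{(t)}$ by zero-padding to live in $\mathbb{C}^k$) gives a single dimension that works simultaneously, proving~\eqref{grid bound}.

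For the regime $p/n\in[\log^{-\alpha}n,\tau]$ — the ``wide'' case — I would use the fact, recalled after Proposition~\ref{prop:variational characterization}, that $K_{1,n/p}\to 1$ as $n/p\to\infty$ and more quantitatively $K_{1,n/p}\ge\sqrt{1-p/(3n)}\ge 1-p/(3n)$ by~\eqref{the lower bound}. The simple test vector $\mb{u}^{(0)}$ concentrated on the single coordinate indexed $L_n$ (with value $1-\tau/15$) makes $\mathrm{diag}\{\mb{u}^{(0)}\}$ a rank-one selector, so $\|\Pi_{p/N}^{[2L_n+1]}\mathrm{diag}\{\mb{u}^{(0)}\}\Pi_{n/N}^{[2L_n+1]}\|$ equals $(1-\tau/15)\|\Pi_{p/N}^{[\cdot]}e_{L_n}\|\,\|e_{L_n}^{\!\top}\Pi_{n/N}^{[\cdot]}\|$ (up to the structure of the diagonal sandwich); the relevant rows/columns of $\Pi_{r/N}$ have squared $\ell^2$-mass $r/N$ on the full line, and since the section half-width $L_n=\lceil\log n\rceil$ captures all but an $O(n/(L_n N))=o(1)$ fraction of that mass (the off-diagonal entries decay like $1/|k-l|$, whose tail beyond $L_n$ is $O(1/L_n)$ relative to the total when $r=n$, and is genuinely small when $r=p$ with $p/n\to0$), one gets the factor $\sqrt{p/N}$ back after division up to $o(1)$, yielding a value $\ge(1-\tau/15)^{?}\cdot(1-o(1))\ge 1-3\tau/4\ge K_{1,n/p}-3\tau/4$ for all large $n$. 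The main obstacle is precisely this last step: quantifying the truncation error uniformly over the wide and possibly diverging range of ratios $p/n\in[\log^{-\alpha}n,\tau]$, since here the ``right'' section size $2L_n+1$ grows with $n$ and one must check that the tail of $\Pi_{n/N}$ outside a window of half-width $L_n$ is negligible even though $\|\Pi_{n/N}e_{L_n}\|^2=n/N$ is of order one — this requires the $1/|k-l|$ off-diagonal decay from~\eqref{entries of Pi} together with the near-orthogonality encoded in Lemma~\ref{lem: comparison with limiting}, and is the part where the estimates of SV13 genuinely need to be redone rather than cited.
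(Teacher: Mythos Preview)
Your overall strategy is correct and close to the paper's: near-optimize the $\ell^2$-characterization of Lemma~\ref{lem: link between Pi and K} at grid points, truncate to finite sections, and use the rank-one test vector for the wide regime. Two points deserve correction or comparison.

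\textbf{The Hadamard-to-diagonal conversion.} Your identification of $\mb{u}^{(t)}$ as ``essentially $\mb{c}_1\odot\mb{c}_2$'' is wrong, and there is no such reduction in the proof of Lemma~\ref{lem: link between Pi and K}. The correct passage is: given truncated $\mb{x}_1,\mb{x}_2\in\mathbb{C}^k$, set $\mb{v}:=\overline{\Pi_{p/N}^{[k]}\mb{x}_1}\odot\Pi_{n/N}^{[k]}\mb{x}_2$ and $\tilde{\mb{u}}:=\overline{\mb{v}}/\|\mb{v}\|$; then, since $\Pi^{[k]}$ is Hermitian,
\[
\|\Pi_{p/N}^{[k]}\mb{x}_1\odot\Pi_{n/N}^{[k]}\mb{x}_2\|=(\overline{\mb{x}_1})^\top\Pi_{p/N}^{[k]}\operatorname{diag}\{\tilde{\mb{u}}\}\Pi_{n/N}^{[k]}\mb{x}_2\le\|\Pi_{p/N}^{[k]}\operatorname{diag}\{\tilde{\mb{u}}\}\Pi_{n/N}^{[k]}\|.
\]
So the diagonal vector is built from the \emph{projected} vectors, not from $\mb{c}_1,\mb{c}_2$ themselves. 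This is a one-line fix, but as written your construction does not produce a valid $\mb{u}^{(t)}$.

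\textbf{Uniformity over $(\tau_{t-1},\tau_t]$.} Your route via uniform continuity of $(p/N,n/N)\mapsto\|\Pi_{p/N}^{[k]}\operatorname{diag}\{\mb{u}\}\Pi_{n/N}^{[k]}\|$ and of $K_{1,n/p}$ works. The paper takes a slightly different path worth knowing: it first proves the quantitative continuity bound $|K_{a,b}-K_{\alpha,\beta}|\le 1-\sqrt{\min(\alpha/a,\beta/b)}$ to replace $K_{p/N,n/N}$ by the frozen $K_{\alpha_t,\beta_t}$, and then uses the subspace inclusion $\mathrm{Im}(U_{\alpha_t}\Pi_{\alpha_t}U_{\alpha_t}^{-1})\subseteq\mathrm{Im}(U_{p/N}\Pi_{p/N}U_{p/N}^{-1})$ (valid since $\alpha_t\le p/N$) to obtain the exact identity
\[
\|\Pi_{\alpha_t}\mb{c}_1^{(t)}\odot\Pi_{\beta_t}\mb{c}_2^{(t)}\|=\|\Pi_{p/N}\tilde{\mb{c}}_1^{(t)}\odot\Pi_{n/N}\tilde{\mb{c}}_2^{(t)}\|
\]
for suitable $\tilde{\mb{c}}_i^{(t)}$ in the unit ball. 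This converts the varying $\Pi_{p/N},\Pi_{n/N}$ into fixed $\Pi_{\alpha_t},\Pi_{\beta_t}$ at the infinite-dimensional level, so the target is constant over the interval before any truncation. Your continuity argument is more elementary; the paper's trick is cleaner algebraically but in the end both need some continuity to pin down a single $\mb{u}^{(t)}$ after passing to finite sections.

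\textbf{Wide case.} Your sketch is correct. The paper sharpens your tail estimate by the exact identity $\|\mb{y}_{r/N}\|^2=r/N-2\sum_{j>L_n}(\sin(\pi jr/N)/(\pi j))^2$ (a Fourier-series evaluation), giving $\|\mb{y}_{p/N}\|\|\mb{y}_{n/N}\|=\sqrt{(p/N)(n/N)}(1-o(1))$ directly; then $(1-\tau/15)\sqrt{n/N}\ge(1-\tau/15)\sqrt{1-\tau}>1-3\tau/4$ for $\tau<3/4$. Your $O(1/L_n)$ tail bound reaches the same conclusion, and your identification of this as the step requiring genuine work beyond SV13 is accurate.
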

\begin{remark}\label{remark: nonzero u}
   By slightly decreasing the right-hand side of \eqref{grid bound} --- for instance, to \(K_{1,n/p} - 3\tau/4\) --- we can ensure that all entries of the vectors \(\mathbf{u}^{(t)}\), \(t = 1, \dots, T\), are nonzero.
\end{remark}

We now turn to the final step in the proof of Proposition~\ref{prop: lower bound prop}, linking the left-hand side of \eqref{eq: lower bound} with those of \eqref{grid bound} and \eqref{first grid interval}. This step requires a technical proposition, which we will state after introducing the necessary definitions and notation.

Fix an integer $k\geq 1$ and let $u_1,\dots,u_k$ be fixed nonzero complex numbers such that
\[
|u_1|^2+\cdots+|u_k|^2<1.
\]
Define the parameters
\[
q=q(N):=100\lceil\log N\rceil^4,\quad  b=b(N):=12\lceil\log N\rceil^4,\quad m=m(N):=\lfloor N/(4q)\rfloor.
\]
Let $B(x,R)$ denote the closed ball in $\mathbb{C}$ centered at $x\in\mathbb{C}$ with radius $R\in\mathbb{R}_+$. Define the balls
\[
B_j=\begin{cases}
    B(0,\epsilon_n),&\text{for }-b+1\leq j\leq 0,\\
    B(u_j,\eta),&\text{for }1\leq j\leq k,\\
    B(0,\epsilon_n),&\text{for }k+1\leq j\leq k+b.
\end{cases}
\]
Here, $\epsilon_n:=(\log n)^{-\alpha/2}$, as defined in \eqref{epsilon_n definition}. The parameter $\eta>0$ is fixed to satisfy
\begin{equation}\label{new equation}
    \eta<\min\left\{\frac{1}{2}\min_{1\leq i\leq k}|u_i|,\frac{1}{3k}\left(\frac{1}{2}-\frac{1}{\gamma}\right)\right\}.
\end{equation}

Recall that $d_j(\mb{a})$ denotes the $j+1$-th diagonal entry of $\mb{D}(\mb{a})$ with its explicit form given in \eqref{def of d again}. 
For each $i=1,\dots,m,$ let $A_i$ be the event that the following three conditions hold:
\begin{description}
    \item[(i)] $|d_{iq+j}(\mb{a})|\in \sqrt{\log n} \times B_j$ for all $j=-b+1,\dots,0$,
    \item[(ii)] $d_{iq+j}(\mb{a})\in \sqrt{\log n} \times B_j$ for all $j=1,\dots,k$,
    \item[(iii)] $|d_{iq+j}(\mb{a})|\in \sqrt{\log n} \times B_j$ for all $j=k+1,\dots,k+b$.
\end{description}

The following proposition is an analogue of Proposition 15 from SV13. Its proof closely follows the argument given there and is deferred to the SM.
\begin{proposition}\label{prop: equivalent of prop 15}
Let $A_i$ be as above. Then, the probability that at least one of the events $A_1,A_2,\dots,A_m$ occurs converges to one as $n\rightarrow\infty$.
\end{proposition}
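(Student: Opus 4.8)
\emph{Strategy.} The plan is to adapt SV13's proof of their Proposition~15: bound $\P(A_i)$ below by a quantity of order $n^{-c}$ with $c<1$ a constant, observe that $A_1,\dots,A_m$ are controlled by pairwise disjoint, well-separated blocks of the Fourier coefficients $d_j$ and are therefore asymptotically independent, and conclude $\P\big(\bigcup_i A_i\big)\to1$ from a second-moment argument, the point being that $m\,\P(A_1)\to\infty$.

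\emph{Structure.} Each $A_i$ depends only on $\{d_{iq+j}:-b+1\le j\le k+b\}$, i.e.\ on a window $W_i$ of $k+2b$ consecutive indices near $iq$. Because $q=100\lceil\log N\rceil^4$, $b=12\lceil\log N\rceil^4$ and $k$ is fixed, for all large $N$ the windows $W_1,\dots,W_m$ are pairwise disjoint and $\bigcup_i W_i\subseteq(0,N/2)$, so no relevant $d_j$ sits at the exceptional indices $0$ or $N/2$ and each pair $(\Re d_j,\Im d_j)$ is non-degenerate. After the truncation of Lemma~\ref{lem:reductions}(i), $d_j=N^{-1/2}\sum_\ell e^{2\pi\rmi\ell j/N}a_\ell$ is a normalized sum of independent summands bounded by $n^{1/\gamma-1/2}\to0$ with uniformly controlled variances and $\gamma$-th moments, so the multivariate local-limit machinery of SV13 (for which the bricks are taken of size $\log^4 n$) applies: for any \emph{fixed} set of indices, the joint law of the corresponding $d_j$'s equals, in the moderate-deviation range $|z_j|=O(\sqrt{\log n})$ and up to a $1+o(1)$ relative error, that of an equal number of independent standard complex Gaussians, and these laws factorize across distinct windows.

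\emph{Lower bound on $\P(A_i)$.} Split $W_i$ into the $k$ \emph{target} indices $iq+1,\dots,iq+k$ and the $2b$ \emph{buffer} indices. A buffer condition $|d_j|\le\epsilon_n\sqrt{\log n}$ fails with probability $(1+o(1))e^{-\epsilon_n^2\log n}=(1+o(1))e^{-(\log n)^{1-\alpha}}$; since $1-\alpha>0$ gives $(\log n)^{1-\alpha}\gg\log\log n$, a union bound over the $2b=O(\log^4 n)$ buffers --- each term computed from the fixed-dimensional joint law of that coordinate with the $k$ targets --- shows that ``all targets in their discs, but some buffer coordinate large'' has probability $o\big(\P(\text{all targets in their discs})\big)$. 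Hence $\P(A_i)=(1+o(1))\,\P\big(d_{iq+j}/\sqrt{\log n}\in B(u_j,\eta),\ j=1,\dots,k\big)$, which by the local-limit input equals $(1+o(1))\prod_{j=1}^k\frac{\log n}{\pi}\int_{B(u_j,\eta)}e^{-|w|^2\log n}\,\mathrm{d}w$. Because $\eta<|u_j|$, the disc $B(u_j,\eta)$ avoids the origin, so this is a Laplace-type integral pinned at the point of the disc nearest $0$, of order $n^{-(|u_j|-\eta)^2}$ up to positive constants and bounded powers of $\log n$. Therefore $\P(A_i)\ge c_{\mathbf u,\eta,k}\,n^{-\sum_{j=1}^k(|u_j|-\eta)^2}$, with the exponent obeying $\sum_j(|u_j|-\eta)^2<\sum_j|u_j|^2<1$ --- this strict inequality, forced by $\eta<|u_j|$ together with the Gaussian tail being largest at the nearest point, is precisely what closes the argument.

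\emph{Conclusion and the main obstacle.} Let $X=\#\{1\le i\le m:A_i\text{ holds}\}$. Since $m=\lfloor N/(4q)\rfloor\asymp n/\log^4 n$, the bound gives $\E X=m\,\P(A_1)\ge c\,n^{\,1-\sum_j(|u_j|-\eta)^2}(\log n)^{-C}\to\infty$, the exponent being a positive constant. For $i\ne i'$ the windows are disjoint, so the factorization yields $\P(A_i\cap A_{i'})\le(1+o(1))\P(A_i)\P(A_{i'})$ uniformly; hence $\operatorname{Var}(X)\le\E X+o\big((\E X)^2\big)$, and Chebyshev gives $\P(X=0)\le(\E X)^{-1}+o(1)\to0$, i.e.\ $\P\big(\bigcup_i A_i\big)\to1$. (When the $a_j$ are Gaussian, the blocks over disjoint windows are exactly independent, so $\P\big(\bigcup_i A_i\big)\ge1-(1-\P(A_1))^m\ge1-e^{-\E X}\to1$ at once.) The main obstacle is the moderate-deviation local-limit estimate: one needs the joint density of a fixed block of $d_j$'s to match the Gaussian density \emph{with $o(1)$ relative error}, uniformly over arguments of size $O(\sqrt{\log n})$ --- most naturally via an exponential tilt recentring the target coordinates followed by an ordinary local CLT for the tilted sum --- because only multiplicative (not merely additive) control preserves the exponents above. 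This is the technical heart of SV13's proof, and adapting it, together with the enlarged parameters $r_n\sim\log^4 n$, growing $M_n$, and $\epsilon_n=(\log n)^{-\alpha/2}$ (all needed to accommodate $n/p\to\infty$), is why we follow that route and defer the computation to the SM. (The companion vector $\mathbf u^{(0)}$ of Lemma~\ref{lem: grid} has dimension $2L_n+1$ but a single nonzero coordinate $1-\tau/15$, so it is not covered by the statement as phrased; the same three steps, however, apply verbatim with $k=1$ after discarding the zero coordinates.)
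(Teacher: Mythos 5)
Your overall second-moment framework is the right one and matches the paper: bound $\P(A_i)$ from below by a quantity of order $n^{-\beta}$ with $\beta<1$, exploit the near-disjointness and near-independence of the windows, and conclude by Paley--Zygmund/Chebyshev. Your Gaussian computation (buffers negligible, target discs avoiding the origin, Laplace-type estimate giving $n^{-\sum_j(|u_j|-\eta)^2}$) is essentially the one the paper uses, up to immaterial constants ($|u_j|-\eta$ vs.\ $|u_j|-\eta/4$).

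The genuine gap is in the transfer from the Gaussian case to the general entries. You propose a ``moderate-deviation local-limit'' estimate — exponential tilting followed by a local CLT — to show that the joint density of a block of $d_j$'s matches the Gaussian density with $o(1)$ \emph{relative} error uniformly for arguments of size $O(\sqrt{\log n})$, and you attribute this route to SV13. That is a mischaracterization: both SV13 and this paper avoid any density/LLT estimate entirely. Instead they replace the indicator $\mathbf{1}_{A_i}$ by a smooth minorant $W_i$ built from a bump function $\psi$, apply Paley--Zygmund to $\sum_i W_i$, compute exactly in the Gaussian case, and then transfer $\E W_i$ and $\E[W_iW_{i'}]$ to the general case via the Lindeberg/Chatterjee invariance principle (Lemma 17 of SV13), which only requires third-derivative bounds and the $\gamma$-th moment condition. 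This matters for two reasons. First, your LLT route would need a relative-error local CLT in a setting where $\gamma$ may be only slightly larger than $2$ (so third moments may not exist), which is exactly the regime where such LLTs become delicate; the Chatterjee route never invokes densities. Second, your second-moment step requires the ``factorization across windows,'' $\P(A_i\cap A_{i'}) = (1+o(1))\P(A_i)\P(A_{i'})$, for probabilities of size $n^{-\beta}$ with $\beta$ close to $1$. The $d_j$'s are uncorrelated but not independent in the non-Gaussian case, and asserting multiplicative factorization at this scale is itself a moderate-deviation statement that does not come for free from a marginal LLT; the paper obtains it by applying the invariance principle directly to the smoothed product $W_iW_{i'}$ and comparing to the Gaussian side, where exact independence holds. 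As written, your proposal defers precisely these two steps to the SM, but they constitute the entire technical content of the proposition; without them the argument does not close.

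A smaller point: the Paley--Zygmund move is applied to the smooth $W_i$, not to the indicators themselves, because the Lindeberg exchange requires thrice-differentiable test functions. Working directly with $\mathbf{1}_{A_i}$, as your Chebyshev formulation implicitly does, would not interface with the invariance principle; you would need to reintroduce the smoothing anyway.
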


We now complete the proof of Proposition~\ref{prop: lower bound prop} by showing that, for any small $\tau,\varepsilon>0$, there exists $n_{\tau,\varepsilon}$ such that, for all $n\geq n_{\tau,\varepsilon}$, inequality \eqref{eq: lower bound} holds with probability at least $1-\varepsilon$. We proceed in a manner similar to SV13 (pp.~4073–4074). For the reader's convenience, we will break up the argument into several steps.

\paragraph{Step 1 - Setup.}
Let \(\mathbf{u}^{(t)} = (u^{(t)}_1, \dots, u^{(t)}_k)^\top\) for  \(t=1,\dots,T\) be as in \eqref{grid bound}. By Remark~\ref{remark: nonzero u}, all coordinates of these vectors can be taken non-zero.

    Choose \(\eta > 0\) sufficiently small and \(n_{\tau,\varepsilon}\) sufficiently large so that the following \textbf{conditions} hold:

\begin{description}
    \item[1. Asymptotic regime:] \(p/n > \log^{-\alpha} n\) and  \(\eta>\epsilon_n=\log^{-\alpha/2} n\)  for all \(n \geq n_{\tau,\varepsilon}\).
    
    \item[2. Inequality \eqref{new equation}:] holds for \(u_i=u_i^{(t)} (t=1,\dots,T)\)  and also for \(k = 1,\) \(u_1 = 1 - \tau/15\).
    
    \item[3. Operator norm bounds:] For each \(t = 1, \dots, T\) and \(p/n \in (\tau_{t-1}, \tau_t]\), if \(\mathbf{v}^{(t)} = (v_1^{(t)}, \dots, v_k^{(t)}) \in B(u_1^{(t)}, \eta) \times \dots \times B(u_k^{(t)}, \eta)\), then
    \[
    \frac{\|\Pi_{p/N}^{[k]} \operatorname{diag}(\mathbf{v}^{(t)}) \Pi_{n/N}^{[k]}\|}{\sqrt{p/N}} > K_{1, n/p} - 5\tau/6.
    \]
    For \(t=0\) and \(p/n \in (\log^{-\alpha} n, \tau]\), if \(v_1 \in B(1 - \tau/15, \eta)\) and 
    \(\mathbf{v}^{(0)} = (0, \dots, 0, v_1, 0, \dots, 0)^\top\) with \(L_n\) zeros on both sides of the \(v_1\) entry, then
    \[
    \frac{\|\Pi_{p/N}^{[2L_n+1]} \operatorname{diag}(\mathbf{v}^{(0)}) \Pi_{n/N}^{[2L_n+1]}\|}{\sqrt{p/N}} > K_{1, n/p} - 5\tau/6.
    \]
    
    \item[4. Norm control:] For each \(t = 1, \dots, T\), 
    \[
    \max\{\|\mathbf{v}^{(t)}\| : \mathbf{v}^{(t)} \in B(u_1^{(t)}, \eta) \times \dots \times B(u_k^{(t)}, \eta)\} < 1.
    \]
    Also \(1 - \tau/15 + \eta < 1.\)
\end{description}
    
\paragraph{Step 2 - Events.}
For each \(t \in \{1, \dots, T\}\) define \(A_i^{(t)}\) as \(A_i\) with \(u_j = u^{(t)}_j\). For $t=0$, set \(k = 1\) and \(u_1 = 1 - \tau/15\). All these definitions use the same $\eta$ chosen above.

By Proposition~\ref{prop: equivalent of prop 15} and the finiteness of \(T \)  we may enlarge \(n_{\tau,\varepsilon}\) so that, for all \(n \geq n_{\tau,\varepsilon}\), 
\[
\mathbb{P}\left[\text{for each }t \in \{0,1,\dots,T\}, \text{ some } A_i^{(t)}\text{ occurs}\right]\geq 1-\varepsilon.
\]

\paragraph{Step 3 - Case $p/n\in(\tau_{t-1},\tau_t], t\geq 1$.}
If \(A_i^{(t)}\) occurs, then, for each $j=1,\dots,k$, $d_{iq+j}(\mb{a})\in B(u_j,\eta)$. Conditions (1) and (2) imply that $|d_{iq+j}(\mb{a})|>\epsilon_n$ for all $j=1,\dots,k$. Recall that by construction, each partition block $J\in\Lambda$ has to start and end with an invisible brick, and the minimum possible length of any brick is $\lceil \log N\rceil^4$. 
Therefore, if \(A_i^{(t)}\) occurs, then the points $iq-\lceil \log N\rceil^4,\dots,iq+k+\lceil \log N\rceil^4$ must be contained in a single block, which we call $J^{(t)}$.

Note that $J^{(t)}$ cannot contain any visible points other than $iq+1,\dots,iq+k$. Indeed, event \(A_i^{(t)}\) implies that points $iq-12\lceil\log N\rceil^4,\dots,iq$ and $iq+k+1,\dots,iq+k+12\lceil\log N\rceil^4$ are invisible. On the other hand, two consecutive invisible bricks cannot belong to the same partition block, and the maximum possible length of a brick is $4\lceil \log N\rceil^4$.

Let $F^{(t)}:=\{iq+1,\dots,iq+k\}$. Since $F^{(t)}\subset J^{(t)}$, 
\begin{eqnarray*}
    &&\frac{\|\mb{P}_{p,N}[J^{(t)}]\mb{D}^\epsilon[J^{(t)}] \mb{P}_{n,N}[J^{(t)}]\|}{\sqrt{(p/N)\log n}} \geq \frac{\|\mb{P}_{p,N}[F^{(t)}]\mb{D}^\epsilon [F^{(t)}]\mb{P}_{n,N}[F^{(t)}])\|}{\sqrt{(p/N)\log n}}\\
    &\geq&\inf\left\{\frac{\|\mb{P}_{p,N}[F^{(t)}]\operatorname{diag}\{\mb{v}^{(t)}\}\mb{P}_{n,N}[F^{(t)}]\|}{\sqrt{(p/N)}}:\mb{v}^{(t)}\in B(u_1^{(t)},\eta)\times\dots\times B(u_k^{(t)},\eta)\right\} . 
\end{eqnarray*}
By Lemma \ref{lem: comparison with limiting}, 
\[
\|\mb{P}_{p,N}[F^{(t)}]-\Pi_{p/N}^{[k]}\|\leq 2k/N, \qquad \|\mb{P}_{n,N}[F^{(t)}]-\Pi_{n/N}^{[k]}\|\leq 2k/N.
\]
Therefore, increasing $n_{\tau,\varepsilon}$ if needed, ensures that the latter infimum is no smaller than
\[
\inf\left\{\frac{\|\Pi_{p/N}^{[k]}\operatorname{diag}\{\mb{v}^{(t)}\}\Pi_{n/N}^{[k]}\|}{\sqrt{(p/N)}}:\mb{v}^{(t)}\in B(u_1^{(t)},\eta)\times\dots\times B(u_k^{(t)},\eta)\right\}-\tau/6.
\]
Condition (3) implies that this is no smaller than $K_{1,n/p}-\tau$, which gives \eqref{eq: lower bound}.

\paragraph{Step 4 - Case $p/n\in(\log^{-\alpha}n, \tau]$.}
The argument is identical but with $k=1$, $u_1=1-\tau/15$, and $F^{(0)}:=\{iq+1-L_n,\dots,iq+1+L_n\}$.
Condition (3) again yields the bound $K_{1,n/p}-\tau$, completing the proof of Proposition \ref{prop: lower bound prop} and of Theorem \ref{T theorem}.

\section{Proof of Theorems \ref{C theorem} and \ref{S theorem}}\label{sec: proof of the other theorems}
\subsection{Proof of Theorem \ref{C theorem}}

%The initial part of the analysis is \emph{verbatim} the same as that of the Toeplitz case and involves no substantially novel arguments beyond those used in SV13. However, it branches at a natural point—before the connection to positive definite functions (i.e., problem~\eqref{new positive definite problem}) is invoked—and from there proceeds along different lines.

As discussed in the Introduction, the key difference between Theorems~\ref{C theorem} and~\ref{T theorem} arises from the simpler representation of \(\|\mb{C}_{p\times n}\|\) in the form~\eqref{PDP matrix}:
\[
\|\mb{C}_{p\times n}\| = \sqrt{N}\,\|\mb{P}_{p,N}\,\mb{D}(\mb{a})\,\mb{P}_{n,N}\|.
\]
In contrast to the Toeplitz case, where \(N = n + p\), here \(N = n\), and consequently \(\mb{P}_{n,N} = \mb{I}_n\).

Keeping this distinction in mind, the proof of Theorem~\ref{C theorem} proceeds in the same manner as that of Theorem~\ref{T theorem} up to inequality~\eqref{polynom product}, which in the present setting takes the simpler form
\[
\max_{\substack{J\in\Lambda \\ J\cap S\neq\varnothing}}
\frac{\|\mb{P}_{p,n}[J]\,\mb{D}^\epsilon[J]\|}{\sqrt{(p/n)\log n}}
\leq \sqrt{\frac{1+\eta}{p/n}}
\sup_{\substack{\mb{v},\mb{w}\in\mathbb{C}^n \\ \|\mb{v}\|=\|\mb{w}\|=1}}
\|(\mb{P}_{p,n}\mb{v})\odot \mb{w}\|.
\]
Since
\[
\|(\mb{P}_{p,n}\mb{v})\odot \mb{w}\| \leq \|\mb{P}_{p,n}\mb{v}\|_{\infty}\,\|\mb{w}\|,
\]
the supremum on the right-hand side reduces to \(\|\mb{P}_{p,n}\|_{2\rightarrow\infty}\), the maximum Euclidean norm of the rows of \(\mb{P}_{p,n}\).

As \(\mb{P}_{p,n}\) is a self-adjoint projection with all diagonal entries equal to \(p/n\), each row has Euclidean norm \(\sqrt{p/n}\). Therefore,
\[
\max_{\substack{J\in\Lambda \\ J\cap S\neq\varnothing}}
\frac{\|\mb{P}_{p,n}[J]\,\mb{D}^\epsilon[J]\|}{\sqrt{(p/n)\log n}}
\leq \sqrt{1+\eta},
\]
which yields the desired tight upper bound on \(\|\mb{C}_{p\times n}\|\).

The proof of the lower bound is nearly identical to that for the Toeplitz case, with the following lemma replacing Lemma~\ref{lem: grid}:

\begin{lemma}\label{lem: grid lemma for circulant}
For any $\tau \in (0, 3/4)$, there exist positive integers $T = T(\tau)$, $k = k(\tau)$, and vectors $\mb{u}^{(1)}, \dots, \mb{u}^{(T)}$ from the interior of the unit ball in $\mathbb{C}^k$ such that, for any $t \in \{1, \dots, T\}$ and all $p/n \in (\tau_{t-1}, \tau_t]$,
\begin{equation}\label{C grid bound}
    \frac{\|\Pi_{p/n}^{[k]}\operatorname{diag}\{\mb{u}^{(t)}\}\|}{\sqrt{p/n}} > 1 - \tau/2.
\end{equation}
Moreover, for all $p/n \in [\log^{-\alpha} n, \tau]$ and all sufficiently large $n$, we have
\begin{equation}\label{C first grid interval}
    \frac{\|\Pi_{p/n}^{[2L_n+1]}\operatorname{diag}\{\mb{u}^{(0)}\}\|}{\sqrt{p/n}} > 1 - 3\tau/4,
\end{equation}
where $L_n := \lceil \log n \rceil$ and $\mb{u}^{(0)} = (0, \dots, 0,\, 1 - \tau/15,\, 0, \dots, 0)^\top$ with $L_n$ zeros on each side of the entry $1 - \tau/15$.
\end{lemma}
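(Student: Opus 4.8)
The plan is to bypass the variational machinery used in the Toeplitz case (Lemma~\ref{lem: link between Pi and K} together with Lemma~\ref{lem: grid}): here the target constant is $1$, which is nothing but the normalized common $\ell^2$-norm of the columns of the orthogonal projection $\Pi_{p/n}$, and such a column is essentially realized by a single, highly localized test vector. Concretely, from \eqref{entries of Pi} (with $N=n$, $r=p$) the operator $\Pi_{p/n}$ is a self-adjoint projection with all diagonal entries equal to $p/n$ and off-diagonal entries obeying $|\Pi_{p/n}(i,j)|\le\frac{1}{\pi|i-j|}$ for $i\ne j$; hence each column $\Pi_{p/n}e_j$ has $\|\Pi_{p/n}e_j\|^2=p/n$, and for a $k$-coordinate truncation the discarded mass is at most $C_0/d_j$, where $d_j$ is the distance from $j$ to the complement of $\{1,\dots,k\}$ and $C_0$ is a universal constant (a tail sum of $1/(\pi^2 m^2)$). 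This is the circulant counterpart of Remark~\ref{remark: deficiency}: $\sup_{\|\mb c\|=1}\|\Pi_{p/n}\mb c\|_\infty=\sqrt{p/n}$, attained by $\mb c=\Pi_{p/n}e_j/\sqrt{p/n}$.

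Given $\tau\in(0,3/4)$, I would take $T=1$ (and, if one prefers to keep the formal grid $\tau_0,\dots,\tau_T$, simply set all $\mb u^{(t)}$ equal), letting $\mb u:=\mb u^{(1)}\in\mathbb{C}^k$ be the vector with a single nonzero coordinate $1-\delta$ at the central index $j_0:=\lceil k/2\rceil$, where $k=k(\tau)$ and $\delta=\delta(\tau)\in(0,1)$ are fixed below. Since $\operatorname{diag}\{\mb u\}e_{j_0}=(1-\delta)e_{j_0}$,
\[
\frac{\|\Pi_{p/n}^{[k]}\operatorname{diag}\{\mb u\}\|}{\sqrt{p/n}}\ \ge\ (1-\delta)\,\frac{\|\Pi_{p/n}^{[k]}e_{j_0}\|}{\sqrt{p/n}}\ \ge\ (1-\delta)\sqrt{1-\frac{C_0\,n}{k\,p}},
\]
because $d_{j_0}=\lceil k/2\rceil$. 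On the range $p/n\in(\tau,1]$ one has $n/p<1/\tau$, so the right side is at least $(1-\delta)\sqrt{1-C_0/(k\tau)}$ uniformly in $p/n$; choosing $k=k(\tau)$ large enough and then $\delta=\delta(\tau)$ small enough makes this exceed $1-\tau/2$, which is \eqref{C grid bound}.

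For the second regime I would use the prescribed vector $\mb u^{(0)}=(0,\dots,0,1-\tau/15,0,\dots,0)^\top\in\mathbb{C}^{2L_n+1}$, whose nonzero coordinate is the central one (distance at least $L_n$ to the complement of $\{1,\dots,2L_n+1\}$). The identical computation gives
\[
\frac{\|\Pi_{p/n}^{[2L_n+1]}\operatorname{diag}\{\mb u^{(0)}\}\|}{\sqrt{p/n}}\ \ge\ \Bigl(1-\tfrac{\tau}{15}\Bigr)\sqrt{1-\frac{C_0\,n}{L_n\,p}},
\]
and now the hypothesis $p/n\ge\log^{-\alpha}n$ forces $n/p\le\log^{\alpha}n$, so $C_0 n/(L_n p)\le C_0(\log n)^{\alpha-1}=o(1)$ since $\alpha\in(0,1)$. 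Hence for all large $n$ the right side is $1-\tau/15-o(1)>1-3\tau/4$, which is \eqref{C first grid interval}. Finally, exactly as in Remark~\ref{remark: nonzero u}, each zero coordinate of $\mb u^{(t)}$ and of $\mb u^{(0)}$ may be replaced by an arbitrarily small nonzero complex number: the vectors stay strictly inside the unit ball and, by continuity, the strict inequalities are preserved, so all coordinates can be taken nonzero should the downstream argument require it.

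I do not foresee a genuine obstacle; the one point to handle carefully is the interplay between the truncation width and the lower bound on $p/n$. The tail error $C_0/k$ is harmless only after division by $p/n$, so what must be controlled is $C_0 n/(kp)$, not $C_0/k$: a fixed width $k=k(\tau)$ works on $(\tau,1]$ because $n/p$ is bounded there, whereas on the thin band $[\log^{-\alpha}n,\tau]$ one is forced to let the window grow like $2L_n+1\sim2\log n$ so that $n/(L_n p)\le(\log n)^{\alpha-1}\to0$. This is precisely why the two regimes use test vectors of different (and, in the second case, $n$-dependent) lengths, and it is the only place where assumption \eqref{new asymptotic regime} enters; everything else is the elementary projection-geometry estimate above.
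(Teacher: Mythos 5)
Your proof is correct, but it takes a genuinely more elementary route than the paper. The paper establishes \eqref{C grid bound} by re-deploying the same approximation machinery used for the Toeplitz case: it passes through the identity $\sup\|(\Pi_{p/n}\mb v)\odot\mb w\|=\|\Pi_{p/n}\|_{2\to\infty}=\sqrt{p/n}$, approximates the optimizers by vectors in the unit ball of $\ell^2(\mathbb{C})$, and then invokes the circulant analogue of \eqref{almost final} (which in the Toeplitz case is backed by Lemma~\ref{lem: lem16}, the $\ell^2$-to-$\mathbb{C}^k$ truncation lemma). You bypass all of that by observing that the supremum in $\|\Pi_{p/n}\|_{2\to\infty}$ is (nearly) attained at a coordinate vector $e_{j_0}$, so a single diagonal matrix $\operatorname{diag}\{\mb u\}$ with one nonzero entry already captures the bound; the tail estimate via $|\Pi_{p/n}(i,j)|\le 1/(\pi|i-j|)$ then replaces the full approximation argument. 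This also makes clear that $T=1$ suffices — the grid is only an artifact of parallelism with the Toeplitz case where the target $K_{1,n/p}$ genuinely varies with the aspect ratio. For \eqref{C first grid interval} your computation coincides (up to the choice of constants) with the paper's, which uses the same delta-vector $\mb u^{(0)}$ and the exact Fourier identity \eqref{Fourier procedure} to bound the truncation error. The one place you must be slightly careful — and you correctly flag it — is that the truncation error must be compared to $p/n$, not to $1$; this is why a fixed $k(\tau)$ handles $(\tau,1]$ while the $n$-dependent window $2L_n+1$ is needed on $[\log^{-\alpha}n,\tau]$. Your remark that the zero coordinates can be perturbed to small nonzero values without breaking the strict inequality is exactly the paper's Remark~\ref{remark: nonzero u} and is needed for the downstream argument.
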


The proof of this lemma follows the same structure as the proof of Lemma~\ref{lem: grid}, but is considerably simpler. We therefore defer its proof to the SM.

\subsection{Proof of Theorem \ref{S theorem}}
In what follows, we concentrate on the symmetric Toeplitz setting of Theorem~\ref{S theorem}, where the proof naturally parallels that of Theorem~\ref{T theorem} with certain key differences that will be emphasized. The arguments for the symmetric circulant case are analogous to those used in proving Theorem~\ref{C theorem} and thus will not be detailed here.

It is natural to embed the rectangular symmetric Toeplitz matrix $\mb{T}_{p\times n}^{(s)}$ into the square symmetric circulant $\mb{C}_{N\times N}^{(s)}$ with $N=2n$ (in contrast to $N=n+p$ in Theorem \ref{T theorem}).  This embedding guarantees that the diagonal entries, $d_j$, of $\mathbf{D}(\mathbf{a}) := \operatorname{diag}(\mathcal{F}_N \mathbf{a})$ are real-valued rather than complex-valued. 

Arguing as in Lemma 5 of SV13 to justify the replacement of $a_0,a_n$ by $\sqrt{2}a_0,\sqrt{2}a_n$, we obtain the same formula for $d_j$ as on p.~4053 of SV13:
\[
d_j=\frac{1}{\sqrt{N}}\left(\sqrt{2}a_0+(-1)^j\sqrt{2}a_n+2\sum_{k=1}^{n-1}a_k\cos\left(\frac{2\pi jk}{N}\right)\right).
\]

The fact that, both in SV13 and in the present symmetric setting, $d_j$ with $0\leq j\leq N/2$ are real-valued, uncorrelated, and standardized random variables imply that $\max_{0\leq j\leq N/2} |d_j|$ behaves asymptotically as $\sqrt{2\log n}$. This contrasts with Theorem \ref{T theorem}, where the corresponding maximum behaves as $\sqrt{\log n}$. This difference ultimately leads to the appearance of an ``extra'' factor of $\sqrt{2}$ in the denominator of $\|\mb{T}^{(s)}_{p\times n}\|/\sqrt{2 p\log n}$, which is absent from its non-symmetric version $\|\mb{T}_{p\times n}\|/\sqrt{p\log n}$ studied in Theorem \ref{T theorem}.

On the technical level, in the proof of the upper bound, Lemma \ref{cor: corollary 13} should be modified to require
\[
|d_{j_1}|^2+\dots+|d_{j_M}|^2\leq (1+\eta)2\log n,
\]
with an additional factor of $2$ on the right-hand side. Similarly, in the proof of the lower bound, all three conditions defining the event $A_i$ (immediately following \eqref{new equation}) should now involve $\sqrt{2\log n}$ instead of $\sqrt{\log n}$. With these changes, the proof proceeds with only minor modifications due to the fact that $N=2n$ rather than $N=p+n$ as in Theorem \ref{T theorem}.

\section{Values of $K_{1,n/p}$}\label{sec: K values}
In this section, we first estimate values of the constant $K_{1,n/p}$ for $p/n$ on the grid $0.01:0.01:1$. 

Lemma \ref{lem:I is close to K}, together with the identity \eqref{K reparameterization}, implies that
\begin{equation}\label{K brackets}
\sqrt{\frac{I_{p,n}^2}{p}-\frac{1}{3p}}\leq K_{1,n/p}\leq \frac{I_{p,n}}{\sqrt{p}},
\end{equation}
where, as shown in Lemma \ref{lem:I is close to K},
\[
I_{p,n}=\max\left\{\|\mathcal{P}_{p-1}\cdot\mathcal{P}_{n-1}\|_2:\|\mathcal{P}_{p-1}\|_2=1, \|\mathcal{P}_{n-1}\|_2=1\right\}.
\]

The latter maximization problem is studied in \cite{bunger11}. Corollary 2 of that paper implies that if the unit vectors $\mb{w}\in\mathbb{C}^n$ and $\mb{v}\in\mathbb{C}^p$ satisfy
\[
\mathcal{P}_{n-1}(z)=\mb{w}_{1}+\mb{w}_{2}z+\dots+\mb{w}_{n}z^{n-1}\qquad\text{and}\qquad \mathcal{P}_{p-1}(z)=\mb{v}_{1}+\mb{v}_{2}z+\dots+\mb{v}_{p}z^{p-1},
\]
where $\mathcal{P}_{n-1}$ and $\mathcal{P}_{p-1}$
are the extremal polynomials for the maximization problem, then $I_{p,n}$ equals  the common spectral norm of the $(p+n-1)\times p$ and $(p+n-1)\times n$  matrices
\[
M_\mb{w}:=\underbrace{\begin{pmatrix}
    \mb{w}_1&&&\\
    \vdots &\mb{w}_1&&\\
    \mb{w}_n&\vdots&\ddots&\\
    &\mb{w}_n&&\mb{w}_{1}\\
    &&\ddots&\vdots\\
    &&&\mb{w}_n
\end{pmatrix}}_{p}\qquad\text{and}\qquad M_\mb{v}:=\underbrace{\begin{pmatrix}
    \mb{v}_1&&&\\
    \vdots &\mb{v}_1&&\\
    \mb{v}_p&\vdots&\ddots&\\
    &\mb{v}_p&&\mb{v}_{1}\\
    &&\ddots&\vdots\\
    &&&\mb{v}_p
\end{pmatrix}}_{n}.
\]
Moreover, $\mb{w}$ is the right principal singular vector of $M_\mb{v}$, and  $\mb{v}$ is the right principal singular vector of $M_\mb{w}$.

Based on this result, we propose the following algorithm for numerically approximating \(K_{1,n/p}\):

\begin{enumerate}
    \item Initialize the algorithm by choosing large \(p\) and \(n\), and set the unit vectors \(\mathbf{w}^{(0)} \in \mathbb{C}^n\) and \(\mathbf{v}^{(0)} \in \mathbb{C}^p\) proportional to vectors of ones.
    \item Given \(\mathbf{w}^{(i)}\) and \(\mathbf{v}^{(i)}\), compute \(\mathbf{w}^{(i+1)}\) and \(\mathbf{v}^{(i+1)}\) as the right principal singular vectors of the matrices \(M_{\mathbf{v}^{(i)}}\) and \(M_{\mathbf{w}^{(i)}}\), respectively.
    \item Iterate until convergence. Estimate \(K_{1,n/p}\) by \(\|M_{\mathbf{w}^{(\text{end})}}\| / \sqrt{p}\).
\end{enumerate}

In practice, we stop the iteration when the difference between \(\|M_{\mathbf{w}^{(i+1)}}\|\) and \(\|M_{\mathbf{w}^{(i)}}\|\) becomes smaller than the tolerance level \(10^{-16}\). In our calculations, convergence was achieved within a few seconds on a standard laptop computer.

Table~\ref{tab: table_intro} reports the estimates of \(K_{1,n/p}\) obtained using this algorithm. We started with \(p = n = 1000\) and then proceeded to smaller ratios \(p/n\) by repeatedly decreasing \(p\) by 10, each time after convergence was achieved at the previous value of \(p\). For the new starting values of \(\mathbf{w}\) and \(\mathbf{v}\), we used the converged vector \(\mathbf{w}\) from the previous step and the right principal singular vector of the corresponding matrix \(M_{\mb{w}}\), respectively.

Assuming that convergence to \(I_{p,n}\) was achieved at each step, equation~\eqref{K brackets} describes the guaranteed accuracy of the approximations reported in Table~\ref{tab: table_intro}. This guaranteed accuracy varies with different values of \(p/n\) shown in the table. Indeed, the first-order Taylor approximation to the gap between the right- and left-hand sides of the inequalities in \eqref{K brackets} equals \(\frac{1}{6 I_{p,n} \sqrt{p}}\). This value is relatively small, \(0.0002\), for \(p/n = 1\), but much larger, approximately \(1/60 \approx 0.167\), for \(p/n = 0.01\) (which corresponded to \(p=10\), \(n=1000\) in our calculations).

The actual accuracy of the reported values might be much higher. For example, as mentioned in the introduction, the value of \(K_{1,1}\) is known up to 24 decimal places. The difference between this known value and the computed \(K_{1,1}\) (reported in Table~\ref{tab: table_intro} only up to three decimal places) was approximately \(-1.6 \times 10^{-7}\).

For \(p/n = 0.01\), the lower bound \eqref{the lower bound} on \(K_{1,n/p}\) equals 0.9983, while the theoretical upper bound is 1. Hence, the difference between the true value of \(K_{1,100}\) and our estimate \(1.000\) cannot exceed 0.002 in absolute value.

Of course, the above discussion of accuracy is valid only if convergence to \(I_{p,n}\) has been achieved. \cite{garcia69}, who computed \(K_{1,1}\) up to 24 decimal places, used a numerical algorithm similar to the one described above and proved its convergence for \(p = n\). We believe that their method can be extended to prove convergence for cases where \(p/n\) is relatively large. However, we leave a rigorous analysis of convergence for \(p/n \in (0,1]\) to future research.

\section{Monte Carlo and a higher order bound}\label{sec: MC}
In this section, we use Monte Carlo experiments to investigate the behavior of $\|\mathbf{T}_{p\times n}\|$ and $\|\mathbf{C}_{p\times n}\|$ for the case $p < n$, which is the focus of this paper. While the derived asymptotic limits provide a useful benchmark, numerical evidence shows a considerable level of dispersion across all aspect ratios and a bias that deteriorates markedly as $p/n$ decreases.

To investigate the underlying causes of this deterioration, we derive a higher-order asymptotic lower bound for $\|\mathbf{C}_{p\times n}\|$ in the Gaussian case. This bound shows that the asymptotic distribution of the scaled and centered norm stochastically dominates a shifted Gumbel distribution, with a shift that vanishes when $p=n$ but grows as $p/n$ decreases. This explains the deterioration of the bias and, at the same time, the dispersion of the shifted Gumbel distribution aligns reasonably well with the dispersion observed in the Monte Carlo experiments, especially when $p/n$ is not too small.

\paragraph{Monte Carlo: }
We simulate 10,000 independent copies of $\frac{\|\mathbf{T}_{p\times n}\|}{\sqrt{p\log n}}$ and $\frac{\|\mathbf{C}_{p\times n}\|}{\sqrt{p\log n}}$ with standard normal entries of the corresponding matrices,  $p=500$ and $n=\left\lfloor \frac{10}{k}p\right\rfloor$, $k=1,\dots,10$. Figure \ref{fig: histograms} shows the histograms corresponding to the case $k=5$ (so that $p/n=1/2$). The vertical red lines indicate the corresponding asymptotic limits: $K_{1,2}$ for the Toeplitz case, and $1$ for the circulant case. We see that most of the Monte Carlo draws are larger than the corresponding limit. The bias is larger in the Toeplitz case.

\begin{figure}
\centering
    \includegraphics[width=\linewidth]{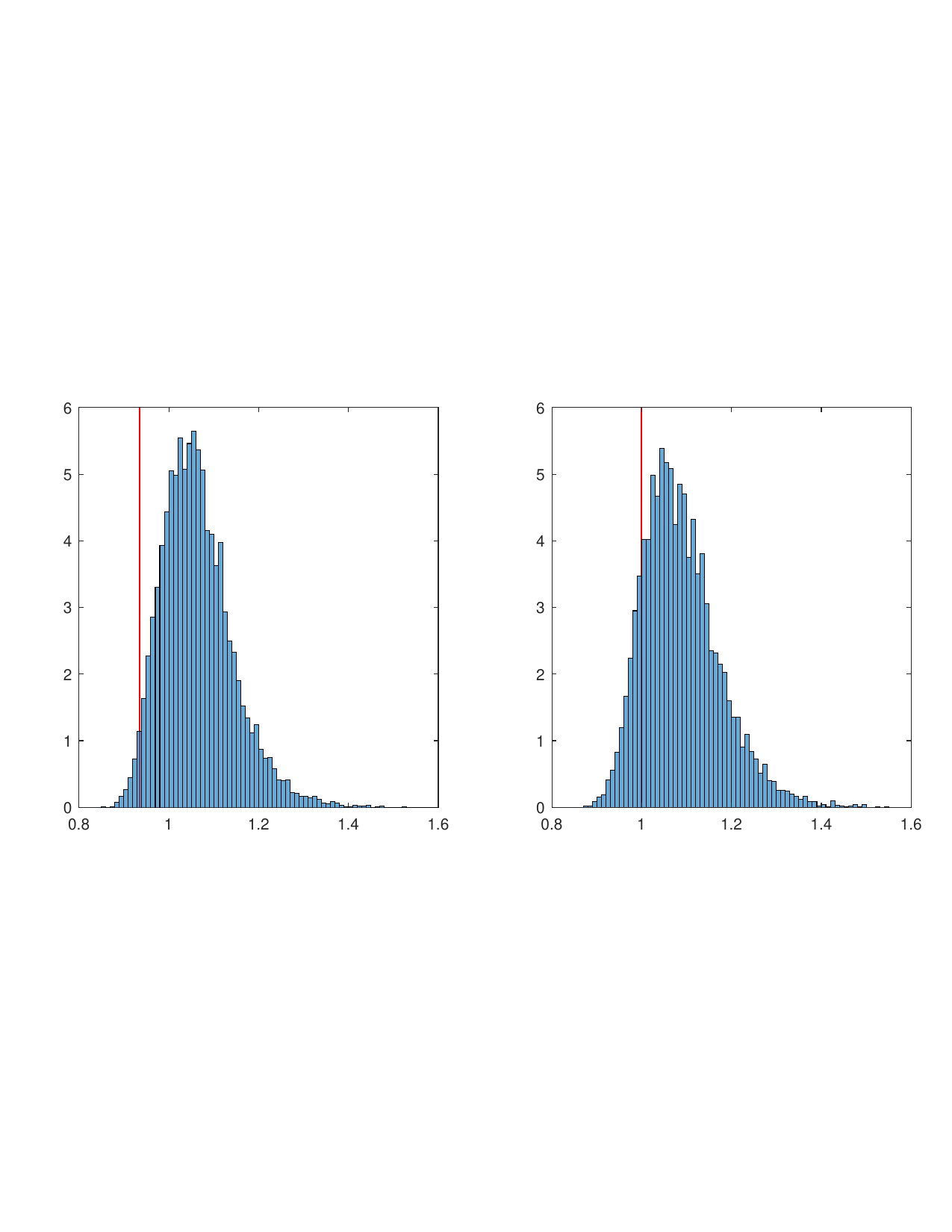}
    \caption{PDF-normalized histograms of 10,000 samples of 
$\frac{\|\mathbf{T}_{p\times n}\|}{\sqrt{p\log n}}$ (left) and 
$\frac{\|\mathbf{C}_{p\times n}\|}{\sqrt{p\log n}}$ (right), 
with standard normal entries, $p=500$, $n=1000$. 
Vertical red lines indicate the asymptotic limits: $K_{1,2}$ (Toeplitz) and $1$ (circulant).}
    \label{fig: histograms}
\end{figure}

Figure \ref{fig: summuries} presents results for all studied ratios $p/n\approx 0.1,0.2,\dots,1$. 
Solid lines show Monte Carlo medians as functions of $p/n$, dashed lines indicate the corresponding asymptotic limits, and dotted lines mark the 0.05 and 0.95 Monte Carlo quantiles. 
For $p/n>0.6$, the asymptotic approximation is consistent with the simulations: in the Toeplitz case, the asymptotic limit remains within the 90\% Monte Carlo interval, while in the circulant case it closely tracks the Monte Carlo median. The quality of the approximation substantially deteriorates for $p/n<0.6$.

\begin{figure}
\centering
    \includegraphics[width=\linewidth]{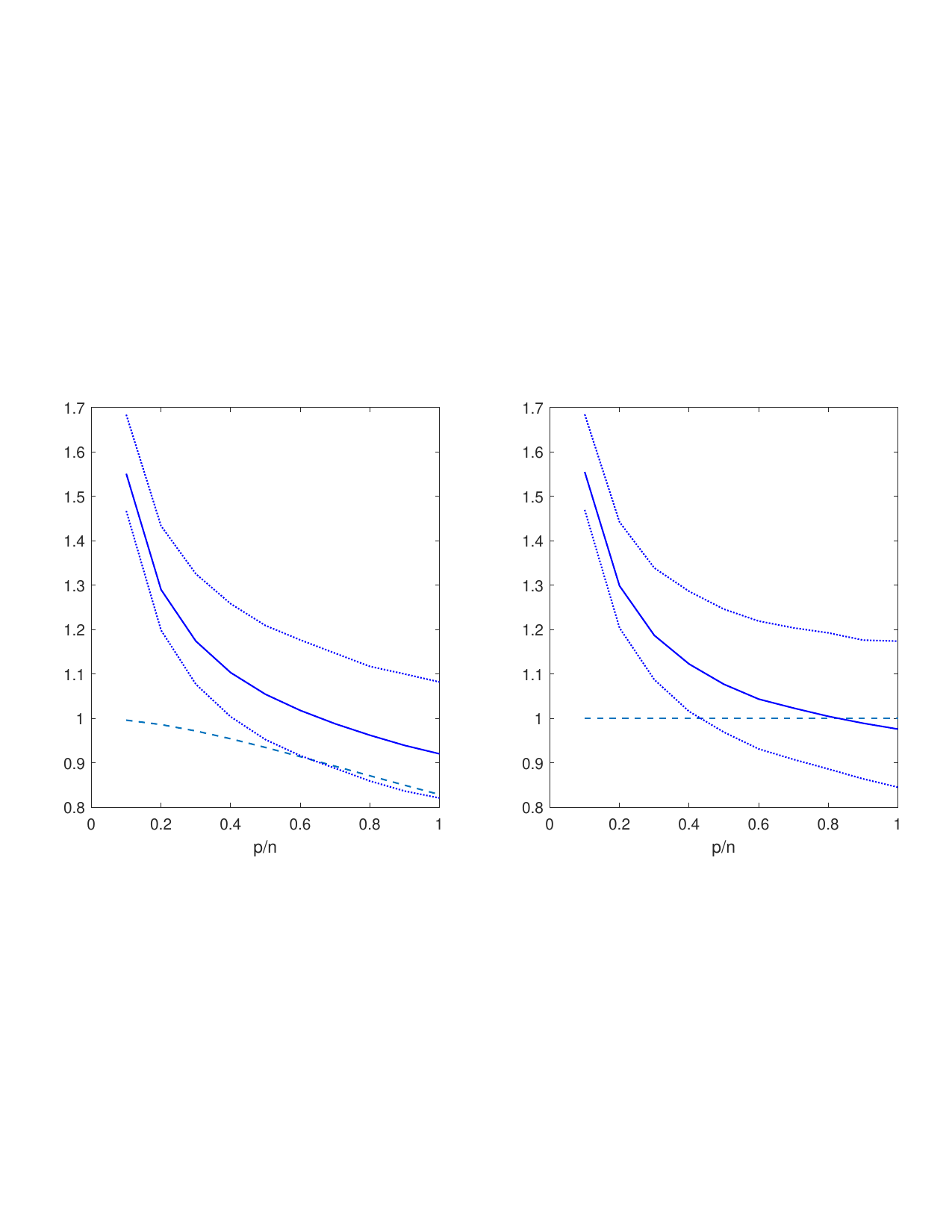}
    \caption{Solid lines show the Monte Carlo medians of $\frac{\|\mathbf{T}_{p\times n}\|}{\sqrt{p\log n}}$ (left) and 
$\frac{\|\mathbf{C}_{p\times n}\|}{\sqrt{p\log n}}$ (right) as functions of $p/n=0.1,0.2,\dots,1$. Dotted lines mark the 0.05 and 0.95 Monte Carlo quantiles. Dashed lines show the asymptotic limits: $K_{1,n/p}$ (Toeplitz) and $1$ (circulant). }
    \label{fig: summuries}
\end{figure}

\paragraph{Higher-order asymptotic lower bound: } 
To gain analytic insight into the deterioration of the finite-sample performance of the asymptotic approximation as $p/n$ decreases, we consider a rectangular Gaussian circulant matrix $\mb{C}_{p\times n}$ where $p$ grows proportionally with $n$, so that the ratio $p/n=c\in(0,1]$ remains fixed as $n$ increases, with $n$ even. 

Representation \eqref{PDP matrix} yields
\[
\|\mb{C}_{p\times n}\|^2=n\|\mb{P}_{p,n}\mb{D}\overline{\mb{D}}\mb{P}_{p,n}\|,
\]
where, in the Gaussian case, all diagonal entries $|d_j|^2$ of $\mb{D}\overline{\mb{D}}$ are standard exponential random variables, except $|d_0|^2$ and $|d_{n/2}|^2$, which are distributed according to a $\chi^2$ distribution with one degree of freedom. Moreover, the random variables $|d_j|^2$ with $0\leq j\leq n/2$ are independent, and $|d_{n-j}|^2=|d_j|^2$.

Our proof of Theorem \ref{C theorem} shows that the first-order behavior of $\|\mb{C}_{p\times n}\|^2$ is governed by values of $|d_j|^2$ of order $\log n$. Moreover, if $|d_j|^2 \approx \log n$, then
\[
\|\mb{C}_{p\times n}\|^2\approx n\overline{\mb{v}_j}^\top \mb{P}_{p,n}\mb{D}\overline{\mb{D}}\mb{P}_{p,n} \mb{v}_j=n\overline{\mb{v}_j}^\top \mb{D}\overline{\mb{D}} \mb{v}_j,
\]
where $\mb{v}_j$ denotes the normalized $j+1$-th column of the projection matrix $\mb{P}_{p,n}$.
This observation naturally suggests the following lower bound for $\|\mb{C}_{p\times n}\|^2$:
\begin{equation}\label{second-order bound}
B_{p, n}:=n\max_{j=0,\dots,n-1}\overline{\mb{v}_j}^\top \mb{D}\overline{\mb{D}} \mb{v}_j\leq \|\mb{C}_{p\times n}\|^2.
\end{equation}

Using the explicit formula \eqref{bound on the entries of P} for the entries of $\mb{P}_{p,n}$, we obtain
\begin{equation}\label{stationary moving average}
n\overline{\mb{v}_j}^\top \mb{D}\overline{\mb{D}} \mb{v}_j=p|d_j|^2+\sum_{\substack{i=0\\i\neq j}}^{n-1}\frac{1}{p}\frac{\sin^2\frac{\pi(j-i)p}{n}}{\sin^2\frac{\pi(j-i)}{n}}|d_{i}|^2.
\end{equation}
Applying this identity together with the relation $|d_{i}|^2=|d_{n-i}|^2$, we conclude that 
\[
\overline{\mb{v}_{n-j}}^\top\mb{D}\overline{\mb{D}}
\mb{v}_{n-j}=\overline{\mb{v}_{j}}^\top\mb{D}\overline{\mb{D}}
\mb{v}_{j},
\]
and hence, we can reduce the domain of the maximum in the definition of $B_{p,n}$:
\[
B_{p,n}=n\max_{j=0,\dots,n/2}\overline{\mb{v}_{j}}^\top\mb{D}\overline{\mb{D}}\mb{v}_j.
\]

Further, the right hand side of \eqref{stationary moving average}
is well approximated by a stationary moving average of i.i.d. standard exponential random variables. 
Consequently, the asymptotic distribution of $B_{p,n}$ can be analyzed using standard results on extrema of stationary sequences (see, e.g., Chapter 3 of \cite{leadbetter83}). This leads to the following lemma, whose proof is given in the Technical Appendix.
\begin{lemma}\label{lem: gumbel}
   Suppose that $n$ is even, and $p=p(n)$ is such that $p/n$ remains constant, $p/n=c\in(0,1]$, when $n\rightarrow\infty$. Then
   \[
   \frac{B_{p,n}}{p}-\log\frac{n}{2}\overset{d}\rightarrow \operatorname{Gumbel}(\theta_c,1)\qquad\text{as }n\rightarrow \infty,
   \]
   where 
   \begin{equation}\label{definition of theta_c}
\theta_c:=-2\sum_{j=1}^\infty\log\left(1-\left(\frac{\sin(c\pi j)}{c\pi j}\right)^2\right),
\end{equation}
$\operatorname{Gumbel}(\theta_c,1)$ is the extreme value distribution of type 1 with the cumulative distribution function $F(x)=\exp\{-e^{-(x-\theta_c)}\}$, and $\overset{d}\rightarrow$ denotes the convergence in distribution.
\end{lemma}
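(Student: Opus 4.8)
The plan is to rewrite $B_{p,n}/p$ as the maximum of a nearly stationary sequence of light-tailed variables, identify its exact stationary limit, compute the relevant tail constant (which turns out to be $e^{\theta_c}$), and invoke classical extreme-value theory for stationary sequences, with the bulk of the effort going into the stationary approximation and the boundary corrections. By \eqref{stationary moving average} together with the reflection identity $|d_i|^2=|d_{n-i}|^2$ noted immediately after it, $B_{p,n}/p=\max_{0\le j\le n/2}X_j^{(n)}$, where
\[
X_j^{(n)}:=|d_j|^2+\sum_{\substack{i=0\\ i\ne j}}^{n-1}c^{(n)}_{j-i}\,|d_i|^2,\qquad c^{(n)}_k:=\frac{1}{p^2}\,\frac{\sin^2(\pi k p/n)}{\sin^2(\pi k/n)},
\]
indices of $d$ being read modulo $n$. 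Writing $p=cn$ one has $c^{(n)}_k=\bigl((\pi k/n)/\sin(\pi k/n)\bigr)^2 a_k$ with $a_k:=\sinc^2(c\pi k)=\bigl(\sin(c\pi k)/(c\pi k)\bigr)^2$ and $a_0:=1$; hence $0\le c^{(n)}_k\le (\pi^2/4)\,a_k\le (4c^2k^2)^{-1}$ for $1\le |k|\le n/2$, so the coefficients are summable uniformly in $n$, $c^{(n)}_k\to a_k$ for each fixed $k$, and $\max_{k\ne 0}c^{(n)}_k\to a_1$. The elementary inequality $|\sin(kx)|\le k|\sin x|$ gives $a_k\le a_1<1=a_0$ for all $k\ge 1$, so $a_0$ is the strict largest coefficient. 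The natural stationary limit is $Y_j:=\sum_{k\in\mathbb Z}a_k\zeta_{j-k}$ with $\{\zeta_i\}$ i.i.d.\ $\operatorname{Exp}(1)$.

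Next I analyse the extremes of $\{Y_j\}$. Writing $Y_0=\zeta_0+W$ with $W:=\sum_{k\ne 0}a_k\zeta_{-k}$ independent of $\zeta_0$, one gets $\P(Y_0>x)=e^{-x}\,\E\bigl[e^{W}\mathbf 1_{\{W\le x\}}\bigr]+\P(W>x)$. Every coefficient in $W$ is at most $a_1<1$, so $W$ has finite exponential moment of order $1$ (indeed of order $a_1^{-1}>1$), whence $\P(W>x)=o(e^{-x})$ and, by monotone convergence, $\E[e^W]=\prod_{k\ne 0}(1-a_k)^{-1}=:e^{\theta_c}$, with $\theta_c=-2\sum_{j\ge1}\log\bigl(1-\sinc^2(c\pi j)\bigr)$ exactly as in \eqref{definition of theta_c}. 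Thus $\P(Y_0>x)\sim e^{\theta_c}e^{-x}$. Since $\{Y_j\}$ is a moving average of light-tailed i.i.d.\ variables with a unique dominant coefficient, the long-range condition $D(u_n)$ and the anti-clustering condition $D'(u_n)$ of Leadbetter et al.\ hold at the levels $u_n:=\log(n/2)+\theta_c+y$: an exceedance $Y_j>u_n$ forces some single $\zeta_{j-k}$ to be of order $\log n$, and because $a_1<1$ one such large value cannot lift two neighbours above $u_n$, while the event of two independent large values within one window has probability $o(\P(Y_0>u_n))$; the block length used in verifying these conditions can be chosen in a nonempty window precisely because $a_0$ strictly dominates $a_1$. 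Hence the extremal index equals $1$ and, with $m:=\lceil n/2\rceil$, $\max_{1\le j\le m}Y_j-\log m-\theta_c\overset{d}{\rightarrow}\operatorname{Gumbel}(0,1)$, i.e.\ $\max_{j}Y_j-\log(n/2)\overset{d}{\rightarrow}\operatorname{Gumbel}(\theta_c,1)$.

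It then remains to show $\max_{0\le j\le n/2}X_j^{(n)}$ has the same limit. Fix $\delta>0$. For $j$ in the bulk $[\delta n,(1/2-\delta)n]$ the previous argument applies verbatim with $c^{(n)}_k$ in place of $a_k$: there the reflection coefficient $c^{(n)}_{2j-n}$ and the coefficients multiplying the two $\chi^2_1$ endpoint variables $|d_0|^2,|d_{n/2}|^2$ are all $O(n^{-2})$; the uniform domination $c^{(n)}_k\le (4c^2k^2)^{-1}$ and dominated convergence give $\E[\exp(\tilde R_j^{(n)})]\to e^{\theta_c}$ uniformly over such $j$, where $\tilde R_j^{(n)}:=X_j^{(n)}-(1+c^{(n)}_{2j-n})|d_j|^2$; consequently $\P(X_j^{(n)}>u_n)=(1+o(1))(n/2)^{-1}e^{-y}$ uniformly over the bulk, and $D(u_n),D'(u_n)$ transfer from the coefficient decay, yielding $\P\bigl(\max_{\delta n\le j\le(1/2-\delta)n}X_j^{(n)}\le u_n\bigr)\to\exp\bigl(-(1-2\delta)e^{-y}\bigr)$. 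For the boundary indices the reflection doubles the self-coefficient, so $\P(X_j^{(n)}>u_n)\asymp \E[\exp(\tilde R_j^{(n)})]\,e^{-u_n/(1+c^{(n)}_{2j-n})}$; for the $O(1)$ indices closest to each endpoint, $c^{(n)}_{2j-n}\to a_{2j}<1$ gives per-index probability $n^{-1/(1+a_{2j})+o(1)}\to 0$, and for the remaining boundary indices $c^{(n)}_{2j-n}=O(j^{-2})$ gives, after a short computation, $\sum_{j<\delta n}\P(X_j^{(n)}>u_n)+\sum_{j>(1/2-\delta)n}\P(X_j^{(n)}>u_n)=O(\delta)+o(1)$. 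Letting $n\to\infty$ and then $\delta\downarrow 0$ gives $\P\bigl(\max_{0\le j\le n/2}X_j^{(n)}\le u_n\bigr)\to\exp(-e^{-y})$ for every $y$, which is the asserted convergence.

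The delicate point is the transfer step, and within it the boundary analysis: pinning the centering constant to exactly $\log(n/2)$ requires the uniform-in-bulk tail estimate together with the proof that the anomalous indices near $j=0$ and $j=n/2$ — where the reflection identity doubles the self-coefficient and where the two diagonal entries are $\chi^2_1$ rather than $\operatorname{Exp}(1)$, giving a strictly heavier per-index tail — produce only $O(\delta)$ extra exceedance mass. A secondary technical point is verifying $D'(u_n)$ for a moving average whose coefficients decay only like $k^{-2}$: the block length must be chosen inside a window determined jointly by the mixing condition $D$ and the anti-clustering condition $D'$, which is nonempty precisely because $a_0=1$ strictly dominates $a_1=\sinc^2(c\pi)$; both constraints, and hence the rate of convergence, degrade as $c\downarrow0$, consistently with the finite-sample behaviour documented in Section~\ref{sec: MC}.
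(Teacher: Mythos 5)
The architecture of your proposal is the same as the paper's: reduce $B_{p,n}/p$ to the maximum of a stationary moving average of i.i.d.\ $\operatorname{Exp}(1)$ variables with $\sinc^2$ weights, compute the exponential tail constant, verify Leadbetter's conditions $D(u_n)$ and $D'(u_n)$, and transfer back. Within that shared frame, however, two key technical steps are handled by genuinely different tactics, and a third step is glossed over.

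Your computation of the tail constant via the decomposition $Y_0=\zeta_0+W$, yielding $\P(Y_0>x)\sim e^{\theta_c}e^{-x}$ with $e^{\theta_c}=\E[e^W]=\prod_{k\ne 0}(1-a_k)^{-1}$, is a clean, self-contained alternative to the paper's route, which instead invokes Zolotarev's asymptotic formula (6) of \cite{zolotarev61} for the tail density and CDF of a weighted sum of $\chi^2$ variables. Your version is more elementary and, incidentally, makes the appearance of $\theta_c$ in \eqref{definition of theta_c} transparent; the paper gains in exchange the corresponding two-dimensional estimate needed for condition (ii), where Zolotarev's density asymptotics are reused several times. The other real divergence is the boundary/transfer step. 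You excise a $\delta n$-sized boundary and argue that $\sum_{j<\delta n}\P(X_j^{(n)}>u_n)=O(\delta)+o(1)$, which works but requires a fairly delicate per-index analysis interweaving the doubled self-coefficient $1+c^{(n)}_{2j}$ (which for the first $O(1/c)$ indices is actually dominated by the $\chi^2(1)$ endpoint term $2c^{(n)}_j|d_0|^2$, not by the doubled exponential) with the $k^{-2}$ decay in the intermediate range $j\asymp\sqrt{\log n}$. The paper excludes only $O(\log n)$ boundary indices, disposing of them with one sub-gamma union bound at level $\sqrt{\log n}$, and then replaces coefficients and endpoint variables by their stationary limits, proving via Lemma~\ref{lem: Xj} that the replacement error is $O(1/\sqrt{\log n})$ uniformly. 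That route is strictly simpler because the process it hands to Leadbetter's theorem is exactly stationary, whereas your phrase ``the previous argument applies verbatim'' over the bulk silently invokes extreme-value theory for a non-stationary array $\{X_j^{(n)}\}$; this is fixable, but it is not verbatim, and the paper's reduction avoids the issue entirely. Finally, your verification of $D(u_n)$ and $D'(u_n)$ is heuristic --- you correctly identify that $a_0=1>a_1$ is the mechanism, but the paper's proof devotes a separate truncation argument (Lemma~\ref{lem: validity of D}) to $D$ and a three-range decomposition ($S_1,S_2,S_3$ with the auxiliary variables $r_{1j},r_{2j},r_{3j}$) to $D'$, and these are the places where the proof actually earns its conclusion.
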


Lemma \ref{lem: gumbel}, together with the inequality $\|\mb{C}_{p\times n}\|^2\geq B_{p,n}$, yields the following second–order asymptotic bound. 
\begin{corollary}\label{cor: gumbel}
    Under the assumptions of Lemma \ref{lem: gumbel}, the Gaussian rectangular circulant matrix $\mb{C}_{p\times n}$ satisfies the asymptotic inequality
    \[
    \limsup_{n\rightarrow\infty}\mathbb{P}\left\{\,\frac{\|\mb{C}_{p\times n}\|^2}{p}-\log\frac{n}{2} \leq x \,\right\}
    \;\leq\; \exp\{-e^{-(x-\theta_c)}\}.
    \]
    Equivalently, the sequence
    \[
    \left\{\,\frac{\|\mb{C}_{p\times n}\|^2}{p}-\log\frac{n}{2}\,\right\}
    \]
    asymptotically stochastically dominates the shifted Gumbel distribution 
    $\operatorname{Gumbel}(\theta_c,\,1)$.
\end{corollary}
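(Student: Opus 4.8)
The plan is to combine the deterministic inequality $B_{p,n}\le\|\mb{C}_{p\times n}\|^2$ from \eqref{second-order bound} with the distributional limit established in Lemma~\ref{lem: gumbel}. Dividing that inequality by $p$ and subtracting $\log(n/2)$ gives, for every fixed $x\in\mathbb{R}$ and every $n$,
\[
\mathbb{P}\left\{\frac{\|\mb{C}_{p\times n}\|^2}{p}-\log\frac{n}{2}\le x\right\}
\;\le\;
\mathbb{P}\left\{\frac{B_{p,n}}{p}-\log\frac{n}{2}\le x\right\}.
\]
First I would take $\limsup_{n\to\infty}$ on both sides. Since the cumulative distribution function $F(x)=\exp\{-e^{-(x-\theta_c)}\}$ of $\operatorname{Gumbel}(\theta_c,1)$ is continuous on all of $\mathbb{R}$, the convergence in distribution asserted by Lemma~\ref{lem: gumbel} upgrades to pointwise convergence of the distribution functions at \emph{every} $x$; hence the right-hand side has a genuine limit, equal to $F(x)$. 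This yields
\[
\limsup_{n\to\infty}\mathbb{P}\left\{\frac{\|\mb{C}_{p\times n}\|^2}{p}-\log\frac{n}{2}\le x\right\}
\;\le\;\exp\{-e^{-(x-\theta_c)}\},
\]
which is precisely the first displayed assertion of the corollary.

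For the second, \emph{equivalent} formulation, I would simply recall the definition of (asymptotic) stochastic dominance: a sequence of random variables $X_n$ asymptotically stochastically dominates a law with distribution function $F$ exactly when $\limsup_{n}\mathbb{P}\{X_n\le x\}\le F(x)$ for all $x$, i.e.\ the limiting distribution function of $X_n$ lies weakly below $F$. Applying this with $X_n=\frac{\|\mb{C}_{p\times n}\|^2}{p}-\log\frac{n}{2}$ and $F$ the distribution function of $\operatorname{Gumbel}(\theta_c,1)$ turns the inequality just derived into the stochastic-dominance statement verbatim.

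There is essentially no analytic obstacle remaining at this stage: the substantive work --- reducing the maximum defining $B_{p,n}$ via the symmetry $|d_i|^2=|d_{n-i}|^2$, approximating the quadratic form on the right-hand side of \eqref{stationary moving average} by a stationary moving average of i.i.d.\ standard exponentials, and invoking classical extreme-value theory for stationary sequences (Chapter~3 of \cite{leadbetter83}) to identify the shift $\theta_c$ in \eqref{definition of theta_c} --- is all contained in the proof of Lemma~\ref{lem: gumbel} in the Technical Appendix. The only point worth making explicit in the corollary's proof is that the limiting Gumbel law is continuous, which is what permits the one-sided bound to pass cleanly from the level of distribution functions to the level of the $\limsup$ without a continuity-point caveat.
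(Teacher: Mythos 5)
Your proof is correct and coincides with the paper's intended argument: the paper derives the corollary exactly by combining the deterministic inequality $B_{p,n}\le\|\mb{C}_{p\times n}\|^2$ with the convergence in Lemma~\ref{lem: gumbel}, and your filling in of the details (monotonicity of probabilities under the pointwise inequality, continuity of the Gumbel CDF upgrading weak convergence to pointwise CDF convergence, and the definition of asymptotic stochastic dominance) is precisely what is implicit there.
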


\begin{remark}
   For the square case, $c=p/n=1$, we have $\theta_c=0$, so there is no shift in the Gumbel law. Moreover, Theorem 1.2 in \cite{barrera22} implies that the sequence described in Corollary \ref{cor: gumbel} converges in distribution to the Gumbel law, even without assuming the Gaussianity. 
\end{remark}

Table \ref{tab: thatac} shows that $\theta_c$ grows rapidly as $c$ decreases, leading to substantial shifts of the Gumbel distribution.

\begin{table}[h]
    \centering
    \begin{tabular}{c|c c c c c c c c c c}
       c  & 0.1 & 0.2 & 0.3 & 0.4 & 0.5 & 0.6 & 0.7 & 0.8 & 0.9 & 1 \\
        $\theta_c$ & 18.42 & 7.05 & 3.61 & 2.06 & 1.23 & 0.75 & 0.45 & 0.25 & 0.11 & 0 
    \end{tabular}
    \caption{Values of $\theta_c$ on the grid $c=0.1:0.1:1$.}
    \label{tab: thatac}
\end{table}

\begin{figure}
\centering
    \includegraphics[width=0.5\linewidth]{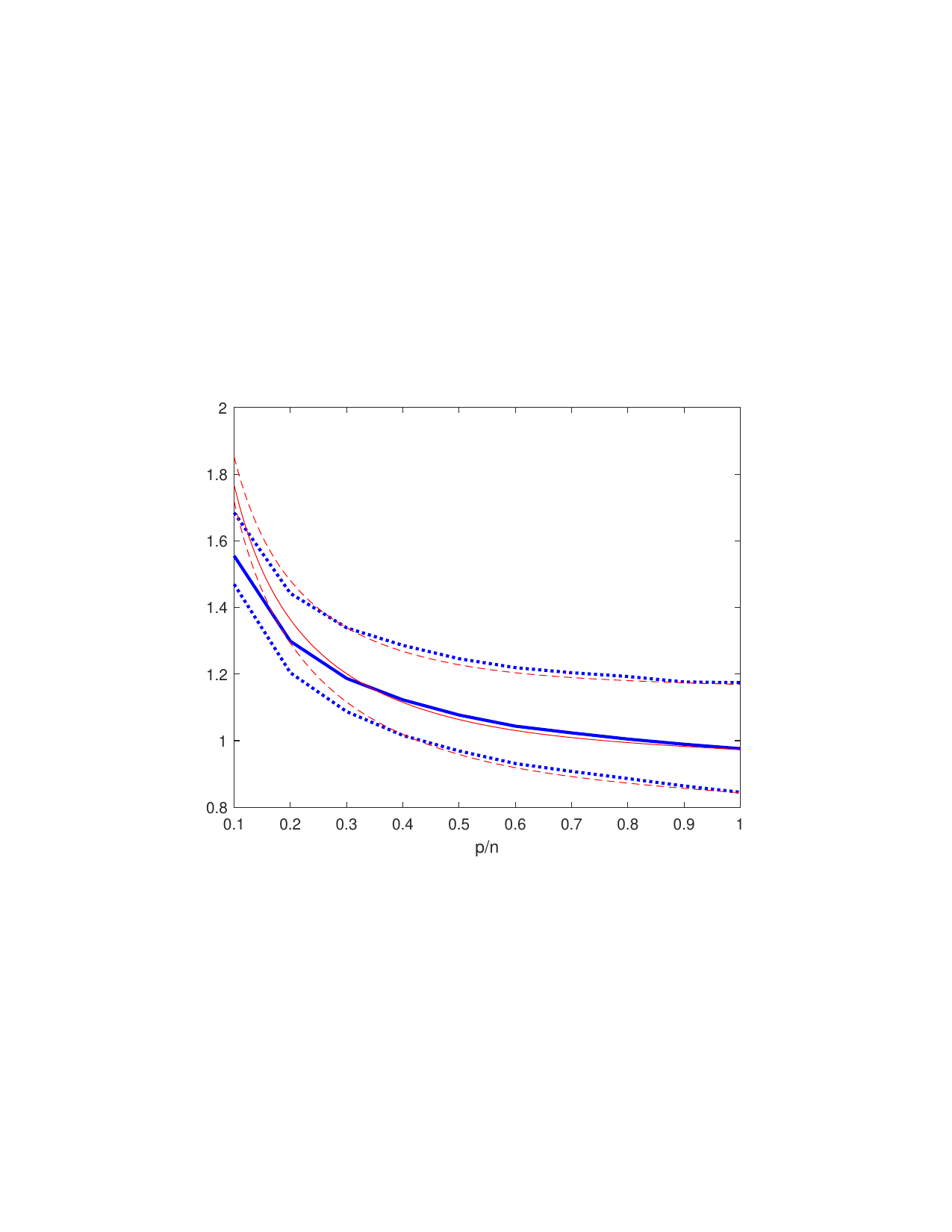}
    \caption{Monte Carlo quantiles of $\|\mb{C}_{p\times n}\|/\sqrt{p\log n}$ reported in Figure \ref{fig: summuries} (fat blue lines) superimposed with the corresponding quantiles of $\mathcal{G}_c$ (thin red lines). Solid lines- medians. Dotted lines - 5\% and 95\% quantiles.}
    \label{fig: bad quantiles}
\end{figure}

Figure \ref{fig: bad quantiles} overlays the Monte Carlo results for $\|\mb{C}_{p\times n}\|/\sqrt{p\log n}$, shown in the right panel of Figure \ref{fig: summuries}, with the quantiles of the distribution
\[
\mathcal{G}_c := \sqrt{\frac{\operatorname{Gumbel}(\theta_c,1)+\log (n/2)}{\log n}},
\]
which, by Corollary \ref{cor: gumbel}, provides asymptotic lower bounds for the corresponding quantiles of $\|\mb{C}_{p\times n}\|/\sqrt{p\log n}$. 

The theoretical quantiles are closely aligned with the Monte Carlo quantiles for moderate and large values of $c$. For smaller $c$, however, a noticeable discrepancy appears; this can be explained by the corresponding values of $\tfrac{n}{p\log n}$ being relatively large (1.17 for $c=0.1$ and 0.64 for $c=0.2$), which do not fit well with the asymptotic regime \eqref{new asymptotic regime}, where $\tfrac{n}{p\log n}$ is required to converge to zero. 

Overall, the observed alignment for larger $c$ is consistent with the conjecture
\begin{equation}\label{conjecture}
    \frac{\|\mb{C}_{p\times n}\|^2}{p}-\log\frac{n}{2}\overset{d}{\longrightarrow} \operatorname{Gumbel}(\theta_c,1),
\end{equation}
although a rigorous proof is currently lacking. Establishing this result would require additional work, including demonstrating a high concentration of the principal eigenvector of $\mb{C}_{p\times n}^\top \mb{C}_{p\times n}$. We leave this as an open problem for future study.

%A similar second-order refinement could in principle be developed in the Toeplitz setting. One possible route is to use the inequality (cf. \eqref{common form})
%\[
%\|\mb{T}_{p\times n}\|\geq \sqrt{N}\max_{j=0,\dots,N}\left|\overline{\mb{\ell}_{pj}}^\top \mb{P}_{p,N}\mb{D}\mb{P}_{n,N}\mb{\ell}_{nj}\right|,
%\]
%where the vectors $\mb{\ell}_{pj}$ and $\mb{\ell}_{nj}$ would be constructed from the Fourier transforms of the optimal functions $\Lambda_p$ and $\Lambda_n$ in the maximization problem~\eqref{new positive definite problem}. Carrying out this program would require a careful control of these Fourier approximations and their interaction with the random complex-valued diagonal matrix $\mb{D}$, which we leave open for future investigation.

\section{Technical Appendix}

\subsection{Proof of Lemma \ref{lem:I is close to K}}
First, we show that 
\begin{equation}\label{GRR}
K_{p,n}^2\leq I_{p,n}^2.
\end{equation}
The argument follows the same idea as in the case $p=n$ on p.~812 of \cite{garcia69}.

Let $\Lambda_p\in \mathcal{F}_{p}$ and $\Lambda_n\in\mathcal{F}_{n}$, and define
\[
F(u):=\int e^{-\rmi ux}\overline{\Lambda_p\left(x\right)}\Lambda_n\left(x\right) \mathrm{d}x.
\]
By the Poisson summation formula (see e.g.~Sec.~13, Ch.~II of \cite{zygmund03}), 
\[
\sum_{k=-\infty}^{+\infty}F(2\pi k)=\sum_{k=-\infty}^{+\infty} \overline{\Lambda_p\left(k\right)}\Lambda_n\left(k\right).
\]
Observe that the sequences $\{\Lambda_p(k)\}_{k\in\mathbb{Z}}\in \mathcal{L}_{p-1}$ and $\{\Lambda_n(k)\}_{k\in\mathbb{Z}}\in \mathcal{L}_{n-1}$. Therefore,
\[
\sum_{k=-\infty}^{+\infty}F(2\pi k)\leq I_{p,n}^2.
\]
Next, since a product of positive definite functions is positive definite, the product $\overline{\Lambda_p(x)}\Lambda_n(x)$ is a positive definite function. By Bochner's theorem (see e.g.~p.~19 in \cite{rudin62}), its Fourier transform has non-negative values, which implies that $F(2\pi k)\geq 0$ for any $k\in\mathbb{Z}$. In particular,
\[
F(0)\leq \sum_{k=-\infty}^{+\infty}F(2\pi k)\leq I_{p,n}^2.
\]
Finally, observe that $F(0)=\int\overline{\Lambda_p(x)}\Lambda_n(x)\mathrm{d}x$. Taking the supremum over all admissible $\Lambda_p\in\mathcal{F}_p$ and $\Lambda_n\in\mathcal{F}_n$, we obtain
\[
\sup_{\Lambda_p\in\mathcal{F}_p,\Lambda_n\in\mathcal{F}_n} \int\overline{\Lambda_p(x)}\Lambda_n(x)\mathrm{d}x\leq I_{p,n}^2, 
\]
which establishes \eqref{GRR}.

It remains to show that 
\begin{equation}\label{belov bound}
K_{p,n}^2 \geq I_{p,n}^2 - \frac{1}{3}.
\end{equation}
To establish this bound, we build on a technique developed by \cite{belov13}, who introduced a novel approach to extremal problems involving positive definite functions. His method relies on extending positive definite sequences to positive definite piecewise linear functions, allowing the analysis of quantities such as
\[
\sup_{\Lambda \in \mathcal{F}_c} \int \Lambda^m(x)\, \mathrm{d}x.
\]
In the special case $m = 2$, this supremum coincides with $K_{c,c}$, which directly connects to our setup when $p = n$. Belov used this framework to establish a bound analogous to ours (see Theorem~5 in~\cite{belov13}), and we adapt his ideas to handle the general case $p \neq n$.

We begin by establishing an inequality analogous to Lemma 4.4 of~\cite{belov13}; see equation~\eqref{belov GRR} below. Let $\{\alpha_j\}\in\mathcal{L}_{p-1}, \{\gamma_j\}\in\mathcal{L}_{n-1}$, and define $\Lambda_{p}$ and $\Lambda_{n}$  as the piecewise linear functions interpolating the nodes $\{(j,\alpha_j)\}$ and $\{(j,\gamma_j)\}$, respectively. By Theorem 1 of \cite{belov13}, $\Lambda_{p}\in\mathcal{F}_{p}$ and $\Lambda_{n}\in\mathcal{F}_{n}$. Therefore, we have
\[
K_{p,n}^2\geq \int\overline{\Lambda_{p}(x)}\Lambda_{n}(x)\mathrm{d}x=\frac{1}{3}\sum_{j=-\infty}^\infty\left(\overline{\alpha_j}\gamma_j+\frac{1}{2}\overline{\alpha_j}\gamma_{j+1}+\frac{1}{2}\overline{\alpha_{j+1}}\gamma_j+\overline{\alpha_{j+1}}\gamma_{j+1}\right),
\]
where the final equality follows from evaluating the integral explicitly over each interval $x \in [j, j+1]$.

Taking the maximum over all such sequences, we obtain
\begin{equation}\label{belov GRR}
    K_{p,n}^2\geq J_{p,n}^2,
\end{equation}
where
\begin{equation}\label{J definition}
J_{p,n}^2:=\max_{\{\alpha_j\}\in\mathcal{L}_{p-1}, \{\gamma_j\}\in\mathcal{L}_{n-1}}\frac{1}{3}\sum _{j=-\infty}^{\infty}\left(\overline{\alpha_j}\gamma_j+\frac{1}{2}\overline{\alpha_j}\gamma_{j+1}+\frac{1}{2}\overline{\alpha_{j+1}}\gamma_j+\overline{\alpha_{j+1}}\gamma_{j+1}\right).
\end{equation}
Hence, to establish inequality \eqref{belov bound}, it suffices to show that
\begin{equation}\label{I-J comparison}
I_{p,n}^2-J_{p,n}^2\leq \frac{1}{3}.
\end{equation}

By definition \eqref{Ipn definition},
\begin{equation}\label{maximisation Ipn2}
I_{p,n}^2=\max_{\{\alpha_j\}\in \mathcal{L}_{p-1},\{\gamma_j\}\in\mathcal{L}_{n-1}}\sum_{\nu=-p+1}^{p-1} \overline{\alpha_\nu} \gamma_{\nu}.
\end{equation}
Suppose that there exist admissible extremal sequences $\{\alpha_j\}$ and $\{\gamma_j\}$ solving this maximization problem, such that all their elements are real and
\begin{equation}\label{monotonicity}
1=\alpha_0\geq\cdots\geq \alpha_{p-1}\geq 0,\qquad 1=\gamma_0\geq\cdots\geq \gamma_{n-1}\geq 0.
\end{equation}
The existence of such real-valued, symmetric, monotone extremal sequences is established in Lemma \ref{lem: theorem 5.1} below. Then we have
\begin{eqnarray*}
    I^2_{p,n}&=&\sum_{j=-\infty}^{\infty}\alpha_j\gamma_j,\\
    J^2_{p,n}&\geq& \frac{1}{3}\sum _{j=-\infty}^{\infty}\left(\alpha_j\gamma_j+\frac{1}{2}\alpha_j\gamma_{j+1}+\frac{1}{2}\alpha_{j+1}\gamma_j+\alpha_{j+1}\gamma_{j+1}\right).
\end{eqnarray*}
Taking the difference and using the symmetry $\alpha_{j}=\alpha_{-j}$ and  $\gamma_{j}=\gamma_{-j}$, we obtain
\begin{eqnarray*}
I^2_{p,n}-J^2_{p,n}&\leq& \frac{1}{3}\sum_{j=-\infty}^{\infty}\left(\alpha_j\gamma_j-\frac{1}{2}\alpha_j\gamma_{j+1}-\frac{1}{2}\alpha_{j+1}\gamma_j\right)\\ &=&\frac{1}{3}\alpha_0\gamma_0+\frac{1}{3}\sum_{j=0}^\infty\left(2\alpha_{j+1}\gamma_{j+1}-\alpha_j\gamma_{j+1}-\gamma_j\alpha_{j+1}\right)\\
    &=&\frac{1}{3}\alpha_0\gamma_0-\frac{1}{3}\sum_{j=0}^\infty\left((\alpha_j-\alpha_{j+1})\gamma_{j+1}+(\gamma_j-\gamma_{j+1})\alpha_{j+1}\right)\leq \frac{1}{3},
\end{eqnarray*}
since all terms in the final sum are non-negative.
This completes the proof of \eqref{I-J comparison}, adapting the argument of Lemma 5.9 in \cite{belov13}.

Finally, the claimed existence of the extremal sequences that are real-valued, symmetric, and monotone is established in the following lemma. The proof closely follows the reasoning of Theorem 5.1 in \cite{belov13} and is therefore deferred to the SM.

\begin{lemma}\label{lem: theorem 5.1} 
There exist real-valued, symmetric sequences $\{\alpha_j\}\in\mathcal{L}_{p-1}$ and $\{\gamma_j\}\in\mathcal{L}_{n-1}$ that attain the maximum in \eqref{maximisation Ipn2} and satisfy the monotonicity condition \eqref{monotonicity}.
\end{lemma}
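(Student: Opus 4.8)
The plan is to produce the extremal pair in three stages, at none of which the value of the maximand in~\eqref{maximisation Ipn2} decreases: (i) producing \emph{some} maximizer, (ii) symmetrizing it to make both sequences real-valued and even, and (iii) rearranging it to make both monotone. Throughout it is convenient to use the spectral description behind the proof of Lemma~\ref{lem: link to Ipn}: a sequence in $\mathcal{L}_{p-1}$ is the autocorrelation of a unit-norm polynomial $\mathcal{P}_{p-1}$, so that, writing $f_\alpha(\omega):=|\mathcal{P}_{p-1}(e^{\rmi\omega})|^2$ and $f_\gamma(\omega):=|\mathcal{P}_{n-1}(e^{\rmi\omega})|^2$,
\[
\sum_{\nu=-p+1}^{p-1}\overline{\alpha_\nu}\,\gamma_\nu=\frac{1}{2\pi}\int_0^{2\pi}f_\alpha(\omega)f_\gamma(\omega)\,\mathrm{d}\omega.
\]
For fixed $\{\alpha_j\}$ the right-hand side, maximized over $\{\gamma_j\}$, is the largest eigenvalue of the Hermitian Toeplitz matrix $T_n(f_\alpha)=(\alpha_{j-k})_{j,k=0}^{n-1}$, attained at the coefficient vector of $\mathcal{P}_{n-1}$; hence $I_{p,n}^2=\max\{\lambda_{\max}(T_n(f_\alpha)):\{\alpha_j\}\in\mathcal{L}_{p-1}\}$, and symmetrically with $p$ and $n$ interchanged.

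First I would settle existence. Identifying $\{\alpha_j\}$ with $(\alpha_{-(p-1)},\dots,\alpha_{p-1})\in\mathbb{C}^{2p-1}$, the class $\mathcal{L}_{p-1}$ is closed, since positive semidefiniteness of the associated Toeplitz form and the normalization $\alpha_0=1$ are closed conditions, and bounded, since $|\alpha_j|\le\alpha_0=1$; it is therefore compact, and likewise $\mathcal{L}_{n-1}$. As the maximand is continuous, a maximizing pair $(\{\alpha_j\},\{\gamma_j\})$ exists. Next, for the passage to real, even, and monotone sequences I would follow the argument of Theorem~5.1 in~\cite{belov13}, adapted to a pair of sequences living in the two classes $\mathcal{L}_{p-1}$ and $\mathcal{L}_{n-1}$. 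The problem is invariant under the joint reflection $\{\alpha_j\}\mapsto\{\overline{\alpha_j}\}=\{\alpha_{-j}\}$, $\{\gamma_j\}\mapsto\{\overline{\gamma_j}\}$ (equivalently $\omega\mapsto-\omega$ above), and, with one sequence held fixed, the objective is a quadratic form in the coefficient vector of the other whose maximizer is a top eigenvector of a Hermitian Toeplitz matrix; exploiting these two features exactly as in~\cite{belov13}, one replaces the maximizer by one with $\alpha_j,\gamma_j\in\mathbb{R}$, and then $\alpha_{-j}=\overline{\alpha_j}$ makes these real sequences even, with $\alpha_0=\gamma_0=1$. Finally, to obtain the monotonicity~\eqref{monotonicity}, I would use Belov's device of replacing a positive definite sequence by the piecewise-linear function interpolating its values at the integers, which by Theorem~1 of~\cite{belov13} is again positive definite and lies in $\mathcal{F}_{p}$ (resp.\ $\mathcal{F}_{n}$); working with these interpolants, one shows that whenever $\{\alpha_j\}$ or $\{\gamma_j\}$ is not nonincreasing on $\{0,1,2,\dots\}$, an elementary rearrangement — swapping two out-of-order values, or averaging a value with a neighbour — produces an admissible sequence for which the maximand does not decrease while the number of inversions strictly drops, so that after finitely many steps both sequences are nonincreasing, yielding~\eqref{monotonicity}.

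The main obstacle, as in~\cite{belov13}, lies in step (iii): positive definiteness is not preserved under arbitrary permutations of the values of a sequence, so each rearrangement must be engineered to keep the sequence — equivalently, its piecewise-linear extension — positive definite, which is precisely where Theorem~1 of~\cite{belov13} and the exact form of the swap/averaging operation enter. The one genuinely new complication relative to~\cite{belov13}, where a single function and a single power are involved, is that here two sequences of possibly different lengths must be made simultaneously real, even, and nonincreasing; I would handle this by treating the two coordinates in turn — permissible because, for a fixed partner, the optimum over the remaining sequence is the top-eigenvalue problem recorded above — using the equal-length case worked out in~\cite{garcia69} (their ``case of the integers'') as the model.
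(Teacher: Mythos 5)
Your overall plan (existence by compactness, then symmetrization, then rearrangement) and the decision to lean on Theorem~5.1 of \cite{belov13} are correct in spirit, but your description of what Belov's argument actually does in the rearrangement step is wrong in a way that matters, and your symmetrization step is also running on a different engine than the one that actually works.

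The crux of Belov's technique, and the paper's proof, is that one never rearranges the autocorrelations $\alpha_j$ directly. Instead, one passes via the Fej\'er--Riesz factorization $T_\alpha(x)=\bigl|\sum_{j}\beta_j e^{\rmi jx}\bigr|^2$, re-centers the coefficient vector $\{\beta_j\}$, and replaces it by its decreasing rearrangement $\{\tilde\beta_j^{\downarrow}\}$; the resulting autocorrelation sequence $\alpha_j^\ast=\sum_i\tilde\beta_i^{\downarrow}\tilde\beta_{i+j}^{\downarrow}$ is \emph{automatically} positive definite (it is an autocorrelation of a non-negative finite sequence, independent of the ordering of the $\beta$'s), is in $S^{\downarrow}$ (hence monotone nonincreasing on $\mathbb{Z}_+$) by Lemma~\ref{lem: belov 5.6}, and satisfies the majorization $\{\alpha_j^{\downarrow}\}\prec\{\alpha_j^\ast\}$. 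The monotonicity of the objective then comes from Lemma~\ref{lem: belov 5.7}: $\{\alpha_j\gamma_j\}\prec\{\alpha_j^\ast\gamma_j^\ast\}$, hence $\sum\alpha_j\gamma_j\le\sum\alpha_j^\ast\gamma_j^\ast$. By contrast, what you propose --- ``swapping two out-of-order values, or averaging a value with a neighbour'' on the $\alpha_j$ scale, with positive definiteness rescued by piecewise-linear interpolation --- does not work: Theorem~1 of \cite{belov13} only says that the interpolant of a \emph{given} positive definite sequence is positive definite, it says nothing about whether a swap or a local average of a positive definite sequence remains positive definite (it generally does not), and there is no clean ``inversions decrease, objective does not'' potential-function argument at the $\alpha$-level. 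This is not a presentational shortcut you could fill in later; it is the key idea, and without it the monotonicity step simply does not close. The piecewise-linear interpolation device from \cite{belov13} is used in this paper, but for a different purpose: it enters the proof of the bound $K_{p,n}^2\ge J_{p,n}^2$ in Lemma~\ref{lem:I is close to K}, not in the rearrangement step.

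A smaller gap: for making the sequences real and non-negative you appeal to reflection symmetry, but the objective $\sum_\nu\overline{\alpha_\nu}\gamma_\nu$ is bilinear, not linear, so averaging a maximizer with its conjugate does not obviously preserve optimality (and $\Re(\overline{\alpha_\nu}\gamma_\nu)\neq\Re\alpha_\nu\,\Re\gamma_\nu$). The paper instead uses Belov's Lemma~5.2 (Lemma~\ref{lem: belov lemma 5.2}): again via Fej\'er--Riesz, replace $\beta_j$ by $\sqrt{(|\beta_j|^2+|\beta_{k-j}|^2)/2}$ to obtain a non-negative sequence $\{h_j\}\in\mathcal{L}_k$ with $h_0=\alpha_0$ and $|\alpha_j|\le h_j$; applying this to both factors only increases $\sum\overline{\alpha_\nu}\gamma_\nu$, so one may restrict the maximization to non-negative sequences from the start. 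The unifying theme is that every reduction happens at the level of the Fej\'er--Riesz square-root coefficients, and your proposal never touches that level.
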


\subsection{Proof of Lemma \ref{lem: link between Pi and K}}
By the identity \eqref{K reparameterization},
\[
\sqrt{p/N}K_{1,n/p}=K_{p/N,n/N}.
\]
Hence, it suffices to prove that
\[
K_{p/N,n/N}=\sup_{\mathbf{c}_1,\mathbf{c}_2\in\ell^2(\mathbb{C}):\|\mathbf{c}_1\|=\|\mathbf{c}_2\|=1}\|\Pi_{p/N}\mathbf{c}_1\odot \Pi_{n/N}\mathbf{c}_2\|.
\]
Recall that, by definition,
\begin{equation}\label{K again}
K_{p/N,n/N}^2=\sup_{\Lambda_1\in \mathcal{F}_{p/N},\Lambda_2\in\mathcal{F}_{n/N}}\int_{-p/N}^{p/N} \overline{\Lambda_1(x)}\Lambda_2(x) \mathrm{d}x.
\end{equation}

By Lemma \ref{lem: Boas and Kac}, any $\Lambda_1\in \mathcal{F}_{p/N},\Lambda_2\in\mathcal{F}_{n/N}$ can be represented in the form 
\[
\Lambda_1(x)=\int_{-\frac{p}{2N}}^{\frac{p}{2N}}\overline{\phi_1(t)}\phi_1(x+t)\mathrm{d}t,\qquad  \Lambda_2(x)=\int_{-\frac{n}{2N}}^{\frac{n}{2N}}\overline{\phi_2(t)}\phi_2(x+t)\mathrm{d}t,
\]
where $\phi_1\in \mathcal{G}_{p/(2N)}$ and $\phi_2\in \mathcal{G}_{n/(2N)}$. 
We will assume, without loss of generality, that these functions equal zero at the support boundary, that is, $\phi_1(t)=0$ for $t=p/(2N)$, and $\phi_2(t)=0$ for $t=n/(2N)$. Note that, since $N=p+n$, the supports of both $\phi_1$ and $\phi_2$ belong to $[-1/2,1/2]$. 

Now let $\tilde{\phi}_1(t)$ and $\tilde{\phi}_2(t)$ be periodic functions with period $1$, equal to $\phi_1(t)$ and $\phi_2(t)$, respectively, for $t\in [-1/2,1/2]$. Further, define
\begin{eqnarray*}
\tilde{\Lambda}_1(x)&:=&\int_{-1/2}^{1/2}\overline{\tilde{\phi}_1(t)}\tilde{\phi}_1(x+t)\mathrm{d}t=(\tilde{\phi}_1^\ast\star\tilde{\phi}_1)(x)\quad \text{and}\\
\tilde{\Lambda}_2(x)&:=&\int_{-1/2}^{1/2}\overline{\tilde{\phi}_2(t)}\tilde{\phi}_2(x+t)\mathrm{d}t=(\tilde{\phi}_2^\ast\star\tilde{\phi}_2)(x).
\end{eqnarray*}

Let us show that
\begin{eqnarray}
    \label{lambda equality} \Lambda_j(x)=\tilde{\Lambda}_j(x)\text{ for all }x\in[-p/N,p/N],\text{ and }j=1,2. 
\end{eqnarray}
It suffices to prove that, for all $t,x\in\mathbb{R}$ such that $|t|\leq 1/2$ and $|x|\leq p/N$, the integrands in the integrals defining $\Lambda_j$ and $\tilde{\Lambda}_j$ coincide:
\begin{equation}\label{equality of integrands}
\overline{\phi_j(t)}\phi_j(x+t)=\overline{\tilde{\phi}_j(t)}\tilde{\phi}_j(x+t),\quad j=1,2.
\end{equation}

First, consider the case $j=1$. For $p/(2N)\leq |t|\leq 1/2$ and $j=1$, the identity \eqref{equality of integrands} holds because $\phi_1(t)=\tilde{\phi}_1(t)=0$.
For $|t|< p/(2N)$, the common value of $\phi_1(t)=\tilde{\phi}_1(t)$ is not zero, in general. Therefore, for such $t$, we need to show that $\phi_1(x+t)=\tilde{\phi}_1(x+t)$.

Observe that, for $|x+t|\leq 1/2$, the latter equality holds by definition. For $1/2<|x+t|\leq  1-p/(2N)$, it holds because, then, both $\phi_1(x+t)=0$ and $\tilde{\phi}_1(x+t)=0$. 

Therefore, it suffices to show that, for all $t,x\in\mathbb{R}$ such that $|t|\leq p/(2N)$ and $|x|\leq p/N$, we must have $|x+t|\leq 1-p/(2N)$. This follows immediately from the triangle inequality: 
\[
|x+t|\leq|x|+|t|\leq p/N+p/(2N)\leq 1-p/(2N),
\]
where the final inequality holds since $p/N=p/(p+n)\leq 1/2$.

Turning to the case $j=2$, and arguing similarly to the case $j=1$, we now reduce the problem to verifying that for all $t,x\in\mathbb{R}$ such that $|t|\leq n/(2N)$ and $|x|\leq p/N$, we must have $|x+t|\leq 1-n/(2N)$. Applying the triangle inequality, 
\[
|x+t|\leq|x|+|t|\leq p/N+n/(2N)= 1-n/(2N),
\]
since $N=p+n$. 
This finishes our proof of \eqref{equality of integrands}, and hence \eqref{lambda equality} holds, as claimed. 

Note that if $N$ had been larger than $p+n$, we would have had the inequality $p/N+p/(2N)<1-p/(2N)$, which would still have been sufficient for our purposes. This justifies the validity of Remark \ref{remark: deficiency}.

The identities \eqref{lambda equality} and \eqref{K again} yield the following representation:
\[
K_{p/N,n/N}^2=\sup_{\phi_1\in \mathcal{G}_{p/(2N)},\phi_2\in\mathcal{G}_{n/(2N)}}\int_{-p/N}^{p/N} \overline{(\tilde{\phi}_1^\ast\star\tilde{\phi}_1)(x)}(\tilde{\phi}_2^\ast\star\tilde{\phi}_2)(x) \mathrm{d}x.
\]
Let $\mathcal{G}_{a/2}^0$ be the set of complex-valued functions $\phi\in L^2(\mathbb{R})$, supported on $[-a/2,a/2]$ and satisfying $\int|\phi(t)|^2\mathrm{d}t\leq 1$. The difference from $\mathcal{G}_{a/2}$ is that the functions in $\mathcal{G}_{a/2}^0$ are allowed to have norms smaller than $1$. 
Then, the domain of the supremum in the above expression can be extended, yielding:
\[
K_{p/N,n/N}^2=\sup_{\phi_1\in \mathcal{G}_{p/(2N)}^0,\phi_2\in\mathcal{G}_{n/(2N)}^0}\int_{-p/N}^{p/N} \overline{(\tilde{\phi}_1^\ast\star\tilde{\phi}_1)(x)}(\tilde{\phi}_2^\ast\star\tilde{\phi}_2)(x) \mathrm{d}x.
\]
Note that for any $\delta<1$, the class $\mathcal{G}_{\delta/2}^0$ coincides with the set $\mathcal{G}_{1/2}\cdot \mathbf{1}_{\delta/2}$, that is, the pointwise products of functions from $\mathcal{G}_{1/2}$ and the indicator function $\mathbf{1}_{\delta/2}$ of the interval $[-\delta/2,\delta/2]$. Therefore, we have:
\begin{eqnarray}
\label{intermediate definition of C}
K_{p/N,n/N}^2=\sup_{\psi_1\in \mathcal{G}_{1/2},\psi_2\in\mathcal{G}_{1/2}}\int_{-p/N}^{p/N} \overline{\left([\tilde{\psi}_1\tilde{\mathbf{1}}_{\frac{p}{2N}}]^\ast\star \tilde{\psi}_1\tilde{\mathbf{1}}_{\frac{p}{2N}}\right)(x)}\left([\tilde{\psi}_2\tilde{\mathbf{1}}_{\frac{n}{2N}}]^\ast\star \tilde{\psi}_2\tilde{\mathbf{1}}_{\frac{n}{2N}}\right)(x) \mathrm{d}x,
\end{eqnarray}
where all tilded functions denote the period-1 extensions of their respective functions.

Now consider the $k$-th Fourier coefficients, $k\in\mathbb{Z}$, of functions $\psi_1, \psi_2,$  and their truncated versions $\psi_1\mathbf{1}_{p/(2N)}$, $\psi_2\mathbf{1}_{n/(2N)}$:
\begin{align*}
    c_{k}^{(1)}&:=\int_{-1/2}^{1/2}\psi_1(x)e^{-2\pi\rmi k x}\mathrm{d}x, &  c_{k}^{(2)}&:=\int_{-1/2}^{1/2}\psi_2(x)e^{-2\pi\rmi k x}\mathrm{d}x,\\
    \check{c}_{k}^{(1)}&:=\int_{-p/(2N)}^{p/(2N)}\psi_1(x)e^{-2\pi\rmi k x}\mathrm{d}x,  & \check{c}_{k}^{(2)}&:=\int_{-n/(2N)}^{n/(2N)}\psi_2(x)e^{-2\pi\rmi k x}\mathrm{d}x.
\end{align*}
These are, of course, also the Fourier coefficients of the period-1 extensions $\tilde{\psi}_1, \tilde{\psi}_2, \tilde{\psi}_1\tilde{\mathbf{1}}_{p/(2N)},$ and $\tilde{\psi}_2\tilde{\mathbf{1}}_{n/(2N)}$, respectively.
Therefore, the Fourier coefficients of the convolution $[\tilde{\psi}_1\tilde{\mathbf{1}}_{\frac{p}{2N}}]^\ast\star \tilde{\psi}_1\tilde{\mathbf{1}}_{\frac{p}{2N}}$ are given by  $|\check{c}_k^{(1)}|^2$, and those of  $[\tilde{\psi}_2\tilde{\mathbf{1}}_{\frac{n}{2N}}]^\ast\star \tilde{\psi}_2\tilde{\mathbf{1}}_{\frac{n}{2N}}$ by $|\check{c}_k^{(2)}|^2$ for all $k\in\mathbb{Z}$.
By Plancherel's theorem,
\begin{equation}\label{Plancherel identity}
\int_{-p/N}^{p/N} \overline{\left([\tilde{\psi}_1\tilde{\mathbf{1}}_{\frac{p}{2N}}]^\ast\star \tilde{\psi}_1\tilde{\mathbf{1}}_{\frac{p}{2N}}\right)(x)}\left([\tilde{\psi}_2\tilde{\mathbf{1}}_{\frac{n}{2N}}]^\ast\star \tilde{\psi}_2\tilde{\mathbf{1}}_{\frac{n}{2N}}\right)(x) \mathrm{d}x=\sum_{k=-\infty}^\infty|\check{c}_k^{(1)}\check{c}_k^{(2)}|^2.
\end{equation}
This equality allows us to link $K_{p/N,n/N}^2$ to a maximization problem in $\ell^2(\mathbb{C})$ space as follows.

For $j=1,2$, define the vectors in $\ell^2(\mathbb{C})$:
\begin{eqnarray*}
\mb{c}_{j}&:=&(\dots,c_{-2}^{(j)},c_{-1}^{(j)},c_{0}^{(j)},c_{1}^{(j)},c_{2}^{(j)},\dots),\text{ and}\\
\check{\mb{c}}_{j}&:=&(\dots,\check{c}_{-2}^{(j)},\check{c}_{-1}^{(j)},\check{c}_{0}^{(j)},\check{c}_{1}^{(j)},\check{c}_{2}^{(j)},\dots).
\end{eqnarray*}
Observe that 
\begin{equation}\label{check identity}
\check{\mb{c}}_{1}=\tilde{\Pi}_{p/N}\mb{c}_{1},\qquad \check{\mb{c}}_{2}=\tilde{\Pi}_{n/N}\mb{c}_{2},
\end{equation}
where operator $\tilde{\Pi}_{r/N}:\ell^2(\mathbb{C})\rightarrow\ell^2(\mathbb{C})$ is defined as follows (cf. \eqref{definition of limiting operator}):
\[
\tilde{\Pi}_{r/N}: \ell^2(\mathbb{C})\overset{\varphi}{\rightarrow}L^2(\mathbb{T})\overset{\times \mathbf{1}_{r/(2N)}}{\longrightarrow}L^2(\mathbb{T})\overset{\varphi^{-1}}{\rightarrow}\ell^2(\mathbb{C}).
\]
In the notations of \eqref{definition of limiting operator},
\[
\tilde{\Pi}_{q/N}=U^{-1}_{q/N}\Pi_{q/N}U_{q/N}.
\]
In particular, operators $\tilde{\Pi}_{q/N}$ and $\Pi_{q/N}$ are unitarily equivalent.

As $\psi_j$ spans $\mathcal{G}_{1/2}$, $\mb{c}_{j}$ spans the unit sphere in $\ell^2(\mathbb{C})$ (recall that $\|\psi_j\|_2=1$ by definition). Therefore, identities \eqref{Plancherel identity} and \eqref{check identity} imply that \eqref{intermediate definition of C} is equivalent to 
\begin{eqnarray}\notag
K_{p/N,n/N}^2&=&\sup_{\mathbf{c}_1,\mathbf{c}_2\in\ell^2(\mathbb{C}):\|\mathbf{c}_1\|=\|\mathbf{c}_2\|=1}\|\tilde{\Pi}_{p/N}\mathbf{c}_1\odot\tilde{\Pi}_{n/N}\mathbf{c}_2\|^2\\\label{l2 formulation of C}
&=&\sup_{\mathbf{c}_1,\mathbf{c}_2\in\ell^2(\mathbb{C}):\|\mathbf{c}_1\|=\|\mathbf{c}_2\|=1}\|\Pi_{p/N}\mathbf{c}_1\odot \Pi_{n/N}\mathbf{c}_2\|^2.
\end{eqnarray}
This concludes the proof of Lemma \ref{lem: link between Pi and K}.

\subsection{Proof of Lemma \ref{lem: grid}}
First, we establish the following auxiliary result:
\begin{lemma}\label{lem: K continuity}
    For any $a,b,\alpha,\beta\in\mathbb{R}$ such that $1\geq a\geq\alpha>0$ and $1\geq b\geq\beta b>0$, we have:
    \[
    \left|K_{a,b}-K_{\alpha,\beta}\right|\leq 1-\sqrt{c},\qquad c:=\min\{\alpha/a,\beta/b\}.
    \]
\end{lemma}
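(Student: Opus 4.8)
The plan is to obtain the estimate from two facts already available in the excerpt: the monotonicity of the admissible class $\mathcal{F}_c$ in the support half‑width $c$, and the exact homogeneity relation \eqref{K reparameterization}. No delicate estimate will be needed; the whole proof is a short combination of these.

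First I would record the monotonicity of $K_{\cdot,\cdot}$. Directly from the definition of $\mathcal{F}_c$, if $0<c'\le c$ then $\mathcal{F}_{c'}\subseteq\mathcal{F}_c$, since a continuous positive definite function supported on $[-c',c']$ and normalized to be $1$ at the origin is, a fortiori, supported on $[-c,c]$. Hence in the variational formula \eqref{new positive definite problem} the supremum is taken over an enlarging feasible set as either support parameter grows, so $(u,w)\mapsto K_{u,w}$ is nondecreasing in each argument. Applying this with $\alpha\le a$ and $\beta\le b$ gives $K_{\alpha,\beta}\le K_{a,b}$, so that $|K_{a,b}-K_{\alpha,\beta}|=K_{a,b}-K_{\alpha,\beta}$, and it remains only to bound this difference from above.

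Next I would produce a matching lower bound for $K_{\alpha,\beta}$. Set $c:=\min\{\alpha/a,\beta/b\}$; since $0<\alpha\le a$ and $0<\beta\le b$ one has $c\in(0,1]$ and $ca\le\alpha$, $cb\le\beta$. By the monotonicity just established, $K_{ca,cb}\le K_{\alpha,\beta}$, while \eqref{K reparameterization} applied with scaling constant $c$ gives $K_{ca,cb}=\sqrt{c}\,K_{a,b}$. Combining the two, $K_{\alpha,\beta}\ge\sqrt{c}\,K_{a,b}$, and subtraction yields
\[
K_{a,b}-K_{\alpha,\beta}\;\le\;\bigl(1-\sqrt{c}\bigr)\,K_{a,b}.
\]
To finish, I would use that $K_{a,b}\le 1$ whenever $a,b\in(0,1]$: by monotonicity again $K_{a,b}\le K_{1,1}=K_1=\|\Sin\|_{2\to 4}^2<1$ (recall $K_1^2=0.6869\cdots$); alternatively this is immediate from $K_{a,b}=\|\Sin_{a,b}\|_{2\to 2}\le\sqrt{\min\{a,b\}}\le 1$, since $\Sin_b$ is a contraction on $L^2$ and $\|\Sin_a f\|_\infty\le\sqrt{a}\,\|f\|_2$. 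Because $1-\sqrt{c}\ge 0$, this gives $K_{a,b}-K_{\alpha,\beta}\le 1-\sqrt{c}$, which is the claimed bound.

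I do not expect any genuine obstacle here. The only two points that deserve a line of care are the verification that $\mathcal{F}_c$ is truly monotone in $c$ (immediate from its definition) and the normalization $K_{a,b}\le 1$ on the relevant range, which reduces either to the known numerical value of $K_1$ or to the one‑line $2\to 2$ bound above.
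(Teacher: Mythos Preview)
Your proof is correct and follows essentially the same route as the paper's: monotonicity of $K_{u,w}$ from $\mathcal{F}_{c'}\subseteq\mathcal{F}_c$, the scaling relation $K_{ca,cb}=\sqrt{c}\,K_{a,b}$ (which the paper rederives inline by rescaling test functions, whereas you simply cite \eqref{K reparameterization}), and the bound $K_{a,b}\le 1$. The only cosmetic difference is in how $K_{a,b}\le 1$ is justified---the paper observes that $\overline{\Lambda_1}\Lambda_2\in\mathcal{F}_1$ and invokes the Boas--Kac result $\sup_{\Lambda\in\mathcal{F}_1}\int\Lambda=1$, while you use $K_{a,b}\le K_{1,1}<1$ or the direct operator bound $\|\Sin_a f\|_\infty\le\sqrt{a}\,\|f\|_2$; all three are valid.
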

\begin{proof}
    By definition,
    \[
    K_{a,b}=\sup_{\Lambda_1 \in \mathcal{F}_a,\; \Lambda_2 \in \mathcal{F}_b} \left(\int \overline{\Lambda_1(x)}\Lambda_2(x)\,\mathrm{d}x\right)^{1/2}.
    \]
    For any $\Lambda_1 \in \mathcal{F}_a,\; \Lambda_2 \in \mathcal{F}_b$, define
    \[
    \widetilde{\Lambda}_1(x):=\Lambda_1(x/c),\qquad \widetilde{\Lambda}_2(x):=\Lambda_2(x/c).
    \]
    By construction, $\widetilde{\Lambda}_1 \in \mathcal{F}_\alpha,\; \widetilde{\Lambda}_2 \in \mathcal{F}_\beta$, and therefore,
    \[
    K_{\alpha,\beta}\geq \left(\int \overline{\widetilde{\Lambda}_1(x)}\widetilde{\Lambda}_2(x)\,\mathrm{d}x\right)^{1/2}=\sqrt{c}\left(\int \overline{\Lambda_1(x)}\Lambda_2(x)\,\mathrm{d}x\right)^{1/2}.
    \]
    Since this inequality holds for any $\Lambda_1 \in \mathcal{F}_a,\; \Lambda_2 \in \mathcal{F}_b$, we obtain
    $K_{\alpha,\beta}\geq \sqrt{c}K_{a,b}$.    On the other hand, since $\mathcal{F}_\alpha\subset \mathcal{F}_a$ and $\mathcal{F}_\beta\subset \mathcal{F}_b$, we have
    $
    K_{\alpha,\beta}\leq K_{a,b}$.
        Combining the latter two inequalities, we arrive at
    \[
    |K_{a,b}-K_{\alpha,\beta}|\leq (1-\sqrt{c})K_{a,b}.
    \]
    It remains to note that
    \[
    K_{a,b}\leq \sup_{\Lambda \in \mathcal{F}_1} \left( \int \Lambda(x)\, \mathrm{d}x \right)^{1/2}=1,
\]
with the final equality established in \cite{boas45}.   
\end{proof}

Now we turn to establishing \eqref{grid bound}. Multiplying both parts of that inequality by $\sqrt{p/N}$ and using identity \eqref{K reparameterization}, we see that \eqref{grid bound} is equivalent to
\begin{equation}\label{another form of grid bound}
\|\Pi_{p/N}^{[k]}\operatorname{diag}\{\mb{u}^{(t)}\}\Pi_{n/N}^{[k]}\|>K_{p/N,n/N}-\sqrt{\frac{p}{N}}\frac{\tau}{2}.
\end{equation}
Note that, for $p/n>\tau$, we have 
\begin{equation}\label{easier margin}
\sqrt{\frac{p}{N}}\frac{\tau}{2}\geq \sqrt{\frac{p}{n}}\frac{\tau}{2}>\frac{\tau^{3/2}}{2}.
\end{equation}

Fix an integer $T>3/\tau^3$ and suppose that  $p/n\in (\tau_{t-1},\tau_t]$ for some $t\in\{1,\dots,T\}$. Then we have
\[
\frac{p}{N}=\frac{p/n}{1+p/n}>\frac{\tau_{t-1}}{1+\tau_{t-1}}=:\alpha_t,\qquad \frac{n}{N}=\frac{1}{1+p/n}\geq \frac{1}{1+\tau_t}=:\beta_t.
\]
By Lemma \ref{lem: K continuity},
\[
|K_{p/N,n/N}-K_{\alpha_t,\beta_t}|\leq 1-\sqrt{\min\left\{\frac{\alpha_t}{p/N},\frac{\beta_t}{n/N}\right\}}.
\]
On the other hand,
\[
\frac{\alpha_t}{p/N}\geq\frac{\alpha_t}{\alpha_{t+1}}\geq\frac{\tau_{t-1}}{\tau_t}\geq 1-\frac{1}{T\tau},\qquad \frac{\beta_t}{n/N}\geq\frac{\beta_t}{\beta_{t-1}}\geq 1-\frac{1}{T}\geq 1-\frac{1}{T\tau}.
\]
Therefore,
\begin{equation}\label{tolerance}
|K_{p/N,n/N}-K_{\alpha_t,\beta_t}|\leq 1-\sqrt{1-\frac{1}{T\tau}}\leq \frac{\tau^{3/2}}{4},
\end{equation}
where the last inequality holds for $T>3/\tau^3$ and $\tau\in(0,1)$ (see SM for an elementary proof). 

Using \eqref{tolerance} and \eqref{easier margin} in \eqref{another form of grid bound}, we conclude that it is sufficient to establish 
\[
\|\Pi_{p/N}^{[k]}\operatorname{diag}\{\mb{u}^{(t)}\}\Pi_{n/N}^{[k]}\|>K_{\alpha_t,\beta_t}-\tau^{3/2}/4
\]
for each $t\in\{1,\dots,T\}$ and all $p/n\in(\tau_{t-1},\tau_t]$.

By Remark \ref{remark: deficiency},
\[
K_{\alpha_t,\beta_t}=\sup_{\mathbf{c}_1,\mathbf{c}_2\in\ell^2(\mathbb{C}):\|\mathbf{c}_1\|=\|\mathbf{c}_2\|=1}\|\Pi_{\alpha_t}\mathbf{c}_1\odot \Pi_{\beta_t}\mathbf{c}_2\|.
\]
Hence, there exist unit vectors $\mb{c}_1^{(t)},\mb{c}_2^{(t)}\in\ell^2(\mathbb{C})$ such that
\[
\left|K_{\alpha_t,\beta_t}-\|\Pi_{\alpha_t}\mathbf{c}_1^{(t)}\odot \Pi_{\beta_t}\mathbf{c}_2^{(t)}\|\right|<\tau^{3/2}/8.
\]

Recall the definition \eqref{definition of limiting operator} of the projection operator $\Pi_{r/N}$, and the related diagonal unitary operator $U_{r/N}$. Since $\alpha_t\leq p/N$, the image of the projection $U_{\alpha_t}\Pi_{\alpha_t}U_{\alpha_t}^{-1}$ is a subspace of the image of the projection $U_{p/N}\Pi_{p/N}U_{p/N}^{-1}$. Consequently,
\[
\Pi_{\alpha_t}\mb{c}_1^{(t)}=U_{\alpha_t}^{-1}U_{\alpha_t}\Pi_{\alpha_t}\mb{c}_1^{(t)}=U_{\alpha_t}^{-1}U_{p/N}\Pi_{p/N}U^{-1}_{p/N}U_{\alpha_t}\Pi_{\alpha_t}\mb{c}_1^{(t)}=:U_{\alpha_t}^{-1}U_{p/N}\Pi_{p/N}\widetilde{\mb{c}}_1^{(t)}.
\]
Similarly,
\[
\Pi_{\beta_t}\mb{c}_2^{(t)}=U_{\beta_t}^{-1}U_{n/N}\Pi_{n/N}\widetilde{\mb{c}}_2^{(t)},\qquad \widetilde{\mb{c}}_2^{(t)}:=U^{-1}_{n/N}U_{\beta_t}\Pi_{\beta_t}\mb{c}_2^{(t)}.
\]
Since $U_{\alpha_t}^{-1}U_{p/N}$ and $U_{\beta_t}^{-1}U_{n/N}$ are unitary and diagonal, we have
\[
\|\Pi_{\alpha_t}\mathbf{c}_1^{(t)}\odot \Pi_{\beta_t}\mathbf{c}_2^{(t)}\|=\|\Pi_{p/N}\widetilde{\mathbf{c}}_1^{(t)}\odot \Pi_{n/N}\widetilde{\mathbf{c}}_2^{(t)}\|.
\]
Note also that $\|\widetilde{\mb{c}}_1^{(t)}\|\leq 1$ and $\|\widetilde{\mb{c}}_2^{(t)}\|\leq 1$. 

Summing up, we have shown that there exist vectors $\widetilde{\mb{c}}_1^{(t)},\widetilde{\mb{c}}_2^{(t)}$ from the unit ball in $\ell^2(\mathbb{C})$  such that
\[
\left|K_{\alpha_t,\beta_t}-\|\Pi_{p/N}\widetilde{\mathbf{c}}_1^{(t)}\odot \Pi_{n/N}\widetilde{\mathbf{c}}_2^{(t)}\|\right|<\tau^{3/2}/8.
\]
Thus, we have reduced the problem of establishing \eqref{grid bound} to showing that there exist $k=k(\tau)$ and $\mb{u}^{(1)},\dots,\mb{u}^{(T)}$ from the interior of the unit ball in $\mathbb{C}^k$ such that
\begin{equation}\label{almost final}
\|\Pi_{p/N}^{[k]}\operatorname{diag}\{\mb{u}^{(t)}\}\Pi_{n/N}^{[k]}\|>\|\Pi_{p/N}\widetilde{\mathbf{c}}_1^{(t)}\odot \Pi_{n/N}\widetilde{\mathbf{c}}_2^{(t)}\|-\tau^{3/2}/8
\end{equation}
for each $t\in\{1,\dots,T\}$ and all $p/n\in(\tau_{t-1},\tau_t]$.

A proof of the following result parallels the proof of Lemma 16 of SV13. We defer it to SM.
\begin{lemma}\label{lem: lem16}
    There exists a finite integer $k=k(\tau)$ and vectors $\mb{x}_1^{(t)},\mb{x}_2^{(t)}$ from the unit ball in $\mathbb{C}^{k}$, such that for each $t\in\{1,\dots,T(\tau)\}$ and all $p/n\in(\tau_{t-1},\tau_t]$,
    \[
    \left|\|\Pi_{p/N}\widetilde{\mathbf{c}}_1^{(t)}\odot \Pi_{n/N}\widetilde{\mathbf{c}}_2^{(t)}\|-\|\Pi_{p/N}^{[k]}\mathbf{x}_1^{(t)}\odot \Pi_{n/N}^{[k]}\mathbf{x}_2^{(t)}\|\right|<\tau^{3/2}/16.
    \]
\end{lemma}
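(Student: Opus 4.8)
The plan is to deduce Lemma~\ref{lem: lem16} by truncating the fixed $\ell^2$-optimizers to a finite index window, exploiting the uniform off-diagonal decay of the operators $\Pi_{r/N}$. The first observation is that the left-hand side of the asserted inequality does not depend on $p/n$. Put $\mb{a}_1^{(t)}:=\Pi_{\alpha_t}\mb{c}_1^{(t)}$ and $\mb{a}_2^{(t)}:=\Pi_{\beta_t}\mb{c}_2^{(t)}$; since the $\Pi$'s are orthogonal projections and the $\mb{c}_j^{(t)}$ are unit vectors, these lie in the unit ball of $\ell^2(\mathbb{C})$, and the diagonal-unitary identity established above gives $\|\Pi_{p/N}\widetilde{\mb{c}}_1^{(t)}\odot\Pi_{n/N}\widetilde{\mb{c}}_2^{(t)}\|=\|\mb{a}_1^{(t)}\odot\mb{a}_2^{(t)}\|$ for every admissible $p/n$. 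Moreover, conjugating $\Pi_{r/N}$ by $U_{r/N}$ turns it, in the Fourier picture of \eqref{definition of limiting operator}, into multiplication by the indicator of $[0,r/N]$; since these indicators are nested in $r/N$, the ranges of the projections $\Pi_{r/N}$ increase with $r/N$, and because $\alpha_t\le p/N$ and $\beta_t\le n/N$ throughout, we get the fixed-point relations $\Pi_{p/N}\mb{a}_1^{(t)}=\mb{a}_1^{(t)}$ and $\Pi_{n/N}\mb{a}_2^{(t)}=\mb{a}_2^{(t)}$, again uniformly in $p/n$. It therefore suffices to approximate each of the finitely many fixed vectors $\mb{a}_j^{(t)}$ ($t=1,\dots,T$, $j=1,2$) by a finitely supported vector, with all resulting errors uniform over this finite index set.

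The truncation uses two scales. By translation invariance of the Laurent operators $\Pi_{r/N}$ and of the entrywise-product norm --- which in particular makes the choice of which consecutive index block underlies the sections $\Pi_{r/N}^{[k]}$ immaterial --- I may fix a finite interval $W\subset\mathbb{Z}$ large enough that $\|\mb{a}_j^{(t)}-\mb{a}_j^{(t)}\mb{1}_W\|<\tau^{3/2}/64$ for all $j,t$. I then fix an integer $D=D(W,\tau)$ (specified below), let $W'\supset W$ be the interval obtained by adjoining $D$ sites on each side, set $k:=|W'|$, identify $\mathbb{C}^k$ with $\mathbb{C}^{W'}$ (the block underlying $\Pi_{p/N}^{[k]}$ and $\Pi_{n/N}^{[k]}$), and put $\mb{x}_j^{(t)}:=\mb{a}_j^{(t)}\mb{1}_W$, viewed as a vector of $\mathbb{C}^k$ supported on $W\subseteq W'$. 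These lie in the unit ball of $\mathbb{C}^k$ since $\|\mb{x}_j^{(t)}\|\le\|\mb{a}_j^{(t)}\|\le1$.

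Two estimates, both uniform in $p/n$, then complete the proof. First, using the fixed-point relations, the bilinear identity $u\odot v-u'\odot v'=(u-u')\odot v+u'\odot(v-v')$, the Hadamard bound $\|u\odot v\|_2\le\|u\|_2\|v\|_2$, and $\|\Pi_{r/N}\|\le1$,
\[
\bigl|\,\|\mb{a}_1^{(t)}\odot\mb{a}_2^{(t)}\|-\|\Pi_{p/N}\mb{x}_1^{(t)}\odot\Pi_{n/N}\mb{x}_2^{(t)}\|\,\bigr|\le\|\mb{a}_1^{(t)}-\mb{x}_1^{(t)}\|+\|\mb{a}_2^{(t)}-\mb{x}_2^{(t)}\|<\frac{\tau^{3/2}}{32}.
\]
Second, because $\mb{x}_j^{(t)}$ is supported on $W\subseteq W'$, the vector $\Pi_{p/N}^{[k]}\mb{x}_1^{(t)}\odot\Pi_{n/N}^{[k]}\mb{x}_2^{(t)}\in\mathbb{C}^{W'}$ is exactly the restriction to $W'$ of $\Pi_{p/N}\mb{x}_1^{(t)}\odot\Pi_{n/N}\mb{x}_2^{(t)}$, so
\[
0\le\|\Pi_{p/N}\mb{x}_1^{(t)}\odot\Pi_{n/N}\mb{x}_2^{(t)}\|^2-\|\Pi_{p/N}^{[k]}\mb{x}_1^{(t)}\odot\Pi_{n/N}^{[k]}\mb{x}_2^{(t)}\|^2=\sum_{i\notin W'}\bigl|(\Pi_{p/N}\mb{x}_1^{(t)})_i(\Pi_{n/N}\mb{x}_2^{(t)})_i\bigr|^2.
\]
The bound $|\Pi_{r/N}(i,j)|\le(\pi|i-j|)^{-1}$ for $i\ne j$, read off from \eqref{entries of Pi} and uniform in $r/N$, gives $|(\Pi_{r/N}\mb{x}_j^{(t)})_i|\le\sqrt{|W|}\,(\pi\,\mathrm{dist}(i,W))^{-1}$ for $i\notin W$, so the last sum is $\le C|W|^2\sum_{d\ge D}d^{-4}=O(|W|^2D^{-3})$, which is $<(\tau^{3/2}/32)^2$ once $D$ is large enough in terms of $W$ and $\tau$. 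Since the smaller vector is literally a subvector of the larger, this upgrades to a bound of $\tau^{3/2}/32$ on the difference of the norms; the triangle inequality then yields the claim. The one real obstacle is the slow, merely $O(1/|i-j|)$, off-diagonal decay of $\Pi_{r/N}$: a single-scale truncation ($W'=W$) would discard an $O(|W|^2)$ amount of mass, so the ambient window $W'$ must be taken much larger than the support window $W$; the scheme works because the tails $\sum_{d\ge D}d^{-4}=O(D^{-3})$ vanish, and it is uniform over the relevant range of $p/n$ precisely because the constant in the decay estimate is independent of $r/N$.
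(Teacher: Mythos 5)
Your proof is correct, and it takes a genuinely different route from the paper's: rather than truncating the $p/n$-dependent vectors $\widetilde{\mathbf{c}}_j^{(t)}$, you first pass to the \emph{fixed} vectors $\mathbf{a}_j^{(t)}=\Pi_{\alpha_t}\mathbf{c}_1^{(t)}$, $\Pi_{\beta_t}\mathbf{c}_2^{(t)}$ via the diagonal-unitary identity $\|\Pi_{p/N}\widetilde{\mathbf{c}}_1^{(t)}\odot\Pi_{n/N}\widetilde{\mathbf{c}}_2^{(t)}\|=\|\mathbf{a}_1^{(t)}\odot\mathbf{a}_2^{(t)}\|$, and then exploit the nesting of the ranges of $\Pi_{r/N}$ to obtain the fixed-point relations $\Pi_{p/N}\mathbf{a}_1^{(t)}=\mathbf{a}_1^{(t)}$, $\Pi_{n/N}\mathbf{a}_2^{(t)}=\mathbf{a}_2^{(t)}$. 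This is a genuine simplification: the paper truncates $\widetilde{\mathbf{c}}_j^{(t)}$ itself, and since $\widetilde{\mathbf{c}}_j^{(t)}=U_{p/N}^{-1}U_{\alpha_t}\mathbf{a}_j^{(t)}$ depends on $p/N$, the resulting finite vectors $\mathbf{x}_j^{(t)}$ there are only $p/n$-independent up to an entrywise phase, which the paper does not comment on. Your construction produces vectors that are literally fixed over $(\tau_{t-1},\tau_t]$ — exactly what the downstream use in Lemma~\ref{lem: grid} and in Steps~2--3 of the lower-bound proof requires. Beyond that, the two arguments share the same skeleton: a two-scale truncation (inner window to truncate the vector, outer window for the section) and the uniform off-diagonal decay $|\Pi_{r/N}(i,j)|\le(\pi|i-j|)^{-1}$; your use of Cauchy--Schwarz to get $\sqrt{|W|}$ in place of $|W|$ is a cosmetic sharpening that does not affect the conclusion.

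Two small precision points. First, the ranges of $\Pi_{r/N}$ are indeed nested, but your stated reason is slightly off: it is $\Pi_{r/N}$ itself (not its $U_{r/N}$-conjugate) that acts as multiplication by $\mathbf{1}_{[0,r/N]}$ in the $\varphi$-picture; the $U_{r/N}$-conjugate $U_{r/N}\Pi_{r/N}U_{r/N}^{-1}$ is multiplication by the symmetric indicator $\mathbf{1}_{[-r/(2N),r/(2N)]}$. Either way the inclusions $[0,\alpha_t]\subset[0,p/N]$ and $[0,\beta_t]\subset[0,n/N]$ give $\Pi_{p/N}\Pi_{\alpha_t}=\Pi_{\alpha_t}$ and $\Pi_{n/N}\Pi_{\beta_t}=\Pi_{\beta_t}$, so the conclusion stands. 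Second, your appeal to translation invariance is what legitimizes identifying $\mathbb{C}^k$ with $\mathbb{C}^{W'}$ for an arbitrary length-$k$ consecutive block $W'$; this is fine since $\Pi_{r/N}(i,j)$ depends only on $i-j$, and it is also implicitly used in the paper (which takes the window $\{-q,\dots,q\}$ while the downstream blocks are $\{iq+1,\dots,iq+k\}$).
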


Now denote vectors $\overline{\Pi_{p/N}^{[k]}\mathbf{x}_1^{(t)}}\odot \Pi_{n/N}^{[k]}\mathbf{x}_2^{(t)}$ as $\mb{v}^{(t)}$, and define
$\widetilde{\mb{u}}^{(t)}:=\overline{\mb{v}^{(t)}}/\|\mb{v}^{(t)}\|$.
We have:
\begin{eqnarray*}
\|\Pi_{p/N}^{[k]}\mathbf{x}_1^{(t)}\odot \Pi_{n/N}^{[k]}\mathbf{x}_2^{(t)}\|&=&\|\overline{\Pi_{p/N}^{[k]}\mathbf{x}_1^{(t)}}\odot \Pi_{n/N}^{[k]}\mathbf{x}_2^{(t)}\|\\
&=&\left(\overline{\Pi_{p/N}^{[k]}\mathbf{x}_1^{(t)}}\right)^\top\operatorname{diag}\{\widetilde{\mb{u}}^{(t)}\}\left(\Pi_{n/N}^{[k]}\mathbf{x}_2^{(t)}\right)\\
&=&\left(\overline{\mathbf{x}_1^{(t)}}\right)^\top\Pi_{p/N}^{[k]}\operatorname{diag}\{\widetilde{\mb{u}}^{(t)}\}\Pi_{n/N}^{[k]}\left(\mathbf{x}_2^{(t)}\right)\\
&\leq& \|\Pi_{p/N}^{[k]}\operatorname{diag}\widetilde{\mb{u}}^{(t)}\}\Pi_{n/N}^{[k]}\|.
\end{eqnarray*}
This inequality, together with Lemma \ref{lem: lem16}, yields
\[
\|\Pi_{p/N}^{[k]}\operatorname{diag}\{\widetilde{\mb{u}}^{(t)}\}\Pi_{n/N}^{[k]}\|>\|\Pi_{p/N}\widetilde{\mathbf{c}}_1^{(t)}\odot \Pi_{n/N}\widetilde{\mathbf{c}}_2^{(t)}\|-\tau^{3/2}/16,
\]
where $\widetilde{\mb{u}}^{(t)}$, $t=1,\dots,T$ are unit vectors. Defining 
\[
\mb{u}^{(t)}:=(1-\varepsilon)\widetilde{\mb{u}}^{(t)},\qquad \varepsilon\in(0,1),
\]
ensures that $\mb{u}^{(t)}$ lie in the interior of the unit ball. Choosing $\varepsilon$ sufficiently small yields \eqref{almost final}, completing the proof of \eqref{grid bound}.

It remains to prove \eqref{first grid interval}. Note that
\begin{equation}\label{explicit}
\frac{\|\Pi_{p/N}^{[2L_n+1]}\operatorname{diag}\{\mb{u}^{(0)}\}\Pi_{n/N}^{[2L_n+1]}\|}{1-\tau/15}=\|\mb{y}_{p/N}(\overline{\mb{y}_{n/N}})^\top\|=\|\mb{y}_{p/N}\|\|\mb{y}_{n/N}\|,
\end{equation}
where $\mb{y}_{r/N}\in\mathbb{C}^{2L_n+1}$ denotes the $L_n+1$-th column of matrix $\Pi_{r/N}^{[2L_n+1]}$.  
Using the explicit formula \eqref{entries of Pi} for the entries of $\Pi_{r/N}$, we obtain:
\begin{equation}\label{Fourier calculation}
\|\mb{y}_{r/N}\|^2=\left(\frac{r}{N}\right)^2+2\sum_{j=1}^{L_n}\left(\frac{\sin\frac{\pi j r}{N}}{\pi j}\right)^2=\frac{r}{N}-2\sum_{j=L_n+1}^{\infty}\left(\frac{\sin\frac{\pi j r}{N}}{\pi j}\right)^2.
\end{equation}
The latter equality follows from expanding the $2\pi$-periodic function $g:\mathbb{R}\rightarrow\mathbb{R}$, defined by $g(x):=\pi|x|-x^2$ for $x\in[-\pi,\pi]$, into its Fourier series, and evaluating it at $x=\pi r/N$ (see the SM for an elementary derivation).

From \eqref{Fourier calculation} we have
\[
\frac{r}{N}\geq \|\mb{y}_{r/N}\|^2\geq \frac{r}{N}-\frac{2}{\pi^2 L_n},
\]
In particular,
\[
\|\mb{y}_{p/N}\|\|\mb{y}_{n/N}\|\leq \sqrt{\frac{p}{N}\frac{n}{N}}
\]
and using the lower bound above,
\begin{eqnarray*}
 \|\mb{y}_{p/N}\|\|\mb{y}_{n/N}\|\geq \left(\sqrt{\frac{p}{N}}-\frac{2}{\pi^2 L_n\sqrt{p/N}}\right)\left(\sqrt{\frac{n}{N}}-\frac{2}{\pi^2 L_n\sqrt{n/N}}\right).
\end{eqnarray*}
Expanding the product yields
\[
\|\mb{y}_{p/N}\|\|\mb{y}_{n/N}\|\geq\sqrt{\frac{p}{N}\frac{n}{N}}-\frac{\sqrt{n/N}}{L_n\sqrt{p/N}}.
\]
Finally, since
\[
L_n p/N\geq L_n p/(2n)\geq \log^{1-\alpha}(n)/2\rightarrow \infty\quad\text{as }n\rightarrow\infty,
\]
the error term is $o(1)$, and hence,
\[
\|\mb{y}_{p/N}\|\|\mb{y}_{n/N}\|=\sqrt{\frac{p}{N}\frac{n}{N}}\left(1-o(1)\right).
\]

Using this in \eqref{explicit} gives
\[
\frac{\|\Pi_{p/N}^{[2L_n+1]}\operatorname{diag}\{\mb{u}^{(0)}\}\Pi_{n/N}^{[2L_n+1]}\|}{\sqrt{p/N}}=\left(1-\frac{\tau}{15}\right)\sqrt{\frac{n}{N}}+o(1).
\]
If $p/n\leq \tau$, then $\sqrt{n/N}=\sqrt{1-p/N}\geq\sqrt{1-\tau}$. 

Since Lemma \ref{lem: grid} assumes that $\tau\in(0,3/4)$, we have $\sqrt{1-\tau}>1-2\tau/3$. Therefore,
\[
\frac{\|\Pi_{p/N}^{[2L_n+1]}\operatorname{diag}\{\mb{u}^{(0)}\}\Pi_{n/N}^{[2L_n+1]}\|}{\sqrt{p/N}}>\left(1-\frac{\tau}{15}\right)\left(1-\frac{2\tau}{3}\right)+o(1)>1-\frac{3\tau}{4}
\]
for all sufficiently large $n$. 

Finally, since $K_{1,n/p}\leq \sup_{\Lambda\in\mathcal{F}_1}\left(\int\Lambda(x)\mathrm{d}x\right)^{1/2}=1$ (by \cite{boas45}), we obtain \eqref{first grid interval}, completing the proof of Lemma~\ref{lem: grid}.

\subsection{Proof of Lemma \ref{lem: gumbel}}
First, we recall some facts about sub-gamma random variables (see e.g.~chapter~{2.4} of \cite{boucheron2013concentration}). A  centered random variable $X$ belongs to the sub-gamma
 family
${SG}(v,u)$ for $v, u > 0$ if
\begin{equation*}%\label{definitions SG}
    \log \mathbb{E} e^{tX} \leq \frac{t^2 v}{2(1 - tu)},
    \qquad
    \forall t: \; 0<t < \frac{1}{u} \, .
  \end{equation*}
   If $X \in SG(v,u)$ then $X \in SG(v',u')$ for each $v' \geq v$,$u' \geq u$, and for arbitrary $C \in \mathbb{R}$, we have 
 \begin{equation}
 \label{SG multiplication}
 CX\in SG\left(C^{2}v,|C| u\right) .
\end{equation}
If  $X\in SG\left( v_{X},u_{X} \right) $ and $Y\in SG\left( v_{Y},u_{Y}\right) 
$ are independent, then 
\begin{equation}
\label{SG independent}
X+Y\in SG\left( v_{X}+v_{Y},\max \left\{
u_{X},u_{Y}\right\} \right).
\end{equation}

  If $X\sim \chi^2(1)$, that is $X$ is distributed according to $\chi^2$ with one degrees of freedom, then $X-1\in SG(2,2)$. If $X\sim\operatorname{Exp}(1)$, then $X-1\in SG(1,1)\subset SG(2,2)$.  Thus, for a linear combination of independent $\operatorname{Exp}(1)$ and $\chi^2(1)$ random variables, we have
\begin{equation}\label{lin comb of exponentials}
\sum_{i=0}^k \omega_i(X_i-1)\in SG(2\|\omega\|^2_2,2\|\omega\|_\infty),
\end{equation}
where $\omega=(\omega_0,\dots,\omega_k)^\top\in\mathbb{R}^{k+1}$.
  Finally, recall that if $X\in SG(v,u)$, then for every $t \geq 0$,
\begin{equation}
    \label{exponential ineq}
    \mathbb{P}(X>\sqrt{2vt}+ut)\leq e^{-t}.
\end{equation}

With these preliminaries in place, we write \eqref{stationary moving average} as
\[
z_j:=\frac{n}{p}\,\overline{\mb{v}_{j}}^\top\mb{D}\overline{\mb{D}} \mb{v}_{j}
   = \sum_{i=0}^{n/2} \omega_i^{(j)} |d_i|^2,
\]
with
\[
\omega_i^{(j)}=\begin{cases}
    \frac{1}{c}\left(|(\mb{v}_j)_i|^2+|(\mb{v}_j)_{n-i}|^2\right)&\text{for }0<i<n/2\\
    \frac{1}{c}|(\mb{v}_j)_i|^2&\text{for }i\in\{0,n/2\}
\end{cases}.
\]
Note that
\[
\|\omega^{(j)}\|_2^2\leq \frac{4}{c^2}\|\mb{v}_j\|_4^4\leq\frac{4}{c^2},\qquad \|\omega^{(j)}\|_\infty\leq \frac{2}{c}\|\mb{v}_j\|_\infty^2\leq 2, \qquad \|\omega^{(j)}\|_1\leq \frac{2}{c}\|\mb{v}_j\|_2^2=\frac{2}{c}.
\]
This, together with \eqref{lin comb of exponentials}, implies that
\[
z_j-\|\omega^{(j)}\|_1\in SG\left(8/c^2,4\right).
\]
Therefore, from \eqref{exponential ineq}, for every $t \geq 0$,
\begin{equation}
    \label{exponential ineq specific}
    \Pr\left(z_j>2/c+4\sqrt{t}/c+4t\right)\leq e^{-t}.
\end{equation}

The latter inequality implies that it is sufficient to prove Lemma \ref{lem: gumbel} with $B_{p,n}$ replaced by
\[
\tilde{B}_{p,n}:=p\max_{ j\in J_n}z_j ,\qquad J_n:=\{j:\lceil\log n\rceil\leq j\leq n/2-\lceil\log n\rceil\}.
\]
Indeed, let $E_{1n}$ be the event that $\frac{1}{p}\tilde{B}_{p,n}>\sqrt{\log n}$ and $E_{2n}$ the event that
\[
\max_{j\in J_n^c} z_j\leq \sqrt{\log n},\qquad J_n^c:=\{j:0\leq j\leq n/2\}\setminus J_n.
\]
If Lemma \ref{lem: gumbel} holds for $\tilde{B}_{p,n}$, $\mathbb{P}\{E_{1n}\}\rightarrow 1$ as $n\rightarrow \infty$. Moreover, setting $t = \sqrt{\log n}/8$ in \eqref{exponential ineq specific}, we obtain for all sufficiently large $n$,
\[
\mathbb{P}\{E_{2n}^c\}\leq 2(\log n+1)e^{-\sqrt{\log n}/8},
\]
and hence $\mathbb{P}\{E_{2n}\}\rightarrow 1$ as $n\rightarrow \infty$. Finally, since $E_{1n} \cap E_{2n}$ implies $B_{p,n} = \tilde{B}_{p,n}$, it remains to establish Lemma \ref{lem: gumbel} with $\tilde{B}_{p,n}$ in place of $B_{p,n}$.

Let $\{\xi_i\}_{i\in\mathbb{Z}}$ be a sequence of i.i.d. $\operatorname{Exp}(1)$ random variables with the identification $\xi_i = |d_i|^2$ for $1 \le i \le n/2-1$. We will also assume that the sequence $\{\xi_i\}_{i\in\mathbb{Z}}$ is independent from $d_0$ and $d_{n/2}$. Further, let $\hat{\omega}^{(j)}$ with $j\in J_n$ be vectors in $\ell^2(\mathbb{R})$ with the $i$-th coordinate defined by
\begin{equation}\label{omega hat ij}
\hat{\omega}^{(j)}_i:=\frac{|\langle e_j,\Pi_c e_i\rangle|^2}{c^2}=\begin{cases}
    1&\text{if }i=j\\
    \left(\frac{\sin [ c\pi(i-j)]}{c\pi(i-j)}\right)^2&\text{if }i\neq j
\end{cases},
\end{equation}
and let
\begin{equation}\label{definition of zeta}
\zeta_j:=\sum_{i\in\mathbb{Z}}\hat{\omega}^{(j)}_i\xi_i.
\end{equation}
It then suffices to establish Lemma \ref{lem: gumbel} with $\tilde{B}_{p,n}$ replaced by
\[
\hat{B}_{p,n} := p \max_{j\in J_n} \zeta_j.
\]

Indeed, denote $z_j-\zeta_j$ as $Z_j$. Since $\mathbb{E}z_j=1/c$ and $\mathbb{E}\zeta_j=1/c$, we have $\mathbb{E}Z_j=0$.
Further, by definition,
\[
Z_j=\omega_0^{(j)}|d_0|^2+\omega_{n/2}^{(j)}|d_{n/2}|^2+\sum_{i\in\mathbb{Z}}\check{\omega}^{(j)}_i\xi_i,
\]
where 
\[
\check{\omega}^{(j)}_i:=\begin{cases}
    \omega^{(j)}_i-\hat{\omega}^{(j)}_i&\text{for } i=1,\dots,n/2-1\\
    \hat{\omega}^{(j)}_i&\text{for }i\leq 0
\end{cases}.
\]

Since $Z_j$ is a linear combination of independent sub-gamma random variables, it is itself sub-gamma. The proof the following lemma is elementary, and is  deferred to the SM.
\begin{lemma}\label{lem: Xj}In the above notations,
\[
Z_j\in SG(v_j,u_j),
\]
where, for any $j\in J_n$,
\[
v_j\leq \frac{1}{c^4\log^3 n}\quad\text{and}\quad u_j\leq  \frac{1}{c^2\log^2 n}.
\]    
\end{lemma}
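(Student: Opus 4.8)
The plan is to write $Z_j=z_j-\zeta_j$ as an explicit linear combination of \emph{independent}, centered $\chi^2(1)$ and $\operatorname{Exp}(1)$ fluctuations, apply the sub-gamma calculus \eqref{SG multiplication}, \eqref{SG independent}, \eqref{lin comb of exponentials}, and then estimate the resulting variance proxy $v_j$ and scale parameter $u_j$ by comparing the weights $\omega_i^{(j)}$ --- which, up to the factor $1/c$, are the squared magnitudes of the normalized $(j{+}1)$-th column $\mb{v}_j$ of $\mb{P}_{p,n}$ --- with the ``continuous'' weights $\hat\omega_i^{(j)}$ from \eqref{omega hat ij}.

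First I would record the decomposition. Since $|d_0|^2-1,\,|d_{n/2}|^2-1\in SG(2,2)$ and each $\xi_i-1\in SG(1,1)$, and since $\mathbb{E}Z_j=0$ (so that the constant term in the expansion is absent), \eqref{SG multiplication} and \eqref{SG independent} give $Z_j\in SG(v_j,u_j)$ with
\[
v_j=2(\omega_0^{(j)})^2+2(\omega_{n/2}^{(j)})^2+\|\check\omega^{(j)}\|_2^2,\qquad u_j=\max\bigl\{2\omega_0^{(j)},\,2\omega_{n/2}^{(j)},\,\|\check\omega^{(j)}\|_\infty\bigr\}.
\]
Next, using $(\mb{v}_j)_i=(\mb{P}_{p,n})_{ij}/\sqrt{p/n}$ together with \eqref{explicit formula for entries of P}--\eqref{bound on the entries of P} (taking $N=n$, $r=p$) and the identities $\sin^2(c\pi(n-i-j))=\sin^2(c\pi(i+j))$ and $\sin^2(\pi(n-i-j)/n)=\sin^2(\pi(i+j)/n)$, which hold because $p=cn$ is an integer, I would obtain, for $0<i<n/2$,
\[
\check\omega_i^{(j)}=\frac{1}{p^2}\left[\sin^2\!\bigl(c\pi(i-j)\bigr)\!\left(\frac{1}{\sin^2(\pi(i-j)/n)}-\frac{1}{(\pi(i-j)/n)^2}\right)+\frac{\sin^2(c\pi(i+j))}{\sin^2(\pi(i+j)/n)}\right],
\]
while $\check\omega_i^{(j)}=-\hat\omega_i^{(j)}$ for $i\le 0$ and $i\ge n/2$, and $\omega_0^{(j)},\omega_{n/2}^{(j)}$ are single terms of the second type.

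The estimates then follow from two elementary facts tied to the range $j\in J_n$: every argument $\pi(i\pm j)/n$ that appears stays in $[\pi\lceil\log n\rceil/n,\pi-\pi\lceil\log n\rceil/n]$, so the sines occurring in denominators are $\ge 2\lceil\log n\rceil/n$ in absolute value; and $x\mapsto \csc^2x-x^{-2}$ is nonnegative and bounded on $(-\pi/2,\pi/2)$. These give $|\check\omega_i^{(j)}|\le C/p^2+\tfrac14c^{-2}\log^{-2}n$ for $0<i<n/2$, $|\check\omega_i^{(j)}|=\hat\omega_i^{(j)}\le c^{-2}\pi^{-2}\log^{-2}n$ off that range, and $\omega_0^{(j)},\omega_{n/2}^{(j)}\le\tfrac14c^{-2}\log^{-2}n$, so $u_j\le c^{-2}\log^{-2}n$ for large $n$. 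For $v_j$ I would split $\|\check\omega^{(j)}\|_2^2$ into three pieces: the ``main-diagonal defect'' sum is $\le p^{-4}\sum_{0<i<n/2}(\csc^2(\cdot)-(\cdot)^{-2})^2\le Cn/p^4=o(c^{-4}\log^{-3}n)$; the two tails give $\sum_{|i-j|\ge\lceil\log n\rceil}(\hat\omega_i^{(j)})^2\le 2c^{-4}\pi^{-4}\sum_{m\ge\lceil\log n\rceil}m^{-4}=O(c^{-4}\log^{-3}n)$; and the ``wrap-around'' sum is $\le p^{-4}\sum_{\lceil\log n\rceil<s<n-\lceil\log n\rceil}\csc^4(\pi s/n)\le p^{-4}\cdot\tfrac{1}{24}n^4\log^{-3}n$, using $\sin(\pi s/n)\ge 2s/n$ for $s\le n/2$, the symmetry $s\leftrightarrow n-s$, and $\sum_{s\ge\lceil\log n\rceil}s^{-4}=O(\log^{-3}n)$. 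Adding the $O(c^{-4}\log^{-4}n)$ contribution of $\omega_0^{(j)},\omega_{n/2}^{(j)}$ bounds $v_j$ by a constant strictly below $1$ times $c^{-4}\log^{-3}n$ for all large $n$.

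The main obstacle is controlling the ``wrap-around'' weights $c^{-1}|(\mb{v}_j)_{n-i}|^2$: these are the dominant contribution to $v_j$ and are \emph{not} uniformly small --- near the resonances $i+j\approx 0$ or $i+j\approx n$ they are of order $c^{-2}\log^{-2}n$ --- so keeping $\|\check\omega^{(j)}\|_2^2$ at the claimed $c^{-4}\log^{-3}n$ scale requires combining the cutoff $j\in J_n$ (which forces $i+j$ to stay $\ge\lceil\log n\rceil$ away from $0$ and $n$) with the summability of $\sum s^{-4}$; once this is arranged the remaining bookkeeping, including pinning the absolute constants down, is routine thanks to the spare polynomial-in-$n$ factors.
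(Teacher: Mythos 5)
Your proof is correct, and the skeleton matches the paper's: you write the same decomposition of $Z_j$ into independent centered $\chi^2(1)$ and $\operatorname{Exp}(1)$ fluctuations, obtain the same closed forms
\[
v_j=2(\omega_0^{(j)})^2+2(\omega_{n/2}^{(j)})^2+\|\check\omega^{(j)}\|_2^2,\qquad u_j=\max\bigl\{2\omega_0^{(j)},\,2\omega_{n/2}^{(j)},\,\|\check\omega^{(j)}\|_\infty\bigr\},
\]
and split $\|\check\omega^{(j)}\|$ into the same three pieces (tails $i\le 0,\,i\ge n/2$; wrap-around $|(\mb{P}_{p,n})_{n-i,j}|^2/c^2$; main-diagonal defect). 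The only genuine divergence is how you control the main-diagonal defect $\omega_i^{(j)}-\hat\omega_i^{(j)}$ for $0<i<n/2$: the paper invokes Lemma \ref{lem: comparison with limiting} (entrywise $|(\mb{P}_{p,n})_{ij}-\Pi_c(i,j)|<2/n$) together with the trivial bound $|(\mb{P}_{p,n})_{ij}|+|\Pi_c(i,j)|\le 2c$, giving the defect $4/(cn)$; you instead expand the trigonometric identity explicitly and invoke the boundedness of $\csc^2 x-x^{-2}$ on $(-\pi/2,\pi/2)$, giving the tighter $O(1/p^2)=O(1/(c^2n^2))$. Both bounds are far below the claimed scales $c^{-2}\log^{-2}n$ and $c^{-4}\log^{-3}n$, so either route works; your version is self-contained (it bypasses Lemma \ref{lem: comparison with limiting}) and slightly sharper, while the paper's re-uses a lemma already proved for another purpose. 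The handling of the dominant wrap-around term (via $\sin(\pi s/n)\ge 2s/n$ and the $J_n$ cutoff forcing $s\ge\lceil\log n\rceil$ away from $0$ and $n$) and of the tails (via $\hat\omega_i^{(j)}\le c^{-2}\pi^{-2}(i-j)^{-2}$ and $\sum_{m\ge\lceil\log n\rceil}m^{-4}=O(\log^{-3}n)$) is identical to the paper's. One cosmetic point: you correctly record $\check\omega_i^{(j)}=-\hat\omega_i^{(j)}$ for $i\le 0$ and $i\ge n/2$; the paper writes $\hat\omega_i^{(j)}$ without the minus sign, a benign typo since only $|\check\omega_i^{(j)}|$ and $(\check\omega_i^{(j)})^2$ appear.
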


By \eqref{SG multiplication}, $-Z_j\in SG(v_j,u_j)$ too, and hence, for every $t>0$, we have
\[
\mathbb{P}(|Z_j|>\sqrt{2v_jt}+u_jt)\leq 2e^{-t}.
\]
Setting $t=\log^{3/2}n$ and noting that, for all $j\in J_n$ and all sufficiently large $n$,
\[
\sqrt{2v_jt}+u_jt\leq \frac{\sqrt{2}}{c^2\log^{3/4} n}+\frac{1}{c^2\log^{1/2}n}\leq \frac{2}{c^2\log^{1/2}n},
\]
we obtain
\[
\mathbb{P}\left\{\max_{j\in J_n}|Z_j|>\frac{2}{c^2\sqrt{\log n}}\right\}\leq 2ne^{-\log^{3/2}n}\rightarrow 0\qquad\text{as }n\rightarrow\infty.
\]
Since 
\[
\frac{|\tilde{B}_{p,n}-\hat{B}_{p,n}|}{p}\leq\max_{j\in J_n}|Z_j|,
\]
we conclude that it is sufficient to establish Lemma \ref{lem: gumbel} with $\hat{B}_{p,n}$ in place of $B_{p,n}$. In other words, it suffices to establish that
\begin{equation}\label{raw result for stationary sequences}
\max_{j\in J_n}\zeta_j-\log\frac{n}{2}\overset{d}{\longrightarrow}\operatorname{Gumbel}(\theta_c,1).
\end{equation}

Let $m:=|J_n|=n/2-2\lceil\log n\rceil+1$. Since $|\log(n/2)-\log m|\rightarrow 0$ as $n\rightarrow \infty$, replacing $\log (n/2)$ by $\log m$ does not change the limit. Changing the index $j\mapsto j-\lceil\log n\rceil +1$, we obtain
\[
\max_{j\in J_n}\zeta_j\mapsto\max_{j=1,\dots,m}\zeta_j,
\]
where the re-indexed $\zeta_j$ satisfies (cf. \eqref{definition of zeta}) 
\begin{equation}\label{definition of reindexed zeta}
\zeta_j=\sum_{i\in\mathbb{Z}}\hat{\omega}^{(j+\lceil\log n\rceil-1)}_i\xi_i=\sum_{i\in\mathbb{Z}}\hat{\omega}^{(j)}_{i-\lceil\log n\rceil+1}\xi_i=\sum_{i\in\mathbb{Z}}\hat{\omega}^{(j)}_i\xi_i',
\end{equation}
where $\xi'_i:=\xi_{i+\lceil\log n\rceil -1}$.
Recalling that $\operatorname{Gumbel}(\theta_c,1)$ is is simply the standard Gumbel law $\operatorname{Gumbel}(0,1)$ shifted by $\theta_c$,
we can restate \eqref{raw result for stationary sequences} in the equivalent form
\begin{equation}\label{result for stationary sequences}
\max_{j=1,\dots,m}\zeta_j- x_m\overset{d}\rightarrow\operatorname{Gumbel}(0,1),
\end{equation}
where
\[
x_m:=x+\log m+\theta_c.
\]

Observe that $\{\zeta_j\}$ is a strictly stationary sequence. By Theorem 3.5.2 in \cite{leadbetter83}, the convergence in \eqref{result for stationary sequences} follows from the corresponding convergence for an i.i.d. sequence with the same marginal distribution, provided that two additional conditions hold. 

\paragraph{(i) Weak dependence (eq.~(3.2.2) in \cite{leadbetter83}):} Denote $\mathbb{P}\{\zeta_{i_1}\leq x_m,\dots,\zeta_{i_k}\leq x_m\}$ as $F_{i_1,\dots,i_k}(x_m)$. Then, for any $x\in\mathbb{R}$, there exists a sequence $l_m=o(m)$ such that, for any integers
\[
1\leq i_1<\cdots<i_r<j_1<\cdots<j_{r'}\leq m
\]
with spacing $j_1-i_r\geq l_m$, we have
\[
|F_{i_1,\dots,i_r,j_1,\dots,j_{r'}}(x_m)-F_{i_1,\dots,i_r}(x_m)F_{j_1,\dots,j_{r'}}(x_m)|\rightarrow 0\quad\text{as }m\rightarrow\infty.
\]

\paragraph{(ii) No clustering of extremes (eq.~(3.4.3) in \cite{leadbetter83}):}
\begin{equation}\label{no clustering}
\limsup_{m\rightarrow \infty}\quad m\sum_{j=2}^{\lfloor m/k\rfloor} \mathbb{P}\{\zeta_1>x_m,\zeta_j>x_m\}\rightarrow 0\quad\text{as }k\rightarrow\infty.
\end{equation}

Before verifying conditions (i) and (ii), let us first establish the analogue of \eqref{result for stationary sequences} for an i.i.d. sequence $\{\hat{\zeta}_j\}_{j=1}^m$ with the same common distribution as $\zeta_j$. In particular, we show that  
\begin{equation}\label{convergence for iid}
\max_{1\le j\le m}\hat{\zeta}_j - x_m \;\overset{d}{\longrightarrow}\; \operatorname{Gumbel}(0,1).
\end{equation}
By Theorem 1.5.1 in \cite{leadbetter83}, it suffices to show that
\begin{equation}\label{condition marginal}
m(1-F_\zeta(x_m))\rightarrow e^{-x}\quad\text{as }m\rightarrow\infty,
\end{equation}
where $F_\zeta(x)$ is the cumulative distribution function of $\zeta_1$.

Observe that a standard $\operatorname{Exp}(1)$ random variable $\xi_i'$, when multiplied by $2$, has a $\chi^2$ distribution with $2$ degrees of freedom. Using this in \eqref{definition of reindexed zeta}, we obtain a representation
\begin{equation}\label{zeta hat}
\zeta_1=\frac{1}{2}\chi^2_0+\frac{1}{2}\sum_{j=1}^\infty\hat{\omega}_j\chi^2_j,
\end{equation}
where $\hat{\omega}_j=\hat{\omega}_j^{(0)}$, the variables $\chi^2_j$, $j=0,1,2,...$ are independent, with $\chi^2_0\sim\chi^2(2)$ and $\chi^2_j\sim\chi^2(4)$ for $j\ge 1$.

By the asymptotic formula (6) in \cite{zolotarev61}, the tail of $F_\zeta$ satisfies
\begin{equation}\label{zolotarev cdf}
1-F_{\zeta}(x_m)=e^{\theta_c-x_m}(1+\epsilon(x_m)),
\end{equation}
where  $\epsilon(x_m)\rightarrow 0$ when $x_m\rightarrow\infty$. Since $\theta_c-x_m=-x-\log m$, this establishes \eqref{condition marginal}. Consequently, \eqref{convergence for iid} follows.

We now verify conditions (i) and (ii), beginning with (i). Introduce the truncated version of $\zeta_j$ (cf.~\eqref{definition of reindexed zeta}):
\[
\bar{\zeta}_j:=\sum_{i\in\mathbb{Z}:|i-j|\leq\log m}\hat{\omega}_{i}^{(j)}\xi'_{i}.
\]
For any integers $1\leq s_1<\cdots<s_R\leq m$, let \[
\bar{F}_{s_1,\dots,s_R}(u_m):=\mathbb{P}\{\bar{\zeta}_{s_1}\leq u_m,\dots,\bar{\zeta}_{s_R}\leq u_m\}.
\]
We will need the following lemma.
\begin{lemma}\label{lem: validity of D}
    For any $x\in\mathbb{R}$, any integers $1\leq s_1<\cdots<s_R\leq m$, and all sufficiently large $m$, we have
    \begin{equation}\label{F and Fbar}
    |F_{s_1,\dots,s_R}(x_m)-\bar{F}_{s_1,\dots,s_R}(x_m)|\leq \frac{R}{m\log^{1/5} m}.
    \end{equation}
\end{lemma}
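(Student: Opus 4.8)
The plan is to use the obvious monotone coupling between $\zeta_j$ and its truncation. Since $\hat\omega_i^{(j)}\ge 0$ and $\xi'_i\ge 0$, the remainder $\Delta_j:=\zeta_j-\bar\zeta_j=\sum_{i:\,|i-j|>\log m}\hat\omega_i^{(j)}\xi'_i$ is a nonnegative random variable, so $\bar\zeta_j\le\zeta_j$ for every $j$. This gives $\{\zeta_{s_l}\le x_m\}\subseteq\{\bar\zeta_{s_l}\le x_m\}$, hence $F_{s_1,\dots,s_R}(x_m)\le\bar F_{s_1,\dots,s_R}(x_m)$, and the elementary inclusion $\bigcap_l\{\bar\zeta_{s_l}\le x_m\}\setminus\bigcap_l\{\zeta_{s_l}\le x_m\}\subseteq\bigcup_l\{\bar\zeta_{s_l}\le x_m<\zeta_{s_l}\}$ bounds the gap in \eqref{F and Fbar} by $\sum_{l=1}^R\P(\bar\zeta_{s_l}\le x_m<\zeta_{s_l})$. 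Since $\{\zeta_j\}$ is strictly stationary, it therefore suffices to prove the single-index estimate $\P(\bar\zeta_j\le x_m<\zeta_j)\le (m\log^{1/5}m)^{-1}$ for one fixed $j$ and all large $m$.

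To prove that single-index bound I would fix a threshold $\delta_m:=(\log m)^{-\rho}$ with $\rho\in(1/5,1/2)$, say $\rho=2/5$, and split on whether $\Delta_j\le\delta_m$: on this event the chain $\bar\zeta_j\le x_m<\zeta_j=\bar\zeta_j+\Delta_j$ forces $\bar\zeta_j\in(x_m-\delta_m,x_m]$, so
\[
\P(\bar\zeta_j\le x_m<\zeta_j)\le\P(\Delta_j>\delta_m)+\P\big(\bar\zeta_j\in(x_m-\delta_m,x_m]\big).
\]
For the first term, the weights $w_k:=(\sin(c\pi k)/(c\pi k))^2$ with $|k|>\log m$ satisfy $|w_k|\le(c\pi k)^{-2}$, hence $\E\Delta_j=\sum_{|k|>\log m}w_k=O(\log^{-1}m)$, $\sum_{|k|>\log m}w_k^2=O(\log^{-3}m)$ and $\max_{|k|>\log m}|w_k|=O(\log^{-2}m)$; then, exactly as in the proof of Lemma~\ref{lem: Xj}, using $\xi'_i-1\in SG(1,1)$ together with \eqref{SG multiplication}, \eqref{SG independent} and \eqref{exponential ineq} with $t=\log^{3/2}m$, the allowed deviation of $\Delta_j$ from its mean is $O(\log^{-1/2}m)\ll\delta_m$, so $\P(\Delta_j>\delta_m)\le e^{-\log^{3/2}m}$, which is super-polynomially small.

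For the second term I would exploit the fact that $\bar\zeta_j=\xi'_j+\bar R_j$, where $\xi'_j\sim\operatorname{Exp}(1)$ is independent of the nonnegative finite sum $\bar R_j:=\sum_{0<|i-j|\le\log m}\hat\omega_i^{(j)}\xi'_i$; the convolution formula for densities then gives the pointwise bound $f_{\bar\zeta_j}(y)=\E\big[e^{-(y-\bar R_j)}\mathbf 1\{\bar R_j\le y\}\big]\le e^{-y}\,\E e^{\bar R_j}$. Because each $\hat\omega_i^{(j)}=(\sin(c\pi(i-j))/(c\pi(i-j)))^2\in[0,1)$ for $i\ne j$ and $\E e^{t\xi'}=(1-t)^{-1}$ for $t<1$, the moment $\E e^{\bar R_j}$ is a partial product of $\prod_{k\ge1}\big(1-(\sin(c\pi k)/(c\pi k))^2\big)^{-2}$, which converges to $e^{\theta_c}$ by finiteness of $\theta_c$ in \eqref{definition of theta_c}; hence $f_{\bar\zeta_j}(y)\le e^{\theta_c-y}$ uniformly in $j$. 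Integrating this over $(x_m-\delta_m,x_m]$ and substituting $x_m=x+\log m+\theta_c$ gives $\P(\bar\zeta_j\in(x_m-\delta_m,x_m])\le \delta_m\,e^{\delta_m}e^{-x}/m\le e^{1-x}(\log m)^{-2/5}/m$ for large $m$. Adding the two contributions and using $(\log m)^{-2/5}\ll(\log m)^{-1/5}$ (and that $e^{-\log^{3/2}m}=o(m^{-1}(\log m)^{-1/5})$) yields the single-index bound, and summing over $l=1,\dots,R$ gives \eqref{F and Fbar}.

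The main obstacle is the second term. A crude tail bound only gives $\P(\bar\zeta_j>x_m-\delta_m)=O(1/m)$, which is not $o(1/m)$, so one genuinely needs the density estimate for the truncated law near the level $x_m$; producing it cleanly relies on isolating the $\operatorname{Exp}(1)$ summand and controlling the exponential moment of the remainder, which is precisely where the convergence of the $\theta_c$-series is used. A secondary technical point is calibrating the power $\rho$ in $\delta_m$ inside the window $(1/5,1/2)$ so that the sub-gamma deviation of $\Delta_j$ is absorbed while the density integral still beats $(m\log^{1/5}m)^{-1}$; the choice $\rho=2/5$ works, with room to spare on both sides.
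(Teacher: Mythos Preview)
Your argument is correct and structurally mirrors the paper's: both use the pointwise inequality $\bar\zeta_j\le\zeta_j$ to get $F\le\bar F$, split off the event $\{\zeta_j-\bar\zeta_j>\delta_m\}$ and control it via the same sub-gamma tail bound, and then reduce to a small-interval probability near the level $x_m$. The choice of exponent $\rho\in(1/5,1/2)$ for $\delta_m=\log^{-\rho}m$ is also essentially the same calibration (the paper takes $\rho=1/4$, you take $\rho=2/5$).

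The one genuine difference is in the small-interval step. The paper bounds $\mathbb{P}\{\zeta_1\in(x_m,x_m+\delta_m]\}$ for the \emph{untruncated} variable by citing Zolotarev's asymptotic density formula $f_{\zeta}(z)=e^{\theta_c-z}(1+o(1))$. You instead bound $\mathbb{P}\{\bar\zeta_j\in(x_m-\delta_m,x_m]\}$ for the \emph{truncated} variable by isolating the unit-weight summand $\xi'_j\sim\operatorname{Exp}(1)$ and observing that the conditional density of $\bar\zeta_j$ given $\bar R_j$ is exponential, so $f_{\bar\zeta_j}(y)\le e^{-y}\,\mathbb{E}e^{\bar R_j}\le e^{\theta_c-y}$, the last inequality being an exact (not asymptotic) bound coming from the partial product for $\theta_c$. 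Your route is more self-contained: it avoids the external Zolotarev reference and yields a clean non-asymptotic density inequality. The paper's route is slightly more economical in context, since Zolotarev's formula is already invoked elsewhere in the proof of Lemma~\ref{lem: gumbel} for the marginal tail. Either way works; your density argument is a nice elementary substitute.
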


The lemma implies condition (i). Indeed, set $l_m=3\log m$. 
By construction, the variables $\bar{\zeta}_{i_1},\dots,\bar{\zeta}_{i_r}$ are independent from $\bar{\zeta}_{j_1},\dots,\bar{\zeta}_{j_{r'}}$ (since the truncation windows do not overlap). Hence,
\[
|\bar{F}_{i_1,\dots,i_r,j_1,\dots,j_{r'}}(u_m)-\bar{F}_{i_1,\dots,i_r}(u_m)\bar{F}_{j_1,\dots,j_{r'}}(u_m)|=0.
\]
On the other hand, using Lemma \ref{lem: validity of D}, replacing $\bar{F}$ with $F$ changes the expression by at most 
\[
\frac{r+r'}{m\log^{1/5}m}+\frac{r}{m\log^{1/5}m}+\frac{r'}{m\log^{1/5}m}\leq \frac{2}{\log^{1/5}m}\rightarrow 0,\quad\text{as }m\rightarrow\infty.
\] 
Hence, condition (i) indeed holds. 

Let us establish Lemma \ref{lem: validity of D}. Observe that for any $\delta_m>0$ we have
\begin{eqnarray}\label{truncation estimate}
F_{s_1,\dots,s_R}(x_m)\leq \bar{F}_{s_1,\dots,s_R}(x_m)&\leq& F_{s_1,\dots,s_R}(x_m+\delta_m)+\sum_{i=1}^R\mathbb{P}\{\zeta_{s_i}-\bar{\zeta}_{s_i}>\delta_m\}\\\notag
&=&F_{s_1,\dots,s_R}(x_m+\delta_m)+R\mathbb{P}\{\zeta_1-\bar{\zeta}_1>\delta_m\}.
\end{eqnarray}
The representation \eqref{zeta hat} for $\zeta_1$ yields a similar representation for $\bar{\zeta}_1$:
\[
\bar{\zeta}_1=\frac{1}{2}\chi^2_0+\frac{1}{2}\sum_{j=1}^{\lfloor\log m\rfloor}\hat{\omega}_j\chi^2_j.
\]
Since, for $j\geq 1$, $\frac{1}{2}\chi_j^2-2\in SG(2,1)$ (because  $\chi_j^2\sim\chi^2(4)$), we have $\zeta_1-\bar{\zeta}_1-\mathbb{E}(\zeta_1-\bar{\zeta}_1)\in SG (u,v)$ with
\begin{eqnarray*}
\mathbb{E}(\zeta_1-\bar{\zeta}_1)&=&2\sum_{j\geq\lfloor\log m\rfloor+1} \hat{\omega}_j\leq \frac{1}{c^2\pi^2\lfloor \log m\rfloor},\\
v&=&2\sum_{j\geq \lfloor\log m\rfloor+1}\hat{\omega}_j^2\leq \frac{2}{3c^4\pi^4\lfloor\log m\rfloor^3},\\
u&=&\max_{j\geq \lfloor\log m\rfloor+1}\hat{\omega}_j\leq \frac{1}{c^2\pi^2\lfloor \log m\rfloor^2}.
\end{eqnarray*}
Inequality \eqref{exponential ineq} then implies that, for any $t>0$,
\[
\mathbb{P}\left\{\zeta_1-\bar{\zeta}_1>\frac{1}{c^2\pi^2\lfloor \log m\rfloor}+\sqrt{\frac{4t}{3c^4\pi^4\lfloor\log m\rfloor^3}}+\frac{t}{c^2\pi^2\lfloor \log m\rfloor^2}\right\}\leq e^{-t}.
\]
Choosing $t=\log^{3/2}m$ and $\delta_m=\log^{-1/4} m$, we obtain for all sufficiently large $m$,
\begin{equation}\label{residual 1}
\mathbb{P}\left\{\zeta_1-\bar{\zeta}_1>\delta_m\right\}\leq e^{-\log^{3/2}m}.
\end{equation}

Further,
\begin{equation}\label{small ball}
F_{s_1,\dots,s_R}(x_m+\delta_m)-F_{s_1,\dots,s_R}(x_m)\leq \sum_{i=1}^R\mathbb{P}\{\zeta_{s_{i}}\in(x_m,x_m+\delta_m]\}=R\mathbb{P}\{\zeta_1\in(x_m,x_m+\delta_m]\}.
\end{equation}
Equation (5) in \cite{zolotarev61} shows that the density $f_\zeta(x)$ of $\zeta_1$  satisfies 
\[
f_{\zeta}(z)=e^{\theta_c-z}(1+\epsilon(z)),
\]
where $\epsilon(z)\rightarrow 0$ when $z\rightarrow \infty$. Therefore, for all sufficiently large $m$,
\[
\mathbb{P}\{\zeta_1\in(x_m,x_m+\delta_m]\}\leq 2\delta_m e^{\theta_c-x_m}=\frac{2e^{-x}}{m\log^{1/4} m}.
\]
Using this and \eqref{residual 1}-\eqref{small ball} in \eqref{truncation estimate}, we obtain \eqref{F and Fbar}. This completes the proof of Lemma \ref{lem: validity of D} and thus verifies condition (i).

It remains to establish condition (ii). Recall the representation \eqref{definition of reindexed zeta} of $\zeta_j$ and let
\[
r_{1j}:=\sum_{i\in\mathbb{Z}}\min\left\{\hat{\omega}_i^{(1)},\hat{\omega}_i^{(j)}\right\}\xi_i',\qquad r_{2j}:=\zeta_1-r_{1j},\qquad r_{3j}:=\zeta_j-r_{1j}.
\]
Note that $r_{2j}$ and $r_{3j}$ are independent random variables. For any $y_m>0$, we have
\begin{eqnarray*}
\mathbb{P}\{\zeta_1>x_m,\zeta_j>x_m\}&=&\mathbb{P}\{r_{1j}+r_{2j}>x_m,r_{1j}+r_{3j}>x_m\}\\
&\leq&\mathbb{P}\{r_{2j}>x_m-y_m,r_{3j}>x_m-y_m\}+\mathbb{P}\{r_{1j}>y_m\}\\
&=&\mathbb{P}\{r_{2j}>x_m-y_m\}\mathbb{P}\{r_{3j}>x_m-y_m\}+\mathbb{P}\{r_{1j}>y_m\}\\
&\leq&\mathbb{P}\{\zeta_1>x_m-y_m\}\mathbb{P}\{\zeta_j>x_m-y_m\}+\mathbb{P}\{r_{1j}>y_m\}\\
&=&(1-F_\zeta(x_m-y_m))^2+\mathbb{P}\{r_{1j}>y_m\}.
\end{eqnarray*}
Using the definition \eqref{omega hat ij} of $\hat{\omega}_{i}^{(j)}$, we obtain
\[
\min\left\{\hat{\omega}_i^{(1)},\hat{\omega}_i^{(j)}\right\}\leq \min\left\{\frac{1}{c^2\pi^2(i-1)^2},\frac{1}{c^2\pi^2(j-i)^2}\right\},
\]
which implies that
\[
r_{1j}\leq \bar{r}_{1j}:=\sum_{i:i<(j+1)/2}\frac{\xi'_i}{c^2\pi^2(j-i)^2}+\sum_{i:i\geq(j+1)/2}\frac{\xi'_i}{c^2\pi^2(i-1)^2}.
\]
Combining these results, we obtain
\begin{equation}\label{to use for S2 and S3}
\mathbb{P}\{\zeta_1>x_m,\zeta_j>x_m\}\leq (1-F_\zeta(x_m-y_m))^2+\mathbb{P}\{\bar{r}_{1j}>y_m\}.
\end{equation}

Now split the sum in \eqref{no clustering} into three parts:
\[
S_1+S_2+S_3:=\left(m\sum_{j=2}^{2\lceil 1/c\rceil}+m\sum_{j=2\lceil 1/c\rceil+1}^{2\lceil \log m\rceil}+ m\sum_{j=2\lceil \log m\rceil+1}^{\lfloor m/k\rfloor}\right)\mathbb{P}\{\zeta_1>x_m,\zeta_j>x_m\}.
\]
First consider the part $S_3$, where we have $j\geq 2\lceil\log m\rceil +1$. Since $\xi'_i-1\in SG(1,1)$, properties \eqref{SG multiplication} and \eqref{SG independent} of sub-gamma random variables imply that $\bar{r}_{1j}-\mathbb{E}\bar{r}_{1j}\in SG(v_r,u_r)$ with
\begin{equation}\label{r1bar SG}
     \mathbb{E}\bar{r}_{1j}\leq \frac{2}{c^2\pi^2(\lceil\log m\rceil-1)}, \quad v_r\leq \frac{2}{3c^4\pi^4(\lceil\log m\rceil-1)^3},\quad u_r\leq \frac{1}{c^2\pi^2\lceil\log m\rceil^2}.
\end{equation}
Therefore, \eqref{exponential ineq} yields
\[
\mathbb{P}\{\bar{r}_{1j}>1\}\leq \exp\{-\log^{3/2}m\},
\]
for all sufficiently large $m$. Using this in \eqref{to use for S2 and S3} with $y_m=1$, we obtain
\[
\mathbb{P}\{\zeta_1>x_m,\zeta_j>x_m\}\leq (1-F_\zeta(x_m-1))^2+\exp\{-\log^{3/2}m\}.
\]
Using the asymptotic formula \eqref{zolotarev cdf} (with $x_m-1$ instead of $x_m$), we conclude that for all sufficiently large $m$,
\[
\mathbb{P}\{\zeta_1>x_m,\zeta_j>x_m\}\leq \frac{2e^{2(1-x)}}{m^2}+\exp\{-\log^{3/2}m\}\leq \frac{3e^{2(1-x)}}{m^2}.
\]
Therefore,
\[
S_3\leq \frac{m^2}{k}\frac{3e^{2(1-x)}}{m^2}=\frac{3e^{2(1-x)}}{k}\rightarrow 0,\quad\text{as }k\rightarrow\infty.
\]

Consider now $S_2$, where we have $j\geq g:=2\lceil 1/c\rceil+1$. Note that, for such $j$, 
\begin{equation}\label{just rbar}
\mathbb{P}\{\bar{r}_{1j}>y_m\}\leq \mathbb{P}\{\bar{r}>y_m\},
\end{equation}
where
\[
\bar{r}:=\sum_{i:i<\lceil 1/c\rceil+1}\frac{\xi'_i}{c^2\pi^2(1/c+\lceil 1/c\rceil+1-i)^2}+\sum_{i:i\geq \lceil 1/c\rceil+1}\frac{\xi'_i}{c^2\pi^2(1/c+i-\lceil 1/c\rceil-1)^2}.
\]
For $\bar{r}$, we have the following representation, similar to \eqref{zeta hat}:
\[
\bar{r}=\frac{1}{2\pi^2}\chi^2_0+\sum_{j=1}^\infty\frac{1}{2\pi^2(1+cj)^2}\chi^2_j.
\]
Therefore, the asymptotic formula (6) in \cite{zolotarev61} yields
\begin{equation*}%\label{zolotarev for rbar}
\mathbb{P}\{\bar{r}>y_m\}=O(e^{-\pi^2y_m})\quad\text{as }y_m\rightarrow\infty
\end{equation*}
The same asymptotic formula yields
\begin{equation*}%\label{zolotarev for the product}
(1-F_\zeta(x_m-y_m))^2=O(e^{-2x_m+2y_m})\quad\text{as }x_m-y_m\rightarrow\infty.
\end{equation*}
Setting $y_m=\frac{1}{3}\log m$ and recalling that $x_m=x+\log m+\theta_c$, we obtain
\[
(1-F_\zeta(x_m-y_m))^2+\mathbb{P}\{\bar{r}>y_m\}=O(e^{-\frac{4}{3}\log m})+O(e^{-\frac{\pi^2}{3}\log m})=O(m^{-4/3}).
\]
Using this and \eqref{just rbar} in \eqref{to use for S2 and S3}, we obtain the uniform bound:
\[
\mathbb{P}\{\zeta_1>x_m,\zeta_j>x_m\}\leq O(m^{-4/3}).
\]
Therefore,
\[
S_2\leq 2m\lceil\log m\rceil O(m^{-4/3})\rightarrow 0\quad\text{as }m\rightarrow\infty.
\]

Finally, consider $S_1$, where we have $j\leq 2\lceil 1/c\rceil$. Note that
\[
\mathbb{P}\{\zeta_1>x_m,\zeta_j>x_m\}\leq \mathbb{P}\left\{\frac{\zeta_1+\zeta_j}{2}>x_m\right\}.
\]
Equation \eqref{definition of reindexed zeta} implies that, for any $j\geq 2$, there exists $\delta_j>0$ such that the random variable $\frac{\zeta_1+\zeta_j}{2}$ admits an upper bound
\[
\frac{\zeta_1+\zeta_j}{2}\leq \frac{1}{2}\frac{1}{1+\delta_j}\xi_0^2+\frac{1}{2}\sum_{i=1}^\infty\sigma_i\xi_i^2
\]
with $0\leq \sigma_i<\frac{1}{1+\delta_j}$. Therefore, 
by (6) in \cite{zolotarev61}, 
\[
\mathbb{P}\left\{\frac{\zeta_1+\zeta_j}{2}>x_m\right\}=O(m^{-1-\delta_j}),
\]
and hence,
\[
S_1\leq m\sum_{j=2}^{2\lceil1/c\rceil}O(m^{-1-\delta_j})\rightarrow 0\qquad\text{as }m\rightarrow\infty.
\]
This completes the proof of condition (ii), yielding Lemma \ref{lem: gumbel}.

\section{Supplementary Material}
\subsection{Proof of Lemma \ref{lem:reductions} (Reductions and standardizations)}

\subsubsection*{Part (i) (truncation and standardization)}

Similarly to SV13, we introduce truncated random variables
\[
\hat{a}_j := a_j \mathbf{1}\left\{ |a_j| \leq \tfrac{1}{2} n^{1/\gamma} \right\}.
\]
The mean and variance of $\hat{a}_j$ are not exactly zero and one, though they approach these values as $n$ becomes large. We therefore standardize:
\[
\bar{a}_j := \frac{\hat{a}_j - \mathbb{E} \hat{a}_j}{\sqrt{\operatorname{Var} \hat{a}_j}}.
\]
By construction, $|\bar{a}_j| < n^{1/\gamma}$ for all sufficiently large $n$.

Let $\bar{\mb{a}} := (\bar{a}_0, \dots, \bar{a}_{N-1})^\top$. Then, to establish part (i) of Lemma \ref{lem:reductions}, it is sufficient to show that 
\begin{equation}\label{part (i) of reduction}
    \frac{\sqrt{N}\|\mb{P}_{p,N}\mb{D}(\mb{a})\mb{P}_{n,N}\|-\sqrt{N}\|\mb{P}_{p,N}\mb{D}(\bar{\mb{a}})\mb{P}_{n,N}\|}{\sqrt{p\log n}} \overset{L^\gamma}{\rightarrow} 0,
\end{equation}
where $\overset{L^\gamma}{\rightarrow}$ denotes convergence in $L^\gamma$.

Since the spectral norm of the projection matrices $\mb{P}_{r,N}$ is $1$, we have
\begin{align*}
\|\mb{P}_{p,N}\mb{D}(\mb{a})\mb{P}_{n,N}\| - \|\mb{P}_{p,N}\mb{D}(\bar{\mb{a}})\mb{P}_{n,N}\|
&\leq \|\mb{P}_{p,N}(\mb{D}(\mb{a}) - \mb{D}(\bar{\mb{a}}))\mb{P}_{n,N}\| \\
&\leq \|\mb{D}(\mb{a}) - \mb{D}(\bar{\mb{a}})\|.
\end{align*}
Hence, it suffices to prove that
\[
\frac{\|\sqrt{N}(\mb{D}(\mb{a}) - \mb{D}(\bar{\mb{a}}))\|}{\sqrt{p\log n}} \overset{L^\gamma}{\rightarrow} 0.
\]

Let $\hat{\mb{a}} := (\hat{a}_0, \dots, \hat{a}_{N-1})^\top$ and $\tilde{\mb{a}} := (\tilde{a}_0, \dots, \tilde{a}_{N-1})^\top$, where $\tilde{a}_j := \hat{a}_j - \mathbb{E} \hat{a}_j$. To establish this convergence, we will show that
\begin{align}
    \label{part 1 lemma 2}
    \frac{\|\sqrt{N}(\mb{D}(\mb{a}) - \mb{D}(\hat{\mb{a}}))\|}{\sqrt{p\log n}} &\overset{L^\gamma}{\rightarrow} 0, \\
    \label{part 2 lemma 2}
    \frac{\|\sqrt{N}(\mb{D}(\hat{\mb{a}}) - \mb{D}(\tilde{\mb{a}}))\|}{\sqrt{p\log n}} &\overset{L^\gamma}{\rightarrow} 0, \\
    \label{part 3 lemma 2}
    \frac{\|\sqrt{N}(\mb{D}(\tilde{\mb{a}}) - \mb{D}(\bar{\mb{a}}))\|}{\sqrt{p\log n}} &\overset{L^\gamma}{\rightarrow} 0.
\end{align}

\paragraph{Step 1. Proof of \eqref{part 1 lemma 2}:} By definition of $\mb{D}(\mb{a})$,
\[
\|\sqrt{N}(\mb{D}(\mb{a}) - \mb{D}(\hat{\mb{a}}))\| = \max_{j=0,\dots,N-1} \left| \sum_{k=0}^{N-1} e^{2\pi\rmi jk/N} a_k \mathbf{1}\left\{ |a_k| > \tfrac{1}{2} n^{1/\gamma} \right\} \right| \leq \sum_{k=0}^{N-1} |a_k| \mathbf{1}\left\{ |a_k| > \tfrac{1}{2} n^{1/\gamma} \right\}.
\]
Using Rosenthal’s inequality (as in SV13), the $\gamma$'s moment of the right hand side of the latter inequality is bounded by a constant times $N\mathbb{E}|a_0|^\gamma$ , which is $O(N)$ under \eqref{Lyapunov}: 
\[
\mathbb{E} \left( \sum_{k=0}^{N-1} |a_k| \mathbf{1}\left\{ |a_k| > \tfrac{1}{2} n^{1/\gamma} \right\} \right)^\gamma = O(N) = O(n).
\]
Therefore,
\[
\mathbb{E} \left| \frac{\|\sqrt{N}(\mb{D}(\mb{a}) - \mb{D}(\hat{\mb{a}}))\|}{\sqrt{p\log n}} \right|^\gamma = \frac{O(n)}{(p \log n)^{\gamma/2}} \rightarrow 0,
\]
since $\gamma > 2$ and $(p \log n)/n \gg 1$ by assumption. This completes the proof of \eqref{part 1 lemma 2}.

\paragraph{Step 2. Proof of \eqref{part 2 lemma 2}:}
To prove \eqref{part 2 lemma 2}, observe that
\begin{eqnarray*}
         \mathbb{E}\left|\frac{\|\sqrt{N}\mb{D}(\hat{\mb{a}})-\sqrt{N}\mb{D}(\tilde{\mb{a}})\|}{\sqrt{p\log n}}\right|^\gamma&\leq&\left|\frac{\sum_{k=0}^{N-1}|\mathbb{E}a_k\mathbf{1}\{|a_k|\leq \tfrac{1}{2}n^{1/\gamma}\}|}{\sqrt{p\log n}}\right|^\gamma\\
     &=&\left|\frac{\sum_{k=0}^{N-1}|\mathbb{E}a_k\mathbf{1}\{|a_k|\leq \tfrac{1}{2}n^{1/\gamma}\}-\mathbb{E}a_k|}{\sqrt{p\log n}}\right|^\gamma\\
     &\leq &\left|\frac{\sum_{k=0}^{N-1}\mathbb{E}|a_k|\mathbf{1}\{|a_k|> \tfrac{1}{2}n^{1/\gamma}\}}{\sqrt{p\log n}}\right |^\gamma=\frac{O(n)}{(p\log n)^{\gamma/2}}\rightarrow 0.
\end{eqnarray*}

\paragraph{Step 3. Proof of \eqref{part 3 lemma 2}:}
We write
\begin{equation}\label{preparation for part 3}
 \|\mb{D}(\tilde{\mb{a}})-\mb{D}(\bar{\mb{a}})\|^\gamma = \max_{0\leq j \leq \lfloor N/2 \rfloor} \left| d_j(\tilde{\mb{a}} - \bar{\mb{a}}) \right|^\gamma,
\end{equation}
where
\begin{equation}\label{d definition}
 d_j(\mb{x}) := \frac{1}{\sqrt{N}} \sum_{k=0}^{N-1} e^{2\pi\rmi kj/N} x_k
\end{equation}
for any $N$-vector $\mb{x}$ with independent real-valued entries.

We use the following tail bound:
\begin{lemma}\label{lem: bernstein for d}
Let $\mb{x} = (x_0, \dots, x_{N-1})^\top$ be a vector of independent real random variables such that $\mathbb{E}x_k = 0$, $\operatorname{Var}(x_k) \leq \sigma^2$, and $|x_k| \leq M$ for all $k$. Then for all $j$ and $t > 0$,
\[
\mathbb{P}(|d_j(\mb{x})| > t) \leq 4 \exp\left(-\frac{t^2}{4\sigma^2 + Mt/\sqrt{N}}\right).
\]
\end{lemma}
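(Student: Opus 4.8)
The plan is to split $d_j(\mb{x})$ into its real and imaginary parts and apply the classical Bernstein inequality (Theorem~2.8.4 in \cite{vershynin2018}) to each. Write
$$
\Re d_j(\mb{x}) = \frac{1}{\sqrt{N}}\sum_{k=0}^{N-1}\cos\!\Big(\tfrac{2\pi kj}{N}\Big)x_k,
\qquad
\Im d_j(\mb{x}) = \frac{1}{\sqrt{N}}\sum_{k=0}^{N-1}\sin\!\Big(\tfrac{2\pi kj}{N}\Big)x_k .
$$
Each is a sum of independent, mean-zero terms bounded in absolute value by $M/\sqrt{N}$, and the sum of their variances is at most $\frac{\sigma^2}{N}\sum_{k}\cos^2(2\pi kj/N)\le\sigma^2$ for the real part and, by the same reasoning using $\sum_k\sin^2(2\pi kj/N)\le N$, at most $\sigma^2$ for the imaginary part. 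Bernstein's inequality therefore gives, for every $s>0$,
$$
\mathbb{P}\big(|\Re d_j(\mb{x})|\ge s\big)\le 2\exp\!\left(-\frac{s^2/2}{\sigma^2 + Ms/(3\sqrt{N})}\right),
$$
and the identical bound for $|\Im d_j(\mb{x})|$.

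To pass from the two real bounds to the complex one, note that $|d_j(\mb{x})|^2 = (\Re d_j(\mb{x}))^2 + (\Im d_j(\mb{x}))^2$, so $\{|d_j(\mb{x})|>t\}\subseteq\{|\Re d_j(\mb{x})|>t/\sqrt{2}\}\cup\{|\Im d_j(\mb{x})|>t/\sqrt{2}\}$. Applying the previous display with $s=t/\sqrt{2}$ and a union bound yields
$$
\mathbb{P}\big(|d_j(\mb{x})|>t\big)\le 4\exp\!\left(-\frac{t^2}{4\sigma^2 + 4Mt/(3\sqrt{2}\,\sqrt{N})}\right).
$$
Since $4/(3\sqrt{2})<1$, we have $4\sigma^2 + 4Mt/(3\sqrt{2}\,\sqrt{N})\le 4\sigma^2 + Mt/\sqrt{N}$, so the exponent above is bounded below by $t^2/(4\sigma^2+Mt/\sqrt{N})$, which gives the asserted inequality.

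This argument is routine, and I do not expect a genuine obstacle. The only mild care needed is in bookkeeping the numerical constants so as to land on the clean denominator $4\sigma^2+Mt/\sqrt{N}$, and in recording the elementary fact that $\sum_{k=0}^{N-1}\cos^2(2\pi kj/N)$ and $\sum_{k=0}^{N-1}\sin^2(2\pi kj/N)$ are each at most $N$ (indeed equal to $N/2$ unless $j\in\{0,N/2\}$), which holds for every index $j$ and makes the variance bound uniform in $j$.
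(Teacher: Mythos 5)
Your proposal is correct and takes essentially the same approach as the paper: split $d_j(\mb{x})$ into real and imaginary parts, bound each via the classical Bernstein inequality, pass to the modulus by a union bound over $\{|\Re d_j|>t/\sqrt{2}\}\cup\{|\Im d_j|>t/\sqrt{2}\}$, and absorb the constant $4/(3\sqrt{2})=2\sqrt{2}/3<1$ into the denominator. The bookkeeping of the variance bound via $\sum_k\cos^2(2\pi kj/N)\le N$ and the constant absorption match the paper's argument exactly.
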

\begin{proof}
Split $d_j$ into real and imaginary parts:
\[
\mathbb{P}(|d_j| > t) \leq \mathbb{P}(|\Re d_j| > t/\sqrt{2}) + \mathbb{P}(|\Im d_j| > t/\sqrt{2}).
\]
Each summand is a sum of independent, bounded, mean-zero variables with variance at most $\sigma^2$, so we apply Bernstein’s inequality (e.g., Theorem 2.8.4 in \cite{vershynin2018}). Noting that $2\sqrt{2}/3 < 1$, we absorb constants into the denominator:
\[
\mathbb{P}(|d_j| > t) \leq 4 \exp\left(-\frac{t^2}{4\sigma^2 + Mt/\sqrt{N}}\right).
\]
\end{proof}

To apply Lemma \ref{lem: bernstein for d} to $x_k = \tilde{a}_k - \bar{a}_k$, observe that $\mathbb{E}x_k = 0$, and as in SV13 (top of p.~4059), $\operatorname{Var}(x_k) = O(n^{-2+4/\gamma})$, while $|x_k| < 2n^{1/\gamma}$. Thus,
\[
\mathbb{P}(|d_j(\tilde{\mb{a}} - \bar{\mb{a}})| > t^{1/\gamma}) \leq 4 \exp\left( -\Omega(n^{1/2 - 1/\gamma}) t^{1/\gamma} \right).
\]
Using this in a union bound (as in (11) of SV13),
\begin{align*}
\mathbb{E} \left[ \max_{0 \leq j \leq \lfloor N/2 \rfloor} |d_j(\tilde{\mb{a}} - \bar{\mb{a}})|^\gamma \right] 
&\leq 1 + (N/2 + 1) \int_1^\infty 4 \exp\left( -\Omega(n^{1/2 - 1/\gamma}) t^{1/\gamma} \right) \mathrm{d}t \\
&\leq 1 + N \cdot \Omega(n^{1/\gamma - 1/2}) \exp\left( -\Omega(n^{1/2 - 1/\gamma}) \right) \rightarrow 1.
\end{align*}
Using \eqref{preparation for part 3} and $N = O(n)$, we obtain
\[
\mathbb{E}\left| \frac{\|\sqrt{N}(\mb{D}(\tilde{\mb{a}}) - \mb{D}(\bar{\mb{a}}))\|}{\sqrt{p \log n}} \right|^\gamma = O\left( \frac{n}{p \log n} \right)^{\gamma/2} \rightarrow 0,
\]
which completes the proof of \eqref{part 3 lemma 2}, and thus of part (i) of Lemma \ref{lem:reductions}.

\subsubsection*{Part (ii) (strengthening to $L^\gamma$ convergence)}

To simplify notation, we now drop the bar over $a_j$ and assume without loss of generality that $a_j$, $j \in \mathbb{Z}$, satisfies
\begin{equation}
\label{new a}
|a_j| < n^{1/\gamma}.
\end{equation}
The following lemma proves part (ii) of Lemma~\ref{lem:reductions}, showing that convergence in probability strengthens to $L^\gamma$ convergence in Theorem~\ref{T theorem}.

\begin{lemma}\label{P implies Lgamma}
If, in addition to \eqref{Lyapunov} and \eqref{new asymptotic regime}, we assume that $\limsup_{n\rightarrow\infty}n/p <\infty$, then the following implication holds:
\[
\frac{\|\mb{T}_{p\times n}\|}{\sqrt{p\log n}} - K_{1,n/p} \overset{P}{\rightarrow} 0  
\Longrightarrow   
\frac{\|\mb{T}_{p\times n}\|}{\sqrt{p\log n}} - K_{1,n/p} \overset{L^\gamma}{\rightarrow} 0.
\]
\end{lemma}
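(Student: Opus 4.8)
The plan is to upgrade the already‑established convergence in probability to $L^\gamma$ convergence by verifying that the sequence of $\gamma$‑th powers is uniformly integrable. Throughout write $X_n:=\|\mb{T}_{p\times n}\|/\sqrt{p\log n}$ and recall, via Lemma~\ref{lem:reductions}(i), that we may assume the entries satisfy $|a_j|<n^{1/\gamma}$, $\E a_j=0$, $\operatorname{Var}a_j=1$; in particular $X_n$ has finite moments of every order for each fixed $n$. The extra hypothesis $\limsup_n n/p<\infty$ enters only through the elementary remark that there are constants $\kappa,n_0$ with $N/(p\log n)\le \kappa/\log n$ for $n\ge n_0$, since $N=n+p\le 2n\le 2Cp$ once $n/p\le C$. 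Combining this with $\|\mb{T}_{p\times n}\|=\sqrt N\,\|\mb{P}_{p,N}\mb{D}(\mb{a})\mb{P}_{n,N}\|\le \sqrt N\,\max_{0\le j\le N-1}|d_j|$ (using $\|\mb{P}_{r,N}\|=1$) gives $X_n\le \sqrt{\kappa}\,\max_j|d_j|/\sqrt{\log n}$ for all large $n$.

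The core of the argument is a moment bound of order strictly above $\gamma$. Fix $\gamma':=\gamma+1$. Lemma~\ref{lem: bernstein for d}, applied with $\sigma^2=1$, $M=n^{1/\gamma}$ and $\sqrt N\ge\sqrt n$, yields an absolute constant $c>0$ with $\P(|d_j|>t)\le 4\exp\{-c\,t\min(t,n^{1/2-1/\gamma})\}$ for all $t>0$, while deterministically $|d_j|\le \sqrt N\,n^{1/\gamma}$. Taking a union bound over the $N\le 2n$ indices and integrating $\gamma' t^{\gamma'-1}\P(\max_j|d_j|>t)$ — bounding the probability by $1$ on $t\le t_0$ with $t_0:=\sqrt{\log(4N)/c}\asymp\sqrt{\log n}$, by $4Ne^{-ct^2}$ on $t_0\le t\le n^{1/2-1/\gamma}$ (note $t_0\ll n^{1/2-1/\gamma}$ for large $n$, so the minimum equals $t$ there), and by the faster exponential on the remaining deterministic range $t\le \sqrt N n^{1/\gamma}$, whose contribution is super‑polynomially small — one obtains $\E\max_j|d_j|^{\gamma'}=O\!\big((\log n)^{\gamma'/2}\big)$. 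Hence $\E X_n^{\gamma'}\le \kappa^{\gamma'/2}(\log n)^{-\gamma'/2}\,\E\max_j|d_j|^{\gamma'}=O(1)$, and since $\E X_n^{\gamma'}<\infty$ for each fixed $n$ we conclude $\sup_n\E X_n^{\gamma'}<\infty$.

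Because $\gamma'>\gamma$, a uniformly bounded $\gamma'$‑th moment forces $\{X_n^\gamma\}_n$ to be uniformly integrable; and since $0<K_{1,n/p}\le 1$ (by the lower bound~\eqref{the lower bound} and the bound $K_{1,n/p}\le 1$ from~\cite{boas45}), the inequality $|X_n-K_{1,n/p}|^\gamma\le 2^{\gamma-1}\big(X_n^\gamma+1\big)$ transfers uniform integrability to $\{|X_n-K_{1,n/p}|^\gamma\}_n$. A uniformly integrable sequence converging to $0$ in probability converges to $0$ in $L^1$; applying this to $|X_n-K_{1,n/p}|^\gamma$ yields $\E|X_n-K_{1,n/p}|^\gamma\to 0$, i.e.\ $X_n-K_{1,n/p}\to 0$ in $L^\gamma$. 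The same chain of estimates applies with only notational changes to $\mb{C}_{p\times n}$, $\mb{T}^{(s)}_{p\times n}$ and $\mb{C}^{(s)}_{p\times n}$, giving the $L^\gamma$ statements in Theorems~\ref{C theorem} and~\ref{S theorem}.

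The only genuinely delicate step is the moment estimate $\E\max_j|d_j|^{\gamma'}=O\!\big((\log n)^{\gamma'/2}\big)$: one must track the Gaussian‑like and the purely exponential regimes of the Bernstein bound separately and check that neither the polynomial prefactor $N$ from the union bound nor the enormous deterministic range of $d_j$ disturbs the $(\log n)^{\gamma'/2}$ scaling that ultimately cancels the $(\log n)^{-\gamma'/2}$ coming from the normalization. Everything else is soft measure theory.
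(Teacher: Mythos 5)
Your proof is correct and reaches the same conclusion, but by a somewhat different route than the paper. The paper verifies the uniform-integrability condition directly: it writes $\E[|Y_n|^\gamma\mathbf{1}\{|Y_n|^\gamma>b\}]$ as a tail integral, splits the integral at a carefully chosen threshold $t_n$ where the Bernstein bound changes regime, and invokes incomplete-gamma asymptotics to show the integral vanishes uniformly in $n$ as $b\to\infty$. You instead establish the stronger and cleaner statement $\sup_n\E X_n^{\gamma+1}<\infty$ and then appeal to the de la Vall\'ee--Poussin criterion: a uniformly bounded moment of order strictly above $\gamma$ forces uniform integrability of the $\gamma$-th powers. Both approaches lean on the same ingredients --- the Bernstein-type tail bound of Lemma~\ref{lem: bernstein for d} applied to $\max_j|d_j|$, the crude inequality $\|\mb{T}_{p\times n}\|\le\sqrt N\max_j|d_j|$, and the hypothesis $\limsup n/p<\infty$ to keep the prefactor $\sqrt{N/(p\log n)}$ comparable to $1/\sqrt{\log n}$ --- and the final transfer step (adding the bounded term $K_{1,n/p}$ and invoking uniform integrability plus convergence in probability, Chung 4.5.4) is identical. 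The advantage of your route is that it avoids the explicit tail-integral bookkeeping with incomplete gamma functions; the one genuinely delicate step you flag correctly is checking that the deterministic range $|d_j|\le\sqrt N n^{1/\gamma}$ contributes only a super-polynomially small amount (it does, because $\gamma>2$ makes $n^{1-2/\gamma}$ grow polynomially in $n$). The paper's direct tail computation, in contrast, shows the sharper fact that for $b$ large the truncated expectation is $O(n^{-100})$ for all $n$, which you do not need and do not get, but which is also not required for the lemma.
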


\begin{proof}
     We adapt the approach of Lemma 7 in SV13. 
     Let 
\[
X_n := Y_n - K_{1,n/p}, \qquad Y_n := \frac{\sqrt{N} \| \mb{P}_{p,N} \mb{D}(\mb{a}) \mb{P}_{n,N} \|}{\sqrt{p\log n}}.
\]
By Theorem 4.5.4 of \cite{chung01}, if $X_n\overset{P}\rightarrow 0$ and $\mathbb{E}|X_n|^\gamma<\infty$ for all $n$, then $X_n\overset{L^\gamma}\rightarrow 0$ as long as $\{X_n^\gamma\}$ is uniformly integrable, that is if
 \[
 \lim_{b\rightarrow \infty}\sup_n\mathbb{E}\left\{|X_n|^\gamma\mathbf{1}\{|X_n|^\gamma>b\}\right\}=0.
 \]
 Note that, for any $b>0$,
 \[
\mathbb{E}\{|X_n|^\gamma\mathbf{1}\left\{|X_n|^\gamma>b\}\right\}=\int_{b}^\infty \mathbb{P}\{|X_n|^\gamma>t\}\mathrm{d}t.
 \]

The elementary inequality
\[
|Y_n-K_{1,n/p}|^\gamma\leq 2^\gamma(|Y_n|^\gamma+|K_{1,n/p}|^\gamma)
\]
and the fact that $\sup_{c\geq 1}|K_{1,c}|<\infty$ imply that the uniform integrability of $\{X_n^\gamma\}$ follows from that of $\{Y_n^\gamma\}$. Let us establish the uniform integrability of $\{Y_n^\gamma\}$.

We have 
\[
|Y_n|\leq \max_{0\leq j\leq \lfloor N/2\rfloor}\frac{\left|d_j(\mb{a})\right|}{\sqrt{\frac{p}{N}\log n}}.
\]
Therefore, 
\begin{eqnarray}\label{uniform integrability}
\mathbb{E}\left\{|Y_n|^\gamma\mathbf{1}\{|Y_n|^\gamma>b\}\right\}\leq  \sum_{j=0}^{\lfloor N/2\rfloor}\int_b^\infty\mathbb{P}\left\{\frac{\left|d_j(\mb{a})\right|}{\sqrt{\frac{p}{N}\log n}}>t^{1/\gamma}\right\}\mathrm{d}t.
\end{eqnarray}
On the other hand, by Lemma \ref{lem: bernstein for d}, for all $t\geq 0$
 \begin{eqnarray}\label{prob in uniform integrability}
     \mathbb{P}\left\{\frac{|d_j(\mb{a})|}{\sqrt{\frac{p}{N}\log 
 n}}>t^{1/\gamma}\right\}\leq 4\exp\left(-\frac{t^{2/\gamma} \frac{p}{n}\log n }{4N/n+t^{1/\gamma}n^{1/\gamma-1/2}\sqrt{\frac{p}{n}\log n }}\right).
 \end{eqnarray}
Let $t_n=n^{\gamma/2-1}\left(\frac{p}{n}\log n\right)^{-\gamma/2}$, then the denominator in \eqref{prob in uniform integrability} evaluated at $t=t_n$ satisfies inequality
\[
4N/n+t_n^{1/\gamma}n^{1/\gamma-1/2}\sqrt{\frac{p}{n}\log n }=4N/n+1\leq 9.
\]
This inequality and the assumption of the lemma that $\limsup_{n\rightarrow\infty}(n/p)<\infty$ imply that there exists a constant $C>0$ such that, for all sufficiently large $n$, we have
 \begin{equation}
 \label{integral split}
 \int_b^\infty\mathbb{P}\left\{\frac{\left|d_j(\mb{a})\right|}{\sqrt{\frac{p}{N}\log n}}>t^{1/\gamma}\right\}\mathrm{d}t\leq \int_b^{t_n}\exp\left(-Ct^{2/\gamma}\log n\right)\mathrm{d}t+\int_{t_n}^{\infty}\exp\left(-Ct^{1/\gamma}\right)\mathrm{d}t.
 \end{equation}
 Both integrals can be linked to the incomplete gamma function $\Gamma(r,x):=\int_x^\infty s^{r-1}e^{-s}\mathrm{d}s$. Indeed, for the second integral, we have
 \[
 \int_{t_n}^{\infty}\exp\left(-Ct^{1/\gamma}\right)\mathrm{d}t=\frac{\gamma}{C^\gamma}\Gamma(\gamma,Ct_n^{1/\gamma})=\frac{\gamma}{C}t_n^{1-1/\gamma}\exp\left(-Ct_n^{1/\gamma}\right)(1+o(1)),
 \]
 where for the last equality we used the standard asymptotic formula for the incomplete Gamma function (see e.g.~\cite{olver97}, p.~66). Note that under the assumptions of the lemma, $t_n$ grows as a (possibly small) positive power of $n$. Therefore, the right hand side of the last equality converges to zero faster than any power of $n$.
 
 For the first integral in \eqref{integral split}, we have
 \begin{eqnarray*}
 \int_b^{t_n}\exp\left(-Ct^{2/\gamma}\log n\right)\mathrm{d}t&\leq& \int_b^{\infty}\exp\left(-Ct^{2/\gamma}\log n\right)\mathrm{d}t\\
 &=&\frac{\gamma}{2C^{\gamma/2}\log^{\gamma/2} n}\Gamma\left(\frac{\gamma}{2},Cb^{2/\gamma}\log n\right)\\
 &=&\frac{\gamma b^{1-2/\gamma}}{2C\log n}\exp\left(-Cb^{2/\gamma}\log n\right)(1+o(1)).
 \end{eqnarray*}
 Combining the last two displays, we see that for sufficiently large $b$, we have
 \[
\int_b^\infty\mathbb{P}\left\{\frac{\left|d_j(\mb{a})\right|}{\sqrt{\frac{p}{N}\log n}}>t^{1/\gamma}\right\}\mathrm{d}t=O(n^{-100}),
 \]
 say. Using this in \eqref{uniform integrability}, we obtain, for sufficiently large $b$,
 \[
 \mathbb{E}\left\{|Y_n|^\gamma\mathbf{1}\{|Y_n|^\gamma>b\}\right\}=0.
 \]
 Hence, $\{Y_n^\gamma\}$ is uniformly integrable. 
 
 To show that  $X_n\overset{P}\rightarrow 0$ implies  $X_n\overset{L^\gamma}\rightarrow 0$, it remains to demonstrate that $\mathbb{E}|X_n|^\gamma<\infty$ for all sufficiently large $n$. But
 \[
 \mathbb{E}|X_n|^\gamma\leq 2^\gamma(\mathbb{E}|Y_n|^\gamma+|K_{1,n/p}|^\gamma)\leq 2^\gamma\left(\mathbb{E}\left\{|Y_n|^\gamma\mathbf{1}\{|Y_n|^\gamma>b\}\right\}+b+\sup_{c\geq 1}K_{1,c}\right).
 \]
 As we have just shown, the first component of the latter sum is finite for sufficiently large $b$ and all sufficiently large $n$. Hence, indeed, $\mathbb{E}|X_n|^\gamma<\infty$ for all sufficiently large $n$. 

\end{proof}

Note that the argument of the above proof breaks down when 
$n/p$ is allowed to diverge to infinity, because the inequality
\[
Y_n:=\frac{\sqrt{N}\|\mb{P}_{p,N}\mb{D}(\mb{a})\mb{P}_{n,N}\|}{\sqrt{p\log n}}\leq \frac{\sqrt{N}\|\mb{D}(\mb{a})\|}{\sqrt{p\log n}},
\]
used to establish the uniform integrability of $Y_n$, becomes too crude in that regime. Extending the result to the regime $n/p\rightarrow\infty$ remains open.

\subsubsection*{Part (iii) (Assuming that $N$ is even)}

 Suppose $N$ is odd. Then, instead of the matrix $\mb{T}_{p\times n}$, consider matrix
 $\mb{T}_{p\times (n+1)}$ that adds an extra column, call it $\mb{v}_T$, to the original matrix. Note that
 \[
|\|\mb{T}_{p\times n}\|-\|\mb{T}_{p\times (n+1)}\||\leq \|\mb{v}_T\|.
 \]
 On the other hand, the identity $\mathbb{E}\|\mb{v}_T\|^2=p$ and Markov's inequality yield $ \|\mb{v}_T\|=o_P(\sqrt{p\log n})$. As a consequence,
 \[
 \frac{\|\mb{T}_{p\times n}\|}{\sqrt{p\log n}}-\frac{\|\mb{T}_{p\times (n+1)}\|}{\sqrt{p\log (n+1)}}=o_P(1).
 \]
 The latter identity and the fact that $K_{1,c}$ is continuous in $c$ immediately imply that Theorem \ref{T theorem} holds for odd $N$ as long as it holds for even $N+1$. 

 \subsection{Proof of Proposition \ref{equivalent prop 8} (Properties of the partition of $\{0,\dots,N-1\}$ )}
This proof adapts the proof of Proposition 8 in SV13 to our setting. For any $s_n=O(\log^\alpha n)$ and $0\leq j_1<j_2<\cdots<j_{s_n}\leq N/2$, we have 
\begin{eqnarray*}
    \mathbb{P}\left\{|d_{j_i}|>\epsilon_n \sqrt{\log n}, 1\leq i\leq s_n\right\}&\leq& \sum_{\xi_i\in\{\Re d_{j_i},\Im d_{j_i}\}}\mathbb{P}\left\{|\xi_{i}|>\epsilon_n\sqrt{(\log n)/2}, 1\leq i\leq s_n\right\}\\
    &\leq&\sum_{\xi_i\in\{\Re d_{j_i},\Im d_{j_i}\}}\sum_{\beta_i\in\{-1,+1\}}\mathbb{P}\left\{\sum_{i=1}^{s_n}\beta_i\xi_{i}>s_n\epsilon_n\sqrt{(\log n)/2}\right\}.
\end{eqnarray*}
Using the definition \eqref{def of d again} of $d_j$, we can represent $\sum_{i=1}^{s_n}\beta_i\xi_{i}$ as a linear combination $\sum_{k=0}^{N-1}\theta_k a_k$ of random variables $a_k$ with weights $\theta_k$ of order $s_n\times O(n^{-1/2})$. Since $|a_k|<n^{1/\gamma}$, Bernstein's inequality yields, for any $t\geq 0$:
\begin{equation}\label{general bernstein}
\mathbb{P}\left\{\left|\sum_{k=0}^{N-1} \theta_k a_k\right| \geq t \right\} \leq 2 \exp\left\{ -\frac{t^2/2}{\sum_{k=0}^{N-1} \theta_k^2 + \max_{0 \leq k < N} |\theta_k| n^{1/\gamma} t / 3} \right\}.
\end{equation}
Therefore,
\begin{equation}\label{for (17)}
\mathbb{P}\left\{\sum_{i=1}^{s_n}\beta_i\xi_{i}>s_n\epsilon_n\sqrt{\frac{\log n}{2}}\right\}\leq 2\exp\left(-\frac{s_n^2\epsilon_n^2\log n/4}{ Var(\sum_{i=1}^{s_n}\beta_i\xi_i)+|O(s_nn^{1/\gamma-1/2})|s_n\epsilon_n\sqrt{\log n}}\right).
\end{equation}
%The Bernstein's inequality used says that if $a_1,\dots,a_s$ are idependent zero mean random variables, uniformly bounded by $M$, then
%\[
%\mathbb{P}\left\{\sum_{k=1}^sa_k>t\right\}\leq \exp\left\{-\frac{t^2/2}{\sum_{k=1}^s\operatorname{Var} a_k+Mt/3}\right\}\qquad \text{for any } t>0.
%\]
On the other hand,
\[
\operatorname{Var}\left(\sum_{i=1}^{s_n}\beta_i\xi_i\right)=\sum_{i=1}^{s_n}\operatorname{Var}\xi_i\leq \sum_{i=1}^{s_n}\operatorname{Var}d_{j_i} \leq s_n,
\]
and therefore, for all sufficiently large $n$, we have
\[
\operatorname{Var}\left(\sum_{i=1}^{s_n}\beta_i\xi_i\right)+|O(s_nn^{1/\gamma-1/2})|s_n\epsilon_n\sqrt{\log n}\leq 2s_n.
\]
This implies that the right hand side of \eqref{for (17)} is $O\left(n^{-s_n\epsilon_n^2/8}\right)$.
The sums over $\xi_i\in\{\Re d_{j_i},\Im d_{j_i}\}$ and $\beta_i\in\{-1,+1\}$ increase this at most by a factor $2^{2s_n}=n^{o(s_n\epsilon_n^2)}$. Thus, finally, we have the following equivalent of SV13's (17):
\begin{equation}
\label{equivalent of (17)}
\mathbb{P}\left\{|d_{j_i}|>\epsilon_n\sqrt{\log n}, 1\leq i\leq s_n\right\}\leq O(n^{-s_n\epsilon_n^2/9}).
\end{equation}

Continuing to closely follow SV13's proof of their proposition 8, we note that if one of the statements of Proposition \ref{equivalent prop 8} fails, then one of the following holds:
\begin{itemize}
    \item[(i)] either of the bricks $L_0$ or $L_{-m_n}$ is visible (this takes into account a possibility that $J\cap S\neq \varnothing$ but $J\notin\mathcal{L}$ because it contains a visible brick $L_0$ or a visible brick $L_{-m_n}$);
    \item[(ii)] there exists a stretch of $M_n$ consecutive bricks from $L_{-m_n+1},\dots,L_{-1}$ such that at least $\lfloor M_n/2\rfloor-1$ of them are visible (this corresponds to a possibility that a block $J\in \Lambda$ is too long to be admissible);
    \item[(iii)] there exists a stretch of $M_n$ consecutive bricks ($L_a,\dots,L_{a+M_n-1}$) from $L_{-m_n+1},\dots,L_{-1}$ such that \newline $\sum_{i=0}^{M_n-1}\#(L_{a+i}\cap S)\geq M_n$ (this takes into account a possibility that the second statement of the proposition is violated).
\end{itemize}
By \eqref{equivalent of (17)}, the probability of the event (i) is $O(n^{-\epsilon_n^2/10})=o(1)$. Further, both (ii) and (iii) are contained in 
\begin{itemize}
    \item[(iv)] there exists a stretch of $M_n$ consecutive bricks ($L_a,\dots,L_{a+M_n-1}$) from $L_{-m_n+1},\dots,L_{-1}$ such that \newline $\sum_{i=0}^{M_n-1}\#(L_{a+i}\cap S)\geq \lfloor M_n/2\rfloor-1$.
\end{itemize}
If we fix such $a$, let $s_n=\lfloor M_n/2\rfloor -1$, and fix positions $j_1,\dots,j_{s_n}$ within the block $L_a\cup L_{a+1}\cup\dots\cup L_{a+M_n-1}$, then by \eqref{equivalent of (17)}, the probability that $j_1,\dots,j_{s_n}\in S$ is bounded above by
\[
O(n^{-(\lfloor M_n/2\rfloor -1)\epsilon_n^2 /9})=O\left(n^{-\frac{\lfloor 50\log^\alpha n\rfloor -1}{9\log^\alpha n}}\right)=O(n^{-5}). 
\]
By union bound,
\[
\mathbb{P}(\text{event (iv)})\leq m_n\times\binom{4M_nr_n}{s_n}O(n^{-5}).
\]
On the other hand 
\[
\binom{4M_nr_n}{s_n}=O((4M_nr_n)^{M_n})=O((\log N)^{5M_n})
\]
and $m_n<N=O(n)$. Therefore, 
\[
\mathbb{P}(\text{event (iv)})\leq O(n^{-4}(\log n)^{1000\log^\alpha n})=o(1).
\]
This completes the proof.

\subsection{Proof of Lemma \ref{equivalent of lemma 9} (block-diagonal reduction)}
     This proof is only marginally different from the proof of Lemma 9 in SV13. Note that the absolute value of the numerator in the ratio of interest is bounded above by
     \begin{equation}\label{eq from lemma 9}
 \|\mb{P}_{p,N}\mb{D}^\epsilon \mb{P}_{n,N}-\mb{B}_{p,N}\mb{D}^\epsilon \mb{B}_{n,N}\|\leq \|(\mb{P}_{p,N}-\mb{B}_{p,N})\mb{R}\|\|\mb{D}\|+\|\mb{D}\|\|\mb{R}(\mb{P}_{n,N}-\mb{B}_{n,N})\|,
 \end{equation}
 where $\mb{R}:=\operatorname{diag} (\mathbf{1}_{\{j\in S\}})$.
The maximal column sum of matrix $(\mb{P}_{p,N}-\mb{B}_{p,N})\mb{R}$ is obviously not larger than the maximum column sum of $\mb{P}_{p,N}$, which is no larger than $C\log n$ for some constant $C$. Such an upper bound follows from \eqref{bound on the entries of P}.

Consider now the row sums. Let $k\in J$, where $J\in\Lambda$, then 
\[
\sum_{l=0}^{N-1}|((\mb{P}_{p,N}-\mb{B}_{p,N})\mb{R})_{kl}|=\sum_{l\notin J}|(\mb{P}_{p,N})_{kl}|\mb{R}_{ll}.
\]
By Proposition \ref{equivalent prop 8}, with probability approaching one (with high probability), each $J\in\Lambda$ has at most $M_n$ elements $j$ where $R_{jj}$ is non-zero. Furthermore, different blocks with visible bricks in them, start and end with invisible bricks (each having at least $r_n=\lceil\log N\rceil^4$ elements). This and \eqref{bound on the entries of P} imply (in the same way as a similar inequality is implied in SV13's proof of their Lemma 9) that, with high probability
\[
\sum_{l\notin J}|(\mb{P}_{p,N})_{kl}|\mb{R}_{ll}\leq \sum_{k=1}^{\#\Lambda-1}\frac{CM_n}{k(\log N)^4}\leq C(\log n)^{\alpha-3}.
\]
Here and elsewhere, $C$ may denote different constants from one appearance to another.
Hence, as in SV13, bounding the spectral norm by the geometric mean of the maximal row and columns sums, we obtain with high probability,
\[
\|(\mb{P}_{p,N}-\mb{B}_{p,N})\mb{R}\|\leq C(\log n)^{\alpha-2}.
\]
Finally, by Bernstein's inequality, $\|\mb{D}\|\leq C\sqrt{\log n}$ with high probability. Therefore, 
\[
\|(\mb{P}_{p,N}-\mb{B}_{p,N})\mb{R}\|\|\mb{D}\|=O_{P}\left((\log n)^{\alpha-3/2} \right).
\]
One shows similarly that 
\[
\|\mb{D}\|\|\mb{R}(\mb{P}_{n,N}-\mb{B}_{n,N})\|=O_{P}\left((\log n)^{\alpha-3/2} \right).
\] Using the latter two displays in \eqref{eq from lemma 9} yields
\[
\frac{\|\mb{P}_{p,N}\mb{D}^\epsilon\mb{P}_{n,N}\|-\|\mb{B}_{p,N}\mb{D}^\epsilon\mb{B}_{n,N}\|}{\sqrt{(p/N)\log n}}=\frac{O_{P}\left((\log n)^{\alpha-2} \right)}{\sqrt{p/n}}=o_P\left((\log n)^{3\alpha/2-2}\right),
\]
where the last equality follows from \eqref{new asymptotic regime}.

\subsection{Proof of Lemma \ref{cor: corollary 13} (an upper bound on $|d_{j_1}|^2+\dots+|d_{j_{M}}|^2$ from admissible blocks)}
We begin by establishing counterparts of Lemmas 11 and 12 from SV13. Lemma \ref{cor: corollary 13} will then be a simple consequence of these results. 

  \begin{lemma}\label{lem: refined equiv of lemma 11}
  Let $k_n=o(\log n/\log \log n)$ be positive integers, and let $\beta=(\beta_1,\dots,\beta_{k_n})^\top$ with each $\beta_i\geq 0$. Then for any fixed small $\tau>0$ and all $0<j_1<\dots<j_{k_n}<N/2$, for all sufficiently large $n$, we have
     \begin{equation}
     \label{equivalent of lemma 11}
     \mathbb{P}\{|d_{j_i}|>\beta_i \sqrt{\log n}, i=1,\dots,k_n\}\leq \max\{n^{-\log n/4},n^{-\|\beta\|^2+\tau}\},
     \end{equation}
     where $d_j$ are as defined in \eqref{def of d again}.     
 \end{lemma}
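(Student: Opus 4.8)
The plan is to follow the strategy of Lemma~11 in SV13: discretize the phases of the complex random variables $d_{j_i}$, apply a Chernoff bound to the resulting real linear forms in the $a_k$, and carefully balance three small error terms. A key preliminary reduction is a dichotomy on the size of $\beta$. If some coordinate satisfies $\beta_i \ge 2\sqrt{\log n}$, then the event in \eqref{equivalent of lemma 11} is contained in $\{|d_{j_i}| > 2\log n\}$, and Lemma~\ref{lem: bernstein for d} (with $\sigma^2 = 1$, $M = n^{1/\gamma}$), together with $n^{1/\gamma-1/2}\log n = o(1)$, bounds its probability by $4\exp(-4\log^2 n/(4+o(1))) \le n^{-\log n/4}$ for large $n$; this produces the first term in the stated maximum. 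From then on I would assume $\|\beta\|_\infty < 2\sqrt{\log n}$, hence $\|\beta\|^2 < 4k_n\log n$.

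For the main case, I would fix an integer $L = L(n) \asymp \sqrt{k_n\log n/\tau}$ and use the elementary fact that $|z| = \max_\phi \Re(e^{-\rmi\phi}z)$ loses only the factor $\cos(\pi/L)$ if $\phi$ runs over the grid $\{2\pi\ell/L\}_{\ell<L}$. Applying this to each $d_{j_i}$ and union-bounding over the $L^{k_n}$ choices of grid directions reduces the problem to bounding, uniformly over directions, $\mathbb{P}\{Z_i > c_i,\ i = 1,\dots,k_n\}$, where $Z_i := \Re(e^{-\rmi\phi_{\ell_i}}d_{j_i}) = \frac{1}{\sqrt N}\sum_k \cos(2\pi kj_i/N - \phi_{\ell_i})\,a_k$ and $c_i := \beta_i\sqrt{\log n}\cos(\pi/L)$. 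Since $0 < j_i < N/2$ and the $j_i$ are distinct, the orthogonality computations already used in Section~\ref{sec:circulant embedding} (those giving $\mathbb{E}|d_j|^2 = 1$, $\mathbb{E}d_j^2 = 0$, and $\mathbb{E}(d_sd_t)=\mathbb{E}(d_s\overline{d_t})=0$ for distinct indices) yield the exact identity $\sum_k \eta_k^{(i)}\eta_k^{(i')} = \tfrac12\delta_{ii'}$ for the coefficient vectors $\eta^{(i)}$ of the $Z_i$.

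Next I would run the Chernoff bound: for $\lambda_i \ge 0$, $\mathbb{P}\{Z_i > c_i\ \forall i\} \le e^{-\sum_i\lambda_i c_i}\prod_k \mathbb{E}\,e^{\theta_k a_k}$ with $\theta_k := \sum_i \lambda_i\eta_k^{(i)}$, so $|\theta_k| \le \frac1{\sqrt N}\sum_i\lambda_i$. A Taylor expansion using $|a_k| \le n^{1/\gamma}$, $\mathbb{E}a_k = 0$, $\mathbb{E}a_k^2 = 1$ gives $\mathbb{E}\,e^{\theta_k a_k} \le \exp(\tfrac{\theta_k^2}{2} + \tfrac{n^{1/\gamma}}{3}|\theta_k|^3)$ whenever $n^{1/\gamma}|\theta_k| \le \tfrac12$; combined with $\sum_k\theta_k^2 = \tfrac14\sum_i\lambda_i^2$ and the choice $\lambda_i = 2c_i$, this leads to $\mathbb{P}\{Z_i > c_i\ \forall i\} \le \exp(-\sum_i c_i^2 + E)$ with $E = O(n^{1/\gamma-1/2}k_n^2(\log n)^3)$. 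Multiplying by the $L^{k_n}$ from the union bound, the exponent becomes $-\cos^2(\pi/L)\|\beta\|^2\log n + k_n\log L + E$, and I would verify that $k_n\log L + (1-\cos^2(\pi/L))\|\beta\|^2\log n + E \le \tau\log n$ for large $n$: the middle term is $\lesssim \sin^2(\pi/L)\cdot k_n\log n\cdot\log n \le \tau\log n/3$ by the choice of $L$; then $\log L = O(\log\log n)$, so $k_n\log L = O(k_n\log\log n) = o(\log n)$ — this is exactly where the hypothesis $k_n = o(\log n/\log\log n)$ is used; and $E = o(1)$ since $\gamma > 2$ (the same $L$ also keeps $n^{1/\gamma}|\theta_k| \le \tfrac12$, because $\sum_i\lambda_i$ is polylogarithmic while $\sqrt N/n^{1/\gamma}$ is a positive power of $n$). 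This yields $\mathbb{P} \le n^{-\|\beta\|^2 + \tau}$, and together with the dichotomy the lemma follows, uniformly in $j_1 < \cdots < j_{k_n}$.

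The main obstacle is the three-way calibration in the last step: the phase discretization forces $L$ to grow, the union bound penalizes growth of $L$, and the non-Gaussian cubic correction in the moment generating function must stay negligible; these are reconciled only because $k_n$ grows slower than $\log n/\log\log n$ and $\gamma > 2$. The other point that needs care — but is purely bookkeeping — is that the covariance identity $\sum_k\eta_k^{(i)}\eta_k^{(i')} = \tfrac12\delta_{ii'}$ is exact, not merely asymptotic, which is what makes the sharp exponent $\|\beta\|^2$, rather than a lossy constant multiple of it, attainable.
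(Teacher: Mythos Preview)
Your argument is correct and follows essentially the same approach as the paper: discretize the phases of the complex $d_{j_i}$ on a grid (the paper's ``Hannan trick''), reduce to real linear forms in the $a_k$ with exact covariance $\tfrac12\delta_{ii'}$, and apply an exponential tail inequality, with the hypothesis $k_n=o(\log n/\log\log n)$ used precisely to absorb the union-bound factor. The only differences are bookkeeping: the paper takes a $\beta$-dependent grid size $m=\lceil 2\pi\|\beta\|\sqrt{\log n}\rceil$, collapses to a single weighted scalar $\sum_i(\beta_i/\|\beta\|)\psi_i$ before applying univariate Bernstein rather than running the multivariate Chernoff directly, and splits the large-$\beta$ regime at $\|\beta\|^2=\log n$ rather than at $\|\beta\|_\infty=2\sqrt{\log n}$.
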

 \begin{proof}
     For $\|\beta\|^2\leq \tau$ the statement of the lemma is trivial, so we will assume $\|\beta\|^2>\tau$. To bound $\mathbb{P}\{|d_{j_i}|>\beta_i \sqrt{\log n}, i=1,\dots,k_n\}$ from above using Bernstein's inequality, we are going to use a simple trick that we learned from \cite{hannan80} (p.~1076--1077). SV13 do not need this trick because their $d_{j_i}$ are real-valued.

     Consider a grid of $m$ equidistant points on $[0,2\pi]$: $2\pi s/m, s=1,\dots, m$. Let 
$\alpha_j$ be the closest point on the grid to the argument of the complex number $d_j$. Then,
\begin{equation}\label{Hannan trick}
|d_j|^2=\frac{|\Re d_j\times \cos \alpha_j+\Im d_j\times\sin\alpha_j|^2}{\cos^2(\alpha_j-\arg d_j)}.
\end{equation}
Let $m:=\lceil 2\pi\|\beta\|\sqrt{\log n}\rceil$.
Then
\[
 |\alpha_{j_i}-\arg d_{j_i}|\leq \frac{2\pi}{m}\leq \frac{1}{\|\beta\|\sqrt{\log n}}.
 \]
 Since, for any $x\in\mathbb{R}$, $\cos x\geq 1-x^2/2$, we have
 \[
 |\cos (\alpha_{j_i}-\arg d_{j_i})|\geq 1-\frac{1}{2 \|\beta\|^2\log n}=:\frac{t_n}{\|\beta\|\sqrt{\log n}}.
 \]
Using this in \eqref{Hannan trick} yields
\[
|d_{j_i}|\leq \frac{\|\beta\|\sqrt{\log n}}{t_n}\left|\Re d_{j_i}\times \cos \alpha_{j_i}+\Im d_{j_i}\times \sin \alpha_{j_i}\right|,\qquad i=1,\dots,k_n.
\]
Therefore, 
\begin{eqnarray*}
&&\mathbb{P}\{|d_{j_i}|>\beta_i \sqrt{\log n}, \quad i=1,\dots,k_n\}\\
&&\leq \mathbb{P}\left\{|\Re d_{j_i}\cos \alpha_{j_i}+\Im d_{j_i}\sin \alpha_{j_i}|>\beta_i t_n/\|\beta\|, \quad i=1,\dots,k_n\right\}\\
&&\leq\sum_{1\leq s_1,\dots,s_{k_n}\leq m}\mathbb{P}\left\{\left|\Re d_{j_i}\cos \frac{2\pi s_i}{m}+\Im d_{j_i}\sin \frac{2\pi s_i}{m}\right|>\beta_i t_n/\|\beta\|, \quad i=1,\dots,k_n\right\}\\
&&\leq \sum_{1\leq s_1,\dots,s_{k_n}\leq m}\sum_{\beta_i\in\{-1,+1\}}\mathbb{P}\left\{\sum_{i=1}^{k_n}\beta_i\psi_{i}\beta_i/\|\beta\|>t_n\right\},
\end{eqnarray*}
where 
\begin{eqnarray*}
\psi_{i}&:=&\Re d_{j_i}\cos \frac{2\pi s_i}{m}+\Im d_{j_i}\sin \frac{2\pi s_i}{m}\\
&=&\Re \left(e^{-\frac{2\pi\rmi s_i}{m}}d_{j_i}\right)=\frac{1}{\sqrt{N}}\sum_{r=0}^{N-1}\cos\left\{2\pi\left(\frac{j_i r}{N}-\frac{s_i}{m}\right)\right\}a_r.
\end{eqnarray*}
The facts about $d_j$ described in Section \ref{sec:circulant embedding} imply that $\psi_{i}, i=1,\dots,k_n$, are mutually uncorrelated and have variance $1/2$.
The sum $\sum_{i=1}^{k_n}\beta_i\psi_{i}\beta_i/\|\beta\|$ has variance $1/2$ as well, and can be represented as a linear combination of $a_r$ with coefficients $\theta_r$ such that $|\theta_r|\leq k_n/\sqrt{N}$ and $\sum_{r=0}^{N-1}\theta_r^2=1/2$. Therefore, Bernstein's inequality yields
\begin{equation}\label{Bernstein for lemma 11}
\mathbb{P}\{|d_{j_i}|>\beta_i \sqrt{\log n}, i=1,\dots,k_n\}\leq m^{k_n} 2^{k_n}\exp\left(-\frac{t_n^2}{1+\psi_n t_n}\right),
\end{equation}
with $\psi_n:=2k_n n^{1/\gamma}/(3\sqrt{N})$.

Recall that we have restricted attention to cases with $\|\beta\|^2>\tau$. Suppose, in addition, that $\|\beta\|^2\leq \log n$. In such cases,  we have $\psi_n t_n^r=o(1)$ for any fixed $r>0$ because $\psi_n$ decays as a power of $1/n$ while  $t_n$ grows as a power of $\log n$. Using this fact we observe that, for all sufficiently large $n$,
\begin{eqnarray*}
\exp\left(-\frac{t_n^2}{1+\psi_n t_n}\right)&\leq& \exp\left(-t_n^2+\psi_n t_n^3\right)\\
&\leq& \exp\left(-\|\beta\|^2\log n+1+o(1)\right).
\end{eqnarray*}
On the other hand, since $m=\lceil 2\pi\|\beta\|\sqrt{\log n}\rceil$ and that $k_n=o(\log n/\log\log n)$, we have
\[
m^{k_n} 2^{k_n}\leq m^{2k_n}\leq n^{o(1)}.
\]
Using the latter two displays in \eqref{Bernstein for lemma 11}, we obtain
\[
\mathbb{P}\{|d_{j_i}|>\beta_i \sqrt{\log n}, i=1,\dots,k_n\}\leq n^{o(1)}\exp\left(-\|\beta\|^2\log n+1+o(1)\right)\leq n^{-\|\beta\|^2+\tau},
\]
for all sufficiently large $n$. This yields the statement of the lemma when $\|\beta\|^2\leq \log n$.

It remains to consider cases where $\|\beta\|^2>\log n$. In such cases, for all sufficiently large $n$, we have
\begin{equation}\label{second case for lamma 11}
\frac{t_n^2}{1+\psi_n t_n}>\frac{1}{2}\|\beta\|\log^{3/2} n.
\end{equation}
To establish this inequality, it is sufficient to show that
\begin{equation}\label{sufficient for lemma 11}
\frac{2}{3}t_n^2>\frac{1}{2}\|\beta\|\log^{3/2}n,\quad\text{and}\quad \frac{1}{3}t_n^2>\psi_n t_n \frac{1}{2}\|\beta\|\log^{3/2}n.
\end{equation}
By definition, 
\[
t_n=\|\beta\|\sqrt{\log n}-\frac{1}{2\|\beta\|\sqrt{\log n}}.
\]
Therefore, for all sufficiently large $n$, we have
\[
\frac{2}{3}t_n^2>\frac{2}{3}\left(\|\beta\|^2\log n-1\right)>\frac{1}{2}\|\beta\|\log^{3/2}n,
\]
where we have used the maintained constraint $\|\beta\|^2>\log n$. This proves the first inequality in \eqref{sufficient for lemma 11}. To establish the second inequality, note that
\[
\frac{1}{3}t_n^2>t_n\frac{1}{6}\|\beta\|\sqrt{\log n}>\psi_n t_n \frac{1}{2}\|\beta\|\log^{3/2}n,
\]
for all sufficiently large $n$, where the last inequality uses the fact that $\psi_n\log n=o(1)$. Hence, both inequalities in \eqref{sufficient for lemma 11} hold, which yields \eqref{second case for lamma 11}.

On the other hand, for all sufficiently large $n$,
\[
m^{k_n}2^{k_n}\leq m^{2k_n}=\exp\left(\log (\lceil 2\pi \|\beta\|\sqrt{\log n}\rceil)2k_n\right)\leq \exp\left(5k_n\log \|\beta\|\right)\leq \exp\left(\|\beta\|\log n\right).
\]
Using this inequality and \eqref{second case for lamma 11} in \eqref{Bernstein for lemma 11}, we obtain
\begin{eqnarray*}
\mathbb{P}\{|d_{j_i}|>\beta_i \sqrt{\log n}, i=1,\dots,k_n\}&\leq&\exp\left(\|\beta\|\log n-\frac{1}{2}\|\beta\|\log^{3/2}n\right)\\
&\leq& \exp\left(-\frac{1}{4}\|\beta\|\log^{3/2}n\right)\leq n^{-\log n/4},
\end{eqnarray*}
where the latter inequality follows from the fact that $\|\beta\|>\sqrt{\log n}$. Hence, the statement of the lemma holds in full generality.
 \end{proof}

%This is the end of Hannan's trick.

\begin{lemma}\label{lem: lemma 12}
    Fix $\eta>0$, let $k_n=o(\log n/\log \log n)$ be positive integers. Then there exists a constant $C>0$ such that for all $0<j_1<j_2<\dots<j_{k_n}<N/2$ and all sufficiently large $n$, we have
\begin{eqnarray*}
    &&\mathbb{P}\left\{|d_{j_1}|>\beta_1\sqrt{\log n},\dots,|d_{j_{k_n}}|>\beta_{k_n}\sqrt{\log n}\right.
    \\
    &&\quad \left.\text{ for some }\beta_1,\dots,\beta_{k_n}>0 \text{ s.t. } \beta_1^2+\dots+\beta_{k_n}^2\geq 1+\eta\right\}\leq Cn^{-1-\eta/3}.
\end{eqnarray*}
\end{lemma}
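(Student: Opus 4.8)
The plan is to derive Lemma~\ref{lem: lemma 12} from Lemma~\ref{lem: refined equiv of lemma 11} by a covering (discretization) argument over the vector $\beta=(\beta_1,\dots,\beta_{k_n})$. First I would observe that the event in Lemma~\ref{lem: lemma 12} is unchanged if we restrict the witnessing vectors to the sphere $\|\beta\|^2=1+\eta$: if $|d_{j_i}|>\beta_i\sqrt{\log n}$ for all $i$ and $\|\beta\|^2\ge 1+\eta$, then replacing $\beta$ by $c\beta$ with $c=\sqrt{(1+\eta)/\|\beta\|^2}\le 1$ only shrinks the coordinates, so all strict inequalities persist while $\|c\beta\|^2=1+\eta$ and $c\beta_i>0$. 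On this sphere each coordinate satisfies $0\le\beta_i\le\sqrt{1+\eta}$, so the index set of admissible $\beta$ is bounded and can be discretized.

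Next I would fix a grid spacing $\delta=\delta_n:=\eta/(20\,k_n\sqrt{1+\eta})$ and let $\mathcal{G}$ be the set of points in $[0,\sqrt{1+\eta}]^{k_n}$ all of whose coordinates are nonnegative integer multiples of $\delta$. For a $\beta$ on the sphere, round each coordinate \emph{down} to the nearest element of $\delta\mathbb{Z}_{\ge 0}$, obtaining $\beta'\in\mathcal{G}$ with $0\le\beta_i'\le\beta_i$. The inequality $\beta_i'\le\beta_i$ gives the inclusion $\{|d_{j_i}|>\beta_i\sqrt{\log n}\ \forall i\}\subseteq\{|d_{j_i}|>\beta_i'\sqrt{\log n}\ \forall i\}$, while $\beta_i'^2\ge\beta_i^2-2\delta\beta_i\ge\beta_i^2-2\delta\sqrt{1+\eta}$ and summing over $i$ yields $\|\beta'\|^2\ge(1+\eta)-2\delta\sqrt{1+\eta}\,k_n=1+9\eta/10$. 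Hence the event of Lemma~\ref{lem: lemma 12} is contained in the union, over $\beta'\in\mathcal{G}$ with $\|\beta'\|^2\ge 1+9\eta/10$, of the events $\{|d_{j_i}|>\beta_i'\sqrt{\log n},\ i=1,\dots,k_n\}$.

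I would then apply the union bound together with Lemma~\ref{lem: refined equiv of lemma 11} (with $\tau$ taken small, for concreteness $\tau=\eta/20$): each term is at most $\max\{n^{-\log n/4},\,n^{-(1+9\eta/10)+\eta/20}\}=n^{-1-17\eta/20}$ for all sufficiently large $n$. The grid has at most $(\sqrt{1+\eta}/\delta+1)^{k_n}$ points, which is $\exp\{k_n(\log k_n+O_\eta(1))\}$, and the hypothesis $k_n=o(\log n/\log\log n)$ makes this $\exp\{o(\log n)\}=n^{o(1)}$. Multiplying, the probability in Lemma~\ref{lem: lemma 12} is at most $n^{o(1)}\cdot n^{-1-17\eta/20}$, which is $\le n^{-1-\eta/3}$ for all sufficiently large $n$ because the surplus $17\eta/20-\eta/3=31\eta/60>0$ absorbs the $o(1)$ in the exponent; thus $C=1$ works.

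The step I expect to be the main obstacle is the calibration of $\delta_n$ rather than anything conceptual: it must shrink like $1/k_n$ so that coordinatewise rounding does not destroy the constraint $\|\beta\|^2\ge 1+\eta$, and this forces $\mathcal{G}$ to carry on the order of $(k_n/\eta)^{k_n}$ points. Keeping that cardinality sub-polynomial in $n$, so that it is dominated by the $n^{-1-\Theta(\eta)}$ bound from Lemma~\ref{lem: refined equiv of lemma 11}, is exactly what the assumption $k_n=o(\log n/\log\log n)$ provides, and the argument fails without it; one must also check that the chosen $\tau$ and the loss incurred in passing from $\|\beta\|^2$ to $\|\beta'\|^2$ together leave a positive margin over $\eta/3$.
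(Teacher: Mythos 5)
Your proof is correct and follows the same discretization strategy as the paper's own argument: cover the admissible $\beta$ by a finite grid, apply Lemma~\ref{lem: refined equiv of lemma 11} to each grid point, and union-bound, with the hypothesis $k_n = o(\log n/\log\log n)$ keeping the grid cardinality $n^{o(1)}$ so that it is absorbed by the gap between the per-point exponent $-(1+9\eta/10)+\eta/20$ and the target $-1-\eta/3$. Your initial rescaling to the sphere $\|\beta\|^2 = 1+\eta$ (which forces $\beta_i\le\sqrt{1+\eta}$ coordinatewise) is a minor simplification over the paper, which instead discretizes the box $[0,1+\eta]^{k_n}$ and then handles witnesses with some $\beta_i>1+\eta$ by a separate one-coordinate application of Lemma~\ref{lem: refined equiv of lemma 11}.
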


\begin{proof}
         The proof is nearly identical to  SV13's proof of their Lemma 12. Namely, construct an $\frac{\eta}{4(1+\eta)k_n}$-net $\mathcal{N}$ for the interval $[0,1+\eta]$ by choosing $\left\lceil\frac{4(1+\eta)^2k_n}{\eta}\right\rceil+1$ equally spaced points in $[0,1+\eta]$ including $0$ and $1+\eta$. Given any $\beta_1,\dots,\beta_{k_n}\in[0,1+\eta]$, we can find $\alpha_1,\dots,\alpha_{k_n}\in\mathcal{N}$ such that 
\[
\beta_i-\frac{\eta}{4(1+\eta)k_n}\leq\alpha_i\leq\beta_i
\]
for each $i$. This yields
\[
\alpha_1^2+\dots+\alpha_{k_n}^2\geq \|\beta\|^2-\frac{\eta}{2}.
\]
The event 
\[
\left\{|d_{j_1}|>\beta_1\sqrt{\log n},\dots,|d_{j_{k_n}}|>\beta_{k_n}\sqrt{\log n}\text{ for some }\beta_1,\dots,\beta_{k_n}\in(0,1+\eta]\text{ s.t. } \|\beta\|^2\geq 1+\eta\right\}
\]
is contained in the  union of events 
\[
\{|d_{j_1}|>\alpha_1\sqrt{\log n},\dots,|d_{j_{k_n}}|>\alpha_{k_n}\sqrt{\log n}\}
\]
taken over all tuples $(\alpha_1,\dots,\alpha_{k_n})\in\mathcal{N}^{k_n}$ satisfying $\alpha_1^2+\dots+\alpha_{k_n}^2\geq 1+\eta/2$. By Lemma \ref{lem: refined equiv of lemma 11}, the probability of this union is bounded by 
\[
(Ck_n)^{k_n}O(n^{-1-\eta/2+\tau})=O(n^{-1-\eta/2+\tau/2}),
\]
for arbitrarily small $\tau>0$.  We can choose $\tau$ so that the latter bound becomes $O(n^{-1-\eta/3})$. Finally, again by Lemma \ref{lem: refined equiv of lemma 11}, for arbitrarily small $\tau_1>0$, the probability of event $\{|d_{j_i}|>(1+\eta)\sqrt{\log n}\text{ for some }1\leq i\leq k_n\}$ is no larger than 
\[
k_nO(n^{-(1+\eta)^2+\tau_1})=O(n^{-1-\eta/3}),
\]
where the last equality is achieved by an appropriate choice of $\tau_1$.

\end{proof}

Now we are ready to prove Lemma \ref{cor: corollary 13}. 
 The proof closely follows that of Corollary 13 in SV13. By Lemma \ref{lem: lemma 12}, for $M=M_n$ and a \textit{fixed} admissible block and points $j_i$, the probability that 
 \[
 |d_{j_1}|^2+\cdots+|d_{j_{M}}|^2>(1+\eta)\log n
 \]
 is at most $Cn^{-1-\eta/3}$. Note that, for all sufficiently large $n$,  the number of admissible blocks is smaller than $n$ and the length of any admissible block is at most $4r_nM_n$ with $4r_n=4\lceil \log N\rceil^4$ being the maximal possible length of a brick. Therefore, by union bound, the probability that the statement of Lemma \ref{cor: corollary 13} is violated is at most 
    \[
    n\binom{4r_nM_n}{M_n}Cn^{-1-\eta/3}= O\left((\log n)^{5M_n}n^{-\eta/3}\right)=o(1).
    \]
    This completes the proof.

\subsection{Proof of the identity \eqref{identity from lem link to Ipn} (link between polynomials and positive definite sequences)}
\begin{eqnarray*}
\frac{1}{2\pi}\int_{0}^{2\pi}|\mathcal{P}_{p-1}(e^{\rmi x})\mathcal{P}_{n-1}(e^{\rmi x})|^2\mathrm{d}x&=&\sum_{\nu_1,\nu_2,\nu_3,\nu_4}\frac{1}{2\pi}\int_0^{2\pi}\beta_{\nu_1}e^{\rmi x\nu_1}\overline{\beta_{\nu_2}}e^{-\rmi x\nu_2}\rho_{\nu_3}e^{\rmi x\nu_3}\overline{\rho_{\nu_4}}e^{-\rmi x\nu_4}\mathrm{d}x\\
&=&\sum_{\nu_1-\nu_2+\nu_3-\nu_4=0}\beta_{\nu_1}\overline{\beta_{\nu_2}}\rho_{\nu_3}\overline{\rho_{\nu_4}}\\
&=&\sum_{j,\nu_2,i,\nu_4:i+j=0}\beta_{j+\nu_2}\overline{\beta_{\nu_2}}\rho_{i+\nu_4}\overline{\rho_{\nu_4}}\\
&=&\sum_{i+j=0}\alpha_j\gamma_i=\sum_{i=-p+1}^{p-1}\alpha_{-i}\gamma_{i}=\sum_{\nu=-p+1}^{p-1}\overline{\alpha_{\nu}}\gamma_{\nu}.
\end{eqnarray*}

\subsection{Proof of Lemma \ref{lem: theorem 5.1} (existence of monotone extremal sequences claimed in the proof of Lemma \ref{lem:I is close to K})}\label{sec: extremal seq}

As follows from the Herglotz lemma (see a discussion on p.~6 of \cite{belov13}), $\mathcal{L}_k$ can be described as the set of all sequences $\{\alpha_j\}$ such that $\alpha_0=1$, $\alpha_j=0$ for all $|j|>k$, and the following trigonometric polynomial is non-negative:
\begin{equation}\label{positive poynomial}
T(x):=\sum_{j=-k}^k\alpha_je^{\rmi j x}\geq 0\text{ for all }x.
\end{equation}
By Fej\'{e}r-Riesz theorem (e.g. Theorem 1 in \cite{hussen21}), there exist complex numbers $\beta_0,\dots,\beta_k$ such that
\begin{equation}\label{RF theorem}
T(x)=\left|\sum_{j=0}^k\beta_je^{\rmi j x}\right|^2.
\end{equation}
The following lemma is a reformulation of Lemma 5.2 in \cite{belov13} adapted to our setting.
\begin{lemma}[Lemma 5.2 of \cite{belov13}]\label{lem: belov lemma 5.2}
    Consider equations \eqref{positive poynomial} and \eqref{RF theorem}, and let 
    \[
    g_j=\left(\frac{|\beta_j|^2+|\beta_{k-j}|^2}{2}\right)^{1/2}\text{ for all }j=0,\dots,k.
    \]
    Then the coefficients of the polynomial 
    $
    \sum_{j=-k}^kh_j e^{\rmi jx}:=\left|\sum_{j=0}^k g_je^{\rmi j x}\right|^2
    $
    satisfy the following properties:
    \[
    h_0=\alpha_0\quad\text{and}\quad |\alpha_j|\leq h_j\quad\text{ for all }j=-k,\dots,k. 
    \]
\end{lemma}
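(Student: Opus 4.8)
The plan is to derive the two asserted properties directly from the Fej\'{e}r--Riesz factorization \eqref{RF theorem}, the only nontrivial point being a \emph{summed} Cauchy--Schwarz estimate that exploits the symmetrization built into the $g_j$. First I would expand the squares in \eqref{RF theorem} and in the defining relation $\sum_{j=-k}^k h_j e^{\rmi jx}=|\sum_{j=0}^k g_j e^{\rmi jx}|^2$, extend $\beta_j$ and $g_j$ by zero outside $\{0,\dots,k\}$, and record
\[
\alpha_m=\sum_{l\in\mathbb{Z}}\beta_{l+m}\overline{\beta_l},\qquad h_m=\sum_{l\in\mathbb{Z}}g_{l+m}g_l\qquad(m\in\mathbb{Z}).
\]
In particular $\{\alpha_m\}$ is Hermitian ($\alpha_{-m}=\overline{\alpha_m}$) and $\{h_m\}$ is real and even ($h_{-m}=h_m$), so it suffices to prove $h_0=\alpha_0$ and $|\alpha_m|\le h_m$ for $m\ge0$.

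The identity $h_0=\alpha_0$ is immediate: $h_0=\sum_{j=0}^k g_j^2=\tfrac12\sum_{j=0}^k|\beta_j|^2+\tfrac12\sum_{j=0}^k|\beta_{k-j}|^2=\alpha_0$, after reindexing $j\mapsto k-j$ in the second sum and using $\alpha_0=\sum_j|\beta_j|^2$. For $|\alpha_m|\le h_m$, I would first invoke the triangle inequality, $|\alpha_m|\le\sum_l|\beta_{l+m}|\,|\beta_l|$. The naive hope of a termwise bound $g_{l+m}g_l\ge|\beta_{l+m}|\,|\beta_l|$ is false, so instead I would apply the Cauchy--Schwarz inequality to the pair of two-dimensional vectors $(|\beta_{l+m}|,|\beta_{k-l-m}|)$ and $(|\beta_l|,|\beta_{k-l}|)$ to get, for every $l$,
\[
g_{l+m}g_l=\tfrac12\sqrt{\bigl(|\beta_{l+m}|^2+|\beta_{k-l-m}|^2\bigr)\bigl(|\beta_l|^2+|\beta_{k-l}|^2\bigr)}\ \ge\ \tfrac12\bigl(|\beta_{l+m}|\,|\beta_l|+|\beta_{k-l-m}|\,|\beta_{k-l}|\bigr),
\]
and then sum over $l\in\mathbb{Z}$. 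The substitution $l\mapsto k-l$ turns $\sum_l|\beta_{k-l-m}|\,|\beta_{k-l}|$ into $\sum_l|\beta_{l+m}|\,|\beta_l|$, so both halves of the lower bound coincide with $\sum_l|\beta_{l+m}|\,|\beta_l|$; hence $h_m\ge\sum_l|\beta_{l+m}|\,|\beta_l|\ge|\alpha_m|$.

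The step that carries the whole argument is this passage from the (false) termwise inequality to the summed Cauchy--Schwarz estimate combined with the reflection $l\mapsto k-l$: it is exactly the choice $g_j=\bigl((|\beta_j|^2+|\beta_{k-j}|^2)/2\bigr)^{1/2}$ that makes the two contributions reassemble into the full sum $\sum_l|\beta_{l+m}|\,|\beta_l|$. Everything else is routine bookkeeping with the zero-extension convention and with the Hermitian/even symmetry, so I do not anticipate a further obstacle.
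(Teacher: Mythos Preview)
The paper does not supply its own proof of this lemma: it is quoted verbatim as Lemma~5.2 of \cite{belov13} and used as a black box in Section~\ref{sec: extremal seq}. So there is nothing in the paper to compare your argument against.

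That said, your proof is correct. The expansion $\alpha_m=\sum_l\beta_{l+m}\overline{\beta_l}$, $h_m=\sum_l g_{l+m}g_l$ is the right starting point, and the zero-extension convention is consistent because for $j\notin\{0,\dots,k\}$ both $\beta_j$ and $\beta_{k-j}$ vanish, so the identity $g_j^2=(|\beta_j|^2+|\beta_{k-j}|^2)/2$ continues to hold for all $j\in\mathbb{Z}$. The two-dimensional Cauchy--Schwarz step is exactly what makes the symmetrization work: termwise one only gets $g_{l+m}g_l\ge\tfrac12\bigl(|\beta_{l+m}|\,|\beta_l|+|\beta_{k-l-m}|\,|\beta_{k-l}|\bigr)$, and it is the reflection $l\mapsto k-l$ (followed by the harmless shift $l\mapsto l-m$) that collapses the second half back to $\sum_l|\beta_{l+m}|\,|\beta_l|$ after summation. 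Your identification of this as the crux is accurate; the rest is indeed bookkeeping.
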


Lemma \ref{lem: belov lemma 5.2} immediately implies that for any sequence $\{\alpha_j\} \in \mathcal{L}_{k}$, there exists a non-negative sequence $\{h_j\} \in \mathcal{L}_{k}$ such that $|\alpha_j| \leq h_j$ for all $j \in \mathbb{Z}$. In particular, the domain of the maximization problem \eqref{maximisation Ipn2} defining $I_{p,n}^2$ can be restricted to sequences in $\mathcal{L}_{p-1}^+$ and $\mathcal{L}_{n-1}^+$, where $\mathcal{L}_{k}^+$ denotes the subset of $\mathcal{L}_{k}$ consisting of non-negative sequences. That is,
\begin{equation}\label{recall Ipn}
    I_{p,n}^2 = \max_{\{\alpha_j\} \in \mathcal{L}_{p-1}^+, \{\gamma_j\} \in \mathcal{L}_{n-1}^+} \sum \alpha_j \gamma_j.
\end{equation}

    Moreover, there exist extremal sequences $\{\alpha_j\} \in \mathcal{L}_{p-1}^+$ and $\{\gamma_j\} \in \mathcal{L}_{n-1}^+$ such that the corresponding trigonometric polynomials
\begin{equation}\label{T polynomials}
T_\alpha(x) = \sum_{j=-p+1}^{p-1} \alpha_j e^{\rmi jx}, \quad \text{and} \quad T_\gamma(x) = \sum_{j=-n+1}^{n-1} \gamma_j e^{\rmi jx}
\end{equation}
admit representations of the form
\begin{equation}\label{T representations}
T_\alpha(x) = \left| \sum_{j=0}^{p-1} \beta_j e^{\rmi jx} \right|^2, \quad \text{and} \quad T_\gamma(x) = \left| \sum_{j=0}^{n-1} \rho_j e^{\rmi jx} \right|^2,
\end{equation}
where all $\beta_j$ and $\rho_j$ are non-negative and satisfy the symmetry conditions
\begin{align}
\beta_{p-1-j} &= \beta_j, \quad \text{for all } j = 0, \dots, p-1, \label{beta equalities} \\
\rho_{n-1-j} &= \rho_j, \quad \text{for all } j = 0, \dots, n-1. \label{rho equalities}
\end{align}

Indeed, the existence of such representations with complex coefficients follows from the Fej\'{e}r–Riesz theorem. However, by Lemma~\ref{lem: belov lemma 5.2}, the transformations 
\[
\beta_j \mapsto \left( \frac{|\beta_j|^2 + |\beta_{p-1-j}|^2}{2} \right)^{1/2}, \quad
\rho_j \mapsto \left( \frac{|\rho_j|^2 + |\rho_{n-1-j}|^2}{2} \right)^{1/2}
\]
yield non-negative, symmetric coefficients and can only increase the value of the objective in the extremal problem \eqref{recall Ipn}.

%In the remainder of the proof, we will show that there exists a choice of extremal sequences $\{\alpha_j\} \in \mathcal{L}_{p-1}^+$ and $\{\gamma_j\} \in \mathcal{L}_{n-1}^+$ such that, in addition to the symmetry conditions \eqref{beta equalities}–\eqref{rho equalities}, the corresponding coefficients satisfy
%\begin{equation}
%    \label{extra property}
%    \beta_0 \leq \beta_1 \leq \dots \leq \beta_{\lfloor (p-1)/2 \rfloor}, \quad \text{and} \quad \rho_0 \leq \rho_1 \leq \dots \leq \rho_{\lfloor (n-1)/2 \rfloor}.
%\end{equation}
%We will then show that the symmetry conditions, together with \eqref{extra property}, imply the monotonicity properties $\alpha_0 \geq \dots \geq \alpha_{p-1} \geq 0$ and $\gamma_0 \geq \dots \geq \gamma_{n-1} \geq 0$, thus completing the proof.

We now introduce a few useful definitions, following \cite{belov13}. Let $S$ denote the set of all finite, non-negative sequences $\{c_j\}_{j=-\infty}^\infty$ such that $c_0 \geq c_j$ for all $j \in \mathbb{Z}$, and the multiset $\{c_j : j \neq 0\}$ can be partitioned into pairs of equal elements. Let $S_0 \subset S$ denote the subset consisting of symmetric sequences, that is, those satisfying $c_{-j} = c_j$ for all $j \in \mathbb{Z}$.

As an example of a sequence in 
$S_0$, that we be useful later, observe (see e.g.~p.23 in \cite{belov13}) that if \begin{equation}\label{eq 5.11}
\alpha_j=\sum_{i\in\mathbb{Z}}\beta_i\beta_{j+i},
\end{equation}
where $\{\beta_j\}$ is a finite sequence of non-negative numbers, then $\{\alpha_j\}\in S_0$.

Next, let $S^\downarrow$ be the subset of $S_0$ consisting of all sequences $\{c_j\}_{j=-\infty}^\infty$ such that $c_j \geq c_{j+1}$ for all $j \in \mathbb{Z}_+$. For any finite sequence $\{c_j\}$ of non-negative numbers, let $\{c_j^\downarrow\}$ denote the sequence consisting of the same elements, reordered so that
\[
c_0^\downarrow \geq c_1^\downarrow \geq c_{-1}^\downarrow \geq c_2^\downarrow \geq c_{-2}^\downarrow \geq \dots.
\]
Note that if $\{c_j\} \in S$, then $\{c_j^\downarrow\} \in S^\downarrow$.

Our final definition introduces the concept of majorization. For finite non-negative sequences $\{a_j\}_{j=0}^\infty$ and $\{b_j\}_{j=0}^\infty$, we say that $\{a_j\}$ is \emph{majorized} by $\{b_j\}$, written $\{a_j\}_{j=0}^\infty \prec \{b_j\}_{j=0}^\infty$, if the inequality
\[
\sum_{j=0}^\infty a_j v_j \leq \sum_{j=0}^\infty b_j v_j
\]
holds for every sequence $\{v_j\} \in S^\downarrow$.

The following two lemmas reformulate Lemmas 5.6 and 5.7 in \cite{belov13}.
\begin{lemma}[Lemma 5.6 of \cite{belov13}]\label{lem: belov 5.6}
    Let $\{\beta_j\}$ be an arbitrary finite sequence of non-negative numbers, $\{\alpha_j\}$ be a sequence defined by \eqref{eq 5.11}, and $\{\alpha_j^\ast\}$ be a sequence defined by
    \[
    \alpha_j^\ast=\sum_{i\in\mathbb{Z}}\beta_i^\downarrow \beta_{j+i}^\downarrow.
    \]
    Then $\{\alpha_j^\ast\}\in S^\downarrow$ and  $\{\alpha_j^\downarrow\}_{j=0}^\infty\prec \{\alpha_j^\ast\}_{j=0}^\infty$.
\end{lemma}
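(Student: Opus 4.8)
The plan is to establish the two assertions of the lemma in turn: first that $\{\alpha_j^\ast\}\in S^\downarrow$, and then the majorization $\{\alpha_j^\downarrow\}_{j=0}^\infty\prec\{\alpha_j^\ast\}_{j=0}^\infty$.

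For the first assertion, observe that $\alpha_j^\ast=\sum_{i\in\mathbb{Z}}\beta_i^\downarrow\beta_{i+j}^\downarrow$ is the autocorrelation of the finite non-negative sequence $\{\beta_j^\downarrow\}$, so by the observation following \eqref{eq 5.11} we already have $\{\alpha_j^\ast\}\in S_0$; what remains is the unimodality $\alpha_0^\ast\ge\alpha_1^\ast\ge\alpha_2^\ast\ge\cdots$. I would obtain this from the layer-cake representation $\beta_i^\downarrow=\int_0^\infty\mathbf{1}\{\beta_i^\downarrow>t\}\,\mathrm{d}t$, which gives $\alpha_j^\ast=\int_0^\infty\!\int_0^\infty|A_t\cap(A_s-j)|\,\mathrm{d}t\,\mathrm{d}s$, where $A_t:=\{i\in\mathbb{Z}:\beta_i^\downarrow>t\}$. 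By the definition of the rearrangement ${}^\downarrow$, each $A_t$ is an interval of consecutive integers that is ``as centered as possible'' (of the form $\{-m,\dots,m\}$ or $\{-m,\dots,m+1\}$), and the sets $A_t$ are totally ordered by inclusion, being level sets of a single function. An elementary endpoint computation then shows that, for any two nested centered intervals, the overlap $|A_t\cap(A_s-j)|$ is non-increasing in $j$ on $\{j\ge 0\}$; integrating over $t$ and $s$ yields the unimodality of $\{\alpha_j^\ast\}$, hence $\{\alpha_j^\ast\}\in S^\downarrow$.

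For the majorization, I would first reduce it, via Abel summation, to the family of partial-sum inequalities $\sum_{j=0}^k\alpha_j^\downarrow\le\sum_{j=0}^k\alpha_j^\ast$ for all $k\ge 0$. Because $\{\alpha_j\}\in S_0$, its nonzero off-origin entries occur in equal pairs, so each distinct value has even multiplicity and the rearranged sequence $\{\alpha_j^\downarrow\}_{j\in\mathbb{Z}}$ is symmetric and unimodal; consequently $2\sum_{j=0}^k\alpha_j^\downarrow-\alpha_0=\sum_{|j|\le k}\alpha_j^\downarrow$ is the sum of the $2k+1$ largest entries of the multiset $\{\alpha_j\}_{j\in\mathbb{Z}}$. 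Likewise, since $\{\alpha_j^\ast\}\in S^\downarrow$, $2\sum_{j=0}^k\alpha_j^\ast-\alpha_0^\ast=\sum_{|j|\le k}\alpha_j^\ast$ is the sum of the $2k+1$ largest entries of $\{\alpha_j^\ast\}_{j\in\mathbb{Z}}$, and $\alpha_0=\sum_i\beta_i^2=\alpha_0^\ast$. Hence it suffices to prove that for every finite $F\subset\mathbb{Z}$ with $|F|=2k+1$ one has $\sum_{j\in F}\alpha_j\le\sum_{|j|\le k}\alpha_j^\ast$. Rewriting both sides as triple sums,
\[
\sum_{j\in F}\alpha_j=\sum_{x,y\in\mathbb{Z}}\beta_x\beta_y\,\mathbf{1}_F(y-x),\qquad \sum_{|j|\le k}\alpha_j^\ast=\sum_{x,y\in\mathbb{Z}}\beta_x^\downarrow\beta_y^\downarrow\,\mathbf{1}_{\{-k,\dots,k\}}(y-x),
\]
and noting that $(\mathbf{1}_F)^\downarrow=\mathbf{1}_{\{-k,\dots,k\}}$ because $|F|=2k+1$ is odd, the required inequality is exactly an instance of the discrete Riesz rearrangement inequality applied with $f=g=\{\beta_j\}$ and $h=\mathbf{1}_F$, which closes the argument.

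Apart from routine parity bookkeeping, the argument rests on two inputs: the monotonicity of overlaps of nested centered integer intervals under translation, which is elementary, and the discrete Riesz rearrangement inequality, which is the main external ingredient. I expect the latter to be the chief obstacle: it can be invoked from the literature (e.g.\ the one-dimensional lattice version of the classical Riesz inequality), but a fully self-contained proof would require a two-point (Steiner-type) symmetrization argument on $\mathbb{Z}$, and that is where the real work lies.
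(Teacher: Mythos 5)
The paper does not prove this lemma---it is imported directly from \cite{belov13} (their Lemma 5.6)---so there is no in-paper argument to compare against, and your blind proof must be judged on its own merits. It is correct but not fully self-contained. For the first claim ($\{\alpha_j^*\}\in S^\downarrow$), the layer-cake representation reduces the unimodality to showing that $|A_t\cap(A_s-j)|$ is non-increasing in $j\geq 0$ for nested integer intervals $A_t,A_s$; your endpoint argument works, and in fact centeredness of the level sets is not strictly needed for this monotonicity (only that they are nested intervals), while the required symmetry $\alpha_{-j}^*=\alpha_j^*$ already follows from the autocorrelation structure. For the majorization, your chain of reductions is clean: Abel summation to partial sums, the even-multiplicity structure of an $S_0$ sequence to convert partial sums into sums of the $2k+1$ largest entries, the bilinear-form rewriting $\sum_{j\in F}\alpha_j=\sum_{x,y}\beta_x\beta_y\mathbf{1}_F(y-x)$, and the observation that $(\mathbf{1}_F)^\downarrow=\mathbf{1}_{\{-k,\dots,k\}}$ precisely because $|F|$ is odd. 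The final step is a genuine instance of the discrete Riesz rearrangement inequality on $\mathbb{Z}$ with $f=g=\beta$ and $h=\mathbf{1}_F$; this is a known result, provable by two-point polarization on $\mathbb{Z}$, so invoking it is legitimate, but as you yourself acknowledge it carries essentially all the weight of the majorization claim, and a fully self-contained proof would have to supply it. Whether Belov's original argument follows this Riesz-based route or proceeds by a more hands-on shuffling of the autocorrelation sums cannot be determined from the paper, which only cites the result.
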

\begin{lemma}[Lemma 5.7 of \cite{belov13}]\label{lem: belov 5.7}
    Let $\{a_j\}_{j=0}^\infty$, $\{b_j\}_{j=0}^\infty$, $\{a_j^\ast\}_{j=0}^\infty$, and $\{b_j^\ast\}_{j=0}^\infty$ be finite non-increasing non-negative sequences such that $\{a_j\}_{j=0}^\infty\prec \{a_j^\ast\}_{j=0}^\infty$ and $\{b_j\}_{j=0}^\infty\prec \{b_j^\ast\}_{j=0}^\infty$. Then $\{a_jb_j\}_{j=0}^\infty\prec \{a_j^\ast b_j^\ast\}_{j=0}^\infty$.
\end{lemma}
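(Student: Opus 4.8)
The plan is to reduce Lemma~\ref{lem: belov 5.7} to a summation-by-parts estimate after recasting the majorization relation in a more tractable form. First I would record the standard equivalent description of $\prec$: for finite non-negative sequences $\{x_j\}_{j\ge 0}$ and $\{y_j\}_{j\ge 0}$, one has $\{x_j\}\prec\{y_j\}$ if and only if
\[
\sum_{j=0}^{n}x_j\le\sum_{j=0}^{n}y_j\qquad\text{for all }n\ge 0 .
\]
The forward implication comes from testing against the symmetric, non-increasing sequences $v_j=\mathbf{1}_{\{|j|\le n\}}$, which belong to $S^\downarrow$; the reverse implication follows by writing an arbitrary finite $\{v_j\}\in S^\downarrow$ as $v_j=\sum_{m\ge j}(v_m-v_{m+1})$ with non-negative coefficients and applying Abel summation to $\sum_{j\ge 0} x_j v_j$. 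Transitivity of $\prec$ is then immediate, and since an entrywise product of two finite non-increasing non-negative sequences is again finite, non-increasing, and non-negative, the sequences $\{a_jb_j\}$, $\{a_j^\ast b_j\}$, $\{a_j^\ast b_j^\ast\}$ all lie in the class on which $\prec$ is defined.

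Next I would split the claim into two one-factor steps and chain them: it suffices to show $\{a_jb_j\}\prec\{a_j^\ast b_j\}$ and $\{a_j^\ast b_j\}\prec\{a_j^\ast b_j^\ast\}$, since then $\{a_jb_j\}\prec\{a_j^\ast b_j^\ast\}$ by transitivity. Using commutativity of the entrywise product, both are instances of the single statement: \emph{if $\{u_j\}\prec\{u_j^\ast\}$ with all sequences non-increasing and non-negative, and $\{w_j\}$ is non-increasing and non-negative, then $\{u_jw_j\}\prec\{u_j^\ast w_j\}$.} To prove this, fix $n\ge 0$, put $U_k=\sum_{j=0}^{k}u_j\le \sum_{j=0}^{k}u_j^\ast=U_k^\ast$, and apply Abel summation:
\[
\sum_{j=0}^{n}u_jw_j=U_n w_n+\sum_{j=0}^{n-1}U_j\,(w_j-w_{j+1}).
\]
Because $w_n\ge 0$ and $w_j-w_{j+1}\ge 0$ for $0\le j<n$ (this is the only place monotonicity of $\{w_j\}$ enters), replacing each $U_j$ by $U_j^\ast$ can only increase the right-hand side, and the resulting quantity is exactly $\sum_{j=0}^{n}u_j^\ast w_j$. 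As $n$ was arbitrary, the partial-sum criterion yields $\{u_jw_j\}\prec\{u_j^\ast w_j\}$, which completes the argument.

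I do not expect a genuine obstacle here: this is precisely the mechanism behind Lemma~5.7 of \cite{belov13}, and the only point that needs a little care is the reformulation of $\prec$ via partial sums — in particular checking that the indicators $\mathbf{1}_{\{|j|\le n\}}$ are admissible members of $S^\downarrow$. Once that reformulation is in hand, the proof is a short summation-by-parts computation relying solely on the non-negativity and monotonicity of the factor held fixed.
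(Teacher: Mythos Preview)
Your argument is correct. The paper does not supply its own proof of this lemma---it simply quotes it as Lemma~5.7 of \cite{belov13}---and your partial-sum reformulation followed by Abel summation is exactly the standard mechanism behind that result.
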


The following corollary completes our proof.

\begin{corollary}\label{cor: theorem 5.1}
    In the problem \eqref{recall Ipn}, there always exist extremal sequences $\{\alpha_j\} \in \mathcal{L}_{p-1}^+$ and $\{\gamma_j\} \in \mathcal{L}_{n-1}^+$ such that
    \[
    \alpha_0 \geq \dots \geq \alpha_{p-1} \geq 0 \quad \text{and} \quad \gamma_0 \geq \dots \geq \gamma_{n-1} \geq 0.
    \]
\end{corollary}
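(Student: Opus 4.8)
The plan is to start from an extremal pair of the special form \eqref{T representations} and replace it by its ``decreasing autocorrelation rearrangement'', showing that this replacement cannot lower the objective in \eqref{recall Ipn} while producing the required monotonicity. Concretely, I would fix extremal non-negative sequences $\{\alpha_j\}\in\mathcal{L}_{p-1}^{+}$ and $\{\gamma_j\}\in\mathcal{L}_{n-1}^{+}$ together with the non-negative coefficients $\{\beta_j\}_{j=0}^{p-1}$ and $\{\rho_j\}_{j=0}^{n-1}$ from \eqref{T representations}, so that $\alpha_j=\sum_{i\in\mathbb{Z}}\beta_i\beta_{i+j}$ and $\gamma_j=\sum_{i\in\mathbb{Z}}\rho_i\rho_{i+j}$; in particular $\{\alpha_j\},\{\gamma_j\}\in S_0$ by \eqref{eq 5.11}. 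Then define $\alpha_j^{\ast}:=\sum_{i\in\mathbb{Z}}\beta_i^{\downarrow}\beta_{i+j}^{\downarrow}$ and $\gamma_j^{\ast}:=\sum_{i\in\mathbb{Z}}\rho_i^{\downarrow}\rho_{i+j}^{\downarrow}$.

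By Lemma \ref{lem: belov 5.6}, $\{\alpha_j^{\ast}\},\{\gamma_j^{\ast}\}\in S^{\downarrow}$ and $\{\alpha_j^{\downarrow}\}_{j\ge 0}\prec\{\alpha_j^{\ast}\}_{j\ge 0}$, $\{\gamma_j^{\downarrow}\}_{j\ge 0}\prec\{\gamma_j^{\ast}\}_{j\ge 0}$. Since each of $\alpha^{\ast},\gamma^{\ast}$ is the autocorrelation of a real, non-negative, finitely supported sequence, it is itself a non-negative, positive definite sequence; moreover $\alpha_0^{\ast}=\sum_i(\beta_i^{\downarrow})^2=\sum_i\beta_i^2=\alpha_0=1$ and likewise $\gamma_0^{\ast}=1$, while the widths of the supports are unchanged, so $\{\alpha_j^{\ast}\}\in\mathcal{L}_{p-1}^{+}$ and $\{\gamma_j^{\ast}\}\in\mathcal{L}_{n-1}^{+}$. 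Being in $S^{\downarrow}$, these sequences satisfy $\alpha_0^{\ast}\ge\alpha_1^{\ast}\ge\cdots\ge\alpha_{p-1}^{\ast}\ge 0$ and $\gamma_0^{\ast}\ge\cdots\ge\gamma_{n-1}^{\ast}\ge 0$, so the only remaining point is that they are again extremal, i.e.\ that the replacement did not decrease $\sum_{\nu}\alpha_\nu\gamma_\nu$.

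For this I would argue in two steps. First, since $\{\alpha_j\},\{\gamma_j\}$ are finitely supported non-negative sequences and the $\downarrow$-operation places the $k$-th largest entry of each at the $k$-th position of the common ordering $0,1,-1,2,-2,\dots$, the sequences $\{\alpha_\nu^{\downarrow}\}$ and $\{\gamma_\nu^{\downarrow}\}$ are similarly ordered, so the Hardy--Littlewood rearrangement inequality gives $\sum_{\nu\in\mathbb{Z}}\alpha_\nu\gamma_\nu\le\sum_{\nu\in\mathbb{Z}}\alpha_\nu^{\downarrow}\gamma_\nu^{\downarrow}$. Second, $\{\alpha_j^{\downarrow}\}_{j\ge 0}$ and $\{\gamma_j^{\downarrow}\}_{j\ge 0}$ are non-increasing non-negative sequences, so Lemma \ref{lem: belov 5.7} applied to the two majorizations above yields $\{\alpha_j^{\downarrow}\gamma_j^{\downarrow}\}_{j\ge 0}\prec\{\alpha_j^{\ast}\gamma_j^{\ast}\}_{j\ge 0}$; testing this majorization against the sequence $v\in S^{\downarrow}$ with $v_j=\mathbf{1}\{|j|\le K\}$ for $K$ large enough to contain all the relevant supports gives $\sum_{j\ge 0}\alpha_j^{\downarrow}\gamma_j^{\downarrow}\le\sum_{j\ge 0}\alpha_j^{\ast}\gamma_j^{\ast}$. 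Using the symmetry of $\alpha^{\downarrow},\gamma^{\downarrow},\alpha^{\ast},\gamma^{\ast}$ about the origin and the equality $\alpha_0^{\downarrow}\gamma_0^{\downarrow}=\alpha_0^{\ast}\gamma_0^{\ast}=1$, the bound on the one-sided sums upgrades to $\sum_{\nu\in\mathbb{Z}}\alpha_\nu^{\downarrow}\gamma_\nu^{\downarrow}\le\sum_{\nu\in\mathbb{Z}}\alpha_\nu^{\ast}\gamma_\nu^{\ast}$. Chaining the two steps gives $\sum_{\nu}\alpha_\nu\gamma_\nu\le\sum_{\nu}\alpha_\nu^{\ast}\gamma_\nu^{\ast}$, so $\{\alpha_j^{\ast}\},\{\gamma_j^{\ast}\}$ attain the maximum $I_{p,n}^2$ in \eqref{recall Ipn}, which proves the corollary and hence Lemma \ref{lem: theorem 5.1}.

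The delicate part is the bookkeeping around the $\downarrow$-rearrangement of two-sided symmetric sequences: matching the one-sided majorizations supplied by Belov's lemmas with the two-sided quadratic form $\sum_{\nu\in\mathbb{Z}}\alpha_\nu\gamma_\nu$, and in particular keeping track of the factor $2$ arising from $\sum_{\nu\in\mathbb{Z}}(\cdot)_\nu=(\cdot)_0+2\sum_{\nu\ge 1}(\cdot)_\nu$ and checking that the $\nu=0$ terms match. The positive-definiteness and support claims for $\alpha^{\ast},\gamma^{\ast}$ are routine once one observes that these are genuine autocorrelations, and the heavy lifting is done by Lemmas \ref{lem: belov lemma 5.2}, \ref{lem: belov 5.6}, and \ref{lem: belov 5.7}, so no single step presents a serious obstacle.
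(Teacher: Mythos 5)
Your proof is correct and uses the same basic strategy as the paper's: replace the Fej\'er--Riesz factors by their zigzag rearrangements $\beta^\downarrow,\rho^\downarrow$, form the corresponding autocorrelations $\alpha^\ast,\gamma^\ast$, and use Lemmas~\ref{lem: belov 5.6}--\ref{lem: belov 5.7} to show the objective in \eqref{recall Ipn} does not decrease, concluding that $\alpha^\ast,\gamma^\ast$ are extremal and lie in $S^\downarrow$. Where you differ is in the bookkeeping of the majorization step, and this is in fact where your write-up is more careful than the paper's: the paper applies Lemma~\ref{lem: belov 5.7} directly to $\{\alpha_j\}_{j\ge 0}$ and $\{\gamma_j\}_{j\ge 0}$, but that lemma requires all four input sequences to be non-increasing, a property the original extremal sequences are not yet known to have. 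You close this by first using the Hardy--Littlewood rearrangement inequality to pass to $\{\alpha_j^\downarrow\}_{j\ge 0}$ and $\{\gamma_j^\downarrow\}_{j\ge 0}$ (which \emph{are} non-increasing), only then invoking Lemma~\ref{lem: belov 5.7}, and finally extracting the scalar inequality from the resulting majorization by testing against the indicator sequence in $S^\downarrow$ and using the two-sided symmetry together with $\alpha_0^\downarrow\gamma_0^\downarrow=\alpha_0^\ast\gamma_0^\ast=1$. Dropping the paper's index shift from $\beta_j$ to $\tilde\beta_j$ is harmless, since the zigzag rearrangement $\downarrow$ depends only on the multiset of values and therefore $\tilde\beta^\downarrow=\beta^\downarrow$.
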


\begin{proof}
    Let $T_\alpha(x)$ and $T_\gamma(x)$ be the trigonometric polynomials \eqref{T polynomials} corresponding to some extremal sequences $\{\alpha_j\} \in \mathcal{L}_{p-1}^+$ and $\{\gamma_j\} \in \mathcal{L}_{n-1}^+$, and let $\{\beta_j\}_{j=0}^{p-1}$ and $\{\rho_j\}_{j=0}^{n-1}$ be the real coefficients in their representations \eqref{T representations}, satisfying the symmetry conditions.

    Extend these sequences to all of $\mathbb{Z}$ by defining $\beta_j = 0$ for $j < 0$ and $j \geq p$, and similarly $\rho_j = 0$ for $j < 0$ and $j \geq n$.

    Define the shifted sequences
    \[
    \tilde{\beta}_j := \beta_{j + \lfloor (p-1)/2 \rfloor}, \quad \tilde{\rho}_j := \rho_{j + \lfloor (n-1)/2 \rfloor}.
    \]
    Then,
    \[
    T_\alpha(x) = \left| \sum_{j=-\lfloor (p-1)/2 \rfloor}^{p-1 - \lfloor (p-1)/2 \rfloor} \tilde{\beta}_j e^{\rmi jx} \right|^2,
    \quad
    T_\gamma(x) = \left| \sum_{j=-\lfloor (n-1)/2 \rfloor}^{n-1 - \lfloor (n-1)/2 \rfloor} \tilde{\rho}_j e^{\rmi jx} \right|^2.
    \]

    Now define
    \begin{align*}
        T_\alpha^\ast(x) &= \sum_{j=-p+1}^{p-1} \alpha_j^\ast e^{\rmi jx} := \left| \sum_{j=-\lfloor (p-1)/2 \rfloor}^{p-1 - \lfloor (p-1)/2 \rfloor} \tilde{\beta}_j^\downarrow e^{\rmi jx} \right|^2, \\
        T_\gamma^\ast(x) &= \sum_{j=-n+1}^{n-1} \gamma_j^\ast e^{\rmi jx} := \left| \sum_{j=-\lfloor (n-1)/2 \rfloor}^{n-1 - \lfloor (n-1)/2 \rfloor} \tilde{\rho}_j^\downarrow e^{\rmi jx} \right|^2.
    \end{align*}

    Observe that
    \[
    \alpha_0^\ast = \sum_j (\tilde{\beta}_j^\downarrow)^2 = \sum_j \beta_j^2 = \alpha_0, \quad \text{and similarly} \quad \gamma_0^\ast = \gamma_0.
    \]
    Furthermore,
    \begin{equation}\label{first norm domination}
        \sum_{j=-\infty}^\infty \alpha_j \gamma_j \leq \sum_{j=-\infty}^\infty \alpha_j^\ast \gamma_j^\ast.
    \end{equation}

    Indeed, by construction,
    \[
    \alpha_j^\ast = \sum_{i \in \mathbb{Z}} \tilde{\beta}_i^\downarrow \tilde{\beta}_{i+j}^\downarrow,
    \quad
    \gamma_j^\ast = \sum_{i \in \mathbb{Z}} \tilde{\rho}_i^\downarrow \tilde{\rho}_{i+j}^\downarrow.
    \]
    Hence, by Lemma~\ref{lem: belov 5.6}, we have
    \[
    \{\alpha_j\}_{j=0}^\infty \prec \{\alpha_j^\ast\}_{j=0}^\infty, \quad
    \{\gamma_j\}_{j=0}^\infty \prec \{\gamma_j^\ast\}_{j=0}^\infty,
    \]
    and then by Lemma~\ref{lem: belov 5.7},
    \[
    \{\alpha_j \gamma_j\}_{j=0}^\infty \prec \{\alpha_j^\ast \gamma_j^\ast\}_{j=0}^\infty,
    \]
    which yields \eqref{first norm domination}.

    Since the right-hand side of \eqref{first norm domination} does not decrease the value of the objective, $T_\alpha^\ast(x)$ and $T_\gamma^\ast(x)$ are also extremal polynomials for problem \eqref{recall Ipn}. Redefine $\alpha_j := \alpha_j^\ast$ and $\gamma_j := \gamma_j^\ast$. These redefined sequences are still extremal, and by Lemma~\ref{lem: belov 5.6}, they belong to $S^\downarrow$. This implies the desired monotonicity:
    \[
    \alpha_0 \geq \dots \geq \alpha_{p-1} \geq 0, \quad \gamma_0 \geq \dots \geq \gamma_{n-1} \geq 0.
    \]
\end{proof}

This completes our proof of Lemma \ref{lem: theorem 5.1}.

\subsection{Proof of Lemma \ref{lem: comparison with limiting} (comparison of $\mb{P}_{k,l}$ with $\Pi(k,l)$)}
For $k=l$, the statement of the lemma is trivial. For $k\neq l$, let $x_N:=\pi(k-l)/N$. Then,
\begin{eqnarray*}
(\mb{P}_{r, N})_{kl}-\Pi_{r/N} (k,l)&=&\frac{1}{N}\frac{1-e^{-2\rmi r x_N}}{1-e^{-2\rmi x_N}}-\frac{1}{N}\frac{1-e^{-2\rmi  rx_N}}{2\rmi x_N}\\
&=&\frac{1-e^{-2\rmi r x_N}}{2N}\left[1+\frac{\rmi}{x_N}\left(1-x_N\cot x_N\right)\right].
\end{eqnarray*}
On the other hand, for any $x\neq 0$ from the interval $(-\pi/2,\pi/2)$, 
\begin{equation*}
0< 1-x\cot x< x^2/2.
\end{equation*}
For $x\in(0,\pi/2)$, the first of these inequalities follows from the fact that $\tan x>x$, and the second one  from the estimate $1-x\cot x<1-\cos x<x^2/2$. For $x\in(-\pi/2,0)$ the inequalities hold because $1-x\cot x$ and $x^2/2$ are even functions.

Therefore, for $|x_N|<\pi/2$, we have:
\[
\left|(\mb{P}_{r, N})_{kl}-\Pi_{r/N} (k,l)\right|\leq \frac{1}{N}(1+|x_N|/2)<\frac{2}{N}.
\]
Noting that $|k-l|<N/2$ implies $|x_N|<\pi/2$ completes the proof.

\subsection{Proof of the final inequality in \eqref{tolerance} (the bound on $|K_{p/N,n/N}-K_{\alpha_t,\beta_t}|$)}
Since $T>3/\tau^3$,
\[
1-\sqrt{1-\frac{1}{T\tau}}\leq 1-\sqrt{1-\frac{\tau^2}{3}}\leq \frac{\tau^{3/2}}{4}.
\]
To see the validity of the last inequality, it is sufficient to verify that
\[
(1-\tau^{3/2}/4)^2\leq 1-\tau^2/3.
\]
Taking the square and rearranging, we obtain an equivalent inequality
\[
-\tau^{3/2}/2+\tau^3/16+\tau^2/3\leq 0.
\]
But
\begin{eqnarray*}
-\tau^{3/2}/2+\tau^3/16+\tau^2/3&=&\tau^{3/2}(-1/2+\tau^{3/2}/16+\tau^{1/2}/3)\\
&\leq& \tau^{3/2}(-1/2+1/16+1/3)\\
&=&-(5/48)\tau^{3/2}<0.
\end{eqnarray*}

\subsection{Proof of Lemma \ref{lem: lem16} (approximation of the norm of the Hadamard product of vectors in $\ell^2(\mathbb{C})$ by that of vectors in $\mathbb{C}^k$)}
 We will search for the required $k=k(\tau)$ in the set of odd integers. It will be convenient to think about $\Pi_{r/N}^{[2q+1]}$ as an operator acting in $\ell^2(\mathbb{C})$ with the following representation:
\[
\Pi_{r/N}^{[2q+1]}(i,j)=\begin{cases}
    \Pi_{r/N}(i,j)&\text{for }|i|,|j|\leq q\\
    0&\text{otherwise}
\end{cases}.
\]

    Let $\widetilde{\mb{x}}_1^{(t)},\widetilde{\mb{x}}_2^{(t)}\in\ell^2(\mathbb{C})$ be restrictions of  $\widetilde{\mb{c}}_1^{(t)},\widetilde{\mb{c}}_2^{(t)}\in\ell^2(\mathbb{C})$ to $[-m,m]$, i.e.~$\widetilde{\mb{x}}_i^{(t)}(j)=\widetilde{\mb{c}}_i^{(t)}(j)$ for $|j|\leq m$ and $\widetilde{\mb{x}}_i^{(t)}(j)=0$  for $|j|> m$. Furthermore, let $m=m(\tau)$ be such that
    \[
    \max_{t\in\{1,\dots,T(\tau)\}}\|\widetilde{\mb{x}}_i^{(t)}-\widetilde{\mb{c}}_i^{(t)}\|<\tau^{3/2}/32.
    \]
    Since $\Pi_{r/N}$ is a projection, $\|\Pi_{r/N}\|=1$, and the latter inequality implies that, for each $t\in\{1,\dots,T(\tau)\}$ and all $p/n\in(\tau_{t-1},\tau_t]$,
    \begin{equation}\label{first part of lem16}
    \left|\|\Pi_{p/N}\widetilde{\mathbf{c}}_1^{(t)}\odot \Pi_{n/N}\widetilde{\mathbf{c}}_2^{(t)}\|-\|\Pi_{p/N}\widetilde{\mathbf{x}}_1^{(t)}\odot \Pi_{n/N}\widetilde{\mathbf{x}}_2^{(t)}\|\right|<\tau^{3/2}/32.
    \end{equation}

    Further, for $q\geq m$ we have
    \begin{eqnarray}\label{remainder term}
    \|\Pi_{p/N}\widetilde{\mathbf{x}}_1^{(t)}\odot \Pi_{n/N}\widetilde{\mathbf{x}}_2^{(t)}\|^2-\|\Pi_{p/N}^{[2q+1]}\widetilde{\mathbf{x}}_1^{(t)}\odot\Pi_{n/N}^{[2q+1]}\widetilde{\mathbf{x}}_2^{(t)}\|^2=\sum_{i:|i|>q}|(\Pi_{p/N}\widetilde{\mathbf{x}}_1^{(t)})(i)(\Pi_{n/N}\widetilde{\mathbf{x}}_2^{(t)})(i)|^2.
    \end{eqnarray}
    On the other hand, for $i$ such that $|i|>m$, we have: 
    \begin{equation}\label{Pi v bounds}
    |(\Pi_{p/N}\widetilde{\mathbf{x}}_1^{(t)})(i)|\leq (2m+1)(|i|-m)^{-1},\qquad |(\Pi_{n/N}\widetilde{\mathbf{x}}_2^{(t)})(i)|\leq (2m+1)(|i|-m)^{-1}.
    \end{equation}
    Indeed, these inequalities follow from the bounds $|\widetilde{\mathbf{x}}_1^{(t)}(j)|\leq 1$,  $|\widetilde{\mathbf{x}}_2^{(t)}(j)|\leq 1$, and, for $i\neq j$,  the bounds
    \[
    |\Pi_{p/N}(i,j)|\leq |i-j|^{-1},\quad  |\Pi_{n/N}(i,j)|\leq |i-j|^{-1}.
    \]
    The latter bounds follow directly from the explicit form of the entries of operator $\Pi_{r/N}$ given in \eqref{entries of Pi}.

 Using \eqref{Pi v bounds} in \eqref{remainder term}, we obtain:
\[
\left|\|\Pi_{p/N}\widetilde{\mathbf{x}}_1^{(t)}\odot \Pi_{n/N}\widetilde{\mathbf{x}}_2^{(t)}\|^2-\|\Pi_{p/N}^{[2q+1]}\widetilde{\mathbf{x}}_1^{(t)}\odot\Pi_{n/N}^{[2q+1]}\widetilde{\mathbf{x}}_2^{(t)}\|^2\right| \leq (2m+1)^4 \sum_{i:|i|>q} (|i|-m)^{-4},
\]
which converges to zero as \( q \rightarrow \infty \). Choosing $q$ so that the right hand side of the above inequality is smaller than $\tau^{3/2}/32$, and combining that inequality with \eqref{first part of lem16}, we obtain
\[
\left|\|\Pi_{p/N}\widetilde{\mathbf{c}}_1^{(t)}\odot \Pi_{n/N}\widetilde{\mathbf{c}}_2^{(t)}\|-\|\Pi_{p/N}^{[k]}\mathbf{x}_1^{(t)}\odot \Pi_{n/N}^{[k]}\mathbf{x}_2^{(t)}\|\right|<\tau^{3/2}/16,
\]
where $\Pi_{p/N}^{[k]},\Pi_{n/N}^{[k]}$ are interpreted as $k\times k$ matrices with $k=2q+1$, and $\mathbf{x}_1^{(t)},\mathbf{x}_2^{(t)}\in\mathbb{C}^{k}$ are the finite $[-q,q]$ sections of infinite vectors $\widetilde{\mathbf{x}}_1^{(t)},\widetilde{\mathbf{x}}_2^{(t)}\in\ell^2(\mathbb{C})$. By construction, the above inequality holds for each $t\in\{1,\dots,T(\tau)\}$ and all $p/n\in(\tau_{t-1},\tau_t]$, which finishes the proof of Lemma \ref{lem: lem16}.

\subsection{Proof of the second equality in \eqref{Fourier calculation} (explicit form of a trigonometric sum)}
It suffices to demonstrate that
\begin{equation}\label{Fourier procedure}
\left(\frac{r}{N}\right)^2+2\sum_{j=1}^{\infty}\left(\frac{\sin\frac{\pi j r}{N}}{\pi j}\right)^2=\frac{r}{N}.
\end{equation}
Consider the $2\pi$-periodic even function $g(x)=\pi |x|-x^2$ for $x\in[\pi,\pi]$. Its Fourier series $a_0/2+\sum_{j=1}^\infty a_j\cos(jx)$ has coefficients
\begin{eqnarray*}
\frac{a_0}{2}&=&\frac{1}{2\pi}\int_{-\pi}^{\pi}(\pi|x|-x^2)\mathrm{d}x=\frac{\pi^2}{6},\\
a_j&=&\frac{1}{\pi}\int_{-\pi}^{\pi}(\pi|x|-x^2)\cos(jx)\mathrm{d}x=-2\frac{1+\cos(\pi j)}{j^2}.
\end{eqnarray*}
Since $f(x)$ has bounded right and left derivatives, it satisfies Dini's criterion for point-wise convergence of the Fourier series. In particular, for $c\in [0,1]$,
\begin{eqnarray*}
f(\pi c)&=&\frac{\pi^2}{6}-2\sum_{k=1}^\infty \frac{1+\cos(\pi k)}{k^2}\cos(k\pi c)=\frac{\pi^2}{6}-4\sum_{k\text{ even}} \frac{\cos(\pi c k)}{k^2}\\
&=&\frac{\pi^2}{6}-\sum_{j=1}^\infty \frac{\cos(2\pi c j)}{j^2}=\frac{\pi^2}{6}-\sum_{j=1}^\infty \frac{1-2\sin^2(\pi c j)}{j^2}=2\sum_{j=1}^\infty \frac{\sin^2(\pi c j)}{j^2}.
\end{eqnarray*}
Setting $c=r/N$ yields \eqref{Fourier procedure}. 

\subsection{Proof of Proposition \ref{prop: equivalent of prop 15} (high probability that one of $A_1,\dots,A_m$ occurs)}
This proof closely follows the argument of Proposition 15 in SV13. In places, we reproduce parts of that proof \emph{verbatim} for completeness and clarity.

Let $\psi:\mathbb{R}\rightarrow [0,1]$ be a smooth function such that $\psi(x)=0$ for $x\leq 0$ and $\psi(x)=1$ for $x\geq 1$. For any real number $R>0$ and any complex number $a\in\mathbb{C}$, we have
\[
\mathbf{1}_{\sqrt{\log n}\times B(a,R)}(z)\geq \zeta_{B(a,R)}(z):=\psi(R^2\log n-|z-a\sqrt{\log n}|^2),
\]
where $\mathbf{1}_\Omega(z)$ is the indicator function of set $\Omega\subset\mathbb{C}$. For each $1\leq i\leq m$, define
\begin{equation}\label{Wi definition}
W_i:=\prod_{j=-b+1}^{b+k}\zeta_{B_j}(d_{iq+j}(\mb{a})).
\end{equation}
We have $W_i\leq \mathbf{1}_{A_i}$. By the Paley-Zygmund inequality,
\begin{equation}\label{Paley_Zygmund}
\mathbb{P}\left\{\sum_{i=1}^m\mathbf{1}_{A_i}\geq 1\right\}\geq\mathbb{P}\left\{\sum_{i=1}^m W_i>0\right\}\geq \frac{(\mathbb{E}\sum_{i=1}^m W_i)^2}{\mathbb{E}[(\sum_{i=1}^m W_i)^2]}.
\end{equation}
Incidentally, the numerator and the denominator in the similar inequality were inadvertently swapped in SV13. 

Inequality \eqref{Paley_Zygmund} reduces the proof to verifying that
\begin{equation}\label{need to prove for prop 3a}
\mathbb{E}\left[\left(\sum_{i=1}^m W_i\right)^2\right] = (1 + o(1))\left(\mathbb{E}\sum_{i=1}^m W_i\right)^2.
\end{equation}
Following SV13, we first establish \eqref{need to prove for prop 3a} in the special case where all $a_j$, $j \in \mathbb{Z}$, are replaced by Gaussian random variables. We then extend the result to the general case using a version of the invariance principle from \cite{chatterjee06}, as formulated in Lemma 17 of SV13.

\paragraph{Gaussian case.}
Let $\{G_j\}_{j\in\mathbb{Z}}$ be a sequence of i.i.d.~standard Gaussian random variables, independent of $\{a_j\}_{j\in\mathbb{Z}}$. Define $d_j(G)$ analogously to $d_j(\mb{a})$ (see \eqref{def of d again}), but with $\mb{a} = (a_0, \dots, a_{N-1})$ replaced by $G = (G_0, \dots, G_{N-1})$. 

Let $W_i^G$ denote the random variable defined in the same way as $W_i$, except that all instances of $d_{iq+j}(\mb{a})$ are replaced by $d_{iq+j}(G)$. Then we have (cf. \eqref{Wi definition})
\begin{equation}\label{EWG}
\mathbb{E} W_i^G=\prod_{j=-b+1}^{k+b}\mathbb{E}\zeta_{B_j}(d_{iq+j}(G)).
\end{equation}
This interchange of product and expectation is valid because all indices $iq+j$ in the product lie strictly between $1$ and $N/2$, ensuring that the corresponding random variables $d_{iq+j}$ are independent.

To evaluate $\mathbb{E}\zeta_{B_j}(d_{iq+j}(G))$, recall from Section \ref{sec: intro} that the variables $\{d_t(G)\}_{0<t<N/2}$ are i.i.d. $\mathcal{N}_{\mathbb{C}}(0,1)$. As a result, the squared magnitudes $\{|d_t(G)|^2\}_{0<t<N/2}$ are i.i.d.~$\operatorname{Exp}(1)$ random variables, so that for any $0<t<N/2$,
\begin{equation}\label{explicit probability}
\mathbb{P}(|d_t(G)|^2>x)=e^{-x},\quad x \geq 0.
\end{equation}

Using \eqref{explicit probability}, we obtain
\begin{equation}\label{d in epsilon a}
    \mathbb{E}\zeta_{B(0,\epsilon_n)}(d_t(G))=\mathbb{E}\psi(\epsilon_n^2\log n-|d_t(G)|^2)\geq \mathbb{P}( |d_t(G)|^2\leq \epsilon_n^2 \log n-1)=1-O(n^{-\epsilon_n^2 }),
\end{equation}
and
\begin{eqnarray}\notag
\mathbb{E}\zeta_{B(u_j,\eta)}(d_{iq+j}(G))&\geq& \mathbb{P}(|d_{iq+j}(G)-u_j\sqrt{\log n}|^2\leq \eta^2\log n-1)\\\label{k balls bound}
&\geq&\mathbb{P}(|d_{iq+j}(G)-u_j\sqrt{\log n}|^2\leq (\eta/2)^2\log n)
\end{eqnarray}
for all sufficiently large $n$.

To bound the latter probability from below, note that the distribution of $d_{iq+j}(G)$ is rotationally symmetrical. Therefore, for any $\theta\in\mathbb{R}$, we have
\begin{eqnarray}\notag
\mathbb{P}(|d_{iq+j}(G)-u_j\sqrt{\log n}|^2\leq (\eta/2)^2\log n)&=&\mathbb{P}(|d_{iq+j}(G)-e^{\rmi\theta}u_j\sqrt{\log n}|^2\leq (\eta/2)^2\log n)\\\label{spherical}
&=&\mathbb{P}\left(d_{iq+j}(G)\in B(e^{\rmi\theta}u_j\sqrt{\log n}, (\eta/2)\sqrt{\log n})\right).
\end{eqnarray}
Now define $\theta_s=\eta s/2$, for $s=1,\dots,\lceil 4\pi/\eta\rceil$, and consider the union of balls 
\[
\mb{B}=\cup_{s=1,\dots,\lceil 4\pi/\eta\rceil}B(e^{\rmi\theta_s}u_j\sqrt{\log n},(\eta/2)\sqrt{\log n}).
\]
This set $\mathbf{B}$ contains the annular region:
\[
\mb{A}:=\{z:(|u_j|-\eta/4)\sqrt{\log n}\leq |z|\leq (|u_j|+\eta/4)\sqrt{\log n}\}.
\]
Indeed, any $z\in\mb{A}$ lies within distance $(\eta/4)\sqrt{\log n}$ from the circle with radius $|u_j|\sqrt{\log n}$. Moreover, since the angular spacing between the centers $e^{\rmi\theta_s}u_j\sqrt{\log n}$ is at most $\eta/2$, the arc length between them is at most $(\eta/2)|u_j|\sqrt{\log n}$. Hence, any such $z$ satisfies 
\[
\min_{s=1,\dots,\lceil 4\pi/\eta\rceil}|z-e^{\rmi\theta_s}u_j\sqrt{\log n}|\leq (\eta/4) \sqrt{\log n}+(\eta/4)|u_j|\sqrt{\log n}\leq (\eta/2)\sqrt{\log n},
\]
and therefore $z\in\mb{B}$, as claimed.

Since $\mb{A}\subseteq \mb{B}$, we have
\begin{eqnarray*}
\mathbb{P}\left(d_{iq+j}(G)\in \mb{A}\right)&\leq&\mathbb{P}\left(d_{iq+j}(G)\in \mb{B}\right)\\
&\leq&\sum_{s=1}^{\lceil 4\pi/\eta\rceil}\mathbb{P}\left(d_{iq+j}(G)\in B(e^{\rmi\theta_s}u_j\sqrt{\log n},(\eta/2)\sqrt{\log n})\right)\\
&=&\lceil 4\pi/\eta\rceil \mathbb{P}(|d_{iq+j}(G)-u_j\sqrt{\log n}|^2\leq (\eta/2)^2\log n),
\end{eqnarray*}
where for the last equality we used \eqref{spherical}. 

Combining this with \eqref{k balls bound}, we obtain
\begin{eqnarray}\notag
\mathbb{E}\zeta_{B(u_j,\eta)}(d_{iq+j}(G))&\geq& \frac{1}{\lceil 4\pi/\eta\rceil}\mathbb{P}\left(d_{iq+j}(G)\in \mb{A}\right)\\\notag
&=&\frac{1}{\lceil 4\pi/\eta\rceil}\mathbb{P}\left((|u_j|-\eta/4)\sqrt{\log n}\leq|d_{iq+j}(G)|\leq (|u_j|+\eta/4)\sqrt{\log n}\right)\\ \label{k balls bound 1}
&=&\frac{n^{-(|u_j|-\eta/4)^2}-n^{-(|u_j|+\eta/4)^2}}{\lceil 4\pi/\eta\rceil}=O(n^{-(u_j')^2}),
\end{eqnarray}
where $u_j':=|u_j|-\eta/4$.

Using the estimates \eqref{k balls bound 1} and \eqref{d in epsilon a} in \eqref{EWG}, we obtain 
\begin{eqnarray}\label{EWG a} 
\mathbb{E} W_i^G=\prod_{j=-b+1}^{k+b}\zeta_{B_j}(d_{ir+j}(G))\geq
    (1-O(n^{-\epsilon_n^2 }))^{2b}O(n^{-(u_1')^2-\dots-(u_k')^2}).
\end{eqnarray}
Note that  $(u_1')^2+\dots+(u_k')^2<1$, and that
\[
(1-O(n^{-\epsilon_n^2}))^{2b}=(1-O(e^{\log^{1-\alpha}n}))^{24\lceil \log N\rceil^4}=1+o(1).
\]
Therefore,
\[
\sum_{i=1}^m\mathbb{E} W_i^G=m\mathbb{E} W_i^G=\frac{N}{400\lceil\log N\rceil^4}O(n^{-(u_1')^2-\dots-(u_k')^2})\rightarrow \infty
\]
as $n\rightarrow\infty$.
Further, as in (22) of SV13,
\begin{equation}\label{22 equiv a}
\sum_{i=1}^m\operatorname{Var} W_i^G\leq \sum_{i=1}^m\mathbb{E}[(W_i^G)^2]\leq \sum_{i=1}^m \mathbb{E}W_i^G=o\left(\left(\sum_{i=1}^m\mathbb{E} W_i^G\right)^2\right).
\end{equation}

Now observe that for any $1\leq i<i'\leq m$, 
\begin{align*}
(i'q - b + 1) - (iq + k + b) &= (i' - i)q + 1 - 2b - k \\
&\geq 100\lceil\log N\rceil^4 - 24\lceil\log N\rceil^4 - k > 0
\end{align*}
for all sufficiently large $n$. Therefore, the index sets $\{iq-b+1,\dots,iq+k+b\}$ and $\{i'q-b+1,\dots,i'q+k+b\}$ do are disjoint. It follows that $W_i^G$ and $W_{i'}^G$ are independent, and hence 
\[
\mathbb{E}[W_i^GW_{i'}^G]=\mathbb{E}[W_i^G]\mathbb{E}[W_{i'}^G].
\]
Combining this identity with the bound \eqref{22 equiv a}, we obtain
\begin{eqnarray*}
\mathbb{E}\left[\left(\sum_{i=1}^m W_i^G\right)^2\right]&=&\sum_{i=1}^m\mathbb{E}[(W_i^G)^2]+\sum_{i\neq i'}\mathbb{E}[W_i^G]\mathbb{E}[W_{i'}^G]\\
&=&\sum_{i=1}^m\operatorname{Var}W_i^G+\left(\sum_{i=1}^m\mathbb{E}W_i^G\right)^2=(1+o(1))\left(\sum_{i=1}^m\mathbb{E}W_i^G\right)^2.
\end{eqnarray*}
This establishes \eqref{need to prove for prop 3a} in the Gaussian case, and thus completes the proof of Proposition~\ref{prop: equivalent of prop 15} under the assumption that the $a_k$ are Gaussian.

\paragraph{General case.} We now use a generalization of \cite{chatterjee06}'s invariance principle (Lemma 17 in SV13) to establish  Proposition \ref{prop: equivalent of prop 15} in full generality. For the sake of completeness, we restate SV13’s Lemma 17 here.

Let $\mb{X} = (X_1, \dots, X_R )$ and $\mb{Y} = (Y_1, \dots, Y_R )$ be independent random vectors, with each $X_i$ and $Y_i$ taking values in a common open interval $I\subseteq\mathbb{R}$. Suppose further that
\[
\mathbb{E}X_i=\mathbb{E}Y_i,\quad \mathbb{E}X_i^2=\mathbb{E}Y_i^2<\infty\quad\text{for all }i=1,\dots,R.
\]

\begin{lemma}[Lemma 17 of \cite{sen13}]\label{lem: SV13 lem17 a}
Let $f=(f_1,\dots,f_M): I^R\rightarrow \mathbb{R}^M$ be thrice continuously differentiable. If we set $\mathbf{U}=f(\mb{X})$ and $\mb{V}=f(\mb{Y})$, then for any thrice continuously differentiable $g:\mathbb{R}^M\rightarrow \mathbb{R}$,
\[
\left|\mathbb{E}[g(\mb{U})]-\mathbb{E}[g(\mb{V})]\right|\leq\sum_{\tau=1}^R\mathbb{E}[S_\tau]+\sum_{\tau=1}^R\mathbb{E}[T_\tau],
\]
where
\begin{eqnarray*}
    S_\tau&:=&\frac{1}{6}|X_\tau|^3\times \sup_{x\in[0\wedge X_\tau,0\vee X_\tau]}|h_\tau(X_1,\dots,X_{\tau-1},x,Y_{\tau+1},\dots,Y_R)|,\\
    T_\tau&:=&\frac{1}{6}|Y_\tau|^3\times \sup_{y\in[0\wedge Y_\tau,0\vee Y_\tau]}|h_\tau(X_1,\dots,X_{\tau-1},y,Y_{\tau+1},\dots,Y_R)|,\\    h_\tau(\mb{x})&:=&\sum_{\ell,s,q=1}^M\partial_\ell\partial_s\partial_q g(f(\mb{x}))\partial_\tau f_\ell(\mathbf{x})\partial_\tau f_s(\mathbf{x})\partial_\tau  f_q(\mb{x})\\
    &&+3\sum_{\ell,s=1}^M\partial_\ell\partial_s g(f(\mb{x}))\partial_\tau^2 f_\ell(\mathbf{x})\partial_\tau f_s(\mb{x})+\sum_{\ell=1}^M\partial_\ell g(f(\mathbf{x}))\partial_\tau^3f_\ell(\mathbf{x}).
\end{eqnarray*}
\end{lemma}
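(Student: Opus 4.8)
The plan is to run the standard Lindeberg-type replacement argument underlying Chatterjee's invariance principle, of which Lemma~\ref{lem: SV13 lem17 a} is the special case for smooth test functions; in particular it suffices to reproduce the proof of Lemma~17 in SV13. Set $G:=g\circ f:I^R\to\mathbb{R}$, which is thrice continuously differentiable under the stated smoothness of $f$ and $g$. Applying the chain rule three times in the $\tau$-th coordinate and collecting terms (using the symmetry of the mixed partials of $g$) one checks that $\partial_\tau^3 G=h_\tau$, with $h_\tau$ exactly as in the statement; this identity is the only place the precise algebraic form of $h_\tau$ enters, and everything below refers only to $\partial_\tau^3 G$.

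First I would introduce the hybrid vectors $Z^{(\tau)}:=(X_1,\dots,X_\tau,Y_{\tau+1},\dots,Y_R)$ for $\tau=0,\dots,R$, so that $Z^{(0)}=\mathbf{Y}$, $Z^{(R)}=\mathbf{X}$, and telescope,
\[
\mathbb{E}[g(\mathbf{U})]-\mathbb{E}[g(\mathbf{V})]=\sum_{\tau=1}^{R}\bigl(\mathbb{E}[G(Z^{(\tau)})]-\mathbb{E}[G(Z^{(\tau-1)})]\bigr).
\]
The vectors $Z^{(\tau)}$ and $Z^{(\tau-1)}$ differ only in the $\tau$-th slot, carrying $X_\tau$ and $Y_\tau$ respectively; write $Z^{(\tau,x)}$ for the vector $(X_1,\dots,X_{\tau-1},x,Y_{\tau+1},\dots,Y_R)$, and $Z^{(\tau,0)}$ for the case $x=0$ (this uses $0\in I$, which holds in the applications, where the entries are centred). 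Taylor-expanding $G$ in its $\tau$-th argument about $0$ with Lagrange remainder and subtracting the two expansions gives, for suitable $\xi\in[0\wedge X_\tau,0\vee X_\tau]$ and $\xi'\in[0\wedge Y_\tau,0\vee Y_\tau]$,
\[
G(Z^{(\tau)})-G(Z^{(\tau-1)})=(X_\tau-Y_\tau)\,\partial_\tau G(Z^{(\tau,0)})+\tfrac{1}{2}(X_\tau^2-Y_\tau^2)\,\partial_\tau^2 G(Z^{(\tau,0)})+\tfrac{1}{6}X_\tau^3\,h_\tau(Z^{(\tau,\xi)})-\tfrac{1}{6}Y_\tau^3\,h_\tau(Z^{(\tau,\xi')}).
\]

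Now take expectations. Since the coordinates of $\mathbf{X}$ are mutually independent, those of $\mathbf{Y}$ are mutually independent, and $\mathbf{X}\perp\mathbf{Y}$ (these are the standing hypotheses, as in \cite{chatterjee06} and SV13), the pair $(X_\tau,Y_\tau)$ is independent of $Z^{(\tau,0)}$; hence the first-order term contributes $\mathbb{E}[X_\tau-Y_\tau]\,\mathbb{E}[\partial_\tau G(Z^{(\tau,0)})]=0$ and the second-order term contributes $\tfrac{1}{2}\,\mathbb{E}[X_\tau^2-Y_\tau^2]\,\mathbb{E}[\partial_\tau^2 G(Z^{(\tau,0)})]=0$, by the moment-matching hypotheses $\mathbb{E}X_\tau=\mathbb{E}Y_\tau$, $\mathbb{E}X_\tau^2=\mathbb{E}Y_\tau^2$. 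Bounding $|h_\tau(Z^{(\tau,\xi)})|\le\sup_{x\in[0\wedge X_\tau,\,0\vee X_\tau]}|h_\tau(Z^{(\tau,x)})|$ and $|h_\tau(Z^{(\tau,\xi')})|\le\sup_{x\in[0\wedge Y_\tau,\,0\vee Y_\tau]}|h_\tau(Z^{(\tau,x)})|$ turns the two remainder terms into $S_\tau$ and $T_\tau$, so $|\mathbb{E}[G(Z^{(\tau)})]-\mathbb{E}[G(Z^{(\tau-1)})]|\le\mathbb{E}[S_\tau]+\mathbb{E}[T_\tau]$; summing over $\tau$ and applying the triangle inequality gives the claim.

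No genuine obstacle is expected: the statement is a restatement of SV13's Lemma~17, itself a smoothed instance of the invariance principle of \cite{chatterjee06}. The only points needing mild care are (i) that $0$ lies in the common interval $I$, so that the Taylor expansion about $0$ and the suprema defining $S_\tau,T_\tau$ make sense; (ii) finiteness of the expectations entering the telescoping argument — if the right-hand side is infinite there is nothing to prove, and otherwise one can either invoke dominated convergence or first reduce to bounded $X_i,Y_i$ by truncation, exactly as elsewhere in the paper; and (iii) carefully tracking the independence bookkeeping when splitting the first- and second-order expectations. All of this is routine.
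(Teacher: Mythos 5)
Your argument is correct and is the standard Lindeberg replacement proof; the chain-rule identity $\partial_\tau^3(g\circ f)=h_\tau$ (the factor $3$ coming from symmetry of $\partial_\ell\partial_s g$), the telescoping through the hybrid vectors $Z^{(\tau)}$, the cancellation of the first- and second-order terms via independence of $(X_\tau,Y_\tau)$ from $Z^{(\tau,0)}$ together with the moment-matching hypotheses, and the Lagrange-remainder bound giving $S_\tau,T_\tau$ are all exactly right. Note, however, that the paper does not itself prove this statement: it is quoted verbatim as Lemma~17 of SV13, so there is no in-paper proof to compare against. Your reconstruction matches what SV13 do (a smooth version of Chatterjee's invariance principle), and your side remarks — that $0\in I$ is implicitly required for the expansion about $0$ and for the suprema in $S_\tau,T_\tau$ to be well defined, and that the expectations can be assumed finite without loss — are the only hypotheses worth flagging.
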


Adapting the lemma to our setting, we make the following identifications:
\begin{itemize}
    \item Let $R=N$, $M=2k+4b$;
    \item Let $\mb{X}=\mb{a}$, $\mb{Y}=G$;
    \item Define the functions $f(\mb{X})$ and $f(\mb{Y})$ as
    \begin{align*}
f(\mb{X}) &= \big( \Re(d_{iq - b + 1}(\mb{a})), \Im(d_{iq - b + 1}(\mb{a})), \dots, \Re(d_{iq + k + b}(\mb{a})),, \Im(d_{iq + k + b}(\mb{a})) \big), \\
f(\mb{Y}) &= \big( \Re(d_{iq - b + 1}(G)), \Im(d_{iq - b + 1}(G)), \dots, \Re(d_{iq + k + b}(G)), \Im(d_{iq + k + b}(G)) \big);
\end{align*}
\item Finally, define the function $g:\mathbb{R}^M\rightarrow\mathbb{R}$ by
\[
g(\mb{z})=\prod_{j=1}^{k+2b}\zeta_{B_{j-b}}(z_{2j-1}+\rmi z_{2j}),\quad\text{where }\mb{z}=(z_1,\dots,z_{2k+4b}).
\]
\end{itemize}
With these identifications, we have
\[
\left|\mathbb{E}[g(\mb{U})]-\mathbb{E}[g(\mb{V})]\right|=\left|\mathbb{E}W_i-\mathbb{E}W_i^G\right|,
\]
and Lemma \ref{lem: SV13 lem17 a} implies that
\begin{equation}\label{implication of Lemma lem17}
|\mathbb{E}[W_i]-\mathbb{E}[W_i^G]|\leq \sum_{\tau=1}^N\mathbb{E}[S_\tau]+\sum_{\tau=1}^{N}\mathbb{E}[T_\tau].
\end{equation}

First, we establish an upper bound on $\sum_{\tau=1}^N\mathbb{E}[S_\tau]$.  Recall that for any $\mb{x}=(x_0,\dots,x_{N-1})$, the real and imaginary parts of $d_j(\mb{x})$ are given by:
\[
\Re( d_j(\mb{x}))=\frac{1}{\sqrt{N}}\sum_{k=0}^{N-1}\cos\left(\frac{2\pi\rmi}{N}kj\right)x_k,\qquad \Im( d_j(\mb{x}))=\frac{1}{\sqrt{N}}\sum_{k=0}^{N-1}\sin\left(\frac{2\pi\rmi}{N}kj\right)x_k.
\]
By definition of $f$, for each $\tau=1,\dots,N$ and $t=1,\dots,M$, we have
\begin{equation}\label{bound on partial der 1 a}
    |\partial_\tau f_t|\leq 1/\sqrt{N},\qquad \partial_\tau^2 f_t=0,\qquad \partial_\tau^3 f_t=0.
\end{equation}
Next, observe that for all $\ell,s,q=1,\dots,M$, the third derivatives of $g$ are uniformly bounded: 
\[
\|\partial_\ell\partial_s\partial_q g\|_\infty=O(1).
\]
Moreover, $\partial_\ell\partial_s\partial_q g(\mathbf{z})\neq 0$ only if 
\[
z_{2j-1}+\rmi z_{2j}\in \sqrt{\log n}B_{j-b}
\]
for all $j=1,\dots,k+2b$.

For any $x\in\mathbb{R}$, define the random vector
\[
\mb{Z}^{(j)}(x):=(a_0,\dots,a_{j-2},x,G_{j},\dots,G_{N-1}).
\]
Applying Lemma~\ref{lem: SV13 lem17 a}, we obtain the bound, for some constant $C>0$,
\begin{eqnarray}\label{ES array a}
\mathbb{E}S_\tau&\leq& \frac{C M^3}{N^{3/2}}\mathbb{E}\left[|a_{\tau-1}|^3 \sup_{x:|x|\leq |a_{\tau-1}|}\mathbf{1}\{d_{iq+s}(\mb{Z}^{(\tau)}(x))\in\sqrt{\log n} B_s,1\leq s\leq k\}\right]
.
\end{eqnarray}
This corresponds to the first line in the display defining $h_\tau(\mb{x})$ in Lemma \ref{lem: SV13 lem17 a}. The term $N^{3/2}$ in the denominator  comes from the bound 
\[
|\partial_\ell\partial_s\partial_q g(f(\mb{x}))\partial_\tau f_\ell(\mathbf{x})\partial_\tau f_s(\mathbf{x})\partial_\tau  f_q(\mb{x})|\leq  \partial_\ell\partial_s\partial_q g(f(\mb{x}))N^{-3/2}\leq O(1) N^{-3/2},
\]
where the first of the inequalities follows from \eqref{bound on partial der 1 a}.

By definition,
\[
\left|d_{iq+s}(\mb{Z}^{(\tau)}(x))-d_{iq+s}(\mb{Z}^{(\tau)}(0))\right|=\left|\frac{1}{\sqrt{N}}e^{\frac{2\pi\rmi}{N}(\tau-1)(iq+s)}x\right|=\frac{|x|}{\sqrt{N}}.
\]
Therefore,
\begin{eqnarray}\notag
&&\sup_{x:|x|\leq |a_{\tau-1}|}\mathbf{1}\{d_{iq+s}(\mb{Z}^{(\tau)}(x))\in\sqrt{\log n} B_s,1\leq s\leq k\}\\\notag
&\leq&\sup_{x:|x|\leq |a_{\tau-1}|}\mathbf{1}\{|d_{iq+s}(\mb{Z}^{(\tau)}(x))|\geq \sqrt{\log n} (|u_s|-\eta), 1\leq s\leq k\}\\\notag
&\leq& \mathbf{1}\{|d_{iq+s}(\mb{Z}^{(\tau)}(0))|>\sqrt{\log n}(|u_s|-\eta)-|a_{\tau-1}|/\sqrt{N}, 1\leq s\leq k\}\\\label{first part of Stau}
&\leq&\mathbf{1}\{|d_{iq+s}(\mb{Z}^{(\tau)}(0))|>\sqrt{\log n}(|u_s|-\eta)-\sqrt{n/N}n^{1/\gamma-1/2}, 1\leq s\leq k\},
\end{eqnarray}
where the latter inequality follows from the bound $|a_{\tau-1}|\leq n^{1/\gamma}$. 

Note that the random variable on the right hand side of the  inequality \eqref{first part of Stau} does not depend on $a_{\tau-1}$. Therefore, using \eqref{first part of Stau} in \eqref{ES array a}, we obtain: 
\[
\mathbb{E}S_\tau\leq \frac{C M^3}{N^{3/2}}\mathbb{E}|a_{\tau-1}|^3 \mathbb{P}\left\{|d_{iq+s}(\mb{Z}^{(\tau)}(0))|>\sqrt{\log n}(|u_s|-\eta)-\sqrt{n/N}n^{1/\gamma-1/2}, 1\leq s\leq k\right\}
.
\]
Further, since $\mathbb{E}a_{\tau-1}^2=1$ and $|a_{\tau-1}|\leq n^{1/\gamma}$, we have
\[
\mathbb{E}|a_{\tau-1}|^3\leq n^{1/\gamma}.
\]
Therefore, recalling that $N=O(n)$ and changing the value of the constant $C$ accordingly, we obtain:
\begin{equation}\label{bound on ES a}
\mathbb{E}S_\tau\leq \frac{CM^3}{n^{3/2-1/\gamma}}\mathbb{P}\left\{|d_{iq+s}(\mb{Z}^{(\tau)}(0))|>\sqrt{\log n}(|u_s|-\eta)-\sqrt{n/N}n^{1/\gamma-1/2}, 1\leq s\leq k\right\}.
\end{equation}

Let us derive an upper bound for the latter probability.  Define
\[
\hat{\mb{Z}}^{(j)}(x):=(a_0,\dots,a_{j-2},x,G_{j}\mb{1}_{\{|G_{j}|\leq n^{1/\gamma}\}},\dots,G_{N-1}\mb{1}_{\{|G_{N-1}|\leq n^{1/\gamma}\}}).
\]
Then we have:
\begin{eqnarray*}
&&\mathbb{P}\left\{|d_{iq+s}(\mb{Z}^{(\tau)}(0))|>\sqrt{\log n}(|u_s|-\eta)-\sqrt{n/N}n^{1/\gamma-1/2}, 1\leq s\leq k\right\}\\
&\leq&\mathbb{P}\left\{|d_{iq+s}(\hat{\mb{Z}}^{(\tau)}(0))|>\sqrt{\log n}(|u_s|-\eta)-\sqrt{n/N}n^{1/\gamma-1/2}, 1\leq s\leq k\right\}\\
&&+\mathbb{P}\left\{|G_\ell|>n^{1/\gamma}\text{ for some } \ell=0,\dots,N-1\right\}.
\end{eqnarray*}
Observe that
\begin{equation}\label{Gaussian tails}
\mathbb{P}\left\{|G_\ell|>n^{1/\gamma}\text{ for some } \ell=0,\dots,N-1\right\}\leq O\left(n^{1-1/\gamma}\exp\left(-\frac{1}{2}n^{2/\gamma}\right)\right).
\end{equation}
Further, by Lemma \ref{lem: refined equiv of lemma 11}, for any small $\delta>0$, we have:
\[
\mathbb{P}\left\{|d_{iq+s}(\hat{\mb{Z}}^{(\tau)}(0))|>\sqrt{\log n}(|u_s|-\eta)-\sqrt{n/N}n^{1/\gamma-1/2}, 1\leq s\leq k\right\}\leq O\left(n^{-(|u_1|-\eta)^2-\dots -(|u_k|-\eta)^2+\delta}\right).
\]
Recalling the definition $u_j':=|u_j|-\eta/4$, we obtain:
\begin{eqnarray*}
(|u_1|-\eta)^2+\dots +(|u_k|^2-\eta)^2&=&(u_1'-3\eta/4)^2+\dots +(u_k'-3\eta/4)^2\\
&=&\sum_{s=1}^k(u_s')^2-\frac{3}{2}\sum_{s=1}^k u_s'\eta+\frac{9k}{16}\eta^2.
\end{eqnarray*}
Note that $u'_j<1$ for all $j=1,\dots,k$. Choosing $\delta>0$ so small that
\[
\frac{3}{2}\sum_{s=1}^k u_s'\eta+\delta<2k\eta,
\]
we get the following upper bound.
\[
\mathbb{P}\left\{|d_{iq+s}(\hat{\mb{Z}}^{(\tau)}(0))|>\sqrt{\log n}(|u_s|-\eta)-\sqrt{n/N}n^{1/\gamma-1/2}, 1\leq s\leq k\right\}\leq O\left(n^{-((u_1')^2+\dots+(u_k')^2)+2k\eta}\right).
\]
Combining this upper bound with \eqref{Gaussian tails}, we obtain:
\begin{eqnarray*}
&&\mathbb{P}\left\{|d_{iq+s}(\mb{Z}^{(\tau)}(0))|>\sqrt{\log n}(|u_s|-\eta)-\sqrt{n/N}n^{1/\gamma-1/2}, 1\leq s\leq k\right\}\\&\leq& O\left(n^{-((u_1')^2+\dots+(u_k')^2)+2k\eta}\right)+  O\left(n^{1-1/\gamma}\exp\left(-\frac{1}{2}n^{2/\gamma}\right)\right).
\end{eqnarray*}

Using the latter inequality in \eqref{bound on ES a}, we obtain the following upper bound on $\sum_{\tau=1}^N\mathbb{E}[S_\tau]$:
\begin{eqnarray}\notag
\sum_{\tau=1}^N\mathbb{E}[S_\tau]&\leq& \frac{M^3}{n^{1/2-1/\gamma}}\left(O\left(n^{-((u_1')^2+\dots+(u_k')^2)+2k\eta}\right)+O\left(n^{1-1/\gamma}\exp\left(-\frac{1}{2}n^{2/\gamma}\right)\right)\right)\\ \label{bound on sum ES a}
&\leq&\frac{\log^{12} n}{n^{1/2-1/\gamma}}O\left(n^{-((u_1')^2+\dots+(u_k')^2)+2k\eta}\right),
\end{eqnarray}
uniformly in $i$ (recall that $\sum_{\tau=1}^N\mathbb{E}[S_\tau]$ depends on $i$ because it involves random variables $d_{iq+j}$). 

Our next goal is to derive a similar bound on $\sum_{\tau=1}^N\mathbb{E}[T_\tau]$. Similarly to \eqref{ES array a}, we get
\begin{eqnarray}\label{ET array a}
\mathbb{E}T_\tau\leq \frac{C M^3 }{N^{3/2}}\mathbb{E}\left[|G_{\tau-1}|^3 \sup_{x:|x|\leq |G_{\tau-1}|}\mathbf{1}\{d_{iq+s}(\mb{Z}^{(\tau)}(x))\in\sqrt{\log n} B_s, 1\leq s\leq k\}\right].
\end{eqnarray}
Note that the expected value on the right hand side of \eqref{ET array a} is no larger than 
\begin{eqnarray*}
\mathbb{E}\left[|G_{\tau-1}|^3 \sup_{x:|x|\leq n^{1/\gamma}}\mathbf{1}\{d_{iq+s}(\mb{Z}^{(\tau)}(x))\in\sqrt{\log n} B_s, 1\leq s\leq k\}\right]+\mathbb{E}[|G_{\tau-1}|^3\mathbf{1}_{|G_{\tau-1}|>n^{1/\gamma}}].
\end{eqnarray*}
The first term in the latter sum can be analyzed very similarly to the above analysis of\linebreak  $\mathbb{E}\left[|a_{\tau-1}|^3 \sup_{x:|x|\leq |a_{\tau-1}|}\mathbf{1}\{d_{iq+s}(\mb{Z}^{(\tau)}(x))\in\sqrt{\log n} B_s, 1\leq s\leq k\}\right]$. For the second term, we have
\[
\mathbb{E}[|G_{\tau-1}|^3\mathbf{1}_{|G_{\tau-1}|>n^{1/\gamma}}]=\frac{1}{\sqrt{2\pi}}\int_{|x|>n^{1/\gamma}}|x|^3e^{-x^2/2}\mathrm{d}x=\sqrt{\frac{8}{\pi}}\int_{y>\frac{1}{2}n^{2/\gamma}}ye^{-y}\mathrm{d}y=\sqrt{\frac{8}{\pi}}\Gamma\left(0,\frac{1}{2}n^{2/\gamma}\right),
\]
where $\Gamma(s,x)$ is the incomplete Gamma function. As is well known (see e.g.~\cite{olver97}, p.~66), for $s=0$, $\Gamma(0,x)=(1+o(1))x^{-1}e^{-x}$ as $x\rightarrow\infty$. Hence,
\[
\mathbb{E}[|G_{\tau-1}|^3\mathbf{1}_{|G_{\tau-1}|>n^{1/\gamma}}]=O(1)n^{-2/\gamma}e^{-\frac{1}{2}n^{2/\gamma}}.
\]
Now a similar argument to the one used above to bound $\sum_{\tau=1}^N \mathbb{E}[S_\tau]$ yields the same bound for $\sum_{\tau=1}^N \mathbb{E}[T_\tau]$:
\[
\sum_{\tau=1}^N\mathbb{E}[T_\tau]\leq \frac{\log^{12} n}{n^{1/2-1/\gamma}}O\left(n^{-((u_1')^2+\dots+(u_k')^2)+2k\eta}\right).
\]

Using the obtained bounds $\sum_{\tau=1}^N \mathbb{E}[S_\tau]$ $\sum_{\tau=1}^N \mathbb{E}[T_\tau]$ in \eqref{implication of Lemma lem17}, we obtain:
\begin{eqnarray}\notag
\mathbb{E}[W_i]&=&\mathbb{E}[W_i^G]+\log^{12}n O\left(n^{1/\gamma-1/2-((u_1')^2+\dots +(u_k')^2)+2k\eta}\right)\\\label{EW and EWG are close a}
&=&\mathbb{E}[W_i^G]+\log^{12}n O\left(n^{\frac{1}{3}(1/\gamma-1/2)-((u_1')^2+\dots +(u_k')^2)}\right)=(1+o(1))\mathbb{E}[W_i^G],
\end{eqnarray}
uniformly in $1\leq i\leq m$. Here, to obtain the second to the last equality, we used \eqref{new equation}. For the last equality, we used  \eqref{EWG}.

Similar arguments lead to the identity
\begin{equation}\label{EWW and EWWG are close}
\mathbb{E}[W_iW_{i'}]=(1+o(1))\mathbb{E}[W_i^GW_{i'}^G]=(1+o(1))\mathbb{E}[W_i^G]\mathbb{E}[W_{i'}^G],
\end{equation}
uniformly in $1\leq i\neq i'\leq m$. 

The asymptotic equivalence \eqref{EW and EWG are close a}, together with the divergence of $\sum_{i=1}^m\mathbb{E}[W_i^G]$, implies that $\sum_{i=1}^m\mathbb{E}[W_i]$ diverges to infinity too. Hence, we have the following analogue of \eqref{22 equiv a} for $W_i$: 
\begin{equation}\label{32 equiv a}
    \sum_{i=1}^m \operatorname{Var}W_i\leq o\left(\left(\sum_{i=1}^m\mathbb{E}W_i\right)^2\right).
\end{equation}
Further, the later three displays yield
\begin{eqnarray*}
\mathbb{E}\left[\left(\sum_{i=1}^mW_i\right)^2\right]&=&\sum_{i=1}^m\mathbb{E}[W_i^2]+\sum_{1\leq i\neq i'\leq m}\mathbb{E}[W_iW_{i'}]\\
&=&\sum_{i=1}^m \operatorname{Var}W_i+\sum_{i=1}^m(\mathbb{E}[W_i])^2+\sum_{1\leq i\neq i'\leq m}\mathbb{E}[W_iW_{i'}]\\
&=&\sum_{i=1}^m \operatorname{Var}W_i+(1+o(1))\left(\sum_{i=1}^m(\mathbb{E}[W_i^G])^2+\sum_{1\leq i\neq i'\leq m}\mathbb{E}[W_i^G]\mathbb{E}[W_{i'}^G]\right)\\
&=&\sum_{i=1}^m \operatorname{Var}W_i+(1+o(1))\left(\sum_{i=1}^m\mathbb{E}[W_i^G]\right)^2\\
&=&\sum_{i=1}^m \operatorname{Var}W_i+(1+o(1))\left(\sum_{i=1}^m\mathbb{E}[W_i]\right)^2\\
&=&(1+o(1))\left(\sum_{i=1}^m\mathbb{E}[W_i]\right)^2.
\end{eqnarray*}
Hence \eqref{need to prove for prop 3a} holds, which implies that Proposition \ref{prop: equivalent of prop 15} holds in full generality. \qed

\subsection{Proof of Lemma \ref{lem: grid lemma for circulant}}
First, observe that
\[
\sup_{\substack{\mathbf{v},\,\mathbf{w} \in \ell^2 \\ \|\mathbf{v}\| = \|\mathbf{w}\| = 1}}
\|(\Pi_{p/n} \mathbf{v}) \odot (\Pi_{1} \mathbf{w})\|
= \sup_{\substack{\mathbf{v},\,\mathbf{w} \in \ell^2 \\ \|\mathbf{v}\| = \|\mathbf{w}\| = 1}}
\|(\Pi_{p/n} \mathbf{v}) \odot \mathbf{w}\|
= \|\Pi_{p/n}\|_{2 \to \infty}.
\]
From identity~\eqref{Fourier procedure}, we have
\[
\|\Pi_{p/n}\|_{2 \to \infty} = \sqrt{p/n}.
\]
Hence, there exist vectors $\widetilde{\mathbf{c}}_1^{(t)}, \widetilde{\mathbf{c}}_2^{(t)}$ in the unit ball of $\ell^2(\mathbb{C})$ such that
\[
\left| \sqrt{p/n} - \| \Pi_{p/n} \widetilde{\mathbf{c}}_1^{(t)} \odot \Pi_{1} \widetilde{\mathbf{c}}_2^{(t)} \| \right| < \tau^{3/2} / 8.
\]
To establish~\eqref{C grid bound}, it therefore suffices to show that there exist $k=k(\tau)$ and $\mb{u}^{(1)},\dots,\mb{u}^{(T)}$ from the interior of the unit ball in $\mathbb{C}^k$ such that
\[
\|\Pi_{p/n}^{[k]} \operatorname{diag}\{\mathbf{u}^{(t)}\}\| 
> \| \Pi_{p/n} \widetilde{\mathbf{c}}_1^{(t)} \odot \Pi_{1} \widetilde{\mathbf{c}}_2^{(t)} \| - \tau^{3/2} / 2.
\]
for each $t\in\{1,\dots,T\}$ and all $p/n\in(\tau_{t-1},\tau_t]$. This is the circulant analogue of~\eqref{almost final} and can be proven by an almost identical argument, which we omit to avoid repetition.

Inequality \eqref{C first grid interval} follows by an argument nearly identical to that used to establish \eqref{first grid interval}, which finishes the proof.

\subsection{Proof of Lemma \ref{lem: Xj}}
    Using the properties \eqref{SG multiplication} and \eqref{SG independent} of sub-gamma random variables, we obtain $Z_j\in SG(v_j,u_j)$, where
    \begin{eqnarray}\label{X vj}
v_j&=&2(\omega_0^{(j)})^2+2(\omega_{n/2}^{(j)})^2+\sum_{i\in\mathbb{Z}}(\check{\omega}^{(j)}_i)^2,\\\label{X uj}
u_j&=&\max\left\{2\omega_0^{(j)},2\omega_{n/2}^{(j)},\max_{i\in\mathbb{Z}}|\check{\omega}^{(j)}_i|\right\}.
\end{eqnarray}
By definition,
\begin{equation}\label{omega0}
\omega_0^{(j)}=\frac{|(\mb{P}_{p,n})_{0j}|^2}{c^2}\leq\frac{1}{4\log^2 n }\frac{1}{c^2}\quad\text{for all }j\in J_n, 
\end{equation}
where the latter inequality uses \eqref{bound on the entries of P} and the fact that $j^2\geq\log^2 n$ for all $j\in J_n$. Similarly,
\begin{equation}\label{omegai}
\omega_{n/2}^{(j)}=\frac{|(\mb{P}_{p,n})_{n/2,j}|^2}{c^2}\leq\frac{1}{4\log^2 n }\frac{1}{c^2}\quad\text{for all }j\in J_n. 
\end{equation}
Further, for $i=1,\dots,n/2-1$, we have
\[
|\check{\omega}_i^{(j)}|=\left|\frac{|(\mb{P}_{p,n})_{i,j}|^2+|(\mb{P}_{p,n})_{n-i,j}|^2-|\Pi_c(i,j)|^2}{c^2}\right|.
\]
On the other hand, 
\begin{eqnarray*}
\left||(\mb{P}_{p,n})_{i,j}|^2-|\Pi_c(i,j)|^2\right|\leq \left|(\mb{P}_{p,n})_{i,j}-\Pi_c(i,j)\right|\left(|(\mb{P}_{p,n})_{i,j}|+|\Pi_c(i,j)|\right)\leq \frac{4c}{n},
\end{eqnarray*}
where the latter inequality holds for all for $i=1,\dots,n/2-1$ and all $j\in J_n$ by Lemma \ref{lem: comparison with limiting}. Therefore, 
\begin{equation*}%\label{omegacheck}
|\check{\omega}_i^{(j)}|\leq \frac{4}{cn}+\frac{|(\mb{P}_{p,n})_{n-i,j}|^2}{c^2}\leq \frac{4}{cn}+\max\left\{\frac{1}{4c^2(n-i-j)^2},\frac{1}{4c^2(i+j)^2}\right\},
\end{equation*}
where the last inequality follows from \eqref{bound on the entries of P}.
For $i\in\{i:i\leq 0\text{ or }i\geq n/2\}$ and all $j\in J_n$, by definition,
\begin{equation*}%\label{omegacheck rest}
    |\check{\omega}_i^{(j)}|\leq \frac{1}{c^2\pi^2(i-j)^2}.
\end{equation*}

The latter two displays imply that, for any $j\in J_n$ and all sufficiently large $n$,
\begin{equation}\label{max omegacheck}
    \max_{i\in\mathbb{Z}}|\check{\omega}^{(j)}_i|\leq \frac{4}{cn}+\frac{1}{4 c^2\log^2 n}+\frac{1}{c^2\pi^2 \log^2 n}\leq \frac{1}{c^2\log^2 n}.
\end{equation}
Moreover, since the minimum possible value of $\min\{n-i-j,i+j\}$ over $i=1,\dots,n/2-1$ and $j\in J_n$ is no smaller than $\lceil \log n\rceil $, we have
\begin{equation}\label{2 omegacheck}
    \sum_{i=1}^{n/2-1}(\check{\omega}^{(j)}_i)^2\leq  \sum_{i=1}^{n/2-1}2\left(\frac{4}{cn}\right)^2+2\sum_{k=\lceil \log n\rceil}^\infty\frac{1}{16 c^4 k^4}.
\end{equation}
Similarly, since the minimum possible value of $|i-j|$ over $i\in\{i:i\leq 0\text{ or }i\geq n/2\}$ and $j\in J_n$ is no smaller than $\lceil \log n\rceil $, we have
\begin{equation}\label{2 omegacheck out}
    \sum_{i\leq 0}(\check{\omega}^{(j)}_i)^2\leq  \sum_{k=\lceil \log n\rceil}^\infty\frac{1}{\pi^4  c^4 k^4}\quad\text{and}\quad \sum_{i\geq n/2}(\check{\omega}^{(j)}_i)^2\leq  \sum_{k=\lceil \log n\rceil}^\infty\frac{1}{\pi^4  c^4 k^4}
\end{equation}
Combining the last two display and using an integral upper bound on the sums of $k^{-4}$, we obtain
\begin{equation}\label{2 omegacheck final}
    \sum_{i\in\mathbb{Z}}(\check{\omega}^{(j)}_i)^2\leq \frac{16}{c^2 n}+\left(\frac{1}{16}+\frac{1}{\pi^4}\right)\frac{2}{3c^4(\log n-1)^3}\leq \frac{1}{8c^4\log^3 n},
\end{equation}
for all sufficiently large $n$.

Using \eqref{omega0}-\eqref{omegai} and \eqref{2 omegacheck final} in \eqref{X vj}, we obtain
\[
v_j\leq \frac{1}{4 c^4 \log^4 n}+\frac{1}{8c^4\log^3 n}\leq \frac{1}{c^4\log^3 n},
\]
for all sufficiently large $n$.
Using \eqref{omega0}-\eqref{omegai} and \eqref{max omegacheck} in \eqref{X uj}, we obtain
\[
u_j\leq \frac{1}{c^2\log^2 n},
\]
which completes the proof.

\bibliography{ddbib}

\end{document}